\documentclass{amsart}
\usepackage{amsmath}
\usepackage{amsthm}
\usepackage{amssymb}
\usepackage{url}
\usepackage{cite}
\usepackage{bm}
\usepackage{verbatim}
\usepackage{indentfirst}
\usepackage{hyperref}
\usepackage[capitalize]{cleveref}
\hypersetup{hidelinks}
\newcommand{\norm}[1]{\lVert#1 \rVert}

\title{A compactness theorem for hyperk\"ahler 4-manifolds with boundary}
\author{Hongyi Liu}
\newtheorem{theorem}{Theorem}[section]
\newtheorem{lemma}[theorem]{Lemma}
\newtheorem{definition}[theorem]{Definition}
\newtheorem{question}[theorem]{Question}
\newtheorem{proposition}[theorem]{Proposition}
\newtheorem{corollary}[theorem]{Corollary}

\theoremstyle{definition}
\newtheorem{remark}[theorem]{Remark}

\begin{document}
\begin{abstract}
In this paper, we study the compactness of a boundary value problem for hyperk\"ahler 4-manifolds. We show that under certain topological conditions and the positive mean curvature condition on the boundary, a sequence of hyperk\"ahler triples converges smoothly up to diffeomorphisms if and only if their restrictions to the boundary converge smoothly up to diffeomorphisms. We also generalize this result to torsion-free hypersymplectic triples. 
\end{abstract}
\maketitle
\tableofcontents

\section{Introduction}
\indent A Riemannian metric $g$ on a $4$-manifold is called  hyperk\"ahler if its holonomy group  $Hol(g)$ is contained in $Sp(1)=SU(2)$. A closed hyperk\"ahler 4-manifold is diffeomorphic to either a torus or the K3 manifold, and the moduli space of all hyperk\"ahler metrics are described by Torelli theorems. There have been extensive recent studies on the Gromov-Hausdorff compactification of these moduli spaces, see for example \cite{odaka2018collapsing} \cite{sun2021collapsing}.

Hyperk\"ahler metrics in dimension 4 are the simplest models for Riemannian metrics with special holonomy. Little general existence theory is developed for the latter in dimensions greater than $4$, except for Calabi-Yau manifolds. Recently Donaldson \cite{donaldson2018elliptic} proposes to study special holonomy metrics on manifolds with boundary and set up suitable elliptic boundary value problems. To make further progress in this direction, it is clear that we need a compactness theory.

In this paper, we study the boundary value problem for hyperk\"ahler 4-manifolds, which serves as the first step towards Donaldson's program.  We follow the general set-up by Fine-Lotay-Singer \cite{fine2017space} in terms of \emph{hyperk\"ahler triples}. 
A hyperk\"ahler triple on an oriented smooth 4-manifold $X$ is a triple of symplectic forms $\bm{\omega}=(\omega_1, \omega_2, \omega_3)$  satisfying the following pointwise condition$$\omega_i\wedge\omega_j=\frac{1}{3}\delta_{ij}(\omega_1^2+\omega_2^2+\omega_3^2).$$
It is well-known that a hyperk\"ahler triple $\bm\omega$ uniquely determines a compatible hyperk\"ahler metric $g_{\bm\omega}$ such that for each $i$, $\omega_i^2=2\text{dvol}_{g_{\bm\omega}}$ and $\omega_i$ is parallel with respect to the Levi-Civita connection. Conversely, given a hyperk\"ahler metric $g$ on $X$, one can choose an orientation and find a compatible hyperk\"ahler triple $\bm\omega$, which is unique up to a $SO(3)$ rotation.

Now let $X$ be a compact oriented  smooth 4-manifold with boundary $\partial X$.  Note $\partial X$ has an induced orientation defined by contracting a volume form of $X$ with an outward vector field. If $\bm\omega$ is a hyperk\"ahler triple on $X$, then its restriction to $\partial X$  is a closed framing $\bm\gamma$ on $\partial X$. The following is a natural filling problem, proposed by \cite{fine2017space}.

\begin{question}\label{question filling}
Which closed framing $\bm\gamma$ extends to a hyperk\"ahler triple on $X$?
\end{question}

Notice a framing $\bm\gamma$ defines a Riemannian metric $g_{\bm\gamma}$ on $\partial X$ as follows: first, there exists a unique dual coframe $\bm\eta=(\eta_1,\eta_2,\eta_3)$ such that $\gamma_i=\frac{1}{2}\delta^{ijk}\eta_j\wedge\eta_k$ and such that  $\eta_1\wedge\eta_2\wedge\eta_3$ is compatible with the orientation of $\partial X$; then the Riemannian metric $g_{\bm\gamma}$ is defined by setting $\bm\eta$ to be orthonormal. When there is no ambiguity, we always use $\bm\eta$ to denote the dual coframe of $\bm\gamma$ defined in this way and denote the Hodge star operator of the Riemannian metric by $*_{\bm\gamma}=*_{\bm\eta}$. It is well-known that if $\bm\omega$ is a hyperk\"ahler triple, then $g_{\bm\omega}|_{\partial X}=g_{\bm\gamma}$; more importantly that the second fundamental form of $\partial X$ is determined intrinsically by $\bm\gamma$ via the  
matrix  $*_{\bm\eta}(\eta_i\wedge d\eta_j)$. In particular, the mean curvature  $H_{\bm\gamma}$ is given by one half of the trace of this matrix, i.e.,  $H_{\bm\gamma}=\frac{1}{2}*_{\bm\eta}(\bm\eta\wedge d\bm\eta^T)$. 

There are some previous works on Question \ref{question filling}.    Bryant \cite{MR2681703} studied the local ``thickening'' problem and obtained both positive and negative results.   It was shown that any real analytic closed framing on a closed oriented 3-manifold $Y$ can be extended to a hyperk\"ahler triple on  $Y\times (-\epsilon,\epsilon)$ for some $\epsilon>0$, and the extension is essentially unique. On the other hand, there exists a smooth closed framing on an open ball  $B^3
\subset \mathbb{R}^3$ that cannot be extended to a hyperk\"ahler triple on $B^3\times (-\epsilon,\epsilon)$ for any $\epsilon>0$. Fine-Lotay-Singer\cite{fine2017space} studied the local deformation theory for Question \ref{question filling} and showed that the boundary framings must deform in certain directions. Roughly speaking, let $X=B^4$ for simplicity, suppose $\bm\omega$ is a hyperk\"ahler triple such that $\partial X$ has positive mean curvature, and $\bm\omega'$ is a nearby hyperk\"ahler triple,
 then after moduling out diffeomorphisms of $\partial X$, the dual coframe of $\bm\omega'|_{\partial X}$ must be a small pertubation of that of $\bm\omega|_{\partial X}$ in the direction of negative frenquency of the boundary Dirac operator defined by $g_{\bm\omega}|_{\partial X}$.

A sequence of pairs of smooth covariant tensors $(T_i^1,\cdots, T_i^m)$ on a compact manifold $M$ with empty or nonempty boundary is said to converge in \emph{Cheeger-Gromov} sense to $(T^1,\cdots, T^m)$ on $M$, if there exist diffeomorphisms $f_i:M\rightarrow M$ such that $f_i^*T_i^{1}\rightarrow T^1,\cdots, f_i^*T_i^m\rightarrow T^m $ smoothly on $M$.

Our main result is the following closedness result for Question \ref{question filling} :

\begin{theorem}\label{convergence of triples}
Let $X$ be a compact oriented smooth 4-manifold with boundary, such that there does not exist  $C\in H_2(X,\mathbb{Z})$ with self intersection $C^2=-2$. Let $\bm\omega_i$ be a sequence of smooth hyperk\"ahler triples on $X$. Suppose $\bm\omega_i|_{\partial X}$ converges in Cheeger-Gromov sense to a closed framing $\bm\gamma$ on $\partial X$ such that $H_{\bm\gamma}>0$, then there exists a smooth hyperk\"ahler triple $\bm\omega$ on $X$ with $\bm\omega|_{\partial X}=\bm\gamma$ and $\bm\omega_i$ converges in Cheeger-Gromov sense to $\bm\omega$ on $X$.

\end{theorem}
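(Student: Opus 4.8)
The plan is to show that the Ricci-flat metrics $g_i:=g_{\bm\omega_i}$ subconverge in $C^\infty$, up to diffeomorphism and up to the boundary, to a Ricci-flat metric on $X$, and then to upgrade this to convergence of the triples. For the upgrade I would use that a hyperk\"ahler triple is equivalent to its metric together with the triple of $g$-parallel self-dual $2$-forms $\omega_j$ normalised by $\omega_j^2=2\,\mathrm{dvol}_g$: the parallel self-dual $2$-forms of a fixed oriented Ricci-flat $4$-manifold form an $S^2$-family, so once the metrics converge the forms converge after an $SO(3)$ gauge rotation, giving a limiting hyperk\"ahler triple $\bm\omega$, with $\bm\omega|_{\partial X}=\bm\gamma$ from the assumed boundary convergence. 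At the outset I would also use the Cheeger--Gromov convergence of $\bm\omega_i|_{\partial X}$ to replace $\bm\omega_i$ by $f_i^*\bm\omega_i$ for diffeomorphisms $f_i$ of $X$ that restrict on $\partial X$ to the given boundary diffeomorphisms, so that from now on $\bm\gamma_i:=\bm\omega_i|_{\partial X}\to\bm\gamma$ genuinely in $C^\infty(\partial X)$; then the induced boundary metrics, their second fundamental forms (which by the discussion in the Introduction are determined by $\bm\gamma_i$) and all derivatives are uniformly bounded, and $H_{\bm\gamma_i}\ge H_0>0$ for $i$ large.

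Next I would establish the a priori geometric bounds. Ricci-flatness together with the lower bound $H_{\bm\gamma_i}\ge H_0$ gives, by a Heintze--Karcher mean-curvature comparison, a bound $c_0/H_0$ on the inradius of $(X,g_i)$, hence $\mathrm{diam}(X,g_i)\le \mathrm{diam}(\partial X,g_{\bm\gamma_i})+2c_0/H_0\le D$; Bishop--Gromov then bounds $\mathrm{Vol}(X,g_i)$ from above. For the $L^2$ curvature bound I would use the Gauss--Bonnet--Chern formula for $4$-manifolds with boundary, which for a Ricci-flat metric reads $\int_X|\mathrm{Rm}_{g_i}|^2 = 32\pi^2\chi(X)-\int_{\partial X}(\text{transgression})$, where the transgression is a universal polynomial in the second fundamental form of $\partial X$ and in the ambient curvature restricted to $\partial X$; by the Gauss and Codazzi equations and $\mathrm{Ric}=0$ the latter is itself a universal expression in the intrinsic boundary data and one derivative of the second fundamental form, hence uniformly bounded, so $\int_X|\mathrm{Rm}_{g_i}|^2\le E$.

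I would then run the standard blow-up analysis for Ricci-flat $4$-manifolds. Granting the non-collapsing bound discussed below, $\varepsilon$-regularity forces $|\mathrm{Rm}_{g_i}|$ to be uniformly bounded away from at most $N=N(E)$ points; a rescaling limit at a point of curvature concentration is a complete, non-flat, Ricci-flat $4$-manifold of finite energy, hence (being a non-collapsed limit) ALE, hence by Kronheimer's classification the minimal resolution of $\mathbb{C}^2/\Gamma$ for some nontrivial finite $\Gamma\subset SU(2)$, and therefore it contains a smoothly embedded $2$-sphere of self-intersection $-2$ inside a compact plumbing region. For $i$ large that region embeds into $X_i$, producing a class in $H_2(X,\mathbb{Z})$ of self-intersection $-2$ and contradicting the hypothesis. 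Curvature concentration at the boundary is excluded the same way: there the rescaled boundary data flattens, so the second fundamental form of $\partial X_i$ tends to zero, the blow-up limit is Ricci-flat with a totally geodesic flat $\mathbb{R}^3$ boundary, reflecting across it yields a complete Ricci-flat ALE $4$-manifold, and the resulting $(-2)$-sphere, disjoint from the mirror locus, again embeds into $X_i$. Hence $|\mathrm{Rm}_{g_i}|\le K$, and by elliptic estimates for Einstein metrics in harmonic and boundary-harmonic coordinates all covariant derivatives of $\mathrm{Rm}_{g_i}$ are uniformly bounded, in the interior and on a collar of $\partial X$ whose definite size follows from the curvature bound and the bound on the second fundamental form via Riccati comparison.

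The point I expect to be the main obstacle is the non-collapsing bound $\mathrm{Vol}(B_1(p,g_i))\ge v_0$ for all $p\in X$, needed both to run the blow-up analysis and to obtain genuine (non-collapsed) Cheeger--Gromov convergence. Near $\partial X$ it follows from the non-degeneracy of the limiting boundary metric $g_{\bm\gamma}$ and the controlled collar; to exclude an interior collapsed region I would invoke the structure theory of $4$-dimensional Ricci-flat spaces collapsing with bounded curvature — on the collapsed part they carry a local nilpotent Killing structure with flat orbits — and argue that this forces the level sets of the boundary-distance function to be collapsed as well, which is impossible at distance $\le c_0/H_0$ from the non-collapsed boundary; the mean-convexity $H_{\bm\gamma}>0$ is precisely what makes the boundary's influence reach the whole manifold. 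With the two-sided volume bound, the curvature bound, the diameter bound and the controlled boundary geometry in hand, the Cheeger--Gromov--Anderson compactness theorem for manifolds with boundary yields $C^{1,\alpha}$ subconvergence up to diffeomorphism and up to the boundary, and bootstrapping the Einstein equation promotes this to $C^\infty$, completing the metric convergence and hence (by the first paragraph) producing $\bm\omega$. Finally, to convert subsequential convergence into convergence of the whole sequence one uses that a hyperk\"ahler filling of the fixed framing $\bm\gamma$ with $H_{\bm\gamma}>0$ is rigid modulo diffeomorphism — the linearised deformation statement of Fine--Lotay--Singer \cite{fine2017space} — so every subsequential limit agrees modulo diffeomorphism, which is exactly Cheeger--Gromov convergence of $\bm\omega_i$ to $\bm\omega$.
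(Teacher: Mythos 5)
Your overall architecture (a priori diameter/volume/$L^2$-energy bounds, $\varepsilon$-regularity plus Kronheimer to exclude bubbles via the no $-2$ curve hypothesis, reflection/unique continuation at a flat totally geodesic boundary, then upgrading metric convergence to the triple via parallelism and constant norm) matches the paper's. But there are genuine gaps at the points the paper identifies as the actual difficulties. The most serious is the boundary injectivity radius. You produce a ``collar of definite size'' from ``the curvature bound and the bound on the second fundamental form via Riccati comparison,'' but Riccati comparison only bounds the \emph{focal} radius from below. The boundary injectivity radius $i_b$ can degenerate for a second reason: a point with two foot points, i.e.\ the boundary nearly touching itself (the squeezed-ball and dumbbell pictures). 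Nothing in your argument excludes this, and without $i_b\geq i_0$ you have no embedded collar, no place to run the boundary volume estimate, and no way to localize the blow-up analysis into ``interior'' versus ``boundary'' cases. The paper's key new geometric input (Proposition \ref{existence of focal points} and Corollary \ref{Ric positive, mean positive, inj lower bound}) is precisely that for $\mathrm{Ric}\geq 0$ and mean convex boundary there \emph{must} be a focal point at distance $i_b$ — proved by a second-variation/Laplacian comparison argument on the midpoint hypersurface of a doubly perpendicular geodesic — so that the focal radius bound really does control $i_b$; the paper even notes (Remark \ref{anderson2008unique}-adjacent, i.e.\ Remark 3.13) that earlier compactness papers had an unjustified step at exactly this point. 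Your proof needs this ingredient and does not supply it.

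Two further points. First, your interior non-collapsing argument via the nilpotent Killing structure of collapsed bounded-curvature Ricci-flat spaces is both unjustified (it is not clear that collapsing ``propagates'' to the distance-to-boundary level sets in the way you assert) and unnecessary: with $\mathrm{Ric}\geq 0$, the Heintze--Karcher inradius bound $\sup_q d(q,\partial X)\leq 3H_0^{-1}$, and the volume lower bound for balls in the collar (Propositions \ref{volume of a metric cyclinder} and \ref{volume noncollasping}), Bishop--Gromov volume monotonicity immediately transports a uniform volume lower bound to every interior ball of fixed radius. This is how the paper argues, and it is the role the hypothesis $H_{\bm\gamma}>0$ actually plays. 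Second, your final step — deducing that all subsequential limits agree from the ``linearised deformation statement of Fine--Lotay--Singer'' — does not work: a linearised/local deformation result cannot identify two a priori unrelated fillings of the same boundary framing. The correct tool is the unique continuation theorem for Einstein metrics with prescribed boundary metric and second fundamental form (\cite{biquard:hal-02928859}, \cite{anderson2008unique}), which gives agreement of the two metrics near $\partial X$ in geodesic gauge, followed by an analytic continuation argument using $\pi_1(X,\partial X)=0$ (itself a consequence of $\mathrm{Ric}\geq 0$ and $H>0$) and the Ebin--Palais properness theorem to handle the boundary diffeomorphisms.
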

The proof here includes two parts: the compactness and uniqueness. The former is the main story of this paper, and the latter is a consequence of \cite{biquard:hal-02928859} or \cite{anderson2008unique} on unique continuation of Einstein metrics with prescribed boundary metric and second fundamental form. It is worth noting that for the compactness part, no general Riemannian convergence theory can be applied directly. The difficulty here is that we only have data on the boundary, and a priori we do not know anything near the boundary or in the interior. Specifically, we worry about the following three bad geometric behaviours:  curvature blow up, volume collapsing and boundary touching. These things are entangled, making it difficult to rule out any of them. However, we are able to separate these bad behaviours and rule them out. We will also give examples to demonstrate that the assumptions in Theorem \ref{convergence of triples} are essential, see Remark \ref{positive mean curvature essential} and \ref{no -2 curve essential}.

Such $C$ in the assumption of Theorem \ref{convergence of triples} is usually called a ``\emph{$-2$ curve}'' in $X$, which appears in Kronheimer's classification of hyperk\"ahler ALE spaces \cite{kronheimer1989construction} \cite{kronheimer1989torelli}. They appear in bubble limits of volume-noncollapsed hyperk\"ahler manifolds. From this, one can replace the ``no $-2$ curve'' condition by an assumption on enhancements of $\bm\omega_i|_{\partial X}$. Let $\bm\gamma$ be a closed framing on $\partial X$. Following \cite{Donaldson2017BoundaryVP}\cite{donaldson2018elliptic}, an enhancement of $\bm\gamma$ is an equivalent class in the set of triples of  closed 2-forms on $X$ whose restrictions to $\partial X$ are equal to $\bm\gamma$. The equivalence relation is defined by $\bm\theta\sim \bm\theta+d\bm a$, where $\bm a$ is a triple of smooth 1-forms on $X$ vanishing on $\partial X$. From the de Rham cohomology exact sequence of the pair $(X,\partial X)$, 
$$H^2(X,\partial X)\rightarrow H^2(X)\rightarrow H^2(\partial X)\rightarrow H^1(X,\partial X),$$
we know $\bm\gamma$ has at least one enhancement if and only if each $\gamma_i$ lies in the kernel of $H^2(\partial X)\rightarrow H^1(X,\partial X)$, and we know the set of all enhancements of $\bm\gamma$ is an affine space over $H^2(X,\partial X)\otimes\mathbb{R}^3$. Choose an enhancement of $\bm\gamma$ and denote it by $\hat{\bm\gamma}$.  Given a 2-cycle $\Sigma\in H_2(X,\mathbb{Z})$, then for any triple of closed 2-forms $\bm\theta\in\hat{\bm\gamma}$, $\int_{\Sigma}\bm\theta$ does not depend on the choice of $\bm\theta$ and we denote this invariant by $c_{\hat{\bm\gamma},\Sigma}\in\mathbb{R}^3$.

The proof of Theorem \ref{convergence of triples} easily adapts to 
\begin{theorem}\label{convergence of triples enhancements}
Let $X$ be a compact oriented smooth 4-manifold with boundary. Let $\bm\omega_i$ be a sequence of smooth hyperk\"ahler triples on $X$, and $\hat{\bm\gamma}_i$ be the enhancement of $\bm\gamma_i=\bm\omega_i|_{\partial X}$ where $\bm\omega_i$ lie in. Let $a>0$ be a positive number. Suppose for any $C\in H_2(X,\mathbb{Z})$ with self intersection $C^2=-2$, $|c_{\hat\gamma_i,C}|\geq a$ and $\bm\omega_i|_{\partial X}$ converges in Cheeger-Gromov sense to a closed framing $\bm\gamma$ on $\partial X$ such that $H_{\bm\gamma}>0$.
Then there exists a smooth hyperk\"ahler triple $\bm\omega$ on $X$ with $\bm\omega|_{\partial X}=\bm\gamma$, and $\bm\omega_i$ converges in Cheeger-Gromov sense to $\bm\omega$ on $X$.
\end{theorem}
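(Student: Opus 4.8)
The plan is to rerun the proof of Theorem~\ref{convergence of triples} with one modification, at the single step where the absence of $-2$ curves is invoked. Recall that there the compactness is reduced to excluding interior curvature blow-up, volume collapse, and boundary touching, and that the $-2$-curve hypothesis enters only to rule out interior curvature concentration: after a point-picking rescaling $\tilde{\bm\omega}_i=\lambda_i^2\bm\omega_i$ with $\lambda_i\to\infty$, chosen so that $|\mathrm{Rm}_{\tilde g_i}|$ is bounded and nondegenerate near $x_i$, one extracts a pointed Cheeger-Gromov limit $(X,\tilde{\bm\omega}_i,x_i)\to(Y,\bm\omega_\infty,y_\infty)$ together with diffeomorphisms $\phi_i$ from exhausting domains of $Y$ onto open subsets of $X$, and one shows (as in the proof of Theorem~\ref{convergence of triples}) that $Y$ is a nonflat ALE hyperk\"ahler $4$-manifold. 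By Kronheimer's classification \cite{kronheimer1989construction,kronheimer1989torelli}, $Y$ is the minimal resolution of some $\mathbb{C}^2/\Gamma$, and since $Y$ is not flat its exceptional set contains an embedded $2$-sphere $S$ with $[S]_Y^2=-2$; smoothness of $Y$ forces the period $\int_S\bm\omega_\infty\in\mathbb{R}^3$ to be nonzero (equivalently, $S$ is a holomorphic rational curve of positive area for a compatible complex structure).

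The replacement for the contradiction is the following period computation. Transporting $S$ by $\phi_i$ gives embedded surfaces $S_i=\phi_i(S)\subset X$; for $i$ large their homology classes $C_i:=[S_i]\in H_2(X;\mathbb{Z})$ satisfy $C_i^2=[S]_Y^2=-2$, since the self-intersection of an embedded surface equals the Euler number of its normal bundle and is thus preserved by the diffeomorphism $\phi_i$ onto its image. Since $\bm\omega_i$ is a triple of closed forms on $X$ restricting to $\bm\gamma_i$ on $\partial X$, it represents the enhancement $\hat{\bm\gamma}_i$, so $c_{\hat\gamma_i,C_i}=\int_{C_i}\bm\omega_i=\int_{S_i}\bm\omega_i$. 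On the other hand $\lambda_i^2\int_{S_i}\bm\omega_i=\int_S\phi_i^*\tilde{\bm\omega}_i\to\int_S\bm\omega_\infty\neq0$, so $|c_{\hat\gamma_i,C_i}|=\lambda_i^{-2}\bigl|\int_S\bm\omega_\infty+o(1)\bigr|\to 0$. Applying the hypothesis $|c_{\hat\gamma_i,C}|\geq a$ with $C=C_i$ gives a contradiction for $i$ large. Hence curvature cannot concentrate, and the rest of the proof --- the uniform $L^2$ curvature bound, noncollapsing, exclusion of boundary touching via $H_{\bm\gamma}>0$, the Cheeger-Gromov-type compactness for $4$-manifolds with boundary, and the unique continuation of \cite{biquard:hal-02928859,anderson2008unique} identifying $\bm\omega|_{\partial X}=\bm\gamma$ --- is taken over from the proof of Theorem~\ref{convergence of triples} unchanged.

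The main obstacle, just as in Theorem~\ref{convergence of triples}, is the bubbling analysis that produces $Y$ and the diffeomorphisms $\phi_i$ realizing its topology inside $X$: carrying out the point-picking so that the bubble at the maximal scale is genuinely nonflat and ALE, controlling a possible tree of bubbles at several scales (it is enough to find one nonflat ALE bubble, which the maximal-scale point-picking supplies), and, in case curvature concentrates instead in a region collapsing at unit scale, running the parallel argument with the $-2$ spheres carried by the resulting ALF/ALG-type bubble --- or appealing to the exclusion of collapse already carried out in the proof of Theorem~\ref{convergence of triples}. All of this is what ``easily adapts'' compresses; the only genuinely new ingredient above is the bookkeeping of periods under rescaling together with the smoothness constraint on Kronheimer's spaces.
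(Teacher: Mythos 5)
Your proposal is correct and follows essentially the same route as the paper. The paper isolates exactly the modification you describe as Proposition~\ref{no bubble 2}: after the maximal-curvature point-picking and rescaling $\tilde g_i = |Rm_{g_i}(q_i)| g_i$, a subsequence converges in the pointed Cheeger-Gromov sense to a nonflat hyperk\"ahler ALE space (via Bando-Kasue-Nakajima and Kronheimer), the ALE bubble carries a $-2$ curve $C_\infty$ with $\int_{C_\infty}\bm\omega_\infty \neq 0$, and pushing $C_\infty$ forward by the convergence diffeomorphisms gives $-2$ curves $C_i$ in $X$ with $\int_{C_i}\bm\omega_i = |Rm_{g_i}(q_i)|^{-1}\int_{C_i}\tilde{\bm\omega}_i \to 0$, contradicting $|c_{\hat\gamma_i,C_i}|\geq a$; the rest of the proof of Theorem~\ref{convergence of triples} is then used verbatim. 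Your remark about possible ALF/ALG-type bubbles in collapsed regions is not needed: the paper's volume-noncollapsing Proposition~\ref{volume noncollasping} and the collapsing $\epsilon$-regularity of Cheeger-Tian already ensure that the bubble extraction only occurs where volume is noncollapsed, so the bubble is ALE as you assume in the main line of your argument.
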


It is worth noting that Question \ref{question filling} is not an elliptic boundary value problem, observed by \cite{fine2017space}. This can also be seen from the uniqueness result of \cite{biquard:hal-02928859} or \cite{anderson2008unique}: the restriction of $\bm\omega$ to any open boundary portion determines $g_{\bm\omega}$ in the whole interior up to local isometries. So, it is natural to enlarge the class of closed triples of 2-forms on $X$ to obtain an elliptic boundary value problem. In \cite{donaldson2018elliptic}, Donaldson studied the deformation theory of torsion-free $G_2$ structures on a compact oriented 7-manifold with boundary $M^7$ as follows. Suppose $\phi_0$ is a smooth torsion-free $G_2$ structure, $\rho_0=\phi_0|_{\partial M^7}$, and denote the enhancement (defined in an analogous way as before) of $\rho_0$ where $\phi_0$ lies in by $\hat\rho_0$.  Donaldson set up an elliptic boundary value problem for the torsion-free equation. So in particular, if the kernel space at $\phi_0$ is trivial, then for any small closed 3-form $\theta$ on $X$,  $\hat\rho_0+\theta|_{\partial X}$ contains a unique torsion-free $G_2$ structure that is close to $\phi_0+\theta$ after gauge fixing. This phenomenon is quite different from the hyperk\"ahler case, since in that case we cannot deform the boundary framing arbitrarily to extend it to a hyperk\"ahler triple, as we discussed before.

It is well-known that $G_2$ structures have a reduction to dimension 4. Consider $X^4\times T^3$. A triple of two forms $\bm\omega=(\omega_1,\omega_2,\omega_3)$ on $X^4$  defines a 3-form on $X^4\times T^3$ by
\begin{equation}\label{g2 and hypersymplectic}
 \phi=dt^1\wedge dt^2\wedge dt^3-\omega_1\wedge dt^1-\omega_2\wedge dt^2-\omega_3\wedge dt^3.
\end{equation}
 The triple $\bm\omega$ is called torsion-free hypersymplectic if $\phi$ is a torsion-free $G_2$ structure. Locally, this is a weaker condition than being hyperk\"ahler, see examples in \cite{Donaldson2017BoundaryVP} or \cite{fine2020report}. Donaldson observed in  \cite{Donaldson2017BoundaryVP} that the boundary value problem for torsion-free $G_2$ structures can also be reduced to dimension 4. So, a compactness result is helpful to solve this dimension reduced boundary value problem. 

Similar to the hyperk\"ahler case, a torsion-free hypersymplectic triple $\bm\omega$ defines a Riemannian metric $g_{\bm\omega}$ and a positive definite $SL(3,\mathbb R)$-valued function $\bm Q=(Q_{ij})$ such that 
$$\omega_i\wedge \omega_j=2Q_{ij}\text{dvol}_{g_{\bm\omega}}$$
We denote $\bm Q'$ the restriction of $\bm Q$ to $\partial X$. When there is ambiguity, we use notations $\bm Q_{\bm\omega}$, $\bm Q'_{\bm\omega}$ to denote their dependence on $\bm\omega$. One can show that the mean curvature of $\partial X$ has an explicit expression in terms of $\bm\gamma$, $\bm Q'$, and we denote this explicit expression by $H_{\bm\gamma,\bm Q'}$. Note that on $\partial X$,  $\bm\gamma,\bm Q'$ are subject to the constraints $d\bm\gamma=0,  d(\bm\gamma(\bm Q')^{-1})=0$.

We have the following analogue of Theorem \ref{convergence of triples enhancements}:

\begin{theorem}\label{convergence of hypersymplectic triples}
Let $X$ be a compact oriented smooth 4-manifold with boundary. Let $\bm\omega_i$ be a sequence of smooth torsion-free hypersymplectic triples on $X$, and $\hat{\bm\gamma}_i$ be the enhancement of $\bm\gamma_i=\bm\omega_i|_{\partial X}$ where $\bm\omega_i$ lie in.  Let $a>0$ be a positive number. Suppose for any $C\in H_2(X,\mathbb{Z})$ with self intersection $C^2=-2$, $|c_{\hat\gamma_i,C}|\geq a$, and $(\bm\gamma_i,\bm Q_i')$  converges in Cheeger-Gromov sense to some pair $(\bm\gamma,\bm Q')$ on $\partial X$, such that $\bm\gamma$ is a framing, $\bm Q'$ is positive definite and $H_{\bm\gamma,\bm Q'}>0$. Then there exists a smooth torsion-free hypersymplectic triple $\bm\omega$ on $X$ with $\bm\omega|_{\partial X}=\bm\gamma$, $\bm Q_{\bm\omega}'=\bm Q'$, and $\bm\omega_i$
converges in Cheeger-Gromov sense to $\bm\omega$.

\end{theorem}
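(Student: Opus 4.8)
\section*{Proof proposal for Theorem \ref{convergence of hypersymplectic triples}}

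The plan is to reduce Theorem \ref{convergence of hypersymplectic triples} to Theorem \ref{convergence of triples enhancements} via the $G_2$ correspondence \eqref{g2 and hypersymplectic}, transferring the entire argument to the closed $7$-manifold $M_i = X \times T^3$ and then descending the limit back to $X$. First I would form the $G_2$ structures $\phi_i$ on $M_i$ associated to $\bm\omega_i$, which are torsion-free by hypothesis; the induced metric $g_{\phi_i}$ is the warped product of $g_{\bm\omega_i}$ on $X$ with a flat metric on $T^3$ whose shape is governed by $\bm Q_{\bm\omega_i}$. The boundary of $M_i$ is $\partial X \times T^3$, and the Cheeger--Gromov convergence of $(\bm\gamma_i, \bm Q_i')$ on $\partial X$ produces Cheeger--Gromov convergence of $\phi_i|_{\partial M_i}$ on $\partial M_i$ (using the $T^3$-invariant diffeomorphisms built from those on $\partial X$). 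The mean curvature of $\partial M_i$ in $M_i$ equals the mean curvature of $\partial X$ in $X$ up to the warping contribution, and the hypothesis $H_{\bm\gamma,\bm Q'}>0$ is exactly what forces the limiting boundary mean curvature of $\partial M_i$ to be positive; one should check that the explicit formula $H_{\bm\gamma,\bm Q'}$ was \emph{defined} so that this matches. The $-2$ curve hypothesis transfers because $H_2(X \times T^3,\mathbb Z)$ contains $H_2(X,\mathbb Z)$ as a summand and the periods $c_{\hat{\bm\gamma}_i,C}$ pull back to the periods of the corresponding enhancement of $\phi_i|_{\partial M_i}$ on the classes coming from $X$; classes involving $T^3$ factors cannot have self-intersection $-2$ in the relevant sense since the bubble analysis only sees the $X$-directions.

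Next I would run the compactness machinery of Theorem \ref{convergence of triples enhancements}—or rather, its $T^3$-equivariant version—on $M_i$. The key point is that every step of that proof (ruling out curvature blow-up, volume collapse, boundary touching; the bubble analysis invoking Kronheimer's classification and the $-2$ curve obstruction) can be carried out equivariantly with respect to the free isometric $T^3$-action, since the metrics $g_{\phi_i}$ are $T^3$-invariant by construction and the bubble limits of $T^3$-invariant spaces are again $T^3$-invariant (or split off further $\mathbb R^k$ factors, which only helps). This yields, after passing to a subsequence and applying $T^3$-invariant diffeomorphisms $F_i$ of $M_i$, smooth convergence $F_i^* \phi_i \to \phi$ to a torsion-free $G_2$ structure $\phi$ on $M = X \times T^3$ with $\phi|_{\partial M}$ the $G_2$ structure determined by $(\bm\gamma,\bm Q')$. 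Because the $F_i$ can be taken $T^3$-equivariant, they descend to diffeomorphisms $f_i$ of $X$, and the limit $\phi$ is $T^3$-invariant, hence of the form \eqref{g2 and hypersymplectic} for a torsion-free hypersymplectic triple $\bm\omega$ on $X$ with $f_i^*\bm\omega_i \to \bm\omega$ smoothly and $\bm\omega|_{\partial X} = \bm\gamma$, $\bm Q'_{\bm\omega} = \bm Q'$.

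The main obstacle I anticipate is the equivariance bookkeeping: Theorem \ref{convergence of triples enhancements}'s proof produces diffeomorphisms of the $4$-manifold, but here I need diffeomorphisms of $X\times T^3$ that both realize the convergence \emph{and} respect the product structure (at least asymptotically), so that they descend to $X$. The cleanest route is probably not to literally invoke Theorem \ref{convergence of triples enhancements} as a black box on $M_i$, but to re-examine its proof and observe that all the diffeomorphisms it constructs—harmonic coordinates, exponential maps, gauge fixings near the boundary—are canonically associated to $T^3$-invariant data and hence automatically $T^3$-equivariant, so the descent is free. A secondary technical point is verifying that the hypersymplectic (as opposed to hyperkähler) generality is genuinely absorbed: the triple $\bm Q_{\bm\omega}$ is not constant, so the $T^3$ fibers have varying shape, but since $\bm Q_{\bm\omega}$ is controlled by $\phi$ itself (it is a smooth function of the $G_2$ structure), smooth convergence of $F_i^*\phi_i$ automatically gives smooth convergence of the corresponding $\bm Q_i$, with no extra estimates needed; one only has to know a priori that the limit $\phi$ is still $T^3$-invariant to conclude it comes from a hypersymplectic triple, and positivity of the limiting $\bm Q'$ together with torsion-freeness of $\phi$ guarantees $\bm Q_{\bm\omega}$ stays positive definite in the interior. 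I would also note in passing that the uniqueness half of the statement (that $\bm\omega$ is determined by the boundary data) follows as before from the unique continuation results of \cite{biquard:hal-02928859} or \cite{anderson2008unique} applied to the Ricci-flat metric $g_\phi$ on $M$.
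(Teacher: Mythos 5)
Your strategy of lifting the whole compactness argument to $X\times T^3$ is natural, but as written it has a genuine gap, and filling it essentially forces you back down to dimension $4$, which is where the paper actually works (the paper uses the $G_2$ lift only for the uniqueness step, exactly as in your closing remark). The gap is twofold. First, there is no a priori interior control of $\bm Q_i$: the hypotheses control $\bm Q_i'$ on $\partial X$ only, and nothing in your argument prevents the eigenvalues of $\bm Q_i$ from degenerating at interior points (one $\to\infty$, another $\to 0$, keeping $\det\bm Q_i=1$). Upstairs this is exactly degeneration of the $T^3$ fibers of $g_{\phi_i}$, i.e.\ collapsing of the $7$-manifold with possibly bounded curvature, which no amount of $T^3$-equivariance rules out. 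The paper's mechanism for excluding this is the subharmonicity \eqref{subharmonic TrQ} of $\operatorname{Tr}\bm Q$ (and \eqref{subharmonic scalar curvature} for $R$) together with the maximum principle, giving uniform two-sided bounds on $\bm Q_i$ and on $|\mathrm{Ric}_{g_i}|$ from the boundary data; your proposal never produces these estimates. Second, the ``$T^3$-equivariant version'' of the compactness machinery is not available off the shelf: the $\epsilon$-regularity result (Proposition \ref{Cheeger-Tian}), the Chern--Gauss--Bonnet bound on $\int|Rm|^2$, Kronheimer's classification, and Proposition \ref{no bubble 2} are all specific to dimension $4$, where $L^2$ of curvature is scale-critical, and their equivariant $7$-dimensional analogues would have to be proved by reducing to the base, i.e.\ by reproving the $4$-dimensional statements for the pair $(g_{\bm\omega},\bm Q)$. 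In particular, in the blow-up analysis the rescaled fibers open up to $\mathbb{R}^3$ and one must know that the complete $4$-dimensional factor of the bubble is hyperk\"ahler up to an $SL(3,\mathbb R)$ rotation; this is Proposition \ref{fy result}, which your argument needs but never invokes.

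The paper's actual route stays in dimension $4$ throughout the compactness part: it derives the interior bounds on $R_i$, $|\mathrm{Ric}_{g_i}|$, $\bm Q_i$, $|d\bm Q_i|$ from \eqref{subharmonic TrQ}, \eqref{subharmonic scalar curvature} and the maximum principle, sets up elliptic regularity for the coupled system \eqref{torsion-free equation 1 harmonic}--\eqref{torsion-free equation 2 harmonic} in harmonic coordinates, and then checks that each ingredient of the hyperk\"ahler proof (harmonic radius lower bounds, Theorem \ref{good boundary}, Proposition \ref{no bubble 2}, the focal point argument, for which $\mathrm{Ric}\ge 0$ and $H>0$ suffice) survives with $\bm Q$ carried along. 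If you want to salvage the $7$-dimensional picture you should at minimum import the maximum-principle estimates first; but at that point the lift buys you little beyond the uniqueness statement, where it is indeed exactly what the paper does.
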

 
Note that Theorem \ref{convergence of hypersymplectic triples} includes the previous two versions.

This paper is organized as follows: In Section \ref{section triples}, we discuss basics of hyperk\"ahler triples on manifolds with boundary. In Section \ref{section Riemannian geometry}, we discuss Riemannian geometry for manifolds with boundary and Riemannian convergence theory. In Section \ref{section main proof}, we prove Theorem \ref{convergence of triples}  and Theorem \ref{convergence of triples enhancements} and give some remarks about the proofs. In Section \ref{section torsion-free triples}, we discuss some basics for torsion-free hypersymplectic triples and prove Theorem \ref{convergence of hypersymplectic triples}.

\textbf{Notations.}
$$\mathbb{R}_+^n=\{x\in\mathbb{R}^n:x^n\geq 0\},$$
 $$B_r=\{x\in\mathbb{R}^n: |x|<r\},$$
 $$B_r^+=B_r\cap \mathbb{R}_+^n,$$
 $$\tilde B_r=B_r^+\cap \partial\mathbb{R}_+^n,$$
$$\partial \tilde{B}_r=\{x\in \mathbb{R}^n:|x|=r,x^n=0\},$$
$$\partial^+ {B}_r^+=\{x\in \mathbb{R}^n:|x|=r,x^n>0\}.$$

\textbf{Acknowledgements.} The author is very grateful to his thesis advisor Song Sun for suggesting the problem, constant support and many inspiring discussions. I thank Antoine Song and Chengjian Yao for some useful comments. I thank Simon Donaldson and Jason Lotay for their interest in this work. 

The author is supported by Simons Collaboration Grant on Special Holonomy in Geometry, Analysis, and Physics (488633, S.S.).

\section{Hyperk\"ahler triples and closed framings}\label{section triples}

The discussions in this section are well-known.

\subsection{Pointwise theory}
 Let $V$ be an oriented 4-dimensional vector space, and ${\bm\omega}=(\omega_1,\omega_2,\omega_3)\in\Lambda^2(V^*)\otimes \mathbb{R}^3$. Suppose $\bm\omega$ is a \emph{definite} triple, i.e., 
$\omega_i$ spans a maximum positive subspace of $\Lambda^2(V^*)$ with respect to the wedge product, then $\omega_i$ defines a unique conformal structure on $V$ by making each $\omega_i$ self dual. Fix a volume form $\mu_0$ on $V$ that defines the orientation of $V$, write $\omega_i\wedge\omega_j=2q_{ij}\mu_0$, we define a matrix $\bm Q$ associated to the definite triple $\bm\omega$ by $Q_{ij}=\frac{q_{ij}}{\det(q_{ij})^\frac{1}{3}}$, which does not depend on the choice of $\mu_0$. We use $\bm Q^{-1}= (Q^{ij})$ to denote the inverse matrix of $\bm Q$. If we write $$\omega_i\wedge\omega_j=2 Q_{ij}\mu,$$ then $\mu$ is a volume form intrinsically defined by $\bm\omega$. We define a unique metric $\langle,\rangle_{\bm\omega}$ on $V$ in the conformal structure by making $\mu$ the volume form. Explicitly, $$\langle u,v\rangle_{\bm\omega}=\frac{1}{6}\sum_{i,j,k=1}^3 \delta^{ijk}\frac{\iota_u \omega_i\wedge\iota_v\omega_j\wedge\omega_k}{\mu}.$$
 So $$\langle u,u \rangle_{\bm\omega}=\frac{\iota_u\omega_1\wedge\iota_v\omega_2\wedge\omega_3}{\mu}.$$
Denote $*_{\bm\omega}$ the Hodge star operator defined by this metric.

Let $W$ be an orientated 3-dimensional  vector space, and ${\bm\gamma}=(\gamma_1,\gamma_2,\gamma_3)$ be a framing on $W$, i.e., a basis for $\Lambda^2W^*$. Then by elementary linear algerba, there exists a coframe  ${\bm\eta}=(\eta_1,\eta_2,\eta_3)$ such that

\begin{equation}\label{framing and coframe}
\gamma_i=\frac{1}{2}\delta^{ijk}\eta_j\wedge\eta_k.
\end{equation}
Such $\bm\eta$ is uniquely determined up to a sign and we choose $\bm\eta$ such that $\eta_1\wedge\eta_2\wedge\eta_3$ defines the orientation of $W$ and we denote this volume form by $\text{vol}_{\bm\gamma}$. There is a unique metric on $W$, denoted by $\langle,\rangle_{\bm\gamma}$, that makes $\bm\eta$ an orthonormal coframe. Denote $*_{\bm\gamma}=*_{\bm\eta}$ by the Hodge star operator of $\langle,\rangle_{\bm\gamma}$. Let $e_i\in W$ be the dual vector of $\eta_i$, so $\eta_i(e_j)=\delta_{ij}$.  Then $\bm e=(e_1,e_2,e_3)$ is a frame of $W$. Conversely, given a coframe $\bm\eta=(\eta_1,\eta_2,\eta_3)$ on $W$ compatible with the orientation, one can define a framing $\bm\gamma=(\gamma_1,\gamma_2,\gamma_3)$ via (\ref{framing and coframe}), and a volume form $\text{vol}_{\bm\eta}=\text{vol}_{\bm\gamma}$, a metric $\langle,\rangle_{\bm\eta}=\langle,\rangle_{\bm\gamma}$, a Hodge star opertaor $*_{\bm\eta}=*_{\bm\gamma}$.

Now if $W\subset V$ is a 3-dimensional subspace, $\bm\omega=(\omega_1,\omega_2,\omega_3)$ is a definite triple on $V$,   and $\bm\gamma=(\gamma_1,\gamma_2,\gamma_3)$ is the restriction of $\bm\omega$ to $W$.  Let $\langle,\rangle_{W}$ be the restriction of the metric $\langle,\rangle_{\bm\omega}$ on $W$, which defines a volume form $\text{vol}_W$ and a Hodge star operator $*_W$  compatible with the orientation of $W$.  Since $\omega_i$ are self-dual, we can write $$\omega_i=\nu^*\wedge *_W\gamma_i+\gamma_i,$$ where $\nu^*=*_{\bm\omega}\text{vol}_W$, then we have $$\omega_i\wedge\omega_j=2\nu^*\wedge*_W\gamma_i\wedge\gamma_j=2\langle \gamma_i,\gamma_j\rangle_{W}\nu^*\wedge \text{vol}_{W}=2\langle\gamma_i,\gamma_j\rangle_W\mu.$$ Hence on $W$, $$Q_{ij}=\langle\gamma_i,\gamma_j\rangle_W,$$ Since $\det(\bm Q)=1$, we have $\text{vol}_{\bm\gamma}=\text{vol}_{W}$. If furthermore we assume $Q_{ij}=\delta_{ij}$, then $\langle\gamma_i,\gamma_j\rangle_{W}=\delta_{ij}=\langle \gamma_i,\gamma_j\rangle_{\bm\gamma}$. In this case, $\langle,\rangle_{\bm\gamma}=\langle,\rangle_W$ as an inner product on $W=\Lambda^1 W$, so in particular $*_W=*_{\bm\gamma}$.

\subsection{Local theory}
Now we move our pointwise discussions to manifolds. Let $X$ be a oriented 4-manifold with boundary, $\bm\omega=(\omega_1,\omega_2,\omega_3)$ be a smooth section in $\Gamma(X, \Lambda^2T^*X\otimes\mathbb{R}^3)$ such that it is a definite triple pointwise, and $\bm\gamma=(\gamma_1,\gamma_2,\gamma_3) $ be its restriction to $\partial X$. By the discussions above, $\bm\omega$ defines a matrix valued function $\bm Q=(Q_{ij})$, a volume form $\mu$, and a Riemannian metric $g_{\bm\omega}$ which equals to  $\langle,\rangle_{\bm\omega}$ on the tangent spaces at each point. Similarly, $\bm\gamma$ defines $\bm\eta=(\eta_1,\eta_2,\eta_3)\in \Omega^1(\partial X)\otimes\mathbb{R}^3,\bm e=(e_1,e_2,e_3)\in \Gamma(\partial X,T\partial X)\otimes\mathbb{R}^3$.
Denote $\nabla$ the Levi-Civita connection of $g_{\bm\omega}$.

\begin{definition}
$\bm\omega$ is a {hypersymplectic} triple if $d\omega_i=0$. A hypersymplectic triple $\bm\omega$ is called {torsion-free} if $d(Q^{ij}\omega_j)=0$, and is called {hyperk\"ahler} if $Q_{ij}=\delta_{ij}$.
\end{definition}

Note that the torsion-free definition coincides with the one defined in the introduction by direct calculation.

Let $\nu$ be the outward unit normal vector field of $\partial X$. We are going to calculate the second fundamental form of $\partial X$, say 
$II(v,w)=\langle\nabla_v\nu, w\rangle_{\partial X}$.  Let $S\in\Gamma(\partial X,\text{End}\ T\partial X)$ be the shape operator, i.e., $\langle v,S(w)\rangle_{\partial X}=II(v,w)$. Denote $\Gamma=(\Gamma_{ij})$ the symmetric matrix $$\Gamma_{ij}=\frac{1}{2}\langle\gamma_i,d(*_{\bm\gamma} \gamma_j)\rangle_{\bm\gamma}+\frac{1}{2}\langle\gamma_j,d(*_{\bm\gamma} \gamma_i)\rangle_{\bm\gamma}=\frac{1}{2} *_{\bm\eta}(\eta_i\wedge d\eta_j+\eta_j\wedge d\eta_i)  $$ which is completely determined by $\bm\gamma$. Denote the matrix $II(e_i,e_j)$ by $A$ and $H=Tr S$.

\begin{lemma}\label{hyperkahler triple second fundamental form}
If $\bm\omega$ is a hyperk\"ahler triple, then 
$$A=\frac{1}{2}( Tr \Gamma)I-\Gamma,$$ In particular, $$2H=*_{\bm\eta}(\bm\eta\wedge d\bm\eta^T)=\langle \gamma_1, d(*_{\bm\gamma}\gamma_1)\rangle_{\bm\gamma}+\langle \gamma_2, d(*_{\bm\gamma}\gamma_2)\rangle_{\bm\gamma}+\langle \gamma_3, d(*_{\bm\gamma}\gamma_3)\rangle_{\bm\gamma},$$
$$|S|^2=Tr  {\Gamma}^2 -H^2.$$
\end{lemma}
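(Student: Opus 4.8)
The plan is to compute the second fundamental form by exploiting the hyperkähler identity $\omega_i^2 = 2\,\mathrm{dvol}$ together with the parallelism of the $\omega_i$, restricted to a neighborhood of the boundary. First I would set up a convenient local frame: near a point $p\in\partial X$, use the outward unit normal $\nu$ and its dual $\nu^* = *_{\bm\omega}\mathrm{vol}_{\partial X}$, and from the pointwise linear algebra recalled in Section~\ref{section triples} (with $\bm Q = I$, so $*_W = *_{\bm\gamma}$) write each self-dual form as
\[
\omega_i = \nu^* \wedge \eta_i + \gamma_i, \qquad \gamma_i = \tfrac12\delta^{ijk}\eta_j\wedge\eta_k,
\]
where I identify $*_{\bm\gamma}\gamma_i = \eta_i$. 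The key input is $d\omega_i = 0$ (hyperkähler triples are in particular closed), which I would differentiate and restrict to $\partial X$. Pairing $d\omega_i$ appropriately with the normal direction extracts the normal derivative of $\eta_i$, and the geometric content is that $\nabla_\nu$ acting on the tangential coframe is governed precisely by the shape operator $S$: expanding $\nabla_{e_j}\nu$ in the frame $\bm e$ gives the matrix $A = (II(e_i,e_j))$.

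The main computation is then to express the condition $0 = d\omega_i|_{\partial X}$ in terms of $\Gamma$ and $A$. Writing $d\eta_i$ in the coframe $\{\eta_1,\eta_2,\eta_3\}$, its "symmetric part" is exactly the matrix $\Gamma$ by the definition $\Gamma_{ij} = \tfrac12 *_{\bm\eta}(\eta_i\wedge d\eta_j + \eta_j\wedge d\eta_i)$, while the normal structure equations contribute the shape operator. Matching the two and solving the resulting linear relation for $A$ should give
\[
A = \tfrac12(\mathrm{Tr}\,\Gamma)\,I - \Gamma.
\]
I expect that the cleanest route is to use the structure equations for the moving coframe $(\nu^*,\eta_1,\eta_2,\eta_3)$ on $X$: $d\nu^* = -\sum_k A_{\cdot k}\wedge\eta_k$-type terms and $d\eta_i = \nu^*\wedge(\text{stuff})+\text{tangential}$, feed these into $d(\nu^*\wedge\eta_i + \gamma_i)=0$, collect the $\nu^*\wedge\eta_j\wedge\eta_k$ components, and read off the relation between $A$ and $\Gamma$. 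Once $A$ is known, $2H = \mathrm{Tr}\,A = \tfrac32\mathrm{Tr}\,\Gamma - \mathrm{Tr}\,\Gamma = \tfrac12\mathrm{Tr}\,\Gamma$... so I must be careful: the trace identity $2H = *_{\bm\eta}(\bm\eta\wedge d\bm\eta^T) = \sum_i \langle\gamma_i, d(*_{\bm\gamma}\gamma_i)\rangle$ forces $H = \tfrac12\mathrm{Tr}\,A$ to match $\tfrac12\mathrm{Tr}\,\Gamma$ only if $S$ and $A$ are related by $H = \mathrm{Tr}\,S = \mathrm{Tr}\,A$ when $\bm e$ is orthonormal — which it is, since $\bm Q = I$. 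So from $A = \tfrac12(\mathrm{Tr}\,\Gamma)I - \Gamma$ I get $\mathrm{Tr}\,A = \tfrac32\mathrm{Tr}\,\Gamma - \mathrm{Tr}\,\Gamma = \tfrac12\mathrm{Tr}\,\Gamma$, and I should double-check the normalization of $\Gamma$ against the stated $2H$ formula; the stated identities are mutually consistent since $2H = \mathrm{Tr}\,\Gamma$ then reads $\tfrac12\mathrm{Tr}\,\Gamma \cdot 2 = \mathrm{Tr}\,\Gamma$, confirming the computation is self-consistent.

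Finally, $|S|^2 = \mathrm{Tr}(S^2) = \mathrm{Tr}(A^2)$ (again since the frame is orthonormal), and substituting $A = \tfrac12(\mathrm{Tr}\,\Gamma)I - \Gamma$ gives $\mathrm{Tr}(A^2) = \tfrac14(\mathrm{Tr}\,\Gamma)^2\cdot 3 - (\mathrm{Tr}\,\Gamma)\mathrm{Tr}\,\Gamma + \mathrm{Tr}\,\Gamma^2 = \mathrm{Tr}\,\Gamma^2 - \tfrac14(\mathrm{Tr}\,\Gamma)^2$; since $H = \tfrac12\mathrm{Tr}\,\Gamma$ this is $\mathrm{Tr}\,\Gamma^2 - H^2$, as claimed. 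The main obstacle I anticipate is purely bookkeeping: correctly deriving the structure equations relating $\nabla\nu$ to the exterior derivatives of the $\eta_i$, i.e.\ carefully tracking how the closedness $d\omega_i=0$ splits into tangential and normal components in a frame that is not parallel, and making sure all the combinatorial factors ($\tfrac12$, $\delta^{ijk}$, orientation signs) are consistent. I would organize this by computing everything at a single point $p$ in Fermi coordinates (geodesic normal coordinates along the boundary), where $\nabla_\nu\nu = 0$ and the off-diagonal structure simplifies, reducing the problem to a finite linear-algebra identity among $3\times 3$ matrices.
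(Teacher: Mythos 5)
Your outline matches the paper's proof: both work in semi-geodesic (Fermi) coordinates near $\partial X$, use the hyperk\"ahler decomposition $\omega_i = \nu^*\wedge *_{\bm\gamma}\gamma_i + \gamma_i$ with $*_{\bm\gamma}\gamma_i=\eta_i$, and exploit closedness (which gives $\partial_t\bm\gamma_t = -d(*_{\bm\gamma_t}\bm\gamma_t)$) to relate the normal derivative of the framing to $d\bm\eta$. Your derivations of $2H=\mathrm{Tr}\,\Gamma$ and $|S|^2=\mathrm{Tr}\,\Gamma^2-H^2$ from the matrix identity are correct, after you untangle the initial slip writing $2H=\mathrm{Tr}\,A$ where you mean $H=\mathrm{Tr}\,S=\mathrm{Tr}\,A$.

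The one thing to flag is that the central identity $A=\tfrac12(\mathrm{Tr}\,\Gamma)I-\Gamma$, which is the entire content of the lemma, is only asserted (``matching the two and solving \dots should give''). The paper carries out this computation explicitly: writing $\partial_{x^i}=a_i^k e_k$ against the orthonormal coframe $\bm\eta_t$ of the level sets, it finds $II(e_i,e_j)=-\tfrac12(\partial_t a_i^j+\partial_t a_j^i)$ and, separately, $\langle\gamma_i,d(*_{\bm\gamma}\gamma_j)\rangle_{\bm\gamma}=-\delta^{ij}\partial_t a_k^k+\partial_t a_j^i$, from which $\Gamma=(\mathrm{Tr}\,A)I-A$ drops out and is then inverted. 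Your proposed structure-equation route would produce the same relation, but you would still need to track the coefficient of $\nu^*$ in $d\gamma_i$ and in $d\nu^*\wedge\eta_i$ carefully; this is precisely the ``$3\times3$ linear-algebra identity'' you defer to the end, and it is where the combinatorial factor $\delta^{ijk}$ enters, so the proof is not complete until that step is executed.
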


\begin{proof}

 Fix a point $p\in \partial X$, choose a semi-geodesic coordinate system centered at $p$, say $(x^1,x^2,x^3,t)$, such that a neighborhood of $p$ is identified with $\{t\geq 0\}$
and its intersection with $\partial X$ is identified with $\{t=0\}$. The hyperk\"ahler triple can be written as 
$$\bm\omega=-dt\wedge *_{\bm\gamma_t}\bm\gamma_t+\bm\gamma_t,$$
where $\bm\gamma_t$ is a smooth family of closed framings on $\partial X$ such that $\bm\gamma_{0}=\bm\gamma$ and $$\frac{\partial \bm\gamma_t}{\partial t}=-d(*_{\bm\gamma_t}\bm\gamma_t).$$
We can further choose 
 $(x^1,x^2,x^3)$ to be a normal coordinate system for $\partial X$ at $p$ of the metric $g_{\bm\omega}|_{\partial X}$ such that $e_i=\partial_{x^i}$ at $p$. Write $\partial_{x^i}=a_i^k(x,t)e_k(x,t)$, where $\bm e_k(x,t)=(e_1(x,t),e_2(x,t), e_3(x,t))$ is the dual frame of $\bm\eta_t=*_{\bm\gamma_t}\bm\gamma_t$. Then  at $p$,
 \begin{equation}\label{II(ei,ej) calculation}
      II(e_i,e_j)=II(\partial_{x^i},\partial_{x^j})=-\frac{1}{2}\partial_t g_{ij}=-\frac{1}{2}\partial_t(a_i^ka_j^k)=-\frac{1}{2}(\partial_t a_i^k+\partial_ta_j^k).
 \end{equation}
Note that at $p$, $$\partial_t\eta_p=\partial_t a_m^pdx^m=\partial_t a_m^p\eta_m,$$ then  we have 
\begin{equation}\label{gammaidetaj calculation}
    \begin{split}
        \langle \gamma_i,d(*_{\bm\gamma}\gamma_j)\rangle_{\bm\gamma}
  &=-\langle\gamma_i,\partial_t\gamma_{j}\rangle_{\bm\gamma}\\
  &=-\langle \frac{1}{2}\delta^{ikl}\eta_k\wedge\eta_l,\frac{1}{2}\partial_t(\delta^{jpq}\eta_p\wedge\eta_q)\rangle_{\bm\gamma}\\
  &=-\frac{1}{4}\delta^{ikl}\delta^{jpq}(\partial_t a_m^p\langle \eta_k\wedge\eta_l,\eta_m\wedge\eta_q\rangle_{\bm\gamma}+\partial_ta_n^q\langle \eta_k\wedge\eta_l,\eta_p\wedge\eta_n\rangle_{\bm\gamma})\\ &=
-\frac{1}{4}\delta^{ikl}\delta^{jpq}(\partial_t a_m^p(\delta^{km}\delta^{lq}-\delta^{kq}\delta^{lm})+\partial_t a_n^q(\delta^{kp}\delta^{ln}-\delta^{kn}\delta^{lp}))\\&=-\delta^{ij}\partial_t a_k^k+\partial_t a_j^i.
    \end{split}
\end{equation}
 Combining (\ref{II(ei,ej) calculation}) and (\ref{gammaidetaj calculation}), we have 
 $$\Gamma= (Tr A) I-A,$$ so $Tr \Gamma=2Tr A$, $A=\frac{1}{2}(Tr \Gamma) I-\Gamma$.

\end{proof}

\section{General Riemannian geometry}\label{section Riemannian geometry}

\subsection{Evolution equations of hypersurfaces}
We refer to \cite{petersen2016riemannian} Section 3.2 and \cite{koiso1981hypersurfaces} for discussions in this section.

Let $M$ be a Riemannian manifold, $f$ be a smooth distance function on $M$, i.e., $|\nabla f|= 1$, so $\nabla_{\nabla f}\nabla f=0$. The $(1,1)$ tensor  corresponds to  $\text{Hess} f$ is
\begin{equation}\label{(1,1) tensor shape operator}
    S(X)=\nabla_X\nabla f,
\end{equation}
 and its trace is $H=\Delta f\in C^\infty(M)$. 
  When $\Sigma\subset M$ is a hypersurface defined by a level set of $f$, the restriction of $S$ to $T\Sigma$ has image in $T\Sigma$, which is the shape operator of $\Sigma$. The second fundamental form of $\Sigma$ with respect to $\nabla f$ is  $II(X,Y):=\langle S(X), Y\rangle=\text{Hess} f(X,Y)$, so $H$ is the mean curvature and $\overrightarrow{H}=-H\nabla f$ is the mean curvature vector.
  By tensor calculations, we have evolution equations of second fundamental forms

\begin{equation}\label{simplified radial curvature equation}
L_{\nabla f} S+S^2=-R(\cdot,\nabla f)\nabla f,
\end{equation}
\begin{equation}\label{radial curvature equations''}
L_{\nabla f} \text{Hess} f-\text{Hess}^2 f=-Rm(\cdot,\nabla f,\cdot,\nabla f),
\end{equation}
where $$\text{Hess}^2 f(X,Y)=\langle S^2(X),Y \rangle=\langle S(X),S(Y)    \rangle,$$
and one has the equality
\begin{equation}\label{Lie derivative S same as covariant derivative S}
    L_{\nabla f}S=\nabla_{\nabla f} S.
\end{equation}
Take the trace of $(\ref{simplified radial curvature equation})$, we have
\begin{equation}\label{evolution of mean curvature}
L_{\nabla f} H=-|S|^2-\text{Ric}(\nabla f,\nabla f). 
\end{equation}
where $|S|^2:=Tr(S^2)$ is the norm square of the shape operator.

Besides the evolution equations,  we have Gauss equations on $\Sigma$ 
\begin{equation}
    \begin{split}
        Rm_{M}(X,W,Y,Z)=&Rm_{\Sigma}(X,W,Y,Z)+II(X, Z)II(W, Y)\\&-II(X, Y)II(W, Z).
    \end{split}
\end{equation} 
Take the trace with respect to $W,Z$, we have 
\begin{equation}\label{trace gauss equation}
\text{Ric}_{M}=\text{Ric}_{\Sigma}+\text{Hess}^2 f-H\cdot \text{Hess}f+Rm_{M}(\cdot, \nabla f,\cdot,\nabla f),
\end{equation}
take the trace again, we have
\begin{equation}\label{scalar curvatures and hypersurface}
    R_M=R_{\Sigma}+|S|^2- H^2+2\text{Ric}_M(\nabla f,\nabla f),
\end{equation}
 where $R$ denote scalar curvatures.
Use equations (\ref{radial curvature equations''}) and (\ref{trace gauss equation}) to cancel the curvature term involving $\nabla f$, we get 
\begin{equation}\label{evolution equation for 2nd form}
L_{\nabla f} \text{Hess} f=\text{Ric}_{\Sigma}-\text{Ric}_M+ 2\text{Hess}^2 f-H \cdot \text{Hess}f.
\end{equation}

\subsection{The boundary exponential map}

Let $(M,g)$ be a complete Riemannian manifold with boundary, which means the induced metric space is complete. 
Denote $T^\perp \partial M$  the normal line bundle of $\partial M$, which is a trivialized by the inward unit normal vector field $N$. We identify $T^\perp\partial M$ with $\partial M\times \mathbb R$ via this trivialization. For $p\in\partial M$, denote $\gamma_p(t)$ the geodesic such that $\gamma_p(0)=p,\gamma_p'(0)=N_p$.  Denote $$D(p)=\inf\{t>0| \gamma_p(t)\in\partial M\}\in (0,\infty],$$ $$\tau(p)=\sup\{t>0|d(\gamma_p(t),\partial M)=t\}\in(0,\infty].$$
We have a subset of $U_{\partial M}\subset T^\perp\partial M$ defined by
 $$U_{\partial M}=\{(p,tN_p)\in T^{\perp}\partial M|0\leq t <D(p) \},$$ which is the domain of the \text{boundary exponential map}
 \begin{equation}\label{boundary exponential map definition}
      \exp^\perp:U_{\partial M}\rightarrow M, (p,s)\mapsto\gamma_p(s).
 \end{equation}
 and define $$V_{\partial M}=\{(p,tN_p)\in T^{\perp}\partial M|0\leq t <\tau(p) \}\subset U_{\partial M}.$$

 There are some definitions, notations and terminologies related to the boundary exponential map that appear in this paper:
 \begin{itemize}
     \item 
 The \emph{boundary injectivity radius} $i_b$ is defined to be the supremum of $s\geq 0$ such that $\exp^\perp|_{\partial M\times[0,s)}$ is a diffeomorphism onto its image.
 \item A \emph{focal point} $q$ of $\partial M$ is a critical value of the boundary exponential map (\ref{boundary exponential map definition}). If $q$ lies in $\gamma_p$ for some $p\in\partial M$, we say $q$ is a focal point along $\gamma_p$. 
 \item A \emph{foot point} of $q\in M$ is a point  $p\in \partial M$ such that $d(q,p)=d(q,\partial M).$ 
 \item A \emph{cut point} of $\partial M$ is a point $q\in M$ such that there exists a foot point $p$ of $q$ such that $d(q,p)=\tau(p)$. We also say $q$ is a cut point of $p$.
 \item When we say a covariant tensor on $M$ is written in \emph{geodesic gauge}, we mean the pull back of this tensor via $\exp^\perp$.
 \item 
 For a  subset $B\subset\partial M$, we use the notation  $$C(B,t_1,t_2)=\exp^\perp(B\times[t_1,t_2))$$ to denote a metric cylinder with base $B$.
 \item $N_r(\partial M,g
):=\{x\in M|d(x,\partial M)\leq r\}.$
 \item The $(1,1)$ tensor $S$ in (\ref{(1,1) tensor shape operator})  is defined with respect to $f=-d(\cdot,\partial M)$ near $\partial M$, so $\nabla f=-N$ on $\partial M$, and $II\geq 0$ if $\partial M$ is convex.
 \end{itemize}
 
Here are some remarks about some of these definitions:
 
 \begin{itemize}
     \item By definition,  $\gamma_p(s)$ is a focal point of along $\gamma_p$, if and only if there exists a non-zero \emph{$\partial M$-Jacobi field} $V$ along $\gamma_p$ (a Jacobi field with $ V(0)\in T_p \partial M, V'(0)+S(V(0))\in T_p^\perp \partial M$) such that $V(s)=0$. If there is no focal point along $\gamma_p|_{[0,l)}$, then 
$$I_0(W,W)=\int_{0}^l \langle W', W'\rangle-\langle R(W,\gamma_p')\gamma_p',W \rangle dt- \langle S(W),W\rangle(0)\geq 0 $$ for any piecewise smooth vector field $W$ along $\gamma$ with $W(0)\in T_{\gamma(0)}\partial M$. 
\item 
By Lemma 3.2 in \cite{sakurai2017rigidity}, $\tau$ defines a continuous map from $\partial M$ to $(0,\infty]$.  It is well-known that by a second variation argument, the first focal point along $\gamma_p$ appears no later than $\tau(p)$, and moreover one can argue by contradiction to get (See Lemma 3.6 in \cite{sakurai2017rigidity}) 
 \end{itemize}

\begin{proposition}\label{first focal point two foot point}
$q\in M$ is a cut point of $p$ if and only if at least one of the following holds:
\begin{itemize}
    \item $q$ is the first focal point of $\gamma_p$ ;
    \item $q$ has at least two foot points.
\end{itemize}
\end{proposition}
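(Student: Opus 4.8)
The plan is to prove both implications by a second-variation/focal-point analysis, following the classical pattern for the interior cut locus (as in do Carmo or Sakai) transplanted to the boundary setting via the geodesics $\gamma_p$ emanating normally from $\partial M$. Throughout I write $f=-d(\cdot,\partial M)$ and recall that, along a unit-speed $\gamma_p$, the distance $d(\gamma_p(t),\partial M)=t$ exactly on $[0,\tau(p)]$, and that for $t<\tau(p)$ the point $p$ is the \emph{unique} foot point of $\gamma_p(t)$ and $\gamma_p|_{[0,t]}$ is the unique minimizing segment from $\gamma_p(t)$ to $\partial M$, with no focal point of $\partial M$ in the open interval $(0,t)$. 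This last fact is the ingredient quoted before the statement from Lemma 3.6 of \cite{sakurai2017rigidity}, namely that the first focal point along $\gamma_p$ occurs no earlier than $\tau(p)$, together with the standard index-form argument.

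For the ``if'' direction, suppose first that $q=\gamma_p(\tau(p))$ and $q$ has two distinct foot points $p_1\neq p_2$. Then $\gamma_{p_1}$ and $\gamma_{p_2}$ are two distinct unit-speed minimizing segments from $\partial M$ to $q$ of the same length $\tau(p)=d(q,\partial M)$; a first-variation argument shows that if $\gamma_p$ were minimizing to $\partial M$ past $q$, one could round the corner at $q$ along the broken geodesic $\gamma_{p_1}\ast(\text{reverse of }\gamma_{p_2})$ and produce a shorter curve to $\partial M$, a contradiction — hence $\tau(p)$ is not exceeded, i.e. $q$ is exactly a cut point. If instead $q=\gamma_p(t_0)$ is the first focal point of $\gamma_p$, then there is a nonzero $\partial M$-Jacobi field $V$ along $\gamma_p$ with $V(t_0)=0$; the standard index-form comparison (using $I_0$ as displayed in the excerpt) shows that for $t>t_0$ the geodesic $\gamma_p|_{[0,t]}$ no longer minimizes distance to $\partial M$, so again $\tau(p)\le t_0$; combined with the quoted fact $\tau(p)\ge t_0$ (first focal point not before $\tau(p)$) we get $\tau(p)=t_0$ and $q$ is a cut point.

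For the ``only if'' direction, suppose $q=\gamma_p(\tau(p))$ is a cut point of $p$ but neither alternative holds: $q$ has a unique foot point and $q$ is not a focal point of $\partial M$ along $\gamma_p$. By continuity of $\tau$ (Lemma 3.2 of \cite{sakurai2017rigidity}) and compactness of a small piece of $\partial M$ near $p$, pick $t_1$ slightly larger than $\tau(p)$ and consider $r=\gamma_p(t_1)$. Since $d(r,\partial M)\le\tau(p)<t_1$, there is a foot point $p'$ of $r$ with a minimizing geodesic $\sigma$ from $p'$ to $r$ of length $<t_1$. If $p'\neq p$, then a limiting argument as $t_1\downarrow\tau(p)$ produces (after passing to a subsequence of the $p'$'s, using completeness/local compactness) a second foot point of $q$, contradicting uniqueness. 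So $p'=p$ and $\sigma=\gamma_p|_{[0,t_1]}$, meaning $\gamma_p|_{[0,t_1]}$ is minimizing to $\partial M$ for $t_1>\tau(p)$; but since there is no focal point on $\gamma_p|_{(0,t_1]}$, the index form $I_0$ is positive definite on the relevant space, and by the usual argument a minimizing normal geodesic to $\partial M$ cannot have conjugate-type degeneracy yet the failure to extend minimization forces a focal point at or before $t_1$ — more precisely, we get that $\gamma_p|_{[0,t_1]}$ \emph{strictly} minimizes, hence $\tau(p)\ge t_1>\tau(p)$, a contradiction. (Equivalently: once one knows $\gamma_p|_{[0,t_1]}$ minimizes with no interior focal point, one shows directly that $d(\gamma_p(t),\partial M)=t$ for all $t\le t_1$, forcing $\tau(p)\ge t_1$.) Either way the assumption is untenable, so at least one of the two alternatives must hold.

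The main obstacle I anticipate is the limiting argument in the ``only if'' direction: extracting an honest second foot point of $q$ from a sequence of foot points $p'_k$ of $\gamma_p(t_k)$ with $t_k\downarrow\tau(p)$ requires local compactness of $\overline{N_\epsilon(\partial M)}$ (or of a neighborhood of $p$ in $\partial M$), plus care that the limiting minimizing segment is distinct from $\gamma_p$ — one has to rule out $p'_k\to p$ with the segments converging to $\gamma_p$ itself, which is where the ``not a focal point'' hypothesis and the uniqueness of short minimizing segments for $t<\tau(p)$ get used. The rest is a faithful adaptation of the interior cut-locus dichotomy, and the needed index-form inequality is already recorded in the excerpt.
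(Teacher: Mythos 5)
The paper does not actually prove this proposition; it cites Lemma~3.6 of \cite{sakurai2017rigidity} (together with the continuity of $\tau$ from Lemma~3.2 there), so there is no in-paper argument to compare against. Your strategy — adapting the classical cut-locus dichotomy (limiting sequence of minimizing segments, index form, nondegeneracy of $\exp^\perp$) to the boundary setting — is the standard route and presumably the same as Sakurai's, so the plan is sound. However, there are several execution problems that you should repair before this counts as a proof.

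First, the ``if'' direction as written is circular: you start from $q=\gamma_p(\tau(p))$, but that \emph{is} the definition of the cut point of $p$, so there is nothing left to prove. What you actually need to show is: if $q=\gamma_p(t_0)$ with $p$ a foot point of $q$ (hence $t_0\le\tau(p)$) and one of the two alternatives holds, then $t_0=\tau(p)$. The corner-rounding argument and the index-form argument then yield $\tau(p)\le t_0$, which combined with $t_0\le\tau(p)$ gives the conclusion. Relatedly, in the focal case you attribute the inequality $\tau(p)\ge t_0$ to the quoted fact from Sakurai, but the parenthetical you give (``first focal point not before $\tau(p)$'') is the statement $t_0\ge\tau(p)$ — the opposite direction and the same content as the index-form step. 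The missing inequality $t_0\le\tau(p)$ should come from the hypothesis that $p$ is a foot point of $q$, not from Sakurai's lemma.

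Second, in the ``only if'' direction, the $p'=p$ branch contains a false equality: you write $\sigma=\gamma_p|_{[0,t_1]}$, but $\sigma$ has length $d(r,\partial M)\le\tau(p)<t_1$ while $\gamma_p|_{[0,t_1]}$ has length $t_1$, so these cannot be the same segment. The correct (and simpler) argument is that a minimizing segment from $p'=p$ to $r$ normal to $\partial M$ must, by uniqueness of geodesics with given initial data, be $\gamma_p|_{[0,|\sigma|]}$ and hence end at $\gamma_p(|\sigma|)\ne\gamma_p(t_1)=r$ — an immediate contradiction with no need for the index-form/focal-point detour you invoke. Third, for the $p'_k\ne p$ branch (the real crux), you correctly identify that one must rule out $p'_k\to p$ using the ``no focal point'' hypothesis, but the actual argument is only gestured at. It should be spelled out: since $\gamma_p(\tau(p))$ is not a focal point, $d\exp^\perp$ is nonsingular at $(p,\tau(p)N_p)$, so $\exp^\perp$ is injective on a neighborhood of that point; yet $(p'_k,|\sigma_k|N_{p'_k})$ and $(p,t_kN_p)$ both lie in that neighborhood for $k$ large, are distinct (since $|\sigma_k|<t_k$), and are sent to the same point $r_k$ — a contradiction. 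Without writing this out, the step that carries all the weight of the ``no focal point'' hypothesis remains a gap.
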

\noindent From this,  we have $\exp^\perp|_{V_{\partial M}}$ is a diffeomorphism and
$$i_b=\inf_{p\in\partial M}\tau (p).$$

It is worth noting that if $M$ is embedded in some complete Riemannian manifold $M'$ of the same dimension and view $\partial M$ as an embedded hypersurface of $M'$, then it may happen that a ``focal point'' of $\partial M$ in $M'$ lies outside $M$ if we define the boundary exponential map in the whole normal bundle $T^\perp\partial M$. However, our definition is intrinsic for $M$.

\subsection{Manifolds with mean convex boundary}

Now we focus on manifolds with mean convex boundary. We will summarize some results and discuss a new result.

The following results are well-known,  see for example \cite{li2014sharp} and \cite{Donaldson2017BoundaryVP}.

\begin{proposition}\label{diam}
Let $(M,g)$ be a compact, connected Riemannian manifold with boundary, $\emph{Ric}_M\geq 0$. Suppose $\partial M$ has mean curvature $H\geq H_0>0$, then $\partial M$ is connected,

$$\pi_1(M,\partial M)=0,$$ $$\sup_{q\in M}d(q,\partial M)\leq (n-1)H_0^{-1},$$  $$\emph{vol}(M)\leq C(n)H_0^{-1}\emph{vol}(\partial M).$$
\end{proposition}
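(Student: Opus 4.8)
The plan is to run the standard Bochner/second-variation comparison argument adapted to a hypersurface boundary, using the evolution equation for the mean curvature along the normal geodesics established earlier in the excerpt. First I would invoke the boundary exponential map $\exp^\perp$ and the function $f=-d(\cdot,\partial M)$, which is a distance function on the open set $V_{\partial M}$ where $\exp^\perp$ is a diffeomorphism. Along a normal geodesic $\gamma_p(t)$ the shape operator satisfies the Riccati equation \eqref{simplified radial curvature equation}; taking the trace gives \eqref{evolution of mean curvature}, $L_{\nabla f}H = -|S|^2 - \mathrm{Ric}(\nabla f,\nabla f)$. Using $\mathrm{Ric}_M\ge 0$ and Cauchy–Schwarz $|S|^2\ge H^2/(n-1)$, one gets $\frac{d}{dt}H \le -\frac{H^2}{n-1}$ along $\gamma_p$ (with the sign convention where $H>0$ on the mean-convex boundary). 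Comparing with the ODE solution, $H$ blows up to $-\infty$ no later than $t=(n-1)H_0^{-1}$, which forces the first focal point along $\gamma_p$ to occur by that time; hence by Proposition~\ref{first focal point two foot point} every $\gamma_p$ meets its cut point by $t=(n-1)H_0^{-1}$, giving $\sup_{q\in M} d(q,\partial M)\le (n-1)H_0^{-1}$.

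For the topological and connectedness statements, I would argue that since every point of $M$ lies on a minimizing normal geodesic from $\partial M$ of length at most $(n-1)H_0^{-1}$, the map $\exp^\perp$ restricted to $V_{\partial M}$ covers $M$; the cut locus has measure zero, so $M$ deformation retracts (off a measure-zero set, and in fact up to homotopy) onto $\partial M$ along these geodesics, and more carefully one shows $\pi_1(M,\partial M)=0$ by lifting any path rel $\partial M$ and contracting it along the normal flow. For connectedness of $\partial M$: if $\partial M$ had two components $\Sigma_1,\Sigma_2$, take a minimizing geodesic in $M$ between them; it must leave $\Sigma_1$ normally and hit $\Sigma_2$ normally, and along it the mean curvature comparison above applied from the $\Sigma_1$ end produces a focal point strictly before the other endpoint (since $H\ge H_0>0$ at both ends with the inward conventions clashing), contradicting minimality — alternatively one deduces connectedness from $\pi_1(M,\partial M)=0$ together with the long exact sequence and the fact that $M$ is connected.

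Finally, the volume bound follows by integrating in geodesic gauge: writing the Riemannian volume as $\mathrm{dvol} = J(p,t)\,dt\,\mathrm{dvol}_{\partial M}(p)$ on $V_{\partial M}$, the Jacobian $J$ satisfies $\partial_t \log J = H(p,t)$, and the mean curvature bound $H(p,t)\le \frac{n-1}{(n-1)H_0^{-1}-t}$ from the Riccati comparison (with $J(p,0)=1$) gives $J(p,t)\le \big(1 - \frac{H_0 t}{n-1}\big)^{n-1}$ for $t<(n-1)H_0^{-1}$; integrating $t$ from $0$ to $(n-1)H_0^{-1}$ yields $\int J\,dt \le C(n)H_0^{-1}$, and since $\exp^\perp(V_{\partial M})$ has full measure in $M$, integrating over $\partial M$ gives $\mathrm{vol}(M)\le C(n)H_0^{-1}\mathrm{vol}(\partial M)$. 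The main obstacle I anticipate is not any single estimate — these are all classical Heintze–Karcher/Kasue-type computations — but rather being careful with sign conventions ($f=-d(\cdot,\partial M)$, inward vs.\ outward normal, the fact that $II\ge 0$ means mean-convex here) and with the intrinsic nature of the cut locus for a manifold-with-boundary, so that the focal-point/cut-point dichotomy of Proposition~\ref{first focal point two foot point} is applied correctly and the "off a measure-zero set" claims are genuinely justified rather than asserted.
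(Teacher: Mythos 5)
Your Riccati/Heintze--Karcher treatment of the distance bound and of the volume bound is correct: for the volume estimate it is exactly the computation the paper outsources to Heintze--Karcher, and for the distance bound the paper instead uses second variation with the test fields $(\tilde l-t)V_i(t)$, which is equivalent to your focal-point comparison. The sign slips you flag yourself ($H\to-\infty$ versus $+\infty$, $\partial_t\log J=H$ with the wrong orientation of $H$) are harmless bookkeeping. The genuine gap is in your proof of $\pi_1(M,\partial M)=0$. The claim that ``$M$ deformation retracts onto $\partial M$ along the normal geodesics'' is false even for the round ball $B^n$, which satisfies every hypothesis of the proposition but does not retract onto $S^{n-1}$; and ``contracting a path along the normal flow'' breaks down on the cut locus of $\partial M$, where the flow of $\nabla d(\cdot,\partial M)$ is neither well defined nor continuous (a point with two foot points has no preferred direction, and a local maximum of $d(\cdot,\partial M)$ goes nowhere under the flow). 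More fundamentally, the only inputs your sketch uses at this stage --- that every point has a foot point, that the cut locus is null, and that $d(\cdot,\partial M)$ is bounded --- hold for \emph{every} compact manifold with boundary, so they cannot by themselves yield any statement about $\pi_1(M,\partial M)$: all of the relative topology is concentrated in the cut locus. The hypotheses $\mathrm{Ric}_M\ge 0$ and $H>0$ must enter, and your argument gives them no place to do so.

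The paper's route is the Frankel-type synthetic argument: a nontrivial class in $\pi_1(M,\partial M)$ (or a pair of boundary components) contains a geodesic $\gamma:[0,l]\to M$ minimizing length in that class, which by first variation meets $\partial M$ orthogonally at both ends; second variation with a parallel orthonormal frame $V_1,\dots,V_{n-1}$ of $T_{\gamma(0)}\partial M$ then gives $0\le -\int_0^l\mathrm{Ric}(V_i,V_i)\,dt-H(\gamma(0))-H(\gamma(l))<0$, a contradiction. This is also the clean repair of your boundary-connectedness argument: as you phrase it, running the Riccati comparison from the $\Sigma_1$ end only forces a focal point of $\Sigma_1$ within distance $(n-1)H_0^{-1}$, which is no contradiction when the minimizing geodesic between the two components is shorter than that, and inferring negativity of the two-endpoint index form from focal-point data is precisely the fallacy the paper's own remark on \cite{knox2012compactness} and \cite{Anderson2012BoundaryVP} warns against. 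The correct two-ended statement is the displayed second-variation inequality (equivalently, that $\Delta d(\cdot,\Sigma_1)+\Delta d(\cdot,\Sigma_2)\ge 0$ at an interior point of $\gamma$ where the sum is minimal, while each term is at most $-H_0$ by the one-sided Riccati comparison), not a focal-point count.
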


\begin{proof}

If $\pi_1(M,\partial M)\neq 0$ or $\pi_0(\partial M)\neq 0$, then every non-trivial class contains a non-trivial unit speed geodesic $\gamma:[0,l]\rightarrow M $ which minimize the length of all curves in its class.
From the first variation formula, $\gamma$ intersects boundary perpendicularly at both end points. Pick an orthonormal basis $V_i,1\leq i\leq n-1$ of $T_{\gamma(0)}\partial M$ and parallel them transport along $\gamma$ to get $V_i(t)$. Let $\gamma_{i,s}(t)$ be a family of curves centered at $\gamma $ with variation field $V_i(t)$. By second variational formula, $$0\leq \sum_{i=1}^{n-1} \frac{d^2}{ds^2}\Big{|}_{s=0}E(\gamma_{i,s})=\int_0^l -\text{Ric}(V_i(t),V_i(t))dt-H(\gamma(0))-H(\gamma(l))<0,$$ which is a contradiction.

If for some $q\in M$, $\tilde{l}:=d(q,\partial M)> (n-1)H_0^{-1}$. Let $p$ be a foot point of $q$. Let $V_i,1\leq i\leq n-1$ be an orthonormal basis of $T_{p}\partial M$ and parallel them transport along $\gamma_p$ to get $V_i(t)$, and denote $\tilde{V}_i(t)=(\tilde l-t)V_i(t)$, $\tilde \gamma_{i,s}(t)$  a family of curves centered at $\gamma_p $ with variation field $\tilde{V}_i(t)$, then $$0\leq \sum\limits_{i=1}^{n-1}\frac{d^2}{ds^2}\Big{|}_{s=0}E(\tilde\gamma_{i,s})=(n-1)\tilde l-\int_0^{\tilde{l}} \text{Ric}(tV_i(t),tV_i(t))dt-\tilde{l}^2H(\gamma(0))<0,$$ which is a contradiction.
The volume upper bound is by volume comparison, see \cite{heintze1978general}.
\end{proof}

\begin{remark}
Note that when $M$ is connected,   $\pi_1({M},\partial M)=0$ is equivalent to that $\partial M$ is connected and the natural map $\pi_1(\partial M)\rightarrow \pi_1(M)$ is surjective, the latter of which means that fix a point $p_0\in\partial M$, then
for any closed path $x(t):0\leq t\leq 1$  in $M$ with $x(0)=x(1)=p_0$, there exists a homotopy $F_s(t):0\leq s, t\leq 1$ with $F_s(0)=F_s(1)=p_0, F_0(t)=x(t)$ such that $F_1(t)\in \partial M$.
\end{remark}

The following result is well-known, see Lemma 6.3 of \cite{kodani1990convergence}.

\begin{proposition}
Let $M$ be a complete Riemannian manifold with  nonempty  compact boundary. If there are no focal point whose distance to $\partial M$ is equal to $i_b$, then there exists a smooth geodesic of length $2i_b$ which is perpendicular to $\partial M$ at both end points.
\end{proposition}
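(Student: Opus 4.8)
The plan is to adapt Klingenberg's injectivity radius lemma to the boundary setting, in the spirit of Lemma~6.3 of \cite{kodani1990convergence}; we may assume $i_b<\infty$, otherwise the statement is vacuous. Since $\exp^\perp$ is a diffeomorphism on $\partial M\times[0,\epsilon)$ for small $\epsilon>0$, $i_b>0$. Using $i_b=\inf_{p\in\partial M}\tau(p)$, the continuity of $\tau\colon\partial M\to(0,\infty]$ (Lemma~3.2 of \cite{sakurai2017rigidity}), and compactness of $\partial M$, choose $p_0$ with $\tau(p_0)=i_b$ and set $q:=\gamma_{p_0}(i_b)$. Then $d(q,\partial M)=i_b$, so $q$ is a cut point of $p_0$ and $\gamma_{p_0}|_{[0,i_b]}$ minimizes the distance from $\partial M$ to $q$. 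By \cref{first focal point two foot point}, either $q$ is the first focal point along $\gamma_{p_0}$ — impossible, since it would then be a focal point at distance exactly $i_b$ — or $q$ has a second foot point $p_1\ne p_0$. The first variation formula (free endpoint on $\partial M$) shows the minimizing geodesic from $p_1$ to $q$ meets $\partial M$ orthogonally, hence equals $\gamma_{p_1}|_{[0,i_b]}$; and since $q$ has two foot points, \cref{first focal point two foot point} gives $\tau(p_1)=d(q,p_1)=i_b$, so $q=\gamma_{p_1}(i_b)$. As $\gamma_{p_0}|_{[0,i_b]}$ and $\gamma_{p_1}|_{[0,i_b]}$ minimize to $\partial M$ they have no focal point at parameter $<i_b$, and neither has one at $i_b$ (that would be a focal point at distance $i_b$); hence $\exp^\perp$ is a diffeomorphism from neighbourhoods $W_0\ni(p_0,i_b)$ and $W_1\ni(p_1,i_b)$ onto neighbourhoods of $q$. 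Let $\rho_0,\rho_1$ denote the second coordinates of the corresponding local inverses: then $\rho_i$ is smooth near $q$, $\rho_i(q)=i_b$, $\nabla\rho_i(q)=\gamma_{p_i}'(i_b)=:u_i$, and $d(\cdot,\partial M)\le\min(\rho_0,\rho_1)$ near $q$.

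It remains to glue. Let $\Gamma\colon[0,2i_b]\to M$ equal $\gamma_{p_0}$ on $[0,i_b]$ and $t\mapsto\gamma_{p_1}(2i_b-t)$ on $[i_b,2i_b]$; this is a broken geodesic of length $2i_b$, orthogonal to $\partial M$ at $p_0$ and $p_1$, and smooth exactly when $u_0=-u_1$. Suppose $u_0\ne-u_1$. Then $u_0\ne u_1$ (otherwise $\gamma_{p_0}=\gamma_{p_1}$, forcing $p_0=p_1$), so $\Sigma:=\{\rho_0=\rho_1\}$ is a smooth hypersurface through $q$ with $T_q\Sigma=(u_0-u_1)^\perp$, and on $\Sigma$ the common value $\phi:=\rho_0|_\Sigma=\rho_1|_\Sigma$ has $\phi(q)=i_b$ and $\nabla^\Sigma\phi(q)=\mathrm{proj}_{(u_0-u_1)^\perp}u_0=\tfrac{1}{2}(u_0+u_1)\ne0$, using $\langle u_0+u_1,u_0-u_1\rangle=|u_0|^2-|u_1|^2=0$. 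Hence there are $q^\ast\in\Sigma$ arbitrarily close to $q$ with $\rho_0(q^\ast)=\rho_1(q^\ast)<i_b$. For $q^\ast$ close enough to $q$ one verifies $d(q^\ast,\partial M)=\rho_0(q^\ast)$: a strictly shorter geodesic from $\partial M$ to $q^\ast$ would, as $q^\ast\to q$, subconverge to $\gamma_{p_j}|_{[0,i_b]}$ for some foot point $p_j$ of $q$, and (by injectivity of $\exp^\perp$ near $(p_j,i_b)$) would have to lie in the $p_j$-normal family, contradicting that it is shorter than $\rho_j(q^\ast)$. So $q^\ast$ has the two distinct foot points coming from $W_0$ and $W_1$, both at distance $\rho_0(q^\ast)<i_b$; then \cref{first focal point two foot point} gives that $\tau$ of one of them equals $\rho_0(q^\ast)<i_b=\inf_{\partial M}\tau$, a contradiction. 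Therefore $u_0=-u_1$ and $\Gamma$ is the desired smooth geodesic.

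The step I expect to be the main obstacle is the last one: showing that sliding along the bisector $\Sigma$ yields points whose two nearest boundary points are genuinely of $p_0$- and $p_1$-type, i.e. promoting the first-order fact $\nabla^\Sigma\phi(q)\ne0$ to the identity $d(\cdot,\partial M)=\min(\rho_0,\rho_1)$ on a neighbourhood of $q$ while excluding a spurious third, strictly closer foot point. This is the technical heart of Klingenberg-type arguments, and is exactly where the no-focal-point hypothesis is used (to keep $\exp^\perp$ a local diffeomorphism near each $(p_j,i_b)$, so that no short ``wild'' geodesic to $\partial M$ can appear); the details may be imported from Lemma~6.3 of \cite{kodani1990convergence}.
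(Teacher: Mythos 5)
Your proof follows the same Kodani-style argument as the paper: minimize $\tau$, produce a second foot point of $q=\gamma_{p_0}(i_b)$ via \cref{first focal point two foot point}, and slide along the bisector $\Sigma=\{\rho_0=\rho_1\}$ to a contradiction when $u_0\neq -u_1$. The one step you try to make explicit—that $d(q^\ast,\partial M)=\rho_0(q^\ast)$ for $q^\ast\in\Sigma$ near $q$—has a gap in your justification. You argue that a strictly shorter minimizing geodesic would subconverge to $\gamma_{p_j}$ for ``some foot point $p_j$ of $q$'' and lie in the $p_j$-normal family, ``contradicting that it is shorter than $\rho_j(q^\ast)$.'' But you only assumed it is shorter than $\rho_0(q^\ast)=\rho_1(q^\ast)$. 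If $q$ has a third foot point $p_2\notin\{p_0,p_1\}$ (which you have not excluded), the shorter geodesic may lie in the $p_2$-family with length $\rho_2(q^\ast)<\rho_0(q^\ast)$, and there is no contradiction: $q^\ast$ would then simply have a unique foot point near $p_2$, and your conclusion that $q^\ast$ has two foot points at distance $\rho_0(q^\ast)$ would not follow.

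The correct justification is simpler and uses the minimality of $i_b$ directly, with no compactness or subconvergence at all: since $i_b=\inf_{\partial M}\tau$, every $p'\in\partial M$ has $\tau(p')\geq i_b$, hence $d(\gamma_{p'}(t),\partial M)=t$ for all $t\leq i_b$. Writing $q^\ast=\exp^\perp(p_0',\rho_0(q^\ast))$ with $p_0'$ near $p_0$ and $\rho_0(q^\ast)<i_b\leq\tau(p_0')$ gives $d(q^\ast,\partial M)=\rho_0(q^\ast)$ at once, and the same argument with $p_1'$ gives $d(q^\ast,\partial M)=\rho_1(q^\ast)$. So $q^\ast$ genuinely has two distinct foot points $p_0',p_1'$ at distance $\rho_0(q^\ast)<i_b$, and \cref{first focal point two foot point} then forces $\tau(p_0')=\rho_0(q^\ast)<i_b$, contradicting $\inf_{\partial M}\tau=i_b$. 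This is the observation that the paper's proof uses implicitly when it asserts ``$q'$ has two foot points and $d(q',\partial M)=h_1(q')<i_b$''; it is not, as you suggest, the place where the no-focal-point hypothesis does the work—that hypothesis is needed earlier, to know that $q$ is not a focal point and that $\rho_0,\rho_1$ are well-defined and smooth.
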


\begin{proof}
$i_b=\inf\limits_{p\in\partial M}\tau (p)>0$. Suppose the infimum is achieved at $p_1\in\partial M$. Then by assumption and Proposition \ref{first focal point two foot point} $\gamma_{p_1}(i_b)$ has another foot point $p_2$. We claim $\gamma_{p_1}'(i_b)=-\gamma_{p_2}'(i_b)$, so $D(p_1)=D(p_2)=2i_b$ and $\gamma_{p_1}:[0,2i_b]\rightarrow M$ is the smooth geodesic we want. By assumption, we can find smooth distance functions $h_1, h_2$  extending $d(\cdot,\partial M)$  near $\gamma_{p_1}|_{[0,i_b]}$, $\gamma_{p_2}|_{[0,i_b]}$, respectively. Consider the smooth hypersurface $\Sigma=(h_1-h_2)^{-1}(0)$ near $q:=\gamma_{p_1}(i_b)=\gamma_{p_2}(i_b)$. Then $v=\nabla h_1(q)+\nabla h_2(q)\in T_{q}\Sigma$. If it is non-zero, then $\langle \nabla h_1(q)+\nabla h_2(q) , v \rangle>0$. Without loss of generality, assume $\langle\nabla h_1,v\rangle>0$. Then in the direction of $-v$ in $\Sigma$, we have some point $q'\in\Sigma$ with $h_1(q')<h_1(q)$. Hence $q'$ has two foot points and $d(q',\partial M)=h_1(q')<i_b$, which is a contradiction.
\end{proof}
It is worth noting that in this proof, Kodani used the first order variation of $h_1$ on $\Sigma$ to lead a contradiction. We can also investigate the second order variation of $h_1$ on $\Sigma$ and prove the following:

\begin{proposition}\label{existence of focal points}
Let $M$ be a compact Riemannian manifold with mean convex boundary, $\emph{Ric}_M\geq 0$,  then there exists a focal point of $\partial M$ whose distance to $\partial M$ is equal to $i_b$.
\end{proposition}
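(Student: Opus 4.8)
The plan is to argue by contradiction along the lines suggested by Kodani's proof of the preceding proposition, but exploiting second-order information rather than just the first-order variation. Suppose there is no focal point at distance exactly $i_b$ from $\partial M$. By Proposition \ref{first focal point two foot point}, for every $p$ realizing $\tau(p)=i_b$ (such a $p$ exists by compactness and continuity of $\tau$) the cut point $q=\gamma_p(i_b)$ must then have at least two distinct foot points; moreover by the no-focal-point hypothesis there is no focal point anywhere along $\gamma_p|_{[0,i_b]}$, and similarly along the geodesic from the second foot point. As in Kodani's argument, the absence of focal points lets us extend $d(\cdot,\partial M)$ to smooth distance functions $h_1,h_2$ near $\gamma_{p_1}|_{[0,i_b]}$ and $\gamma_{p_2}|_{[0,i_b]}$, and the first-order argument forces $\nabla h_1(q)=-\nabla h_2(q)$, so that $\gamma_{p_1}\cup\gamma_{p_2}$ forms a single smooth geodesic of length $2i_b$ meeting $\partial M$ orthogonally at both ends — this is exactly the output of the previous proposition.

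Now I would run a second-variation argument on this geodesic $\gamma:=\gamma_{p_1}:[0,2i_b]\to M$, exactly of the type used in the proof of Proposition \ref{diam}: take an orthonormal basis $V_i$ of $T_{p_1}\partial M$, parallel transport along $\gamma$, and vary $\gamma$ in these directions. The energy second variation gives
\begin{equation}
0\leq \sum_{i=1}^{n-1}\frac{d^2}{ds^2}\Big|_{s=0}E(\gamma_{i,s})=\int_0^{2i_b}-\mathrm{Ric}(V_i,V_i)\,dt-H(\gamma(0))-H(\gamma(2i_b)),
\end{equation}
using that both boundary pieces of $II$ enter with the correct sign because $\gamma$ leaves $\partial M$ orthogonally at both ends. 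Since $\mathrm{Ric}_M\geq 0$ and $\partial M$ is mean convex (so $H>0$ at both endpoints), the right-hand side is strictly negative, a contradiction. The point is that the putative ``no focal point at $i_b$'' scenario forces the existence of a genuine $\partial M$-to-$\partial M$ orthogonal geodesic, and mean convexity plus $\mathrm{Ric}\geq 0$ rules such a geodesic out — the same mechanism that makes $\pi_1(M,\partial M)=0$ in Proposition \ref{diam}.

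The main obstacle I anticipate is being careful about the degenerate possibility that the second foot point $p_2$ coincides with $p_1$, i.e. that $q$ is reached by two distinct geodesic segments from the same boundary point, or more generally that the ``smooth geodesic of length $2i_b$'' produced might be a closed geodesic or might re-hit $\partial M$ before time $2i_b$; one must check the second-fundamental-form terms at the endpoints still appear with the sign making $II(V_i,V_i)\geq H_0/(n-1)$-type positivity usable, i.e. that $\gamma$ is genuinely perpendicular to $\partial M$ at both ends and that the competitor variations keep endpoints on $\partial M$. Verifying that $\nabla h_1(q)+\nabla h_2(q)=0$ (not merely that the sum is tangent to the level set $\Sigma$) requires the first-order argument from the previous proposition, which I would simply cite. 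One should also confirm that when $p_1=p_2$ the geodesic, traversed as a loop, still yields a valid variation with both ``endpoint'' curvature contributions — this is the only place the argument needs a small amount of care, and it is handled by treating the loop as a geodesic segment from $p_1$ to $p_1$ and noting the second variation formula is insensitive to whether the endpoints coincide.
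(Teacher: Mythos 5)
Your reduction to a smooth geodesic $\gamma$ of length $2i_b$ meeting $\partial M$ orthogonally at both endpoints is the same as the paper's (both rest on Kodani's proposition), but the step that is supposed to produce the contradiction has a genuine gap. You assert
$$0\leq \sum_{i=1}^{n-1}\frac{d^2}{ds^2}\Big|_{s=0}E(\gamma_{i,s})=-\int_0^{2i_b}\mathrm{Ric}(\gamma',\gamma')\,dt-H(\gamma(0))-H(\gamma(2i_b)),$$
i.e.\ that the two-boundary index form $I_1$ is nonnegative on parallel fields. In Proposition \ref{diam} that inequality is justified because the geodesic there minimizes energy in a nontrivial class of $\pi_1(M,\partial M)$ or between distinct boundary components. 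Here there is no such class: the geodesic realizing $2i_b$ is only a critical point of the free-boundary energy functional, not a local minimizer (a nearby boundary-to-boundary curve of length $2i_b-\delta s^2$ merely has its midpoint at distance about $i_b-Cs$ from $\partial M$, which contradicts nothing), and the absence of focal points along $\gamma$ for either boundary portion does not imply $I_1\geq 0$. This is exactly the ``unjustified statement'' that the Remark following Remark \ref{inj lower bound limit remark} isolates in \cite{knox2012compactness} and \cite{Anderson2012BoundaryVP}, together with an explicit counterexample (two spheres in $\mathbb{R}^n$ with a perpendicular segment between them, where $I_1<0$ on parallel fields despite the absence of focal points). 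As written, your argument reproduces that known error.

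The paper's proof avoids this by exploiting second-order information about the distance functions rather than about the energy of $\gamma$: on the equidistant hypersurface $\Sigma=(h_1-h_2)^{-1}(0)$ through the midpoint $q$ one has $II_\Sigma=\tfrac12(II_{\Sigma_1}+II_{\Sigma_2})$, hence $\Delta_\Sigma h_1(q)=-\tfrac12(H_{\Sigma_2}-H_{\Sigma_1})$, and the evolution equation (\ref{evolution of mean curvature}) together with $H_{\partial M}>0$ and $\mathrm{Ric}_M\geq 0$ forces this to be strictly negative; this yields a point $q''\in\Sigma$ with $h_1(q'')<i_b$, which has two foot points at distance less than $i_b$ and so contradicts $i_b=\inf_p\tau(p)$. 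The fact that replaces the (false) minimizing property of $\gamma$ is that no point with two foot points can lie at distance less than $i_b$ from $\partial M$ --- information your proposal never uses. To salvage a second-variation argument you would have to prove, not assume, that $I_1\geq 0$ in this configuration, and the cited counterexample shows this cannot follow from the focal-point hypothesis alone.
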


\begin{proof}
Suppose not, by the previous proposition, we have a smooth geodesic of length $2i_b$ which is perpendicular to $\partial M$ at both end points. We use notations $p_1,p_2, q, h_1,h_2,\Sigma$ as in the previous proposition. We claim $\Delta_\Sigma h_1(q)<0$, so we get another point $q''\in \Sigma$ near $p$ with $h_1(q'')<h_1(q)=i_b$ and get a contradiction. Denote $N_{0}=\nabla h_1(q)=-\nabla h_2(q),\Sigma_1=h_1^{-1}(i_b),\Sigma_2=h_2^{-1}(i_b)$, then $N_0$ is a common unit normal vector for $\Sigma,\Sigma_1,\Sigma_2$ at $q$. Let $II_\Sigma, II_{\Sigma_1},II_{\Sigma_2}$ be second fundamental forms with respect to $N_0$ at $q$.
Then at $q$, $$II_\Sigma= \frac{1}{|\nabla (h_1-h_2)|}\text{Hess} (h_1-h_2)=\frac{1}{2} \text{Hess}(h_1-h_2)=\frac{1}{2}(II_{\Sigma_1}+II_{\Sigma_2}),$$  hence 
 $$H_\Sigma=\frac{1}{2}(H_{\Sigma_1}+H_{\Sigma_2}),
\overrightarrow{H}_{\Sigma}=\frac{1}{2}(\overrightarrow{H}_{\Sigma_1}+\overrightarrow{H}_{\Sigma_2}).$$From the formula of Laplace operator on a hypersurface, we know that at $q$,
$$\Delta_\Sigma h_1=\Delta h_1-\text{Hess} h_1(N_0,N_0)+\langle \nabla h_1, \overrightarrow{H}_\Sigma \rangle ,$$
$$\Delta_{\Sigma_1} h_1=\Delta h_1-\text{Hess} h_1(N_0,N_0)+\langle \nabla h_1,\overrightarrow{H}_{\Sigma_1} \rangle.$$
Since $h_1$ is a constant on $\Sigma_1$, $\Delta_{\Sigma_1}h_1=0$, hence
\begin{equation}
\begin{split}
\Delta_{\Sigma} h_1 &=\Delta_{\Sigma} h_1-\Delta_{\Sigma_1}h_1=\langle\nabla h_1, \overrightarrow H_{\Sigma}-\overrightarrow H_{\Sigma_1}\rangle=\langle \nabla h_1, \frac{1}{2}(\overrightarrow H_{\Sigma_2}-\overrightarrow H_{\Sigma_1})\rangle \\
&=-\frac{1}{2}(H_{\Sigma_2}-H_{\Sigma_1}).
\end{split}
\end{equation}
Since $\partial M$ is mean convex, $\text{Ric}_M\geq 0$, by the evolution equation of mean curvature (\ref{evolution of mean curvature}), we have $-H_{\Sigma_1}(q)> H_{\partial M}(p_1)>0$, $H_{\Sigma_2}(q)> H_{\partial M}(p_2)>0$. Hence $\Delta_{\Sigma} h_1(p)<0$, which completes the proof.

\end{proof}

A moment thought about the arguments in the end of the previous proof yields that $\text{Ric}_M\geq 0$ is not so necessary, since we can make use of the evolution equation (\ref{evolution of mean curvature}) to get an ordinary differential inequality for the mean curvature. 

\begin{proposition}\label{more general inj lower bound}
If in the previous proposition we assume instead $\emph{Ric}_M\geq -(n-1)c$ for some $c>0$, and $H\geq  H_0>0$. If $i_b< -\frac{1}{2}(n-1)\ln \big{|}\frac{H_0-(n-1)\sqrt{c}}{H_0+(n-1)\sqrt{c}}\big{|}$, then 
there exists a focal point of $\partial M$ whose distance to $\partial M$ is equal to $i_b$. 
\end{proposition}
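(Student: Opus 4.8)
The strategy is to mimic the argument of Proposition \ref{existence of focal points} but to track the mean curvature along the normal geodesics via a Riccati-type comparison rather than using $\mathrm{Ric}_M\geq 0$ directly. Suppose for contradiction that there is no focal point of $\partial M$ at distance $i_b$. By the preceding proposition of Kodani, there is then a smooth geodesic $\gamma$ of length $2i_b$ perpendicular to $\partial M$ at both endpoints; keep the notation $p_1,p_2,q,h_1,h_2,\Sigma,\Sigma_1,\Sigma_2,N_0$ as in the proof of Proposition \ref{existence of focal points}. Exactly as there, $H_\Sigma=\tfrac12(H_{\Sigma_1}+H_{\Sigma_2})$ and $\Delta_\Sigma h_1(q)=-\tfrac12\big(H_{\Sigma_2}(q)-H_{\Sigma_1}(q)\big)$, where the signs are fixed by $\nabla f=-N$ near $\partial M$. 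So it suffices to show $H_{\Sigma_2}(q)>H_{\Sigma_1}(q)$ (equivalently, with the sign conventions in the excerpt, that $-H_{\Sigma_1}(q)$ and $H_{\Sigma_2}(q)$ are both positive and hence $\Delta_\Sigma h_1(q)<0$), which again produces a nearby point $q''\in\Sigma$ with $h_1(q'')<i_b$, contradicting $d(\cdot,\partial M)=i_b$ at $q$.

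\textbf{Key analytic step.} The heart of the matter is the ODE comparison for the mean curvature along $\gamma_{p_1}$ and $\gamma_{p_2}$. Along a unit-speed normal geodesic, set $h(t)=H_{\Sigma_t}$ for the level sets of the distance function; by the traced Riccati equation (\ref{evolution of mean curvature}), $h'=-|S|^2-\mathrm{Ric}(\dot\gamma,\dot\gamma)\leq -\tfrac{1}{n-1}h^2+(n-1)c$, using Cauchy–Schwarz $|S|^2\geq H^2/(n-1)$ and $\mathrm{Ric}\geq -(n-1)c$. With the convention that $f=-d(\cdot,\partial M)$, the initial mean curvature at $\partial M$ is $\geq H_0>0$, and one compares $h(t)$ with the solution of $u'=-\tfrac{1}{n-1}u^2+(n-1)c$, $u(0)=H_0$, namely $u(t)=(n-1)\sqrt{c}\,\coth\!\big(\sqrt{c}\,t+\mathrm{arccoth}(H_0/((n-1)\sqrt c))\big)$ in the regime $H_0>(n-1)\sqrt c$, and the analogous $\tanh$/blow-up expression when $H_0<(n-1)\sqrt c$. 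In all cases $u$ stays positive (and in fact remains bounded below by a positive constant, or blows up to $+\infty$) on the interval $[0,t_\ast)$, where $t_\ast$ is exactly the quantity $-\tfrac12(n-1)\ln\big|\tfrac{H_0-(n-1)\sqrt c}{H_0+(n-1)\sqrt c}\big|$ appearing in the hypothesis; this is just the time at which the comparison solution with reversed sign would degenerate. Hence if $i_b<t_\ast$, the mean curvature of $\Sigma_1$ computed from $p_1$ at parameter $i_b$ satisfies $-H_{\Sigma_1}(q)\geq u(i_b)>0$, and symmetrically $H_{\Sigma_2}(q)\geq u(i_b)>0$ along $\gamma_{p_2}$, which is precisely what the contradiction requires.

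\textbf{Remaining bookkeeping.} What is left is routine: verify that $h_1,h_2$ are genuinely smooth distance functions on a neighborhood of $\gamma_{p_1}|_{[0,i_b]}$ and $\gamma_{p_2}|_{[0,i_b]}$ (this is where the no-focal-point assumption is used, via the absence of conjugate/focal points before $i_b$), check the orientation conventions so that the $II_\Sigma=\tfrac12(II_{\Sigma_1}+II_{\Sigma_2})$ identity and the Laplacian-on-a-hypersurface formula come out with the signs stated in Proposition \ref{existence of focal points}, and note that the monotonicity $h'\leq -\tfrac{1}{n-1}h^2+(n-1)c$ together with $h(0)\geq H_0$ gives $h(t)\geq u(t)$ by a standard ODE comparison (Grönwall-type) argument.

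\textbf{Main obstacle.} The delicate point is pinning down the exact threshold $-\tfrac12(n-1)\ln\big|\tfrac{H_0-(n-1)\sqrt c}{H_0+(n-1)\sqrt c}\big|$: one must solve the Riccati comparison ODE explicitly and identify the time up to which positivity of the mean curvature is guaranteed, handling uniformly the three cases $H_0>(n-1)\sqrt c$, $H_0=(n-1)\sqrt c$, and $H_0<(n-1)\sqrt c$ (in the last of which the comparison solution blows up in finite time, which only helps). Everything else is a direct transcription of the proof of Proposition \ref{existence of focal points}.
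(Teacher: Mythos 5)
Your overall strategy coincides with the paper's: use the Kodani proposition to produce a geodesic perpendicular to $\partial M$ at both ends, reuse the $\Delta_\Sigma h_1(q)<0$ argument from Proposition \ref{existence of focal points}, and replace the crude $\text{Ric}\ge 0$ step with a Riccati comparison along the normal geodesic. However, the Riccati comparison as you have written it is wrong in both the sign of the inequality and the direction of the comparison, and these two errors do not cancel. In the paper's conventions, $f=-d(\cdot,\partial M)$, so $\nabla f$ points \emph{outward}; if $h(t)$ is the mean curvature at distance $t$ going \emph{inward}, then $h'=L_N H=-L_{\nabla f}H=+|S|^2+\text{Ric}(N,N)\ge \tfrac{1}{n-1}h^2-(n-1)c$, not $\le -\tfrac{1}{n-1}h^2+(n-1)c$ as you wrote. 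With your (wrong-sign) inequality $h'\le F(h)$ and $h(0)\ge u(0)$, standard ODE comparison gives no conclusion at all: you would need $h'\ge F(h)$ to deduce $h\ge u$. Moreover your comparison ODE $u'=-\tfrac{1}{n-1}u^2+(n-1)c$ with $u(0)=H_0>0$ stays trapped between its two equilibria and remains positive for all time (it neither hits zero nor blows up), so it cannot produce the finite threshold in the statement; your remark that it ``blows up in finite time'' is not correct for this $u$.

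The fix, which is what the paper does, is to use the correctly signed inequality $h'\ge \tfrac{1}{n-1}h^2-(n-1)c$ and compare with the solution of $f'=\tfrac{1}{n-1}f^2-(n-1)c$, $f(0)=H_0$. Since $h'\ge F(h)$, $f'=F(f)$ and $h(0)\ge f(0)$, one gets $h(t)\ge f(t)$. When $H_0\ge(n-1)\sqrt c$, $f$ is nondecreasing and stays positive forever (indeed blows up if $H_0>(n-1)\sqrt c$); when $H_0<(n-1)\sqrt c$, $f$ decreases and first reaches $0$ at the finite time obtained by integrating the separable equation, which is exactly the threshold in the statement, so $f(i_b)>0$ for $i_b$ below the threshold. (As a side remark independent of your proposal: integrating $f'=\tfrac1{n-1}f^2-(n-1)c$ from $H_0$ to $0$ gives the time $\tfrac{1}{2\sqrt c}\ln\frac{(n-1)\sqrt c+H_0}{(n-1)\sqrt c-H_0}$, whereas the paper's displayed threshold has prefactor $\tfrac{1}{2}(n-1)$; this looks like a typo in the paper, but it does not affect the logic.) Once $h(i_b)=-H_{\Sigma_1}(q)>0$ and symmetrically $H_{\Sigma_2}(q)>0$, the rest of the argument is exactly Proposition \ref{existence of focal points} and is fine as you outlined.
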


\begin{proof} Suppose the conclusion is not true, follow the arguments as before except for second last sentence. Let  $S_i(X)=-\nabla_{X}\nabla h_i$, $H_i=Tr S_i=-\Delta h_i$, and identify a neighborhood of $\gamma_{p_i}|_{[0,i_b]}$ with a subset of $\partial M\times\mathbb{R}$ via $\exp^\perp$. Then by (\ref{evolution of mean curvature}),$$\partial_t{ H_i}=|S_i|^2+\text{Ric}(\nabla h_i,\nabla h_i)\geq\frac{1}{n-1}H_i^2-(n-1)c$$ and $H(p_i,0)\geq H_0$.
Let $f$ solves the ODE on $[0,i_b]$
 $$ f'=\frac{1}{n-1}f^2-(n-1)c$$ and $f(0)=H_0$, then we have $H_i(p_i,t)\geq f(t)  $.  In particular, $H_i(p_i,i_b)\geq f(i_b)>0$, which leads to a contradiction as before.

\end{proof}

Proposition $\ref{existence of focal points}$ implies

\begin{corollary}\label{Ric positive, mean positive, inj lower bound}
Let $(M,g)$ be a compact Riemannian manifold with boundary, $K>0,\lambda>0$ are constants.
Suppose $\sec \leq K$, $S\leq \lambda$, $H>0$, $\emph{Ric}_M \geq 0$, then $i_b\geq \frac{1}{\sqrt{K}}\emph{arccot}{\frac{\lambda}{\sqrt{K}}}.  $
\end{corollary}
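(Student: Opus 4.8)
\textbf{Proof plan for Corollary \ref{Ric positive, mean positive, inj lower bound}.}

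The plan is to combine the existence of a focal point at distance $i_b$ (Proposition \ref{existence of focal points}) with a Rauch-type comparison estimate that bounds from below the distance to the first focal point along a normal geodesic, using the sectional curvature upper bound $\sec\le K$ and the shape operator bound $S\le\lambda$ on $\partial M$. First I would invoke Proposition \ref{existence of focal points}: since $\mathrm{Ric}_M\ge 0$ and $H>0$, there is a point $p\in\partial M$ and a focal point $q=\gamma_p(i_b)$ of $\partial M$ along the inward normal geodesic $\gamma_p$, with $d(q,\partial M)=i_b$. By the characterization recalled in the bullet points after the boundary exponential map, $q$ being a focal point means there is a nontrivial $\partial M$-Jacobi field $V$ along $\gamma_p$, i.e. a Jacobi field with $V(0)\in T_p\partial M$ and $V'(0)+S(V(0))\in T_p^\perp\partial M = \mathbb{R}\,\gamma_p'(0)$, such that $V(i_b)=0$; one may normalize $|V(0)|=1$ and note $V'(0) = -S(V(0)) + (\text{normal component})$, but since $V$ stays tangent to the level sets one gets $\langle V'(0),\gamma_p'(0)\rangle$ controlled, and effectively $|V'(0)+S(V(0))|$ can be taken to vanish after projecting, so $|V'(0)|\le |S(V(0))|\le \lambda$.

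The heart of the argument is the following comparison claim: if $\sec\le K$ along $\gamma_p$ and $W$ is a nontrivial $\partial M$-Jacobi field with $|W(0)|=1$ and $\langle W'(0),W(0)\rangle \le \lambda$ (more precisely with initial shape-operator data bounded by $\lambda$), then $W(t)\ne 0$ for all $t< \frac{1}{\sqrt K}\mathrm{arccot}\frac{\lambda}{\sqrt K}$. To prove this I would consider $u(t)=|W(t)|$ where $W$ does not vanish, compute $u'' \ge -K u$ from the Jacobi equation together with $\sec\le K$ (this is the standard Rauch computation: $u u'' = \langle W,W''\rangle + |W'|^2 - (u')^2 \ge -\langle R(W,\gamma')\gamma',W\rangle \ge -K|W|^2$ using Cauchy--Schwarz for $|W'|^2\ge (u')^2$), with initial conditions $u(0)=1$, $u'(0)=\langle W'(0),W(0)\rangle \le \lambda$. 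Comparing with the solution $\phi(t)=\cos(\sqrt K t) + \frac{\lambda}{\sqrt K}\sin(\sqrt K t)$ of $\phi''=-K\phi$, $\phi(0)=1$, $\phi'(0)=\lambda$ via a Sturm comparison argument shows $u(t)>0$ as long as $\phi(t)>0$, and $\phi$ has its first zero exactly at $t_0 = \frac{1}{\sqrt K}\mathrm{arccot}\big(-\frac{\lambda}{\sqrt K}\big)$. Wait — one must be careful: the first zero of $\cos(\sqrt K t)+\frac{\lambda}{\sqrt K}\sin(\sqrt K t)$ with $\lambda>0$ occurs at $\sqrt K t = \pi - \mathrm{arctan}(\sqrt K/\lambda) = \mathrm{arccot}(-\lambda/\sqrt K)$, which is larger than $\frac{1}{\sqrt K}\mathrm{arccot}\frac{\lambda}{\sqrt K}$; since we only need the stated (weaker) lower bound $\frac{1}{\sqrt K}\mathrm{arccot}\frac{\lambda}{\sqrt K}$ on $i_b$, the comparison amply suffices, so there is no first focal point before $t_0 \ge \frac{1}{\sqrt K}\mathrm{arccot}\frac{\lambda}{\sqrt K}$, hence $i_b = d(q,\partial M)\ge \frac{1}{\sqrt K}\mathrm{arccot}\frac{\lambda}{\sqrt K}$.

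I expect the main obstacle to be the careful bookkeeping of the initial conditions for the $\partial M$-Jacobi field: splitting $V$ into components tangent and normal to $\gamma_p'$, checking that $\langle V(t),\gamma_p'(t)\rangle$ is affine and in fact constant zero (so that $V$ is genuinely a variation through unit-normal geodesics emanating from $\partial M$, staying perpendicular to $\gamma_p'$), and deducing the clean inequality $u'(0)\le\lambda$ from $S\le\lambda$. Once the reduction to the scalar Sturm comparison $u''\ge -Ku$, $u(0)=1$, $u'(0)\le\lambda$ is in place, the rest is the standard index/Sturm comparison lemma. A minor point to handle is the degenerate possibility $V(0)=0$ with $V'(0)\perp T_p\partial M$ nonzero, but such a $V$ is proportional to $t\,\gamma_p'(t)$ and never vanishes for $t>0$, so it does not produce a focal point and can be excluded; thus any genuine focal point arises from a $\partial M$-Jacobi field with $V(0)\ne 0$, and after rescaling we are in the situation above.
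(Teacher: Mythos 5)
Your overall strategy matches the paper's: invoke Proposition \ref{existence of focal points} to produce a focal point at distance exactly $i_b$, then rule out focal points before $\frac{1}{\sqrt K}\mathrm{arccot}\frac{\lambda}{\sqrt K}$ by a Rauch/Sturm comparison for $\partial M$-Jacobi fields. The paper simply cites ``comparison theorem for Jacobi fields'' for the second step, so spelling it out is welcome, but your execution of the comparison has a sign error that breaks the argument as written.

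The issue is the initial condition for $u=|V|$ and the direction of the Sturm comparison. With the paper's convention $S(X)=\nabla_X\nabla f$, $\nabla f=-N$ pointing outward (so $S\ge 0$ when $\partial M$ is convex), a normalized $\partial M$-Jacobi field with $V(0)\neq 0$ has $V'(0)=-S(V(0))$ up to a multiple of $\gamma_p'(0)$, and since $V(0)\perp\gamma_p'(0)$ this gives $u'(0)=\langle V'(0),V(0)\rangle=-\langle S(V(0)),V(0)\rangle\ge -\lambda$ from $S\le\lambda$, \emph{not} $u'(0)\le\lambda$ as you wrote (that would require $S\ge-\lambda$). The correct comparison function is therefore $\phi(t)=\cos(\sqrt K\, t)-\frac{\lambda}{\sqrt K}\sin(\sqrt K\, t)$, with $\phi(0)=1$, $\phi'(0)=-\lambda$, whose first zero is exactly $\frac{1}{\sqrt K}\mathrm{arccot}\frac{\lambda}{\sqrt K}$, and the Sturm comparison uses $u(0)=\phi(0)$, $u'(0)\ge\phi'(0)$, $u''\ge -Ku$ to conclude $u\ge\phi>0$ on $[0,\frac{1}{\sqrt K}\mathrm{arccot}\frac{\lambda}{\sqrt K})$. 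As you have it, with $\phi'(0)=+\lambda$ and $u'(0)\le\lambda$, the inequalities run the wrong way: $u'(0)\le\phi'(0)$ only yields $u\le\phi$ (via the Wronskian argument), which says nothing about $u$ staying positive, so the remark that ``the comparison amply suffices'' does not actually close the argument. Once the sign is fixed, the rest of your reasoning---including the exclusion of the degenerate case $V(0)=0$, where $V(t)=ct\gamma_p'(t)$ never vanishes for $t>0$---goes through.
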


\begin{proof}
By Proposition \ref{existence of focal points}, there exists $p\in\partial M$ such that $\gamma_p(i_b)$ is a focal point along $\gamma_p$. If $i_b<\frac{1}{\sqrt{K}}\text{arccot}{\frac{\lambda}{\sqrt{K}}} $, from comparison theorem for Jacobi fields, we know  $\gamma_p(i_b)$ cannot be a focal point along $\gamma_p$, which is a contradiction. 
\end{proof}

Similarly, Proposition \ref{more general inj lower bound} implies

\begin{corollary}\label{inj lower bound limit}
Let $(M,g)$ be a compact Riemannian manifold with boundary. Suppose $|Rm|\leq C$, $|S|\leq C$, $H\geq H_0>0$, then we can find $i_0$ depending explicitly on $C,H_0$ such that $i_b\geq i_0.$
\end{corollary}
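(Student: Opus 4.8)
The plan is to deduce Corollary \ref{inj lower bound limit} directly from Proposition \ref{more general inj lower bound} together with the Jacobi field comparison argument already used for Corollary \ref{Ric positive, mean positive, inj lower bound}, treating the two possible cases separately. First I would observe that the hypothesis $|Rm|\leq C$ gives both a sectional curvature bound $\sec\leq C$ and a Ricci lower bound $\mathrm{Ric}_M\geq -(n-1)c$ with $c=c(C,n)$, so that Proposition \ref{more general inj lower bound} applies. Set $L_0:=-\tfrac{1}{2}(n-1)\ln\bigl|\tfrac{H_0-(n-1)\sqrt c}{H_0+(n-1)\sqrt c}\bigr|$, the threshold appearing in that proposition (note it depends only on $C,H_0,n$, and we may harmlessly shrink it so that it is positive and finite, replacing $H_0$ by $\min(H_0, (n-1)\sqrt{c}/2)$ if necessary so the logarithm is of a number in $(0,1)$).

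The main dichotomy: either $i_b\geq L_0$, in which case we are done with $i_0=L_0$; or $i_b< L_0$, in which case Proposition \ref{more general inj lower bound} guarantees a point $p\in\partial M$ such that $\gamma_p(i_b)$ is a focal point of $\partial M$ along $\gamma_p$. In the latter case I would run the Jacobi field comparison exactly as in the proof of Corollary \ref{Ric positive, mean positive, inj lower bound}: a $\partial M$-Jacobi field $V$ along $\gamma_p$ vanishing at $t=i_b$ satisfies $V''=-R(V,\gamma_p')\gamma_p'$ with $V(0)\in T_p\partial M$ and $V'(0)+S(V(0))\in T_p^\perp\partial M$; using $|Rm|\leq C$ to bound the curvature operator and $|S|\leq C$ to bound the initial shape operator, a standard Riccati/Sturm comparison shows that no such nonzero $V$ can vanish before some explicit time $t_1=t_1(C)>0$ (concretely one compares with the model solution $\cos(\sqrt C\, t)-\tfrac{C}{\sqrt C}\sin(\sqrt C\,t)$, i.e.\ $i_b\geq \tfrac{1}{\sqrt C}\operatorname{arccot}\tfrac{C}{\sqrt C}$ as in the previous corollary). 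Hence $i_b\geq t_1$ in this case.

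Combining the two cases, $i_b\geq i_0:=\min(L_0,t_1)>0$, and $i_0$ depends only on $C$ and $H_0$ (and the dimension $n$), as claimed. I do not expect a serious obstacle here: the only mild technical point is bookkeeping the constant $c=c(C,n)$ in the Ricci lower bound and checking the threshold $L_0$ is a genuine positive number, which is immediate once one notes $H_0$ can be assumed small relative to $(n-1)\sqrt c$; the focal-point comparison is entirely routine and already invoked verbatim in Corollary \ref{Ric positive, mean positive, inj lower bound}. The one thing to be careful about is that Proposition \ref{more general inj lower bound} is phrased as a conditional existence of a focal point \emph{under} the smallness assumption on $i_b$, so the logical structure must be the contrapositive-style case split above rather than a direct application.
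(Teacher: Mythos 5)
Your proof is correct and is essentially the expansion the paper has in mind: the paper disposes of this corollary with the single remark that it follows from Proposition~\ref{more general inj lower bound} ``similarly'' to the way Corollary~\ref{Ric positive, mean positive, inj lower bound} follows from Proposition~\ref{existence of focal points}, and your case split ($i_b\geq L_0$ trivially, versus $i_b<L_0$ forcing a focal point at distance $i_b$ which the Jacobi field comparison using $\sec\leq C$, $S\leq C$ rules out below $t_1(C)$) is exactly that argument made explicit. One small remark: the hedging about shrinking $H_0$ to keep the threshold finite is unnecessary, since $\bigl|\tfrac{H_0-(n-1)\sqrt c}{H_0+(n-1)\sqrt c}\bigr|\in[0,1)$ for all $H_0,c>0$, so the logarithm is always $\leq 0$ and the threshold is always positive (equal to $+\infty$ precisely when $H_0=(n-1)\sqrt c$, which is harmless since it only enlarges the range where the proposition applies); but this does not affect the correctness of the argument.
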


\begin{remark}\label{inj lower bound limit remark}
In the previous two corollaries, if the sectional curvature and Ricci curvature bounds only holds for $N_1(\partial M,g)$, then we also have a $i_b$ lower bound. In the case of Corollary \ref{Ric positive, mean positive, inj lower bound}, we have $i_b\geq \min\{\frac{1}{\sqrt{K}}\text{arccot}{\frac{\lambda}{\sqrt{K}}} ,1\}$. 
\end{remark}

\begin{remark}
In the same setting as the previous two corollaries, \cite{knox2012compactness} Lemma 2.2 claimed to prove a lower bound for $i_b$, using a similar method as \cite{Anderson2012BoundaryVP} Lemma 2.4.  In both papers, there is a logic problem that they get a contradiction with an unjustified statement: Let $M$ be a Riemannian manifold with boundary, $\gamma:[0,l]\rightarrow M$ be a geodesic that is perpendicular to the boundary at both end points, and suppose there is no focal point along $\gamma$ for both boundary portions, then $I_1(V,V)\geq 0$ for any smooth vector field along $\gamma$ with $V(0),V(l)\in T{\partial M}$. Here $$I_1(V,V)=\int_{0}^l \langle V', V'\rangle-\langle R(V,\gamma')\gamma',V \rangle dt- \langle S(V(0)),V(0)\rangle-\langle S(V(l)),V(l)\rangle. $$ In fact, this unjustified statement is not true, and one can easily think of an example: let $$\Sigma_1=\{(x',x^n)\in\mathbb{R}^n\big | |x'|^2+(1-x^n)^2=R_1^2\},$$ $$\Sigma_2=\{(x',x^n)\in\mathbb{R}^n\big | |x'|^2+(1+x^n)^2=R_2^2\},$$ $R_1,R_2>2$
and $\gamma(t)=(0,1-t), 0\leq t\leq 2 $, then $I_1(V,V)=-\frac{1}{R_1}-\frac{1}{R_2
}<0$ for any unit-norm parallel vector field along $\gamma$ with $V(0)\in T_{\gamma(0)}\Sigma_1$. In this case, there exist no focal points on $\gamma$ for both $\Sigma_1,\Sigma_2$.

In fact, focal points give crucial information for index form defined by one submanifold and one point. However, as seen from the example, they do not fit well with the index form defined for two submanifolds. Indeed, there is some notion of ``conjugate point'' defined for two submanifolds, see \cite{ambrose1961index}. 
\end{remark}

It is easy to see focal points can ``pass to the limit'', since they arise from kernels of the differential of exponential maps. Though one can use Corollary \ref{inj lower bound limit} directly in many situations, we point out this fact here, which may help in contradiction arguments.

\begin{proposition}\label{pass to limit}
Let $M$ be a manifold with boundary, $g_i$ be a sequence of Riemannian metrics on $M$, and $p_i\in\partial M$. Suppose $g_i$ converges to a Riemannian metric $g_{\infty}$ smoothly and $p_i$ converges to $p_\infty\in\partial M$, $\gamma_{p_i}$ is defined on $[0,b]$ and
$\gamma_{p_i}(t_i)$ is a focal point along $\gamma_{p_i}$ with $0<a\leq t_i\leq b$. Then for a subsequence, $\gamma_{p_i}$ converges smoothly to $\gamma_{p_\infty}$, $t_i\rightarrow t_\infty$ and $\gamma_{p_\infty}(t_\infty)$ is a focal point along $\gamma_{p_\infty}$.
\end{proposition}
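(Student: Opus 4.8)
The plan is to characterize a focal point along a normal geodesic as the vanishing locus of a $\partial M$-Jacobi field, and then observe that everything in this characterization depends continuously on the metric and the initial foot point. Concretely, fix coordinates near $p_\infty$ on $\partial M$ and work in the normal bundle trivialization. For each $i$, the geodesic $\gamma_{p_i}$ with $\gamma_{p_i}(0)=p_i$, $\gamma_{p_i}'(0)=N_{p_i}^{g_i}$ solves the geodesic ODE whose coefficients are smooth functions of $g_i$ and its derivatives; since $g_i\to g_\infty$ smoothly and $p_i\to p_\infty$, and the inward unit normal $N_{p_i}^{g_i}$ depends smoothly on $(g_i,p_i)$, standard ODE continuous dependence on parameters and initial conditions gives that $\gamma_{p_i}\to\gamma_{p_\infty}$ in $C^\infty([0,b])$ (after noting all of them stay in a fixed compact set where the metrics are uniformly controlled). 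Passing to a subsequence, $t_i\to t_\infty\in[a,b]$.

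Next I would set up the $\partial M$-Jacobi field condition as a boundary-value / initial-value problem. By the first bullet after Proposition~\ref{first focal point two foot point}, $\gamma_{p_i}(t_i)$ being a focal point means there is a nonzero Jacobi field $V_i$ along $\gamma_{p_i}$ with $V_i(0)\in T_{p_i}\partial M$, $V_i'(0)+S_{g_i}(V_i(0))\in T_{p_i}^\perp\partial M$, and $V_i(t_i)=0$. Normalize $|V_i(0)|^2 + |V_i'(0)|^2 = 1$ in a fixed background metric. The Jacobi equation $V_i'' + R_{g_i}(V_i,\gamma_{p_i}')\gamma_{p_i}' = 0$ is a linear second-order ODE along $\gamma_{p_i}$ with coefficients converging smoothly (because $\gamma_{p_i}\to\gamma_{p_\infty}$ and $g_i\to g_\infty$ smoothly, so the curvature tensors along the curves converge). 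The shape operator $S_{g_i}$ of $\partial M$ at $p_i$ also converges to $S_{g_\infty}$ at $p_\infty$. Hence the initial data $(V_i(0), V_i'(0))$, which lie in a compact set cut out by the linear constraints "$V_i(0)\in T_{p_i}\partial M$", "$V_i'(0)+S_{g_i}(V_i(0))\perp T_{p_i}\partial M$" together with the unit-norm normalization, subconverge to some $(V_\infty(0), V_\infty'(0))$ satisfying the limiting constraints, and $(V_\infty(0),V_\infty'(0))\neq 0$. Solving the limiting Jacobi equation with this data yields a nonzero $\partial M$-Jacobi field $V_\infty$ along $\gamma_{p_\infty}$, and by continuous dependence $V_i \to V_\infty$ uniformly on $[0,b]$. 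Since $V_i(t_i)=0$ and $t_i\to t_\infty$, we get $V_\infty(t_\infty)=0$, so $\gamma_{p_\infty}(t_\infty)$ is a focal point along $\gamma_{p_\infty}$.

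The one genuine subtlety I would be careful about is making sure the limiting Jacobi field does not degenerate to zero and that $t_\infty>0$, so that $\gamma_{p_\infty}(t_\infty)$ is a bona fide focal point rather than the trivial statement $V(0)=0$ at $t=0$; this is exactly why the hypothesis $0<a\le t_i$ is imposed and why I normalize the initial data. A zero limiting Jacobi field would force $(V_\infty(0),V_\infty'(0))=0$, contradicting the unit-norm normalization passed to the limit. One should also check that the constraint set for $(V(0),V'(0))$ is genuinely closed under the limit — this is immediate since $T_{p_i}\partial M$ and $S_{g_i}|_{p_i}$ vary continuously, so the constraints are given by continuously-varying linear equations, and the intersection with the unit sphere is compact. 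Everything else is routine continuous dependence of solutions of linear and nonlinear ODEs on parameters, carried out on the fixed compact interval $[0,b]$.

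I would also remark, as the excerpt already hints ("focal points can pass to the limit, since they arise from kernels of the differential of exponential maps"), that an alternative and essentially equivalent phrasing is: $\gamma_{p_i}(t_i)$ is a critical value of $\exp^{\perp}_{g_i}$, i.e.\ $d\exp^{\perp}_{g_i}$ is singular at $(p_i, t_i N_{p_i})$; the family of differentials $d\exp^{\perp}_{g_i}$ converges smoothly, and the locus where a smooth family of linear maps drops rank is closed, which again gives the claim. I would present the Jacobi-field version as the main argument since it makes the role of the lower bound $a$ on $t_i$ most transparent.
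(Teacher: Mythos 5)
Your proof is correct and follows essentially the same strategy as the paper: characterize focal points via $\partial M$-Jacobi fields and use smooth dependence of ODE solutions on parameters and initial data to pass the vanishing condition to the limit. The only cosmetic difference is that you normalize the Jacobi field's initial data at $t=0$ (using compactness of the constrained unit sphere there), whereas the paper normalizes the derivative at the terminal point $t_i$ and rescales the time parameter to a common interval; both yield the required nontrivial limit Jacobi field vanishing at $t_\infty>0$.
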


\begin{proof}
$\gamma_{p_i}'(0)\in TM$ is a bounded sequence, hence subconverges to some $v\in TM $, which must equal to $\gamma_{p_\infty}'(0)$. Hence by ODE theories, $\gamma_{p_i}$ converges smoothly to $\gamma_{p_\infty}$. Suppose for a subsequence $t_i\rightarrow t_\infty$. Let $J_i:[0,t_i]\rightarrow TM$ be $\partial M$-Jacobi fields along $\gamma_i$ with $J_i(t_i)=0, |J_i'(t_i)|=\frac{t_\infty}{t_i}$. Normalize these geodesics and Jacobi-fields by $\bar\gamma_i(t)={\gamma}_i(\frac{t_i}{t_\infty} t)$, $\bar{J}_i(t)=J_i(\frac{t_i}{t_\infty} t)$, so  $\bar\gamma_i, \bar{J}_i$ are defined on the same interval $[0,t_\infty]$, and
\begin{equation}
\begin{split}
\bar J_i''(t)&+  Rm_{g_i}(\bar{J}_i(t),\bar\gamma_i'(t))\bar\gamma_i'(t)=0,\\
 \bar{J}_i(t_\infty)&=0,|\bar{J}_i'(t_\infty)|=1.
\end{split}\end{equation}
For a subsequence $\bar{J_i}'(t_\infty)\rightarrow w$ with $|w|=1$. Let ${J_\infty}$ be the non-trivial Jacobi-field along $\gamma_{p_\infty}$ with $J_\infty(t_\infty)=0, J'_\infty(t_\infty)=w$, then $J_i$ converges smoothly as maps $[0,t_\infty]\rightarrow TM$ to $J_\infty$ by ODE theories. Hence $J_\infty$ is a $\partial M$-Jacobi field, which implies $\gamma_{p_\infty}(t_\infty)$ is a focal point along $\gamma_{p_\infty}.$
\end{proof}

\subsection{Volume estimates near boundary}

In this subsection, we show some volume lower bounds near boundary under some geometric control. These estimates can be found in \cite{anderson2004boundary} and \cite{knox2012compactness}.

\begin{proposition}\label{volume of a metric cyclinder}
Let $({M},g)$ is a Riemannian manifold with boundary, suppose $\emph{Ric}\geq -(n-1)c$ for some $c\geq 0$, and $\exp^\perp$ is an diffeomorphism in $B\times [0,T)$ for some open subset $B$ of $\partial M$, then $$\sup_{B\times \{t\}} H\leq\max\{(n-1)\sqrt{2c},4(n-1)^{-1}T^{-1}\},$$
$$\emph{vol}(C(B,t_1,t_2))\geq C(n,T,c)\emph{vol}_{\partial M}(B)|t_2-t_1|,$$ where $0\leq t, t_1,t_2\leq\frac{1}{2}T.$

\end{proposition}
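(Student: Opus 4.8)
The plan is to work entirely in geodesic gauge, where the boundary exponential map pulls back $g$ to a metric of the form $dt^2 + g_t$ on $B \times [0,T)$, with $g_0$ the induced metric on $B$ and $g_t$ a smooth family of metrics on the slices. Let $J(t)$ be the Jacobian of $\exp^\perp$, i.e. $\mathrm{dvol}_g = J(p,t)\, \mathrm{dvol}_{\partial M}(p)\, dt$, so that $J(p,0) = 1$. The logarithmic derivative of $J$ along $t$ is exactly the mean curvature of the slice $\{t = \mathrm{const}\}$ with respect to $\nabla t$: namely $\partial_t \log J = \mathrm{tr}(S_t) = H_t$, where $S_t(X) = \nabla_X \nabla t$ is the shape operator from \eqref{(1,1) tensor shape operator}. (Here $H_t$ is the mean curvature in the outward-from-$\partial M$ convention, consistent with the sign in \eqref{evolution of mean curvature}.) Then the volume of the cylinder is $\mathrm{vol}(C(B,t_1,t_2)) = \int_B \int_{t_1}^{t_2} J(p,t)\, dt\, \mathrm{dvol}_{\partial M}(p)$, so both assertions reduce to pointwise control of $H_t$ from above and of $J(p,t)$ from below on $[0,\tfrac12 T]$.

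The first step is the upper bound on $H$. By the traced radial curvature equation \eqref{evolution of mean curvature}, along each normal geodesic $H_t$ satisfies $\partial_t H_t = -|S_t|^2 - \mathrm{Ric}(\partial_t,\partial_t) \le -\tfrac{1}{n-1} H_t^2 + (n-1)c$, using Cauchy–Schwarz ($|S_t|^2 \ge \tfrac{1}{n-1} H_t^2$) and $\mathrm{Ric} \ge -(n-1)c$. This is a Riccati differential inequality. Since $\exp^\perp$ is a diffeomorphism on $B \times [0,T)$, $H_t$ stays finite (no focal point) on all of $[0,T)$; a standard comparison argument for the Riccati inequality then forces $H_t$ at time $t \in [0,\tfrac12 T]$ to be bounded above by the value at time $t$ of the maximal solution that is finite on all of $[0,T)$. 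Comparing with the explicit solutions of $u' = -\tfrac{1}{n-1}u^2 + (n-1)c$ (which are $(n-1)\sqrt c \coth$ or $\tanh$ or $\mathrm{cot}$ type, and in the $c=0$ case simply $u = (n-1)/(t - t_0)$), one extracts that $H_t \le \max\{(n-1)\sqrt{2c}, \, 4(n-1) T^{-1}\}$ for $t \le \tfrac12 T$: roughly, either the curvature term dominates and $H_t \lesssim (n-1)\sqrt{c}$, or the ``time to blow-up'' constraint $T$ dominates and $H_t \lesssim (n-1)/(T - t) \le 2(n-1)/T$, with the stated constant absorbing both regimes; I would spell out the elementary ODE comparison to land on exactly the constants claimed.

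The second step is the volume lower bound. From $\partial_t \log J = H_t$ and the upper bound on $H_t$ just obtained on $[0,\tfrac12 T]$, together with a lower bound $H_t \ge -$(something) — which again comes from the Riccati inequality run the other way, or more simply from the fact that $J(p,\cdot)$ cannot vanish on $[0,T)$ since $\exp^\perp$ is a diffeomorphism there, giving a uniform lower bound $J(p,t) \ge c(n,T,c) > 0$ on $[0,\tfrac12 T]$ by integrating the (now two-sided) bound on $\partial_t \log J$ from $J(p,0)=1$. Integrating over $B \times [t_1,t_2] \subset B \times [0,\tfrac12 T]$ gives $\mathrm{vol}(C(B,t_1,t_2)) \ge c(n,T,c)\, \mathrm{vol}_{\partial M}(B)\, |t_2 - t_1|$, as desired.

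The main obstacle is the ODE comparison bookkeeping in the first step: one must argue carefully that the diffeomorphism hypothesis on the \emph{full} interval $[0,T)$ (not just up to $\tfrac12 T$) is what prevents $H_t$ from blowing up before time $T$, and then translate ``no blow-up on $[0,T)$'' into the explicit numerical bound at time $\le \tfrac12 T$ with the precise constants $(n-1)\sqrt{2c}$ and $4(n-1)T^{-1}$. This is purely a matter of comparing with the family of explicit solutions of the scalar Riccati equation and checking cases; there is no geometric subtlety beyond \eqref{evolution of mean curvature} and Cauchy–Schwarz.
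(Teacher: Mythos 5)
The structure of your argument --- derive a Riccati inequality from \eqref{evolution of mean curvature}, use ``no blow-up before $T$'' to bound the mean curvature on $[0,\tfrac12T]$, then integrate the mean-curvature bound to control the Jacobian of $\exp^\perp$ --- is exactly the paper's. Two points need attention, one real.

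First, a sign problem in step one. You set $S_t(X)=\nabla_X\nabla t$ with $t=d(\cdot,\partial M)$, so that $\partial_t H_t=-|S_t|^2-\mathrm{Ric}(\partial_t,\partial_t)$. This is the convention in which $H_t<0$ for a convex boundary (for the Euclidean ball $H_t=-(n-1)/|x|$); it is the \emph{negative} of the paper's $H$, which comes from $f=-d(\cdot,\partial M)$ and satisfies $\partial_t H=+|S|^2+\mathrm{Ric}\ge\frac{1}{n-1}H^2-(n-1)c$. For your inequality $\partial_t H_t\le -\frac{1}{n-1}H_t^2+(n-1)c$ the Riccati blow-up is towards $-\infty$, so ``no focal point on $[0,T)$'' yields a \emph{lower} bound on your $H_t$, not the upper bound you claim by ``comparison with the maximal solution finite on $[0,T)$.'' Indeed for your sign the ODE $u'=-\frac{1}{n-1}u^2+(n-1)c$ has no finite upper envelope over $[0,T)$: anything starting at or above $-(n-1)\sqrt c$ stays finite. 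The conclusion you want (an upper bound on the paper's $H=-H_t$, equivalently a lower bound on your $H_t$) is recoverable, but as written the comparison direction is backwards and the parenthetical ``outward-from-$\partial M$ convention'' is inconsistent with $S_t=\nabla\nabla t$. This needs fixing before the step proves anything. (Incidentally, your constant $4(n-1)T^{-1}$ is the correct one produced by integrating $\partial_t(H^{-1})\le-\tfrac12(n-1)^{-1}$; the $4(n-1)^{-1}T^{-1}$ appearing in the proposition statement and at the end of the paper's proof seems to come from a slip in the paper's own integration, which drops the inverse on $(n-1)$ in the middle line.)

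Second, for the volume estimate your ``more simply, $J$ cannot vanish on $[0,T)$'' alternative is not quantitative --- positivity of $J$ on $[0,T)$ alone says nothing about how small $J$ can be at $t=\tfrac12 T$. What works, and is what the paper does (slicewise via $\frac{d}{dt}\mathcal{H}^{n-1}(B_t)=-\int_{B_t}H$, which is the same as your pointwise $\partial_t\log J=H_t=-H$), is to integrate the one-sided mean-curvature bound from step one. You also only need that one-sided bound: $H\le C_1(n,T,c)$ on $[0,\tfrac12T]$ gives $J(p,t)\ge e^{-C_1 t}\ge e^{-C_1 T/2}$ there, and then the cylinder volume estimate follows. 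Asking for a ``two-sided'' bound on $\partial_t\log J$ is unnecessary and, on the $H_t\le$ side, unavailable by the Riccati argument.
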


\begin{proof}
By the evolution equation (\ref{evolution of mean curvature}),
\begin{equation}
\partial_t H=|S|^2+\text{Ric}(N_t,N_t),
\end{equation}
hence
\begin{equation}
\partial_t H\geq \frac{1}{n-1}H^2-(n-1)c.
\end{equation}
If for some $t_0\in[0,\frac{1}{2}T],z_0\in B$, we have $H(z_0,t_0)\geq \delta$ and $\delta\geq (n-1)\sqrt{2c}$, then $\partial_t H(z_0,t)\geq 0$ and $H(z_0,t)\geq (n-1)\sqrt{2c}$ for $t\in[t_0,T) $. Hence $$\partial_t H(z_0,t)\geq \frac{1}{2}(n-1)^{-1}H(z_0,t)^2,$$ $$H^{-1}(z_0,t)\leq H^{-1}(z_0,t_0)-\frac{1}{2}(n-1)(t-t_0)\leq \delta^{-1}-\frac{1}{2}(n-1)(t-\frac{1}{2}T).$$ Then the continuity of $H(z_0,\cdot)$  in  $[0,T)$ forces $\delta\leq 4(n-1)^{-1}T^{-1}$, which proves the mean curvature estimate.

For $t\in[0,\frac{1}{2}T]$,  let $B_t=\exp^\perp(B\times \{t\})$, then $$\frac{d}{dt}\mathcal{H}^{n-1}(B_t)=-\int_{B_t} Hd\mathcal{H}^{n-1}(B_t)\geq-C_1(n,T,c)\mathcal{H}^{n-1}(B_t) ,$$where $\mathcal{H}^{n-1}$ denotes the $(n-1)$-dimensional Hausdorff measure of ${M}$. Hence $$\mathcal{H}^{n-1}(B_t)\geq e^{-C_1(n,T,c)t}\text{vol}_{\partial M}(B)\geq e^{-\frac{1}{2}C_1(n,T,c)T}\text{vol}_{\partial M}(B)$$ and when $0\leq t_1<t_2\leq \frac{1}{2}T$, $$\text{vol}(C(B,t_1,t_2))=\int_{t_1}^{t_2}\mathcal{H}^{n-1}(B_t)dt\geq C(n,T,c)\text{vol}_{\partial M}(B)(t_2-t_1).$$
\end{proof}

\begin{proposition}\label{volume noncollasping}
Let $M$ be a Riemannian manifold with boundary, $p\in\partial M$. Suppose  $B(p,2r_0)$ has compact closure,
$$\sup\limits_{B(p,2r_0)}|Rm|\leq C,$$
$$\emph{vol}_{\partial M}(B_{\partial M}(p,r_0))\geq v_0,$$
  $\exp^\perp$ is an diffeomorphism in $B_{\partial M}(p,r_0)\times [0,r_0)$, and on $B_{\partial M}(p,r_0)$ $$  \emph{Ric}_{\partial M}\geq -(n-2)c_0,  \ |S|\leq C,$$  then there exists $r_1>0,v_1>0$ depending on $n, C,c_0, r_0,v_0$, such that for $q=\exp^\perp(p,2r_1)$, we have $$\emph{vol}(B(q,r_1))\geq v_1.$$

\end{proposition}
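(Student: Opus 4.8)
The plan is to produce a definite-size metric ball around $q = \exp^\perp(p, 2r_1)$ that is uniformly non-collapsed, by first establishing that a fixed-size chunk of the collar $C(B_{\partial M}(p, r_1), 0, 2r_1)$ has definite volume and then showing this chunk sits inside $B(q, r_1)$ for a well-chosen $r_1$. First I would use the boundary volume bound together with the lower Ricci bound on $\partial M$ and the Bishop–Gromov comparison on the hypersurface $\partial M$ to get a lower bound on $\mathrm{vol}_{\partial M}(B_{\partial M}(p, s))$ for all scales $s \le r_0$; combined with the two-sided bound $|S| \le C$ on the collar and the interior curvature bound $|Rm| \le C$, I can invoke Proposition \ref{volume of a metric cyclinder} (with $T = r_0$, noting $\mathrm{Ric} \ge -(n-1)C$ on $B(p,2r_0)$) to conclude that the metric cylinder $C(B_{\partial M}(p, s), t_1, t_2)$ has volume bounded below by $c(n, C, c_0, r_0, v_0)\, \mathrm{vol}_{\partial M}(B_{\partial M}(p,s))\,|t_2 - t_1|$ whenever $0 \le t_1 < t_2 \le \tfrac12 r_0$. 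Choosing $r_1$ small (depending only on $n, C, c_0, r_0$) this already gives a definite lower bound for $\mathrm{vol}\big(C(B_{\partial M}(p, r_1), r_1, 3r_1)\big)$.

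Next I would show $C(B_{\partial M}(p, r_1), r_1, 3r_1) \subset B(q, r_1')$ for some $r_1'$ comparable to $r_1$, which is the geometric heart of the argument. The point $q$ lies on $\gamma_p$ at distance $2r_1$, and any point $z = \exp^\perp(y, \tau)$ with $d_{\partial M}(y, p) < r_1$, $r_1 < \tau < 3r_1$ can be joined to $q$ by concatenating: the normal geodesic from $z$ down to $y \in \partial M$ (length $\tau < 3r_1$), a path in $\partial M$ from $y$ to $p$ (length $< r_1$ in the induced length metric, which is comparable to the ambient distance once we know — from $|S| \le C$ and curvature bounds — that the collar geometry is controlled), and the normal geodesic from $p$ to $q$ (length $2r_1$). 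This shows $d(z, q) \le C' r_1$. Rescaling the choice of $r_1$ by the constant $C'$ (which again depends only on the allowed data) we arrange $C(B_{\partial M}(p, \rho), \rho, 3\rho) \subset B(q, r_1)$ for the final radius $r_1$, and then the volume lower bound from the previous paragraph applied at scale $\rho$ gives $\mathrm{vol}(B(q,r_1)) \ge v_1$.

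The main obstacle I anticipate is the containment step: controlling $d(z,q)$ requires that distances measured within the collar (using boundary paths plus normal segments) not be wildly larger than ambient distances, and more importantly that the normal geodesics from points of $B_{\partial M}(p,r_1)$ actually remain defined and embedded up to the needed length — i.e. that we have enough room in the collar. This is exactly what the hypothesis that $\exp^\perp$ is a diffeomorphism on $B_{\partial M}(p, r_0) \times [0, r_0)$ buys us, provided $r_1 \le \tfrac{1}{10} r_0$ say, but one must be careful that the boundary distance $d_{\partial M}$ and ambient distance $d$ are uniformly comparable on this region — this follows from $|Rm| \le C$ and $|S| \le C$ via a standard estimate on how the collar metric $dt^2 + g_t$ degenerates, where $g_t$ is the induced metric on the level set, with $\partial_t g_t$ controlled by $S$. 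A secondary point is simply bookkeeping the dependence of all constants on $(n, C, c_0, r_0, v_0)$ and not on anything else, which the structure above respects since every invoked estimate (Bishop–Gromov on $\partial M$, Proposition \ref{volume of a metric cyclinder}, the collar comparison) has constants of exactly this form.
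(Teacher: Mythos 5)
Your first step --- Bishop--Gromov on $\partial M$ to push the boundary volume bound down to smaller scales, followed by Proposition \ref{volume of a metric cyclinder} to give the collar slab $C(B_{\partial M}(p,\cdot),t_1,t_2)$ a definite volume --- is exactly what the paper does and is fine. The gap is in the containment step. Your path from $z=\exp^\perp(y,\tau)$ to $q=\exp^\perp(p,2r_1)$ goes down to the boundary, across, and back up, which costs roughly $\tau + d_{\partial M}(y,p) + 2r_1 \approx 6r_1$; this only places the cylinder inside $B(q,6r_1)$, and the statement needs $B(q,r_1)$ with $q$ at height $2r_1$ (the whole point being to produce a non-collapsed ball that stays away from $\partial M$; a ball of radius $6r_1$ about a point at height $2r_1$ hits the boundary). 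Rescaling cannot repair this: the ratio $d(z,q)/r_1$ is scale-invariant because the height of $q$ is tied to $r_1$, and if instead you shrink only the cylinder to heights $\approx \rho=r_1/C'$ while keeping $q$ at height $2r_1$, then since $d(\cdot,\partial M)$ is $1$-Lipschitz every point of that cylinder is at distance at least $2r_1-3\rho>r_1$ from $q$, so the cylinder becomes \emph{disjoint} from $B(q,r_1)$.

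The repair is to travel \emph{horizontally}, at constant height, rather than down and back up: connect $z=\exp^\perp(y,\tau)$ to $\exp^\perp(p,\tau)$ by the image under $\exp^\perp(\cdot,\tau)$ of a path in $\partial M$ from $y$ to $p$, then run along $\gamma_p$ from height $\tau$ to height $2r_1$. This is what the paper does: it first bounds $|S_{\Sigma_t}|$ on $[0,r_2]$ by integrating the Riccati inequality $\frac{d}{dt}|S|\le |S|^2+C$, then integrates $\frac{d}{dt}\log|\gamma_t'(s)|\le |S_{\Sigma_t}|$ to show $\exp^\perp(\cdot,t)$ is $C_1$-Lipschitz from $(\partial M,d_{\partial M})$ into $(\Sigma_t,d_{\Sigma_t})$, and concludes $C(B_{\partial M}(p,\tfrac{r_1}{2C_1}),\tfrac{3r_1}{2},\tfrac{5r_1}{2})\subset B(q,r_1)$. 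You actually have the needed ingredient in hand --- you observe that $\partial_t g_t$ is controlled by $S$ in the collar --- but you deploy it only to compare $d_{\partial M}$ with the ambient distance along the boundary, rather than to bound the length of constant-height paths, which is where it is really needed.
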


\begin{proof}

 By definition $C(B_{\partial M}(p,r_0),0,r_0)\subset B(p,2r_0)$.  We claim that  there exists $r_2>0,C_1>0$ such that for any $p_1,p_2\in B_{\partial M}(p,r_0)$, $$d_{\Sigma_t}(\exp^\perp(p_1,t),\exp^\perp(p_2,t))\leq C_1d_{\partial M}(p_1,p_2)$$ when $0\leq t\leq r_2$, where $\Sigma_t$ is the image of $B_{\partial M}(p,2r_0)$ under $\exp^\perp(\cdot, t)$. In fact, by (\ref{simplified radial curvature equation})(\ref{Lie derivative S same as covariant derivative S}),
 $$\nabla_{\nabla t } S=S^2+R(\cdot,\nabla t)\nabla t, $$ hence  $$\frac{d}{dt}|S|\leq |S|^2+C.$$
Integrate the inequality, we have 
$$\arctan(\frac{|S|}{\sqrt C})(x,t)-\arctan(\frac{|S|}{\sqrt C})(x,0)\leq \sqrt{C}t.$$
Hence there exist $r_2>0,C_2>1$ such that $|S|\leq \log C_2$ when $0\leq t\leq r_2$.
Now let $\gamma_0(s)$ be a smooth curve in $B_{\partial M}(p,2r_0)$ that connects $p_1,p_2$, and let $\gamma_t(s)=\exp^\perp(\gamma_0(s),t)\in \Sigma_t$, then we have $$\frac{d}{dt} \log |\gamma_t'(s)|=-\frac{\langle S_{\Sigma_t}(\gamma_t'(s)),\gamma_t'(s)\rangle}{\langle\gamma_t'(s),\gamma_t'(s)\rangle}\leq |S_{\Sigma_t}(\gamma_t(s))|\leq \log C_2.$$
It follows that $|\gamma_t'(s)|\leq C_1|\gamma_0'(s)|$ with $C_1=C_2^{r_2}$ and the claim follows from integration. 
Now take $$r_1=\min\{2C_1r_0,\frac{1}{4}r_2\},$$ we have $$C(B_{\partial M}(p,\frac{r_1}{2C_1}),\frac{3r_1}{2},\frac{5r_1}{2})\subset B(q,r_1),$$ then we apply Proposition \ref{volume of a metric cyclinder} and Bishop-Gromov volume comparison on $\partial M$ to get the desired conclusion.
\end{proof}

\begin{remark}
It is easy to give a quantitative version of the lemma from the proof. However, to the author's knowledge, we cannot prove the last inclusion in the proof without a control of curvature. It may be possible that a metric ball of the boundary becomes ``long and thin'' under the flow of $\nabla d(\cdot,\partial M)$,  while maintains an area lower bound. 
\end{remark}

\subsection{Harmonic radius and convergence theory}

Convergence theory of Riemannian manifolds is a powerful tool to prove conclusions in Riemannian geometry through contradiction arguments when explicit bounds is not required. In this section, we will restate some results of \cite{anderson2004boundary}, follow the proof there, and discuss some direct corollaries.

Let $(M,g)$ be a Riemannian manifold with boundary, $m\in \mathbb{N}$,   $0<\alpha<1$, $Q>1$. For $p\in M$, define $r_h^{m,\alpha}(p,g,Q)$ to be the supremum of $\rho>0$ such that if $d(p,\partial M)>\rho$,
then there exists a neighborhood $U$ of $p$ in $M$ and a interior coordinate chart $\varphi:B_{\frac{\rho}{2}}\rightarrow U$, $\varphi(0)=p$, and  if $d(p,\partial M)\leq\rho$, then there exists a neighborhood $U$ of $p$ in $M$ and a boundary coordinate chart
$\varphi: B_{4\rho}^+\rightarrow U$, $\varphi((0,d(p,\partial M)))=p, \varphi(\tilde{B}_{4\rho})=U\cap\partial M$, and in either $B_{\frac{\rho}{2}}$ or $B_{4\rho}^+$, we have $$\Delta_M\varphi^{-1}=0,$$
$$Q^{-2}(\delta_{ij})\leq (g_{ij})\leq Q^2 (\delta_{ij}), $$
$$\rho^{m+\alpha}\sum_{|\beta|=m}|\partial_\beta g_{ij}(x)-\partial_\beta g_{ij}(y)|\leq (Q-1)|x-y|^\alpha$$
We call such a coordinate chart a $(\rho,Q,m,\alpha)$-harmonic coordinate chart centered at $p$. Note that the second condition implies there exists $r_1,r_2$, depending on $\rho,Q$, such that
$B(p,r_1)\subset U\subset B(p,r_2)$.

\begin{definition}
Fix an integer $m\geq 0$, and $0<\alpha<1$.  We say a sequence of Riemannian manifold with boundary $(M_i,g_i,p_i)$ converges in pointed $C^{m,\alpha}$ to $(M_\infty,g_\infty,p_\infty)$ if there exists precompact open subsets  $\Omega_{i}$ of $M_i$ and $\Omega_{\infty,i}$ of $M_\infty$, and $\sigma_i>\rho_i\rightarrow\infty$ such that $B(p_i,\rho_i)\subset \bar{\Omega}_{i}\subset B(p_i,\sigma_i)$, $B(p_\infty,\rho_i)\subset \bar{\Omega}_{i}\subset B(p_\infty,\sigma_i)$ and  there exists diffeomorphisms $F_{i}:\Omega_{\infty,i}\rightarrow \Omega_{i}$, $F_{i}:\Omega_{\infty,i}\cap\partial M_i\rightarrow\Omega_{i}\cap\partial M_\infty$ such that  $F_{i}^*g_i\rightarrow g$ in $C^{m,\alpha}$ topology,  and $F_{i}^{-1}(p_i)\rightarrow p_\infty$. If we replace $C^{m,\alpha}$ by $C^\infty$, we say the convergence is in pointed Cheeger-Gromov sense.
\end{definition}

\begin{remark}
$(M_\infty, g_\infty)$ is automatically a complete $C^{m,\alpha}$ or $C^\infty$ Riemannian manifold with boundary from the definitions. Sometimes we only need that one metric ball converges, so one can modify the definitions above: suppose $\bar{B}(p_i,r)\subset \Omega_i$ for some precompact open set $\Omega_i\subset M_i$ and there exists a Riemannian manifold with boundary $(\Omega_\infty,g_\infty)$, a point $p_\infty\in\Omega_\infty$, and  diffeomorphisms $F_i:\Omega_{\infty}\rightarrow\Omega_i$ mapping $\partial \Omega_\infty$ onto $\Omega_i\cap\partial M_i$ such that $F_i^*g_i\rightarrow g_\infty$ in $C^{m,\alpha} $ or $C^\infty$ topology and $F_i^{-1}(p_\infty)\rightarrow p_i$, we say $B(p_i,r)$ converges in $C^{m,\alpha}$  or Cheeger-Gromov sense to $B(p_\infty, r)$.

\end{remark}

The following theorem is well-known and is a fundamental theorem of Riemannian convergence theory. 

\begin{proposition}\label{harmonic radius lower bound}
Let $(M_i,g_i)$ be  a sequence of  complete Riemannian manifold with boundary, $p_i\in M_i$. Suppose there exists some $Q>1$, and a positive function $r:(0,\infty)\rightarrow(0,\infty) $, such that $r_h^{m,\alpha}(p,g_i,Q)\geq r(R)$ for any $p\in B(p_i,R)$, then for a subsequence, $(M_i,g_i,p_i)$ converges in pointed $C^{m,\beta}$ sense to $(M_\infty,g_\infty,p_\infty)$ for any $0<\beta<\alpha$. If the above assumption holds for only one $R$, then $B(p_i,R)$ converges in $C^{m,\beta}$ sense to $B(p_\infty, R)$.

\end{proposition}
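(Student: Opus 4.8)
\textbf{Proof proposal for Proposition \ref{harmonic radius lower bound}.}

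The plan is to run the standard Cheeger--Gromov diagonal argument, adapted to the boundary setting exactly as in \cite{anderson2004boundary}. The uniform harmonic radius bound $r_h^{m,\alpha}(p,g_i,Q)\ge r(R)$ is precisely the hypothesis that converts into local $C^{m,\alpha}$ control of the metric in a fixed (scale-invariant) family of charts, so the work is to glue these local charts into a global limiting manifold. First I would fix $R$ and cover $B(p_i,R)$ by harmonic charts: for each $i$, choose a maximal $\tfrac{1}{10}r(R)$-separated set of points in $\bar B(p_i,R)$; the number of such points is uniformly bounded (by a packing/volume argument using the two-sided metric bound on $g$ in each harmonic chart, which gives uniform volume ratios, hence a uniform bound on the cardinality of separated sets). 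Around each such point $p_i^k$ take the interior chart $B_{\rho/2}$ or the boundary chart $B_{4\rho}^+$ guaranteed by the definition of $r_h^{m,\alpha}$ with $\rho=r(R)$; note that boundary charts are \emph{half}-balls mapping $\tilde B_{4\rho}$ into $\partial M_i$, which is what lets the limit inherit a boundary. In these coordinates $g_i$ has components bounded in $C^{m,\alpha}$ uniformly in $i$, so by Arzel\`a--Ascoli, after passing to a subsequence, $g_i$ restricted to each chart converges in $C^{m,\beta}$, $\beta<\alpha$, to a limiting metric tensor.

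The second step is to extract the limiting transition maps and assemble the limit space. The overlaps of two harmonic charts are, in each fixed chart's coordinates, given by maps that are harmonic coordinate changes; by elliptic regularity (harmonic functions with respect to a $C^{m,\alpha}$-controlled metric are $C^{m+1,\alpha}$-controlled) these transition maps are uniformly bounded in $C^{m+1,\alpha}$, so again a subsequence converges. One then defines $M_\infty$ (or, in the one-$R$ version, $\Omega_\infty\supset \bar B(p_\infty,R)$) as the quotient of the disjoint union of the limiting chart domains by the limiting transition maps; the limiting metric tensors are compatible on overlaps and glue to a $C^{m,\beta}$ metric $g_\infty$, and the half-ball charts endow $M_\infty$ with a smooth boundary with $\partial M_\infty$ the image of the $\tilde B$'s. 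The point $p_i$ lies in some chart, and its coordinate representatives converge to a point $p_\infty$. The diffeomorphisms $F_i$ are built chart-by-chart: in each limiting chart, $F_i$ is the composition (limiting chart) $\to$ (coordinates) $\to$ ($i$-th chart via the $i$-th harmonic coordinate map), and one checks these are consistent on overlaps up to the transition maps, so they patch to honest diffeomorphisms $\Omega_{\infty,i}\to\Omega_i$ carrying boundary to boundary, with $F_i^*g_i\to g_\infty$ in $C^{m,\beta}$ on compact subsets. Taking $R\to\infty$ and diagonalizing over an exhaustion gives the pointed convergence statement; if the hypothesis holds for a single $R$ one stops after the first stage and obtains only $B(p_i,R)\to B(p_\infty,R)$.

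Finally one should record that $(M_\infty,g_\infty)$ is complete: completeness of the metric balls $\bar B(p_\infty,\rho)$ follows because they are limits of $\bar B(p_i,\rho_i)\subset\bar\Omega_i$ which are compact with uniformly controlled geometry, and the harmonic-radius lower bound passes to the limit (the limit metric again has $r_h^{m,\alpha}\ge r(R)$ by lower semicontinuity), so no boundary-touching or collapsing can occur in the limit. The main obstacle, and the only place real care is needed, is the boundary bookkeeping: one must ensure that boundary charts only overlap with boundary charts in a controlled way, that the gluing does not accidentally identify boundary points with interior points, and that the transition maps between a boundary half-ball chart and an interior ball chart are handled correctly near $\tilde B$; this is exactly the technical content of \cite{anderson2004boundary}, and I would simply follow that argument, the interior case being the classical one in \cite{petersen2016riemannian}. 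The passage from $C^{m,\alpha}$ to $C^{m,\beta}$ for $\beta<\alpha$ is the usual mild loss from Arzel\`a--Ascoli on H\"older spaces and costs nothing.
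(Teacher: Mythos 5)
Your outline is correct and is essentially the standard argument: the paper itself states this proposition without proof, calling it a well-known fundamental fact of Riemannian convergence theory (with the boundary version carried out in \cite{anderson2004boundary}), and your chart-covering, Arzel\`a--Ascoli, transition-map, and gluing steps are exactly that argument. The only step you gloss over slightly is that the chart-by-chart maps defining $F_i$ agree on overlaps only approximately (up to the converging transition maps), so turning them into honest global diffeomorphisms requires the usual averaging/center-of-mass smoothing with a partition of unity; this is standard and is part of the machinery you already defer to \cite{anderson2004boundary}.
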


Next, we discuss under what geometric control we can  get a harmonic radius lower bound. We state and prove the following local version of Theorem 3.2.1 in  \cite{anderson2004boundary}, with simplified arguments in some parts.

\begin{theorem}\label{local harmonic radius lower bound}
Fix $m\geq 1$. Let $(M,g)$ be a  Riemannian manifold with boundary, and $\Sigma\subset\partial M$ be a boundary metric ball with compact closure, nonempty boundary. Suppose $\exp^\perp$ maps $\Sigma\times [0,i_0)$ diffeomorphically onto its image $\Omega$,  
\begin{equation}\label{injectivity radius explaination}
     inj_{\Omega}\geq i_0,\ inj_{\Sigma}\geq i_0,
\end{equation}
in $\Omega$,
\begin{equation}\label{bounds 1 up to m}
    |\nabla^l \text{Ric}_M|\leq \Lambda, 0\leq l\leq m,
\end{equation}
 on $\Sigma$
 \begin{equation}\label{bounds 2 up to m}
     |\nabla_{\partial M}^l \text{Ric}_{\partial M}|\leq \Lambda,   |\nabla_{\partial M}^{l+1} H|\leq \Lambda, 0\leq l\leq m.
 \end{equation}
 Then for any $Q>1$, $\alpha\in(0,1)$, $p\in\Omega$, 
 \begin{equation}\label{harmonic radius lower bound non empty boundary}
     r^{m+1,\alpha}_h(p,g,Q)\geq r_0(i_0,\Lambda,m,\alpha,Q)d(p,\partial ^+\Omega),
 \end{equation}
  where $\partial^+\Omega=\bar {\Omega} \backslash(\Omega\cup\partial M)$.
\end{theorem}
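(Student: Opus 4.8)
The plan is to follow the standard scheme for interior and boundary harmonic radius lower bounds (as in \cite{anderson1990convergence}, \cite{anderson2004boundary}), adapted to the present setup where the region $\Omega$ is given in geodesic gauge over $\Sigma\times[0,i_0)$. The argument is by contradiction and rescaling. Suppose the conclusion fails: then there are $Q>1$, $\alpha\in(0,1)$, manifolds $(M_j,g_j)$ satisfying all the hypotheses with the fixed constants $i_0,\Lambda$, and points $p_j\in\Omega_j$ with
\[
\lambda_j:=\frac{r_h^{m+1,\alpha}(p_j,g_j,Q)}{d(p_j,\partial^+\Omega_j)}\to 0.
\]
Using the scale-invariance and continuity properties of the harmonic radius (a point $q$ where the ratio $r_h^{m+1,\alpha}(q,g_j,Q)/d(q,\partial^+\Omega_j)$ is close to its infimum can be chosen as a near-minimizer, by a standard point-selection/\,Ekeland-type argument within $\Omega_j$), I select such near-minimizers and rescale the metrics by $g_j\mapsto \tilde g_j:=\mu_j^{-2}g_j$ with $\mu_j=r_h^{m+1,\alpha}(p_j,g_j,Q)\to 0$ (note $\mu_j\to 0$ because $d(p_j,\partial^+\Omega_j)$ is bounded above by $\mathrm{diam}(\Sigma)+i_0$ while the ratio $\to 0$). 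After rescaling, the new harmonic radius at $p_j$ equals $1$, and the rescaled distance to $\partial^+\Omega_j$ tends to infinity, so on larger and larger balls we are effectively in the "interior or half-space" situation. The curvature hypotheses (\ref{bounds 1 up to m}), (\ref{bounds 2 up to m}) rescale to give $|\tilde\nabla^l\mathrm{Ric}|\to 0$ and similarly for the boundary quantities, and the injectivity-radius bounds (\ref{injectivity radius explaination}) together with $i_b\to\infty$ (since $i_b\geq i_0$ rescales to $i_0/\mu_j\to\infty$) keep the geometry noncollapsed on all fixed scales.

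Next I extract a limit. By the elliptic regularity estimates for harmonic coordinates — exactly as in \cite{anderson2004boundary} Section 3, where a bound on $\mathrm{Ric}$ and its covariant derivatives in harmonic coordinates gives $C^{m+1,\alpha'}$ control of the metric coefficients for $\alpha'<\alpha$, and the boundary version uses in addition the control of $\mathrm{Ric}_{\partial M}$ and the mean curvature $H$ to handle the boundary harmonic chart — the rescaled metrics $\tilde g_j$ have a uniform harmonic-radius lower bound on every compact set, hence by Proposition \ref{harmonic radius lower bound} a subsequence converges in pointed $C^{m+1,\alpha'}$ to a limit $(M_\infty,g_\infty,p_\infty)$, which is either $\mathbb{R}^n$ or $\mathbb{R}^n_+$ with a $C^{m+1,\alpha'}$ metric. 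Since all rescaled Ricci and boundary data go to zero, $g_\infty$ is Ricci-flat; on $\mathbb{R}^n_+$ the boundary is totally geodesic in the limit (the rescaled second fundamental form tends to zero, using the mean-curvature control and the evolution equation (\ref{evolution of mean curvature}) to bootstrap the full second fundamental form, or more directly the bound $|S|\leq C$ which rescales away). A Ricci-flat metric on $\mathbb{R}^n$ that is $C^{m+1,\alpha'}$ in harmonic coordinates is real-analytic and, being a global limit of noncollapsed harmonic charts with Euclidean volume growth (inherited from the volume estimates, e.g.\ Proposition \ref{volume of a metric cyclinder}), must be flat; on $\mathbb{R}^n_+$, doubling across the totally geodesic boundary reduces to the same statement. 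Hence $g_\infty$ is the flat metric, so its harmonic radius at $p_\infty$ is $+\infty$.

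Finally I derive the contradiction from upper semicontinuity of the harmonic radius under $C^{m+1,\alpha'}$ convergence: on one hand $r_h^{m+1,\alpha'}(p_\infty,g_\infty,Q)=\infty$, while on the other hand $r_h^{m+1,\alpha}(p_j,\tilde g_j,Q)=1$ for all $j$ and the harmonic radius is continuous under this convergence, which would force $r_h(p_\infty,g_\infty,Q)\le \liminf 1 \cdot(\text{const})<\infty$ — contradiction. (One must be mildly careful that a drop from exponent $\alpha$ to $\alpha'$ in the definition only improves the harmonic radius, so the contradiction survives; this is the usual $\alpha'<\alpha$ bookkeeping.) I expect the main obstacle to be the boundary case of the regularity/limit step: ensuring that the geodesic-gauge description over $\Sigma\times[0,i_0)$ is compatible with boundary harmonic coordinates after rescaling — i.e.\ that the hypotheses, which are phrased intrinsically on $\Omega$ and on $\Sigma$ rather than in a chart, genuinely yield the $C^{m+1,\alpha}$ a priori estimate on $g$ up to the boundary. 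This is where the control of $\mathrm{Ric}_{\partial M}$ and of $H$ (and its derivatives) in (\ref{bounds 2 up to m}) is essential, via the boundary elliptic system for harmonic coordinates in which $H$ appears as (essentially) the Neumann data for the normal coordinate function; handling this carefully, following \cite{anderson2004boundary} but localized to $\Omega$, is the technical heart of the proof.
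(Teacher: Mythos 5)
Your proposal follows the same contradiction-and-rescaling scheme as the paper's proof (minimize the ratio $r_h/d(\cdot,\partial^+\Omega)$, rescale so the harmonic radius at the near-minimizer is $1$, extract a blow-up limit, and derive a contradiction from the flatness of the limit), and you correctly identify that the technical heart is the boundary elliptic bootstrap in harmonic coordinates. However, two steps as written contain genuine gaps. To conclude the interior limit is flat, you invoke ``Ricci-flat with Euclidean volume growth implies flat.'' This is false: Ricci-flat ALE spaces (e.g.\ Eguchi--Hanson) have Euclidean volume growth and are not flat, and even the claim that the limit is topologically $\mathbb R^n$ needs an argument. The paper instead uses that the rescaled injectivity radius tends to infinity (which you observe, but do not use), so $\mathrm{inj}_{M_\infty}=\infty$, and then applies the Cheeger--Gromoll splitting theorem to conclude $M_\infty\cong\mathbb R^n$ isometrically. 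In the boundary case, the paper does not bootstrap a second-fundamental-form bound via the evolution equation or ``rescale away $|S|\le C$'' (not a hypothesis); it extracts $S_\infty=0$ directly from the traced Gauss equation (\ref{scalar curvatures and hypersurface}) using $R_{M_\infty}=R_{\Sigma_\infty}=H_\infty=\mathrm{Ric}_{M_\infty}(\nabla f,\nabla f)=0$, and then applies Lemma \ref{flat totally geodesic} to get $(\mathbb R^n_+,g_{\mathrm{flat}})$.

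More substantively, the final contradiction cannot be obtained from ``continuity of the harmonic radius'' under the convergence one actually gets. Proposition \ref{harmonic radius lower bound} gives convergence only in $C^{m+1,\beta}$ for $\beta<\alpha$, and the harmonic radius with H\"older exponent $\alpha$ is not controlled by $C^{m+1,\beta}$-closeness to the flat metric; the direction you call ``the usual $\alpha'<\alpha$ bookkeeping'' runs the wrong way, since the problem is to \emph{improve} the exponent, not weaken it. The paper's proof does this by explicitly constructing new harmonic coordinates on large balls of the rescaled sequence (Dirichlet problems in the interior case, mixed Dirichlet/Neumann problems near the boundary), and then bootstrapping the metric coefficients in these coordinates via the Ricci equations: interior $W^{m+2,p}$ estimates for (\ref{Ricci equation}), the boundary Ricci equation (\ref{Ricci boundary equation}) along $\tilde B_r$, and Neumann $W^{2,p}$ estimates using (\ref{Neumann boundary condition 1})--(\ref{Neumann boundary condition 2}), where the rescaled $H_k\to 0$ enters as the vanishing Neumann data. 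This is what turns $C^{m+1,\beta}$ control into a $(\rho,Q,m+1,\alpha)$ harmonic chart of radius $\ge 2$ around $p_k$ for large $k$, contradicting $r_h^{m+1,\alpha}(p_k,g_k,Q)=1$. Without carrying out this construction, the proof does not close.
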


\begin{remark}\label{local harmonic radius lower bound remark} One should understand (\ref{injectivity radius explaination}) as follows:
for an open set $U$ inside a Riemannian manifold with boundary, $inj_U\geq i_0$ means for each $p\in U$, $\exp_{p}$ maps $B_{i_0}(0)\subset T_p M$ diffeomorphically onto its image if $d(p,U^c)\geq i_0$, and maps $B_{\frac{1}{2}d(p,U^c)}(0)\subset T_p M$ diffeomorphically onto its image if $d(p,U^c)\leq i_0$.
\end{remark}

\begin{proof}
If not, we have a sequence $(M_k,\tilde{g}_k)$ and $\Sigma_k$, $\Omega_k$ that satisfies the conditions, but there exists $p_k\in \Omega_k$ with
$$\frac{r_h^{m+1,\alpha}(p_k,\tilde{g}_k,Q)}{d_{\tilde{g}_k}(p_k,\partial^+ \Omega_k)}=\inf_{p\in \Omega_k} \frac{r_h^{m+1,\alpha}(p_k,\tilde{g}_k,Q)}{d_{\tilde{g}_k}(p,\partial^+ \Omega_k)}\rightarrow 0.$$  Rescale the metric $g_k=(r_h^{m+1,\alpha}(p_k,\tilde g_k,Q))^{-2}\tilde{g}_k$, so $r_h^{m+1,\alpha}(p_k,{g}_k,Q)=1$, then $d_{g_k}(p_k,\partial^+\Omega_k)\rightarrow\infty$, and $r_h^{m+1,\alpha}(p,{g}_k,Q)\geq\frac{1}{2}$ if $d_{g_k}(p,p_k)\leq R$ , $k\geq k(R)$. Fix any $\beta\in(0,\alpha)$. Then there are two cases: 
 
\textbf{Case 1}
 $d_{g_k}(p_k,\Sigma_k)\rightarrow\infty$ for some subsequence.

Then a subsequence $(M_k,{g_k},p_k)$ converges in pointed $C^{m+1,\beta}$ sense to a complete Riemannian manifold $(M_\infty,g_\infty,p_\infty)$.  So $\text{Ric}_{M_\infty}=0$, $inj_{M_\infty}=\infty$. By Cheeger-Gromoll splitting theorem, $(M_\infty, g_\infty)$ is isometric to $(\mathbb{R}^n, g_{flat})$. 

Hence for any $L>0$, there exist a coordinate $\varphi_{0,k}:B_{L+5} \rightarrow U_k\subset M_k$, $\varphi_{0,k}(0)=p_k$,  such that
$$\norm{g_{k,ij}-\delta_{ij}}_{C^{m+1,\beta}(B_{L+5})}\rightarrow 0, 1\leq i,j\leq n.$$ 
We solve the Dirichlet problem for functions $u_k^\nu$, $1\leq\nu\leq n$:

\begin{equation}
\Delta_{M_k}u_k^\nu=0 \ {\rm in} \ {B}_{L+5} , u_k^\nu|_{\partial {B}_{L+5}}=x^\nu\ .
\end{equation}
Recall the formula $$\Delta_{g}=g^{ij}\partial_i\partial_j+\frac{1}{\sqrt{|g|}}{\partial_i(\sqrt{|g|}g^{ij})}\partial_j, |g|=\det(g_{ij}).$$  
Then we have $$\lVert u_k^\nu-x^\nu\rVert_{C^{m+2,\beta}({B}_{L+5})}\leq C\lVert \Delta_{ M_k}(u_k^\nu-x^\nu)\rVert_{C^{m,\beta}({B}_{L+5})}\rightarrow 0.$$
Hence, we get a new coordinate system $(u_k^1,\cdots,u_k^n)$ and we discard the original coordinate system, and we use the same notation for tensors written in the new coordinate system, so in the new coordinate system we have
$$\lVert g_{k,ij}-\delta_{ij} \rVert_{C^{m+1,\beta}(B_{L+3})}\rightarrow 0.$$ 
Now we want to improve the convergence of $g_{k,ij}$     from elliptic equations.
We have a system of equations
$$\Delta_{M_k} g_{k,ij}+B_{ij}(g_k,\partial g_k)=-2\text{Ric}_{M_k,ij}. $$
where $B_{ij}(g,\partial g)$ are  polynormials of $g,\partial g$ and are quadratic in $\partial g$.
From $W^{m+2,p}$ estimates, Morrey embeddings, and $$|\nabla^l \text{Ric}_{ M_k}|\rightarrow 0, 0\leq l\leq m,$$
we have for $1\leq i,j\leq n$

\begin{equation}\label{harmonic coordinate estimates for use of lower dimensional}
    \begin{split}
\norm{g_{k,ij}-\delta_{ij}}_{C^{m+1,\alpha}( B_{L+2})}\leq& C(\norm{\Delta_{ M_k} (g_{k,ij}-\delta_{ij})}_{C^m( B_{L+3})}\\&+\norm{g_{k,ij}-\delta_{ij}}_{L^\infty(B_{L+3})})\rightarrow 0.
    \end{split}
\end{equation}
Hence we get a $(2(L+2), Q,m+1,\alpha)$ harmonic coordinate chart centered at $p_k'$, with $d_{g_k}(p_k',p_k)\rightarrow 0$ , then  $r_h^{m,\alpha}(p_k,{g}_k,Q)\geq 2(L+1)$ for large $k$, which is a contradiction. 

\textbf{Case 2}
$d_{{g}_k}(p_k,\Sigma_k)\leq K.$

A subsequence $(M_k,{g_k},p_k)$ converges in pointed $C^{m+1,\beta}$ sense to a complete Riemannian manifold with boundary $(M_\infty,g_\infty,p_\infty)$ and $(\partial M_k,{g}_k,q_k)$ converges in $C^{m+1,\beta}$ sense to $(\partial M_\infty,g_{\infty}|_{\partial M_\infty},q_\infty)$, where $q_k\in\Sigma_k$ is the unique foot point of $p_k$ in $\Sigma_k$. Then $\text{Ric}_{M_\infty}=0$, $\text{Ric}_{\partial M_\infty}=0$, $H_\infty=0$, $inj_{\partial M_\infty}=\infty$, $i_{b,M_\infty}=\infty$. Hence
$(\partial M_\infty, g_\infty|_{\partial M_\infty})$ is isometric to $(\mathbb{R}^{n-1}, g_{flat})$. By (\ref{scalar curvatures and hypersurface}), we have $S_\infty=0$. Then by Lemma $\ref{flat totally geodesic}$, $(M_\infty,g_\infty)$ is a smooth Riemannian manifold with boundary and $Rm_{M_\infty}=0$. Since also $i_{b,M_\infty}=\infty$, $(M_\infty, g_\infty)$ is a isometric to $(\mathbb{R}^n_+,g_{flat})$.  

Hence for any $L>2K+10$, there exist a coordinate $\varphi_{0,k}:B_{L+5}^+\rightarrow U_k\subset M_k$, $\varphi_{0,k}(0)=q_k$, $\varphi_{0,k}(\tilde{B}_{L+5})=U_k\cap\partial M_k$ such that
$$\norm{g_{k,ij}-\delta_{ij}}_{C^{m+1,\beta}(B_{L+5}^+)}\rightarrow 0, 1\leq i,j\leq n.$$  
First, we solve for functions $v_k^\nu$,     $1\leq\nu\leq n-1$, 
\begin{equation}
\Delta_{\partial M_k}v_k^\nu=0 \ {\rm in} \ \tilde{B}_{L+5} , v_k^\nu|_{\partial \tilde{B}_{L+5}}=x^\nu ,
\end{equation}
Then we have $$\lVert v_k^\nu-x^\nu\rVert_{C^{m+2,\beta}(\tilde{B}_{L+5})}\leq C\lVert \Delta_{\partial M_k}(v_k^\nu-x^\nu)\rVert_{C^{m,\beta}(\tilde{B}_{L+5})}\rightarrow 0.$$
Next, we solve for $1\leq\nu\leq n-1$,
\begin{equation}
\Delta_{M_k} u_k^\nu=0  \ {\rm in} \ B_{L+5}^+, u_k^\nu|_{\tilde{B}_{L+5}}=v_k^\nu, u_k^\nu|_{{\partial^+B_{L+5}^+}}=x^\nu.
\end{equation}
Note that $\partial B^+_{L+5}$ is not a $C^1$-boundary, but it satisfies exterior sphere condition, so we can solve the equations by Perron's method to get a unique solution  $u_k^\nu \in C^\infty((B_{L+5}^+)^\circ)\cap C^0(\overline{{B}_{L+5}^+}$).
From definitions and the estimates above, we have
$$
\lVert\Delta_{M_k}(u_k^\nu-x^\nu)\rVert_{C^{m,\beta}(B^+_{L+5})}\rightarrow 0,$$  $$\lVert u_k^\nu-x^\nu\rVert_{C^{m+2,\beta}(\tilde{B}_{L+5})}\rightarrow 0,$$ $$\lVert u_k^\nu-x^\nu\rVert_{L^\infty(\partial B^+_{L+5})}\rightarrow 0,$$
then by maximum principle, we have  $$\lVert u_k^\nu-x^\nu\rVert_{L^\infty(B^+_{L+5})}\rightarrow 0,$$ and by Schauder estimates

\begin{equation}
\begin{split}
\lVert u_k^\nu-x^\nu\rVert_{C^{m+2,\beta}(B_{L+4}^+)}\leq C(&\lVert \Delta_{M_k}(u_k^\nu-x^\nu)\rVert_{C^{m,\beta}(B_{L+5}^+)}+\lVert u_k^\nu-x^\nu\rVert_{L^\infty(B_{L+5}^+)}\\ 
&+\lVert u_k^\nu-x^\nu\rVert_{C^{m+2,\beta}(\tilde{B}_{L+5})})\rightarrow 0.
\end{split}
\end{equation}
Next, we construct $u_k^n$ by solving

$$\Delta_{M_k}u_k^n=0 \ {\rm in}\ B^+_{L+5},u_k^n|_{\partial B_{L+5}^+}=x^n.  $$ We have

\begin{equation}
    \begin{split}
        \lVert u_k^n-x^n\rVert_{C^{m+2,\beta}(B^+_{L+4})}\leq& C(\lVert \Delta_{M_k}(u_k^n-x^n)\rVert_{C^{m,\beta}(B_{L+5}^+)}\\&+\lVert u_k^n-x^n\rVert_{L^\infty(B_{L+5}^+)}) \rightarrow 0.
    \end{split}
\end{equation}
Hence we get a new coordinate system $(u_k^1,\cdots,u_k^n)$ and we discard the original coordinate system, and we use the same notation for tensors written in both coordinate systems, so in the new coordinate system we have
\begin{equation}\label{lower order convergence harmonic coordinates}
    \lVert g_{k,ij}-\delta_{ij} \rVert_{C^{m+1,\beta}(B_{L+3}^+)}\rightarrow 0.
\end{equation}
Now we want to improve the convergence of $g_{k,ij}$ from elliptic equations with Neumann boundary conditions.
We have equations
\begin{equation}\label{Ricci boundary equation}
\Delta_{\partial M_k} g_{k,ij}+\tilde{B}_{ij}(g_k,\partial g_k)=-2\text{Ric}_{\partial M_k,ij} 
\end{equation}
\begin{equation}\label{Ricci equation}
\Delta_{M_k} g_{k,ij}+B_{ij}(g_k,\partial g_k)=-2\text{Ric}_{M_k,ij} 
\end{equation}
Fix $\theta\in (\beta,1),  p=\frac{n}{1-\theta}$, from $W^{m+2,p}$ estimates, Morrey embeddings, and $$|\nabla_{\partial M_k}^l \text{Ric}_{\partial M_k}|\rightarrow 0, 0\leq l\leq m,$$
we have for $1\leq i,j\leq n-1$,
\begin{equation}\label{lower dimensional convergence harmonic coordinates}
\begin{split}
\norm{g_{k,ij}-\delta_{ij}}_{C^{m+1,\theta}(\tilde B_{L+2.5}^+)}\leq &C(\norm{\Delta_{\partial M_k} (g_{k,ij}-\delta_{ij})}_{C^m(\tilde B_{L+3}^+)}\\&+\norm{g_{k,ij}-\delta_{ij}}_{L^\infty(\tilde B_{L+3}^+)})\rightarrow 0.
\end{split}
\end{equation}
By Theorem 8.33 in \cite{gilbarg2015elliptic},
\begin{equation}
\begin{split}
\norm{g_{k,ij}-\delta_{ij}}_{C^{m+1,\theta}(B_{L+2}^+)}  \leq & C(\norm{\Delta_{ M_k} (g_{k,ij}-\delta_{ij})}_{C^m(\tilde B_{L+2.5}^+)}\\ &+\norm{g_{k,ij}-\delta_{ij}}_{L^\infty( B_{L+2.5}^+)}\\ &+\norm{g_{k,ij}-\delta_{ij}}_{C^{m+1,\theta}(\tilde B_{L+2.5}^+)})\rightarrow 0.
\end{split}
\end{equation}
Note that
\begin{equation}\label{Neumann boundary condition 1}
N_kg_k^{nn}=-2(n-1)H_kg_k^{nn},
\end{equation}
\begin{equation}\label{Neumann boundary condition 2}
N_kg_k^{in}=-(n-1)H_kg_k^{in}+\frac{1}{2\sqrt{g_k^{nn}}}g_k^{ij}\partial_j g_k^{nn},
\end{equation}
where $N_k=\frac{g_k^{jn}\partial _j}{\sqrt{g_k^{nn}}}$ is the unit normal vector of $\partial M_k$, $1\leq i\leq n-1$ and $j$ sums from $1$ to $n$, then we have Neumann boundary conditions for (\ref{Ricci equation}).  
For simplicity, assume for a while $m=0$. Since $$\norm{g_{k,ij}-\delta_{ij}}_{C^{1,\beta}(B_{L+2}^+)}\rightarrow 0 ,$$ $$|\text{Ric}_{M_k, ij}|_{C^0(B_{L+2}^+)}\rightarrow 0, 1\leq i,j\leq n,$$ $$|H_k|_{C^1(\tilde{B}_{L+2})}\rightarrow 0,$$ we have

$$ \norm{\Delta_{M_k} (g_k^{nn}-\delta^{nn})}_{C^0(B_{L+2}^+)}\rightarrow 0,   \norm{N_kg_k^{nn}}_{C^1(\tilde B_{L+2})}\rightarrow 0,$$
 then by Morrey embeddings (together with extensions), and $W^{2,p}$ estimates for Neumann boundary problems (for example, see a priori estimate 2.3.1.1 in \cite{grisvard2011elliptic}),

\begin{equation}
\begin{split}
\norm{g_k^{nn}-\delta^{nn}}_{C^{1,\theta}(B^+_{L+1.7})}\leq & C \norm{g_k^{nn}-\delta^{nn}}_{W^{2,p}(B_{L+1.8}^+)}\\
\leq & C( \norm{g_k^{nn}-\delta^{nn}}_{L^{p}(B_{L+2}^+)}+ \norm{\Delta_{M_k} (g_k^{nn}-\delta^{nn})}_{L^p(B_{L+2}^+)}\\ &+\norm{N_kg_k^{nn}}_{W^{1-\frac{1}{p},p}(\tilde B_{L+2})}\rightarrow 0.
\end{split}
\end{equation}
Now for $1\leq l\leq n-1$, since $$\norm{\Delta_{M_k}( g_k^{ln}-\delta^{ln})}_{C^0(B_{L+2}^+)}\rightarrow 0,$$ 
and
\begin{equation}
\begin{split}
\norm{N_kg_k^{ln}}_{W^{1-\frac{1}{p},p}(\tilde{B}_{L+1.5})} &\leq C(\norm{g_k^{nn}-\delta^{nn}}_{W^{2-\frac{1}{p},p}(\tilde{B}_{L+1.5})}+\norm{ H_k}_{W^{1-\frac{1}{p},p}(\tilde{B}_{L+1.5})})\\& \leq C(\norm{g_k^{nn}-\delta^{nn}}_{W^{2,p}({B}^+_{L+1.7})}+\norm{ H_k}_{C^1(\tilde{B}_{L+1.7})})\rightarrow 0.
\end{split}
\end{equation}
Then 
\begin{equation}
\begin{split}
\norm{g_k^{ln}-\delta^{ln}}_{C^{1,\theta}(B_{L+1.1}^+)}\leq & C\norm{g_k^{ln}-\delta^{ln}}_{W^{2,p}(B_{L+1.2}^+)}\\ \leq &C(\norm{g_k^{ln}-\delta^{ln}}_{L^{p}(B_{L+1.5}^+)}+\norm{\Delta_{M_k}( g_k^{ln}-\delta^{ln})}_{L^p(B_{L+1.5}^+)}\\  &+\norm{N_kg_k^{ln}}_{W^{1-\frac{1}{p},p}(\tilde{B}_{L+1.5})})\rightarrow 0.
\end{split}
\end{equation}
Hence $$\norm{g_{k,ij}-\delta_{ij}}_{C^{1,\theta}(B_{L+1.1}^+)}\rightarrow 0,  1\leq i,j\leq n.$$
For general $m\geq 1$,  take  $m$-th derivatives of (\ref{Ricci equation}) and the Neumann boundary conditions (\ref{Neumann boundary condition 1})(\ref{Neumann boundary condition 2}),
and note that $$[\partial _i,N_k]=(\frac{\partial_ig_k^{jn}}{\sqrt{g_k^{nn}}}-\frac{g_k^{jn}\partial_i g_k^{nn}}{2\sqrt{g_k^{nn}}^3})\partial_j,$$ so we get a system of second order elliptic equations with Neumann boundary conditions in $\partial_\gamma   g_k^{nn}$ and $\partial_\gamma g_k^{ln}$, $|\gamma|=m, 1\leq l\leq m-1$, with other terms freezed. Apply the previous estimates in the case $m=0$ and use (\ref{lower order convergence harmonic coordinates})(\ref{lower dimensional convergence harmonic coordinates}),
we get
$$\norm{g_k^{nn}-\delta^{nn}}_{C^{m+1,\theta}(B_{L+1}^+)}\rightarrow 0,$$
and then for $1\leq l\leq n-1,$
$$\norm{g_k^{ln}-\delta^{ln}}_{C^{m+1,\theta}(B_{L+1}^+)}\rightarrow 0,$$
hence  $$\norm{g_{k,ij}-\delta_{ij}}_{C^{m+1,\theta}(B_{L+1}^+)}\rightarrow 0,  1\leq i,j\leq n$$
In particular, take $\theta=\alpha$, one can  we get a $(\frac{L+1}{4}, Q,m+1,\alpha)$ harmonic coordinate chart centered at $p_k'$ , with $d_{g_k}(p_k',p_k)\rightarrow 0.$ Then  $r_h^{m+1,\alpha}(p_k,{g}_k,Q)\geq \frac{L}{4}$ for large $k$, which is a contradiction.

\end{proof}

\begin{remark}
Note that the case $m=0$ is also true, and one should be a little careful with the geometric arguments in the proof. Actually, the arguments in \cite{anderson2004boundary} prove a $C_*^{m+2}$ harmonic radius lower bound. 
\end{remark}

\begin{remark}\label{global harmonic radius lower bounds for complete manifolds with boundary}
The proof also shows that if $M$ is complete, $i_b\geq i_0$, $inj_M\geq i_0$, $inj_{\partial M}\geq i_0$ and (\ref{bounds 1 up to m})(\ref{bounds 2 up to m}) hold, then for any $p\in M$
\begin{equation}\label{harmonic radius lower bound globally}
     r^{m+1,\alpha}_h(p,g,Q)\geq r_0(i_0,\Lambda,m,\alpha,Q).
 \end{equation}

\end{remark}

The following corollary is a version we will use often.

\begin{corollary}\label{corollary that will be used often}
Let $(M_i,g_i)$ be a sequence of complete Einstein manifold with boundary. Suppose $i_b\geq i_0, inj_{\partial M}\geq i_0, |Rm|\leq C, |S|\leq C, |\nabla_{\partial M}^k Rm_{\partial M}|\leq C_k,|\nabla_{\partial M}^{k+1}H|\leq C_k, k\geq 0$, then for any $p_i\in M_i$,  there exists some subsequence such that  $(M_i,g_i,p_i)$ converges in pointed Cheeger-Gromov sense.
\end{corollary}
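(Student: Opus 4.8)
The plan is to reduce Corollary \ref{corollary that will be used often} to the local harmonic radius lower bound of Theorem \ref{local harmonic radius lower bound} together with the fundamental compactness statement Proposition \ref{harmonic radius lower bound}, and then upgrade the resulting $C^{m+1,\beta}$ convergence to smooth convergence by a diagonal/bootstrap argument using the Einstein equation.

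\begin{proof}
Fix any $m\geq 1$, any $\alpha\in(0,1)$, and any $Q>1$. We first claim that there is a positive function $r:(0,\infty)\to(0,\infty)$, depending only on $i_0$, $C$, the $C_k$'s, $m$, $\alpha$, $Q$, such that $r^{m+1,\alpha}_h(p,g_i,Q)\geq r(R)$ for every $p\in B(p_i,R)$ and every $i$. Indeed, let $p\in B(p_i,R)$ and let $q\in\partial M_i$ be a foot point of $p$, so $d(p,\partial M_i)=d(p,q)\leq$ a bound coming from Proposition \ref{diam} applied with the mean curvature and Ricci hypotheses (if $M_i$ is compact with mean-convex boundary and $\mathrm{Ric}\geq 0$; in general $d(p,\partial M_i)\leq R+\mathrm{diam}$-type bound is not available, so one uses instead that we only need the harmonic radius near a fixed ball — see below). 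Since $i_b\geq i_0$, Proposition \ref{first focal point two foot point} gives that $\exp^\perp|_{V_{\partial M_i}}$ is a diffeomorphism, so there is a boundary metric ball $\Sigma\subset\partial M_i$ of radius $i_0/2$ centered at $q$ with $\exp^\perp:\Sigma\times[0,i_0/2)\to\Omega$ a diffeomorphism onto its image; shrinking constants, $inj_\Sigma\geq i_0'$ and $inj_\Omega\geq i_0'$ in the sense of Remark \ref{local harmonic radius lower bound remark}, using $i_b\geq i_0$, $inj_{\partial M}\geq i_0$ and $|Rm|\leq C$, $|S|\leq C$ (the bound on $|S|$ controls how much $\exp^\perp$ distorts, hence an interior injectivity radius lower bound on $\Omega$). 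The Einstein condition gives $\mathrm{Ric}_{M_i}=\lambda_i g_i$ with $|\lambda_i|\leq C$, hence $|\nabla^l\mathrm{Ric}_{M_i}|=0$ for $l\geq 1$ and $|\mathrm{Ric}_{M_i}|\leq C$; the hypotheses $|\nabla^k_{\partial M}Rm_{\partial M}|\leq C_k$, $|\nabla^{k+1}_{\partial M}H|\leq C_k$ give, via the contracted Gauss equation, $|\nabla^l_{\partial M}\mathrm{Ric}_{\partial M}|\leq\Lambda$, and directly $|\nabla^{l+1}_{\partial M}H|\leq\Lambda$, so the hypotheses (\ref{bounds 1 up to m}), (\ref{bounds 2 up to m}) of Theorem \ref{local harmonic radius lower bound} hold on $\Omega$ with $\Lambda=\Lambda(C,C_0,\dots,C_m)$. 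Theorem \ref{local harmonic radius lower bound} then gives $r^{m+1,\alpha}_h(p,g_i,Q)\geq r_0(i_0',\Lambda,m,\alpha,Q)\,d(p,\partial^+\Omega)$, and since $p$ lies on the central geodesic at distance $\leq i_0/4$ from $\partial M_i$ while $\partial^+\Omega$ is at parameter $\geq i_0/2$, we get $d(p,\partial^+\Omega)\geq i_0/8$, i.e. a uniform positive lower bound independent of $p$ and $i$; if instead $d(p,\partial M_i)>i_0$, one applies the interior part of the same theorem (Remark \ref{global harmonic radius lower bounds for complete manifolds with boundary}) to get a uniform lower bound there too. This proves the claim with $r(R)$ in fact independent of $R$.

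Now apply Proposition \ref{harmonic radius lower bound}: passing to a subsequence, $(M_i,g_i,p_i)$ converges in pointed $C^{m+1,\beta}$ sense to a complete $C^{m+1,\beta}$ Riemannian manifold with boundary $(M_\infty,g_\infty,p_\infty)$, for every $0<\beta<\alpha$. In particular there are exhausting precompact domains $\Omega_{\infty,i}\subset M_\infty$ and diffeomorphisms $F_i:\Omega_{\infty,i}\to\Omega_i\subset M_i$ with $F_i^*g_i\to g_\infty$ in $C^{m+1,\beta}_{loc}$ and $F_i$ respecting boundaries. Since $m\geq 1$ was arbitrary, a Cantor diagonal argument over $m\to\infty$ (refining the subsequence at each stage and choosing a single diagonal subsequence) yields convergence $F_i^*g_i\to g_\infty$ in $C^{m,\beta}_{loc}$ for every $m$, i.e. in $C^\infty_{loc}$, which is pointed Cheeger-Gromov convergence; the limit $g_\infty$ is then smooth. (Here one uses that the harmonic radius bounds of the previous paragraph are uniform in $i$ for each fixed $m$, so the same limiting domains and maps can be used, up to passing to further subsequences.) Alternatively, and more cleanly, one observes that the limit $g_\infty$ is itself Einstein: $F_i^*(\mathrm{Ric}_{g_i}-\lambda_i g_i)=0$ and $\lambda_i\to\lambda_\infty$ after a subsequence, and $C^{m+1,\beta}$ convergence is enough to pass $\mathrm{Ric}$ to the limit in $C^{m-1,\beta}$, giving $\mathrm{Ric}_{g_\infty}=\lambda_\infty g_\infty$ weakly, hence classically; then elliptic regularity in harmonic coordinates (where $\mathrm{Ric}=\lambda g$ becomes $\Delta_g g_{ij}+B_{ij}(g,\partial g)=-2\lambda g_{ij}$, the same equation (\ref{Ricci equation}) used in the proof of Theorem \ref{local harmonic radius lower bound}, now with Neumann boundary data (\ref{Neumann boundary condition 1}), (\ref{Neumann boundary condition 2}) controlled by the smooth limiting boundary metric and $H_\infty$) bootstraps $g_\infty$ to $C^\infty$ in those charts, and simultaneously improves the convergence $F_i^*g_i\to g_\infty$ from $C^{m+1,\beta}$ to $C^\infty_{loc}$ by exactly the Schauder iteration already carried out in that proof.

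The main obstacle is the first paragraph, namely extracting a \emph{uniform} harmonic radius lower bound from the stated hypotheses: one must check that the hypotheses of Theorem \ref{local harmonic radius lower bound} are genuinely implied — the control of $\mathrm{Ric}_{\partial M}$ and its derivatives via the Gauss equation requires the bounds on $|S|$ and on $|Rm_{M}|$ (or their interaction) to be packaged correctly, and the interior injectivity radius bound on $\Omega$ must be deduced from $i_b\geq i_0$, $inj_{\partial M}\geq i_0$, and $|S|\leq C$ rather than assumed. Once the hypotheses of Theorem \ref{local harmonic radius lower bound} are verified uniformly, the rest is the standard Anderson-type compactness machinery plus the Einstein bootstrap, both of which are essentially contained in the material already developed in this section; in particular the passage from $C^{m+1,\beta}$ to $C^\infty$ is routine because the Einstein equation is elliptic in harmonic gauge and the boundary conditions are the Neumann conditions (\ref{Neumann boundary condition 1}), (\ref{Neumann boundary condition 2}) already analyzed.
\end{proof}
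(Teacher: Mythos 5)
Your overall strategy coincides with the paper's: produce a uniform harmonic radius lower bound, feed it into the Anderson-type compactness statement (Proposition \ref{harmonic radius lower bound}), and then pass from $C^{m+1,\beta}$ to $C^\infty$ by letting $m\to\infty$ or bootstrapping the Einstein equation. The second paragraph of your argument is fine, and your observations that the Einstein condition trivially gives $|\nabla^l\mathrm{Ric}_{M}|$ bounds and that the stated hypotheses directly furnish $|\nabla^l_{\partial M}\mathrm{Ric}_{\partial M}|$ are correct (the detour through the Gauss equation you worry about is actually unnecessary, since $|\nabla^k_{\partial M}Rm_{\partial M}|\leq C_k$ is assumed outright).

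The genuine gap is the interior injectivity radius lower bound on the boundary collar $\Omega$. You assert it by saying that $|S|\leq C$ controls the distortion of $\exp^\perp$ and thereby gives an injectivity radius bound, and you flag at the end that this step ``must be deduced rather than assumed,'' but you never actually deduce it, and the distortion heuristic alone does not suffice: controlling the bi-Lipschitz constant of $\exp^\perp$ lets you push short loops to the boundary, but a short geodesic loop in the interior does not project to a short \emph{geodesic} loop in $\partial M$, so no direct contradiction with $inj_{\partial M}\geq i_0$ is obtained. What is actually needed is a \emph{volume} argument. The paper's route is: first use $inj_{\partial M}\geq i_0$ and $|Rm_{\partial M}|\leq C_0$ to get a lower bound on $\mathrm{vol}_{\partial M}(B_{\partial M}(q,r_0))$; then invoke Proposition \ref{volume noncollasping}, which uses $|S|\leq C$, $|Rm|\leq C$, $i_b\geq i_0$ to transfer this to a volume lower bound for interior metric balls of a fixed size at controlled distance from $\partial M$; then apply Lemma \ref{well-know interior inj lower bound} (the Cheeger--Gromov--Taylor type estimate) to convert the volume lower bound, together with $|Rm|\leq C$, into an interior injectivity radius lower bound. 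With that in hand, the paper cites Remark \ref{global harmonic radius lower bounds for complete manifolds with boundary} and Proposition \ref{harmonic radius lower bound}, exactly as you do afterward. So the missing ingredient in your proposal is precisely the chain ``$inj_{\partial M}$ bound $\Rightarrow$ boundary volume bound $\Rightarrow$ (Prop.\ \ref{volume noncollasping}) interior volume bound $\Rightarrow$ (Lemma \ref{well-know interior inj lower bound}) interior $inj$ bound''; without it the application of Theorem \ref{local harmonic radius lower bound} (or the Remark) is not justified.
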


\begin{proof}
By Proposition \ref{volume noncollasping}, $|Rm|\leq C$, $|S|\leq C$, $i_b\geq i_0$,  together imply volume lower bounds of interiors balls of some fixed radius near boundary, hence also gives an interior injectivity radius lower bound from the following lemma. Then use Remark \ref{global harmonic radius lower bounds for complete manifolds with boundary} and Proposition \ref{harmonic radius lower bound}.
\end{proof}

The following lemma is well-known, which is a qualitative version of Theorem 4.3 in \cite{cheeger1982finite} and can also be easily proved by contradiction arguments.

\begin{lemma}\label{well-know interior inj lower bound} Let $(M,g)$ be a Riemannian manifold, and $B(p,r)$ be a metric ball that has compact closure. Suppose  $$\sup\limits_{B(p,r)}|Rm|\leq C, \emph{vol}(B(p,r))\geq v, $$then there exists $r_0>0$ depending on $n, C,v,r$ such that $\exp_q:B_{r_0}(0)\subset T_q M\rightarrow B(q,r_0)\subset M$ is a diffeomorphism for any $q\in B(p,\frac{r}{2})$.
\end{lemma}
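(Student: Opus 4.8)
The statement is the qualitative form of the explicit injectivity radius estimate in \cite{cheeger1982finite} (Theorem 4.3), from which it follows immediately; below I indicate the self-contained contradiction argument alluded to in the remark, which runs parallel to the proof of Theorem \ref{local harmonic radius lower bound}. The starting point is two classical facts: if $|Rm|\leq C$ on $B(p,r)$, then by the Rauch comparison theorem no geodesic issuing from a point of $B(p,\frac r2)$ has a conjugate point before arclength $\pi/\sqrt C$, so $\exp_q$ is a local diffeomorphism on the ball of radius $\min\{\frac r2,\pi/\sqrt C\}$ in $T_qM$; and by Klingenberg's lemma, $\mathrm{inj}(q)$ is the minimum of this conjugate radius and one half the length of the shortest geodesic loop based at $q$. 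Since a loop of length $\ell<r$ stays in $B(q,\frac r2)\subset B(p,r)$, it therefore suffices to produce a lower bound, depending only on $n,C,v,r$, for the length of geodesic loops based at points of $B(p,\frac r2)$.

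The plan is to argue by contradiction: suppose there are $(M_i,g_i)$, $p_i$ with $|Rm_{g_i}|\leq C$ on $B(p_i,r)$ and $\mathrm{vol}(B(p_i,r))\geq v$, but points $q_i\in B(p_i,\frac r2)$ with $\rho_i:=\mathrm{inj}_{g_i}(q_i)\to 0$. For large $i$ there is a geodesic loop $\gamma_i$ at $q_i$ of length $2\rho_i$. Rescale $\tilde g_i:=\rho_i^{-2}g_i$, so that $\mathrm{inj}_{\tilde g_i}(q_i)=1$, $\gamma_i$ has $\tilde g_i$-length $2$, and on the ball $B_{\tilde g_i}(q_i,\tfrac{r}{2\rho_i})$, whose radius tends to $\infty$, one has $|Rm_{\tilde g_i}|\leq C\rho_i^2\to 0$. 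Because $q_i\in B(p_i,\frac r2)$ we have $B_{g_i}(p_i,r)\subset B_{g_i}(q_i,\tfrac{3r}{2})$, hence $\mathrm{vol}_{g_i}(B_{g_i}(q_i,\tfrac{3r}{2}))\geq v$; since $\mathrm{Ric}_{g_i}\geq -(n-1)C$, Bishop--Gromov volume comparison gives $\mathrm{vol}_{g_i}(B_{g_i}(q_i,s))\geq v\,V_{-C}(s)/V_{-C}(\tfrac{3r}{2})$ for $s\leq \tfrac{3r}{2}$, where $V_{-C}(s)$ is the volume of an $s$-ball in the space form of constant curvature $-C$. Rescaling and using $V_{-C}(s\rho_i)\sim\omega_n(s\rho_i)^n$ as $\rho_i\to 0$, we get a uniform Euclidean-type lower bound $\mathrm{vol}_{\tilde g_i}(B_{\tilde g_i}(q_i,s))\geq c_0\,s^n$ for every fixed $s$ and all large $i$, with $c_0=c_0(n,C,v,r)>0$.

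With a two-sided curvature bound (here tending to $0$) and this uniform volume lower bound, the standard interior $C^{1,\alpha}$ harmonic radius estimate (the interior analogue of Theorem \ref{local harmonic radius lower bound}, cf. \cite{anderson2004boundary}) yields, along a subsequence, pointed $C^{1,\alpha}$ convergence $(M_i,\tilde g_i,q_i)\to(M_\infty,g_\infty,q_\infty)$ to a complete Riemannian manifold; since $|Rm_{\tilde g_i}|\to 0$ the limit has vanishing curvature, so in harmonic coordinates the Ricci equation $\Delta_{g_\infty} g_{ij}+B_{ij}(g_\infty,\partial g_\infty)=0$ bootstraps to show $g_\infty$ is smooth and flat, and $\mathrm{vol}_{g_\infty}(B(q_\infty,s))\geq c_0 s^n$ for all $s>0$. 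A complete flat manifold is $\mathbb{R}^n/\Gamma$; if $\Gamma\neq\{1\}$, Bieberbach's theorem gives either a translation subgroup of positive rank $k$, forcing volume growth of order $s^{n-k}<c_0 s^n$ for large $s$, or a finite $\Gamma$, which cannot act freely on $\mathbb{R}^n$. Both are impossible, so $M_\infty\cong\mathbb{R}^n$ and $\mathrm{inj}_{g_\infty}(q_\infty)=\infty$. But the unit vectors $\gamma_i'(0)\in T_{q_i}M_i$ converge, so the loops $\gamma_i$ converge to a geodesic in $\mathbb{R}^n$ of length $2$ returning to $q_\infty$ (equivalently, $\mathrm{inj}$ is continuous in this setting, forcing $\mathrm{inj}_{g_\infty}(q_\infty)=1$), a contradiction.

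The main obstacle is the limit-extraction step: only a two-sided curvature bound — with no bound on derivatives of curvature — is available, so one must invoke the $C^{1,\alpha}$ harmonic-coordinate compactness theory rather than $C^\infty$ Cheeger--Gromov convergence, and one must track that every constant ($c_0$, the harmonic radius bound, etc.) depends only on $n,C,v,r$; the remaining points — ruling out nontrivial flat quotients by volume growth and passing the geodesic loop to the limit — are routine. (Alternatively, one may simply quote the quantitative estimate \cite{cheeger1982finite} Theorem 4.3, which contains all of this.)
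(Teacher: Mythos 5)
The paper itself gives no proof of this lemma beyond citing \cite{cheeger1982finite} Theorem 4.3 and remarking that a contradiction argument also works, so your fallback of simply quoting that theorem is exactly the paper's route and is adequate. The problem is with the self-contained argument you present as the main content: it is circular at the compactness step. The ``standard interior $C^{1,\alpha}$ harmonic radius estimate'' takes a lower bound on the injectivity radius (or on the harmonic radius itself) as an input; in the standard theory the implication ``two-sided curvature bound $+$ volume lower bound $\Rightarrow$ harmonic radius lower bound'' factors through ``curvature bound $+$ volume lower bound $\Rightarrow$ injectivity radius lower bound,'' which is precisely the statement being proved. A volume lower bound together with $|Rm_{\tilde g_i}|\to 0$ does not by itself produce coordinates of definite size near $q_i$, so the extraction of the pointed $C^{1,\alpha}$ limit $(M_\infty,g_\infty,q_\infty)$ is not justified as written. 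The standard way to break the circle is to pull $\tilde g_i$ back by $\exp_{q_i}$ to the ball of radius $\sim \pi/(\sqrt{C}\,\rho_i)$ in $T_{q_i}M_i$, where there are no conjugate points: by the Gauss lemma the origin automatically has huge injectivity radius there, balls about $0$ have almost Euclidean volume, and the geodesic loop lifts to a chain of $\sim R$ preimages of $q_i$ in $B(0,2R)$ whose pairwise distances are at least $2\rho_i$; since the ball of radius $s<\rho_i$ about each preimage surjects onto $B(q_i,s)$ under the local isometry, summing volumes of these disjoint balls against $\mathrm{vol}(B(0,3R))$ yields the contradiction with the volume lower bound directly, with no limit extraction --- this is in fact the proof in \cite{cheeger1982finite}.

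A second, more minor, slip: you apply Bishop--Gromov on $B(q_i,\tfrac{3r}{2})$, but the Ricci lower bound is only known on $B(p_i,r)$, and $B(q_i,\tfrac{3r}{2})$ need not even have compact closure since $M$ is not assumed complete. The fix is routine: run volume comparison only on balls of radius at most $r/2$ centered at points of $B(p_i,r/2)$ (all contained in $B(p_i,r)$) and chain the resulting volume lower bound from $p_i$ to $q_i$ along a minimizing geodesic. Your reduction via Rauch and Klingenberg to bounding the length of geodesic loops, and the classification of complete flat manifolds with Euclidean volume growth, are fine.
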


\section{Convergence of hyperk\"ahler manifolds}\label{section main proof}

\subsection{Curvature estimates near the boundary}
This section serves as a first step for the proof of our main theorem.
For an Einstein manifold with boundary, if the boundary intrinsic and extrinsic geometry are controlled well and $i_b$ is bounded from below, we hope to control the interior geometry within $i_b$. To the author's knowledge, we do not know any general statement. We will first state and prove a version we need, and then discuss some lemmas needed in the proof.

\begin{theorem}\label{good boundary}
Let $(M,g)$ be a complete hyperk\"ahler 4-manifold with compact boundary. Suppose $|S|\leq C$,  $|\nabla^j_{\partial M} Rm_{\partial M}|\leq C_j, |\nabla^{j+1}_{\partial M} H|\leq C_j,    j=0,1,\cdots$, $inj_{\partial M}\geq i_0$, $i_b\geq i_0$, $\int_{M}|Rm|^2\leq C$. Then for any $r_1<i_0$, there exists $C'>0$, depending on $C,C_j,i_0,r_1$,  such that $\sup\limits_{N_{r_1}(\partial M,g)}|Rm|\leq C'$.

\end{theorem}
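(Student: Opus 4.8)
The plan is to argue by contradiction using the convergence theory developed in Section \ref{section Riemannian geometry}, combined with an $\epsilon$-regularity statement for Einstein $4$-manifolds. Suppose the conclusion fails: then there is a sequence $(M_k,g_k)$ of hyperk\"ahler $4$-manifolds satisfying all the hypotheses with uniform constants, points $x_k\in N_{r_1}(\partial M_k,g_k)$, and $|Rm_{g_k}|(x_k)\to\infty$. Since the boundary data are uniformly controlled and $i_b\geq i_0$, Proposition \ref{volume of a metric cyclinder} gives a definite volume for metric cylinders over boundary balls, so the metric does not collapse in a collar $N_{r_1}$, and the $L^2$ bound $\int_M|Rm|^2\leq C$ is scale-invariant in dimension $4$. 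The first step is therefore to perform a point-selection (Hamilton-type) blow-up: choose $y_k$ near $x_k$ realizing an almost-maximal value $\lambda_k=|Rm|(y_k)\to\infty$ of curvature on a slightly smaller collar, and rescale $\tilde g_k=\lambda_k g_k$. There are two cases according to whether $\lambda_k d_{g_k}(y_k,\partial M_k)$ stays bounded or diverges.

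If $\lambda_k d_{g_k}(y_k,\partial M_k)\to\infty$, the rescaled boundary runs off to infinity: the second fundamental form in the rescaled metric is $O(\lambda_k^{-1/2})\to 0$, the intrinsic curvature of $\partial M_k$ and all its derivatives tend to zero after rescaling, and $i_b$, $inj_{\partial M}$ blow up. So locally around $y_k$ we are in the interior situation, and by the $\epsilon$-regularity theorem for Ricci-flat (Einstein) $4$-manifolds — valid once the local $L^2$ norm of curvature is below a dimensional threshold, which is achieved because the scale-invariant energy $\int_{B}|Rm|^2$ over small balls goes to zero by absolute continuity of the integral — we get a uniform curvature bound near $y_k$, contradicting $|Rm_{\tilde g_k}|(y_k)=1$ being the almost-maximum while curvature should decay. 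More precisely, standard arguments (Anderson, Bando-Kasue-Nakajima) extract a smooth noncollapsed Ricci-flat ALE limit with $\int|Rm|^2<\infty$; a nontrivial such limit carries a nontrivial $L^2$ curvature class, but the point is that the Gromov-Hausdorff/Cheeger-Gromov limit, being a complete noncollapsed Ricci-flat $4$-manifold with finite energy and curvature $1$ at a point, cannot appear if the energy of the tail tends to zero — one runs the bubbling argument to exhaust all the energy and reach a contradiction with $\int_M|Rm|^2\leq C$ being finite (only finitely many bubbles of definite energy can form).

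If $\lambda_k d_{g_k}(y_k,\partial M_k)$ stays bounded, the rescaled boundary persists at bounded distance. Here I would use Corollary \ref{inj lower bound limit} (or Remark \ref{inj lower bound limit remark}) and the boundary harmonic-radius estimate Theorem \ref{local harmonic radius lower bound}: after rescaling, $|\nabla^j_{\partial M}Rm_{\partial M}|\to0$, $|\nabla^{j+1}_{\partial M}H|\to0$, $|S|\to0$, $inj_{\partial M}\to\infty$, $i_b\to\infty$, so the pointed sequence $(M_k,\tilde g_k,y_k)$ converges in Cheeger-Gromov sense, away from concentrated curvature, to a complete Ricci-flat $4$-manifold-with-boundary whose boundary is flat and totally geodesic; by Lemma \ref{flat totally geodesic} (invoked in the excerpt) this forces the limit to be flat $\mathbb R^4_+$, and one may double it across the totally geodesic boundary to obtain a complete noncollapsed Ricci-flat limit with finite energy. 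Then the same interior bubbling/$\epsilon$-regularity contradiction as in Case 1 applies: either the limit is flat — contradicting $|Rm|(y_k)=1$ — or a definite amount of energy bubbles off, and since only finitely many such bubbles can occur under $\int_M|Rm|^2\leq C$, after finitely many rescalings we reach a region where the local energy is below the $\epsilon$-regularity threshold, giving a uniform curvature bound and the desired contradiction. I expect the main obstacle to be the bookkeeping near the boundary in Case 2: ensuring that the blow-up points can be taken so that no curvature escapes to the boundary (a point-selection that simultaneously controls distance to $\partial M$), and verifying that the doubling across the limiting totally geodesic boundary is legitimate so that the clean interior $\epsilon$-regularity theory can be quoted; the interior case and the energy-quantization are comparatively standard.
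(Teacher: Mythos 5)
Your overall strategy — contradiction, point-selection/blow-up, $\epsilon$-regularity, splitting into cases by where the rescaled boundary sits, invoking Lemma~\ref{flat totally geodesic} when the boundary persists — is the right shape, and Case 2 of your outline essentially reproduces what the paper calls Claim 1. But there is a genuine gap in Case 1, and it is the key point of the whole proof.

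In Case 1 (rescaled boundary escapes to infinity), you extract a complete noncollapsed Ricci-flat $4$-manifold with finite energy and $|Rm|=1$ at the base point, hence a nontrivial hyperk\"ahler ALE space by Bando--Kasue--Nakajima, and you assert a contradiction by ``running the bubbling argument to exhaust all the energy.'' This does not close. A single Eguchi--Hanson bubble costs a fixed finite energy and is fully compatible with $\int_M|Rm|^2\leq C$; nothing in the energy budget forbids one (or several) such bubbles. What forbids them here is \emph{topology}: since $r_1<i_0\leq i_b$, the set $N_{r_1}(\partial M,g)$ lies within the boundary injectivity radius, so via $\exp^\perp$ it is diffeomorphic to $\partial M\times[0,r_1]$. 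A neighborhood that deformation retracts to a $3$-manifold has trivial intersection form on $H_2$, hence contains no class with self-intersection $-2$. By Kronheimer's classification, every non-flat hyperk\"ahler ALE space contains a $-2$ curve; therefore no nontrivial ALE bubble can be carried by a region inside the collar. This is exactly the content of Proposition~\ref{no bubble} and is the crux of the paper's Claim 2. Without invoking it, your Case 1 reaches no contradiction — you simply find an admissible bubble. The phrase ``cannot appear if the energy of the tail tends to zero'' has no justification in the hypotheses (you are only given $\int_M|Rm|^2\leq C$, not that it shrinks), and ``only finitely many bubbles can form'' is true but innocuous: it bounds the number of bubbles, it does not rule out the first one.

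Two further remarks on execution. First, a Hamilton-type point pick takes $y_k$ in $B(x_k,2)$, so the blow-up center can escape $N_{r_1}$; outside that collar you have no pointwise control at all, and the ``no $-2$ curve'' observation would also fail. The paper instead takes $p_i$ to be the genuine supremum of $|Rm|$ on $N_{r_1}$, then proves that $d(p_i,\partial M)^2|Rm(p_i)|\to\infty$ (that is your Case 2, reduced to Lemma~\ref{flat totally geodesic}), and then rescales by \emph{boundary distance} rather than by curvature, so that $p_i$ sits at rescaled distance exactly $1$ from $\partial M$ and the energy ball $B_{g'_i}(p_i,\beta)$ is provably disjoint from the new collar $N_{\alpha+\beta}(\partial M,g'_i)$. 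Second, the iteration is not a generic bubble-tree argument: because each energy ball $B_{g_i^{(j+1)}}(p_i^{(j)},\beta)$ lies inside $N_{\alpha+\beta}(\partial M,g_i^{(j)})$ and outside $N_{\alpha+\beta}(\partial M,g_i^{(j+1)})$, the balls at different stages are pairwise disjoint, and each carries $\geq\epsilon_0$ of energy; that is what eventually violates $\int_M|Rm|^2\leq C$. Your outline gestures at the termination but not at the disjointness, which is the part that actually makes the counting work. In short: add the topological observation that the collar is $\partial M\times[0,r_1]$ and hence admits no $-2$ curves, replace the Hamilton pick with the supremum on $N_{r_1}$, and replace curvature rescaling with boundary-distance rescaling to make the energy balls disjoint; then your outline becomes the paper's proof.
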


\begin{proof}
Without loss of generality, assume $i_0=1$. Denote $\alpha={r_1}$, $\beta=\frac{1}{4}(1-\alpha)$.  Suppose the conclusion is not true, we have a sequence $(M_i,g_i)$ satisfying the conditions, but $$\sup\limits_{N_{\alpha}(\partial M_i,g_i)}|Rm_{g_i}|\rightarrow\infty.$$ Let  $p_i\in N_{r_1}(\partial M_i,g_i)$  achieves this supremum. 

\textbf{Claim 1} There exists a subsequence such that 
\begin{equation}\label{Distance curvature}
d_{g_i}(p_i,\partial M_i)^2|Rm_{g_i}(p_i)|\rightarrow\infty.
\end{equation}
If this is not true, we have $\sup_i d_{g_i}(p_i,\partial M_i)^2 |Rm_{g_i}|(p_i)<\infty$. Rescale $\tilde{g}_i=|Rm_{g_i}(p_i)|g_i$, then $|Rm_{\tilde{g}_i}(p_i)|=1$, and $|Rm_{\tilde{g}_i}|\leq 1$ in $N_{\alpha|Rm(p_i)|^{\frac{1}{2}}}(\partial M_i,\tilde{g}_i)$, and $\sup_id_{\tilde{g}_i}(p_i,\partial M_i)<\infty$, $ i_{b,\tilde g_i}\geq |Rm_{g_i}(p_i)|^{\frac{1}{2}}$ for all $i$. Hence by Corollary \ref{corollary that will be used often}, $(M_i,g_i,p_i)$ subconverges in pointed Cheeger-Gromov sense to $(M_\infty, \tilde{g}_\infty,p_\infty)$, which is a complete Ricci-flat 4-manifold with flat, totally geodesic boundary, hence must be flat by Lemma \ref{flat totally geodesic}. This contradicts that $|Rm_{g_\infty}(p_\infty)|=1$ and proves Claim 1.

Now rescale $g_i$ in another way, let $g_i'=d_{g_i}(p_i,\partial M_i)^{-2}g_i$, so $d_{g_i'}(p_i,\partial M)=1$.
Since $d_{g_i}(p_i,\partial M_i)\leq \alpha$, the rescaled metric $g_i'$ satisfies $i_{b,g_i'}\geq \alpha^{-1}$ as well as all other conditions of the assumptions of the theorem, but with different bounds, regardness of whether $d_{g_i}(p_i,\partial M_i)$ is uniformly bounded from below or not. Moreover, (\ref{Distance curvature}) is equivalent to $|Rm_{g_i'}(p_i)|\rightarrow\infty$.

By the $\epsilon$-regularity Theorem \ref{Cheeger-Tian}, there exists a universal constant $\epsilon_0$ such that  for sufficiently large $i$, $$\int_{B_{{g}_i'}(p_i,\beta )}|Rm_{g_i'}|^2\geq\epsilon_0.$$

\textbf{Claim 2} There exists a subsequence such that
$$ \sup\limits_{ N_{\alpha}(\partial M_i,g_i')}|Rm_{g_i'}|\rightarrow \infty, $$
If not, we have
$ \sup\limits_{ N_{\alpha}(\partial M_i,g_i')}|Rm_{g_i'}|\leq C$. By Lemma \ref{volume noncollasping} and Bishop-Gromov volume comparison, $\text{vol}(B_{g_i'}(p_i,\beta))\geq v$. Since $B_{g_i'}(p_i,\beta)\subset N_{\alpha^{-1}(\alpha+\beta)}(\partial M_i,g_i')\subset N_{\alpha+\beta}(\partial M_i,g_i)$, and the last one is diffeomorphic to $\partial M_i\times [0,\alpha+\beta]$, we conclude that there is no $-2$ curve in $B_{g_i'}(p_i,\beta)$. By Proposition \ref{no bubble},
$|Rm_{g_i'}(p_i)|$ is bounded, which is a contradiction to Claim 1 and finishes the proof of Claim 2.

Now Claim 2 enables us to get by induction, for each fixed positive integer $N$, $N$ sequences of metrics $g_i^{(0)}=g_i,g_i^{(1)}=g_i',\cdots, g_i^{(N)}$, and points $p_i^{(j)}\in N_{\alpha}(\partial M_i,g_i^{(j)})$ for  $0\leq j\leq N-1$, $p_i^{(0)}=p_i$, such that for $0\leq j\leq N-1$, $p_i^{(j)}$ achieves the supremum of $|Rm_{g_i^{(j)}}|$ in $N_{\alpha}(\partial M_i,g_i^{(j)})$, and

 $$|Rm_{g_i^{(j)}}(p_i)|\rightarrow\infty,$$  $$d_{g_i^{(j+1)}}(p_i^{(j)},\partial M_i)=1,$$
$$\int_{B_{{g}_i^{(j+1)}}(p_i^{(j)},\beta)}|Rm_{g_i^{(j+1)}}|^2\geq\epsilon_0,$$

$$ B_{g_i^{(j+1)}}(p_i^{(j)},\beta)\subset N_{\alpha^{-1}(\alpha+\beta)}(\partial M_i,g_i^{(j+1)})\subset N_{\alpha+\beta}(\partial M_i,g_i^{(j)}) ,$$ 
 $$B_{g_i^{(j+1)}}(p_i^{(j)},\beta)\cap N_{\alpha+\beta}(\partial M_i,g_i^{(j+1)})=\emptyset . $$               
It follows that for each fixed $i$, $B_{g_i^{(j+1)}}(p_i^{(j)},\beta)$ does not interect each other for different $j$. Since $\int_{M_i}|Rm_{g_i}|^2\leq C$, we have $N\epsilon_0\leq C$. This is a contradiction, since $N$ can be any positive integer.
\end{proof}

\begin{remark}
This theorem is purely local. In fact, by slightly modifying the proof, we see that if the bounds in the assumptions hold in a metric cyclinder $C(B_{\partial M}(p,r_0),0,r_1)$  such that $\exp^\perp$ maps $B_{\partial M}(p,r_0)\times [0,r_1)$ diffeomorphically onto it, and such that $B_{\partial M}(p,r_0)$ has compact closure,  then we have curvature bounds in any interior metric cyclinder $C(B_{\partial M}(p,r_0'),0,r_1')$ with fixed $r_0'<r_0,r_1'<r_1$.
\end{remark}

The following collapsing $\epsilon$-regularity theorem is originally due to Cheeger-Tian in \cite{cheeger2006curvature}. Recently, in the hyperk\"ahler case, \cite{sun2021collapsing} gives a simple proof by a blow-up argument and studying complete collapsing limits of hyperk\"ahler manifolds with bounded curvature.

\begin{proposition}\label{Cheeger-Tian} There exists $\epsilon,c$ such that the following holds:
Let $(M^4,g)$ be an Einstein 4-manifold, $|\emph{Ric}|\leq 3$, $r\leq 1$, and $B(p,r)$ is a metric ball that has compact closure. If $$\int_{B(p,r)}|Rm|^2\leq\epsilon,$$ then $$\sup\limits_{B(p,\frac{r}{2})}|Rm|\leq c r^{-2}.$$
\end{proposition}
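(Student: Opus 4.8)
The statement to prove is Proposition~\ref{Cheeger-Tian}, the collapsing $\varepsilon$-regularity theorem for Einstein $4$-manifolds. Here is how I would approach it.

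\medskip

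\textbf{Overall strategy.} The plan is to argue by contradiction and a blow-up (point-selection) argument, exactly in the spirit of the cited references \cite{cheeger2006curvature} and \cite{sun2021collapsing}. Suppose the conclusion fails for every $\varepsilon = 1/k$ and $c = k$; then one obtains Einstein $4$-manifolds $(M_k, g_k)$ with $|\mathrm{Ric}_{g_k}| \le 3$, balls $B(p_k, r_k)$ with $r_k \le 1$, small energy $\int_{B(p_k,r_k)} |Rm_{g_k}|^2 \le 1/k$, yet $\sup_{B(p_k, r_k/2)} r_k^2 |Rm_{g_k}| > k$. After rescaling so that $r_k = 1$ and applying a standard point-selection lemma (a ``parabolic'' or Hamilton-type rescaling argument on the function $|Rm|$ relative to distance to $\partial B(p_k,1)$), one may assume the curvature maximum is attained at a point $q_k$ with $|Rm_{g_k}(q_k)| = \lambda_k^2 \to \infty$, and that after rescaling $\tilde g_k = \lambda_k^2 g_k$ the rescaled metrics have $|Rm_{\tilde g_k}| \le 1 + o(1)$ on larger and larger balls $B_{\tilde g_k}(q_k, R_k)$ with $R_k \to \infty$, while $|Rm_{\tilde g_k}(q_k)| = 1$. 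The rescaled metrics are still Einstein with $|\mathrm{Ric}_{\tilde g_k}| = \lambda_k^{-2}|\mathrm{Ric}_{g_k}| \to 0$, so the limit will be Ricci-flat, and the energy $\int |Rm_{\tilde g_k}|^2$ on any fixed ball is scale-invariant in dimension $4$, hence still tends to $0$.

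\medskip

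\textbf{Passing to a limit and getting a contradiction.} The subtle point is that the sequence may collapse (injectivity radius $\to 0$), so one cannot extract a smooth non-collapsed limit directly. This is where the structure theory of bounded-curvature collapse enters: with uniformly bounded curvature one applies Cheeger--Fukaya--Gromov theory (or, in the hyperk\"ahler/Ricci-flat Kähler case, the more hands-on analysis of \cite{sun2021collapsing}) to the pointed sequence $(M_k, \tilde g_k, q_k)$. Either the sequence is non-collapsed along a subsequence, in which case it converges in pointed $C^\infty$ (Cheeger--Gromov) sense to a complete Ricci-flat $4$-manifold $(M_\infty, \tilde g_\infty, q_\infty)$ with $\int_{M_\infty}|Rm|^2 = 0$, forcing $\tilde g_\infty$ flat and contradicting $|Rm(q_\infty)| = 1$; or it is collapsed, in which case the local universal covers (or the frame bundle) converge to a space of lower-dimensional collapse carrying a nilpotent Killing structure, and the same scale-invariant $L^2$ curvature energy bound again forces the (pulled-back) limiting curvature to vanish, contradicting $|Rm_{\tilde g_k}(q_k)| = 1$ since curvature is invariant under the local covering maps. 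The cleanest route, if one wants to keep the paper self-contained, is to cite \cite{sun2021collapsing} directly: there it is shown that a pointed sequence of hyperk\"ahler $4$-manifolds with bounded curvature and (locally) small $L^2$ energy cannot have a curvature bump, which is precisely what is needed here.

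\medskip

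\textbf{Main obstacle.} The hard part is the collapsing case: the absence of an injectivity radius lower bound means no naive Arzel\`a--Ascoli argument applies, and one must invoke the full local structure theory of collapse with bounded curvature (nilpotent Killing structures / $N$-structures) to make sense of the limit and transfer the scale-invariant energy bound through the covering maps. In the Einstein (or hyperk\"ahler) setting this has the payoff that the limiting collapsed structure is rigid enough — the curvature must satisfy an elliptic system and vanish when its $L^2$ norm does — so the contradiction closes. I would therefore present the proof by reducing to and citing the blow-up/collapsing analysis of \cite{sun2021collapsing} (with \cite{cheeger2006curvature} as the original source), rather than reproving the $N$-structure theory from scratch.
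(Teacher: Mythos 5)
The paper gives no proof of this proposition: it records it as a known result of Cheeger--Tian \cite{cheeger2006curvature}, noting that \cite{sun2021collapsing} supplies a simpler blow-up proof in the hyperk\"ahler case (which suffices for the paper's applications), so your plan of setting up the contradiction/point-selection/rescaling and then reducing to and citing those references is essentially the same approach. One caution: in the collapsed case the scale-invariant $L^2$ energy is \emph{not} simply inherited by local universal covers --- it gets multiplied by the covering degree, which is unbounded under collapse --- so your heuristic about transferring the energy bound through the covers should not be taken literally; this is precisely the difficulty that makes \cite{cheeger2006curvature} hard and that \cite{sun2021collapsing} circumvents by a different classification of complete collapsed limits in the hyperk\"ahler setting.
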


The following proposition plays an important role. To avoid redundancy, we state it here without proving, but we will prove a more general version later, see Proposition \ref{no bubble 2}.

\begin{proposition}\label{no bubble}
Let $(M,g)$ be an hyperk\"ahler 4-manifold. Suppose $B(p,5)$ has compact closure, $B(p,3)$
contains no $-2$ curve,  $$\emph{vol}(B(p,1))\geq v,$$
$$\int_{B(p,3)}|Rm|^2\leq C.$$Then there exists $C'>0$, depending on $v,C$ such that $$\sup_{B(p,1)}|Rm|\leq C'.$$
\end{proposition}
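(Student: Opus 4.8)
The plan is to argue by contradiction using the convergence theory developed in the previous sections. Suppose the statement fails: then there is a sequence of hyperk\"ahler $4$-manifolds $(M_i,g_i)$ and points $p_i$ with $B_{g_i}(p_i,5)$ precompact, $B_{g_i}(p_i,3)$ containing no $-2$ curve, $\mathrm{vol}(B_{g_i}(p_i,1))\geq v$, $\int_{B_{g_i}(p_i,3)}|Rm_{g_i}|^2\leq C$, but $\sup_{B_{g_i}(p_i,1)}|Rm_{g_i}|\to\infty$. First I would perform a point-selection (blow-up) argument: using the precompactness of $B(p_i,3)$ and the fact that curvature blows up inside the smaller ball $B(p_i,1)$, select new centers $q_i$ and scales $\lambda_i = |Rm_{g_i}(q_i)|^{1/2}\to\infty$ (a standard Hamilton-type point-picking lemma) so that after rescaling $\tilde g_i = \lambda_i^2 g_i$ one has $|Rm_{\tilde g_i}(q_i)| = 1$ and $|Rm_{\tilde g_i}|$ bounded on a ball of $\tilde g_i$-radius $\to\infty$ around $q_i$. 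Crucially the $L^2$ curvature bound is scale-invariant in dimension $4$, so $\int |Rm_{\tilde g_i}|^2 \leq C$ is preserved.

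Next I would extract a limit. Since the rescaled balls have bounded curvature and, via the volume lower bound $\mathrm{vol}(B_{g_i}(p_i,1))\geq v$ together with volume comparison and (if needed) an interior injectivity radius estimate \`a la Lemma~\ref{well-know interior inj lower bound}, a noncollapsing bound at the new base points, the sequence $(M_i,\tilde g_i,q_i)$ subconverges in pointed Cheeger-Gromov sense to a complete, nonflat, Ricci-flat (hence hyperk\"ahler, after passing the parallel triple to the limit) $4$-manifold $(M_\infty,g_\infty,q_\infty)$ with $\int_{M_\infty}|Rm_{g_\infty}|^2\leq C<\infty$. Finite energy plus Ricci-flatness forces $(M_\infty,g_\infty)$ to be an ALE gravitational instanton (this is the standard structure theory: finite $L^2$ curvature, volume growth, and the Bando-Kasue-Nakajima end analysis). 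Here I must be careful to ensure the limit is noncollapsed at infinity as well, or at least to identify the ALE structure; the paper's hypotheses and the volume bound are arranged precisely so this works.

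Then comes the contradiction step: Kronheimer's classification shows every nonflat hyperk\"ahler ALE $4$-manifold contains a holomorphically embedded $(-2)$-curve (a $2$-sphere $\Sigma$ with $\Sigma^2=-2$). Such a $\Sigma$ sits inside a bounded region of $M_\infty$, hence for $i$ large its $F_i$-image sits inside $B_{\tilde g_i}(q_i, R)$ for some fixed $R$, which after unrescaling lies inside $B_{g_i}(p_i,3)$. The homology class of this embedded sphere has self-intersection $-2$ (self-intersection is a diffeomorphism invariant and, for $i$ large, the near-isometry $F_i$ identifies the class), contradicting the assumption that $B_{g_i}(p_i,3)$ contains no $-2$ curve. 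This yields the desired bound.

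The main obstacle I expect is controlling the geometry of the bubble limit well enough to guarantee it is a genuine ALE space carrying a $(-2)$-curve rather than some partially collapsed or incomplete object — i.e., upgrading ``complete Ricci-flat with finite $L^2$ curvature and a volume lower bound at one scale'' to ``ALE gravitational instanton''. This requires the noncollapsing to propagate and the end structure theory (Bando-Kasue-Nakajima, or the collapsing analysis of \cite{sun2021collapsing}) to apply; some care is also needed to check that the $-2$ class in the bubble can be faithfully transported back to $X$ via the approximating diffeomorphisms so that the topological hypothesis is genuinely violated. Since the paper explicitly defers the proof to the more general Proposition~\ref{no bubble 2}, I would organize the argument so that the hyperk\"ahler-specific input (Kronheimer's classification, parallel triple passing to the limit) is cleanly separated from the general bubbling/compactness machinery.
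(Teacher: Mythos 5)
Your proposal matches the paper's proof of the more general Proposition~\ref{no bubble 2} essentially step for step: point selection (the paper's Lemma~\ref{point selection}), rescaling, Cheeger--Gromov extraction, passage of the parallel triple to the limit, Bando--Kasue--Nakajima to identify the bubble as ALE, Kronheimer's classification to produce a $(-2)$-curve with nonzero period, and transport of that curve back into $B(p,3)$ via the approximating diffeomorphisms. The ``obstacle'' you flag is handled exactly as you would hope: since $\mathrm{Ric}\geq 0$, Bishop--Gromov propagates $\mathrm{vol}(B(p,1))\geq v$ first to $\mathrm{vol}(B(q,1))\geq 3^{-4}v$ for all $q\in B(p,2)$ and then downward to all radii $r\leq 1$, so after rescaling by $\lambda_i^2$ one gets $\mathrm{vol}_{\tilde g_i}(B_{\tilde g_i}(q_i,r))\geq 3^{-4}vr^4$ for all $r\leq\lambda_i$, hence the limit has maximal (Euclidean) volume growth and the ALE structure theory applies directly.
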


The following lemma originally dates back to Koiso in \cite{koiso1981hypersurfaces}, 
and is an incredibly special case of the result in \cite{biquard:hal-02928859}\cite{anderson2008unique}.
Since it plays an important role throughout the paper, we provide a detailed proof here following \cite{koiso1981hypersurfaces}.

\begin{lemma}\label{flat totally geodesic}
Let $(M,g)$ be a connected  $C^2$ Riemannian manifold with boundary. Suppose $\emph{Ric}_M=0$ and for some open boundary portion $T$, $S|_T=0$ and $Rm_{\partial M}|_T=0$, then $g$ is smooth and $Rm_M=0$.
\end{lemma}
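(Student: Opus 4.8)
The plan is to work near the boundary portion $T$ in a semi-geodesic (boundary normal) coordinate system and run the evolution equations for the second fundamental form, combined with an analyticity/unique-continuation argument in the normal direction. Concretely, pick $p\in T$ and identify a neighborhood with $\partial M\times[0,\epsilon)$ via $\exp^\perp$, writing $g = dt^2 + g_t$ where $g_t$ is the induced metric on the level set $\Sigma_t$. Let $S$ be the shape operator of $\Sigma_t$ (with respect to $\nabla f$, $f=-d(\cdot,\partial M)$); by hypothesis $S|_{t=0}=0$ on $T$ and $Rm_{\Sigma_0}=0$ on $T$. The key structural input is the system consisting of the radial curvature equation $L_{\nabla f}S + S^2 = -R(\cdot,\nabla f)\nabla f$, the traced Gauss equation (\ref{trace gauss equation}) which expresses $\mathrm{Ric}_M$ — here zero — in terms of $\mathrm{Ric}_{\Sigma_t}$, $S$, and the radial curvature, and equation (\ref{evolution equation for 2nd form}), namely $L_{\nabla f}\,\mathrm{Hess}\,f = \mathrm{Ric}_{\Sigma_t} - \mathrm{Ric}_M + 2\,\mathrm{Hess}^2 f - H\,\mathrm{Hess}\,f$. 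Since $\mathrm{Ric}_M=0$, this last equation becomes a first-order evolution equation for $S$ driven only by $S$ itself and by $\mathrm{Ric}_{\Sigma_t}$.

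The first main step is to show $S\equiv 0$ and $g_t$ is flat for all small $t$. I would set up the coupled system in $t$: $\partial_t g_t = -2\,\mathrm{II}_t$ (so $g_t$ is determined by $S$), and $\partial_t S = \mathrm{Ric}_{\Sigma_t} + 2S^2 - HS$ (schematically, from (\ref{evolution equation for 2nd form}) converted to the $(1,1)$-tensor, using $L_{\nabla f}S = \nabla_{\nabla f}S$ from (\ref{Lie derivative S same as covariant derivative S})), where $\mathrm{Ric}_{\Sigma_t}$ is a second-order elliptic-type expression in $g_t$. This is not an ODE because $\mathrm{Ric}_{\Sigma_t}$ involves spatial derivatives, so the clean way is: observe that $S\equiv 0$, $g_t \equiv g_0$ (with $g_0$ flat) is a solution of the system with the given initial data, and invoke a unique continuation / uniqueness statement. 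The honest route that stays within what Koiso did is to first prove real-analyticity: a Ricci-flat (Einstein) metric is real-analytic in harmonic coordinates (DeTurck–Kazdan), and in the boundary-normal gauge one then argues that $g_t$ depends analytically on $t$; hence it suffices to show all $t$-derivatives of $S$ vanish at $t=0$ on $T$. Differentiating the evolution equation repeatedly at $t=0$ and using $S|_{t=0}=0$, $\mathrm{Ric}_{\Sigma_0}=0$ (which follows from $Rm_{\Sigma_0}=0$): $\partial_t S|_0 = \mathrm{Ric}_{\Sigma_0} + 2S^2|_0 - HS|_0 = 0$, and inductively $\partial_t^k S|_0 = 0$ because every term in $\partial_t^k S|_0$ contains either a factor of some $\partial_t^j S|_0$ with $j<k$ (all zero by induction, including the $\mathrm{Ric}_{\Sigma_t}$ term which is a function of $g_t$ and its spatial derivatives, and $\partial_t g_t = -2\mathrm{II}_t$ inherits the vanishing) — one has to check that spatial derivatives don't spoil this, but since $S\equiv 0$ to order $k-1$ in $t$ means $g_t$ agrees with $g_0$ to order $k$ in $t$, all spatial derivatives of $g_t - g_0$ also vanish to that order at $t=0$, so $\mathrm{Ric}_{\Sigma_t}$ and all the curvature terms vanish to the appropriate order. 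Hence $S\equiv 0$ and $g_t$ is flat near $p$, so $Rm_M = 0$ near $p$ by the Gauss equation plus the radial curvature equation ($R(\cdot,\nabla f)\nabla f = -\partial_t S - S^2 = 0$, and the mixed and tangential components vanish by Gauss–Codazzi with $S=0$ and $Rm_{\Sigma_t}=0$).

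The second step is the global conclusion: the set where $Rm_M = 0$ is closed; I want to show it is also open. Standard unique continuation for Einstein metrics — precisely the Biquard and Anderson–Herzlich results cited, but here one can also argue by hand via analyticity — gives that if $Rm_M$ vanishes on an open set, it vanishes on the connected component; alternatively, once we know $g$ is flat in a collar of $T$, every interior point is reached and flatness propagates because a Ricci-flat metric is real-analytic and analytic continuation from an open set forces $Rm_M \equiv 0$ on the connected manifold $M$. Smoothness of $g$ is then automatic: a $C^2$ Einstein metric is real-analytic in harmonic coordinates (elliptic regularity / DeTurck–Kazdan), hence smooth.

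The main obstacle I expect is the bookkeeping in the induction showing $\partial_t^k S|_0 = 0$: one must be careful that the term $\mathrm{Ric}_{\Sigma_t}$ in the evolution equation brings in spatial derivatives of $g_t$, so the induction hypothesis has to be stated as "$g_t - g_0$ vanishes to order $k$ in $t$ in $C^\infty$ of the spatial variables on $T$" rather than just "$\partial_t^{k-1}S|_0 = 0$ pointwise", and one needs the semi-geodesic gauge to be smooth (which requires knowing $\partial M$ is $C^2$ and using that $\exp^\perp$ is a local diffeomorphism). The cleanest presentation will probably avoid the full induction by instead citing real-analyticity of $g$ in $(x,t)$ jointly (which does require upgrading regularity first via harmonic coordinates, then translating to the boundary-normal gauge) and then running the vanishing-of-Taylor-coefficients argument, which is what I would write up.
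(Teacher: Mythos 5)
The central difficulty in this lemma is that the hypothesis is only $g\in C^2$, while the conclusion ($g$ smooth, $Rm_M\equiv 0$) requires either real-analyticity or at least $C^\infty$ control \emph{up to and across the boundary}. Your proposal recognizes that some regularity upgrade is needed (``which does require upgrading regularity first via harmonic coordinates, then translating to the boundary-normal gauge''), but you never explain how to get it, and this is exactly where the paper's one nontrivial idea lives. Interior elliptic regularity / DeTurck--Kazdan analyticity of Einstein metrics in harmonic coordinates only holds on open sets without boundary; it tells you nothing about whether $g$ is $C^\infty$, let alone analytic, at $t=0$. Your Taylor-coefficient induction ($\partial_t^k S|_{t=0}=0$) cannot even be set up under the hypotheses as given, since the iterated $t$-derivatives of $S$ need not exist when $g$ is merely $C^2$ at the boundary.

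The paper's proof resolves this by a reflection argument that you do not mention: extend $g$ across $\partial M$ by even reflection in a semi-geodesic coordinate $(x',x^n)$ with $g_{ij}(x',0)=\delta_{ij}$, and verify that the extension is $C^2$. This is where $S|_T=0$ gets used: $S=0$ forces $\partial_{x^n}g_{ij}(x',0)=0$, which is exactly the compatibility condition making the reflected metric $C^1$; one then checks the three types of second derivatives match from both sides to get $C^2$. After reflection, $p$ is an \emph{interior} point of a $C^2$ Ricci-flat metric on an open set, so interior elliptic regularity gives analyticity in harmonic coordinates near $p$ --- now genuinely on both sides of the original boundary. Only then does the paper write the evolution equation $\partial_t^2 g = 2\,\mathrm{ric}\,g - \tfrac12\,\mathrm{tr}_g(\partial_t g)\,\partial_t g + (\partial_t g)^2$ as a second-order Cauchy problem in $t$ and invoke uniqueness in Cauchy--Kovalevskaya with data $g(\cdot,0)$, $\partial_t g(\cdot,0)=0$ and the flatness hypothesis $\mathrm{Ric}_{\partial M}|_T=0$ to conclude $g(z,t)=g(z,0)$, hence $Rm_M=0$ near $p$. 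Your induction is morally the coefficient-by-coefficient version of this Cauchy--Kovalevskaya step, and the global conclusion via interior analyticity of $Rm_M$ on a connected manifold agrees with the paper's. But without the reflection (or some substitute boundary regularity/unique-continuation argument), the step from $C^2$ at the boundary to a setting where either analyticity or the induction makes sense is a genuine gap, and it is precisely the step where the totally geodesic hypothesis is essential.

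A secondary wording issue: in your induction you say the $\mathrm{Ric}_{\Sigma_t}$ terms ``vanish to the appropriate order'' because $g_t-g_0$ vanishes to that order, but that only gives that $\mathrm{Ric}_{\Sigma_t}$ agrees with $\mathrm{Ric}_{\Sigma_0}$ to that order; what makes the term actually vanish is the hypothesis $Rm_{\partial M}|_T=0$, which you should invoke explicitly there. This does not break the argument (you do have the hypothesis), but the reasoning as written is slightly off.
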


\begin{proof}
Fix any point $p$ in $T$. First, we show $g$ can be extended across the boundary near $p$.
Choose a semi-geodesic coordinate system $(x^1,\cdots, x^n)$ near $p$, with $\partial M$ identified with $\{x^n=0\}$, and interior identified with $\{x^n>0\}$, $\nabla x^n=\partial_{ x^n}$, and $g_{ij}(x',0)=\delta_{ij}, 1\leq i,j\leq n-1$, where $x'=(x^1,\cdots, x^{n-1})$.  We extend the metric tensor $g$ by reflection across $\{x^n=0\}$, i.e., set $g_{ij}(x',x^n)=g_{ij}(x',-x^n), 1\leq i,j\leq n$. Then $g\in C^0(B)\cap C^2(B^+)$, where $B^+$ is the boundary coordinate ball and $B$ is its extension after reflection. We need to show $g\in C^2(B)$. In fact, $S=0$ is equivalent to $\frac{\partial g_{ij}}{\partial x^n_+}(x',0)=0, 1\leq i,j\leq n-1,$ then we also have $\frac{\partial g_{ij}}{\partial x^n_-}(x',0)=-\frac{\partial g_{ij}}{\partial x^n_+}(x',0)=0,$ hence  $\frac{\partial g_{ij}}{\partial x^n}(x',0)=0$ and $g_{ij}\in C^1(B)$. Since $g\in C^2(B^+)$, we have for $1\leq l\leq n-1$,
 $$\frac{\partial }{\partial x^n_+}\frac{\partial g_{ij}}{\partial x^l}(x',0)=\frac{\partial }{\partial x^l}\frac{\partial g_{ij}}{\partial x^n_+}(x',0)=\frac{\partial }{\partial x^l}0=0,$$ $$\frac{\partial }{\partial x^n_-}\frac{\partial g_{ij}}{\partial x^l}(x',0)=-\frac{\partial }{\partial x^n_+}\frac{\partial g_{ij}}{\partial x^l}(x',0)=0.$$ 
 \begin{equation}
 \begin{split}
      \frac{\partial}{\partial x^n_-}\frac{\partial g_{ij}}{\partial x^n}(x',0)&=\lim\limits_{x^n\rightarrow 0^-} \frac{1}{x^n}\frac{\partial g_{ij}}{\partial x^n}(x',-x^n)\\ &=\lim\limits_{x^n\rightarrow 0^+}\frac{1}{x^n}\frac{\partial g_{ij}}{\partial x^n}(x',x^n)=\frac{\partial}{\partial x^n_+}\frac{\partial g_{ij}}{\partial x_n}(x',0).
 \end{split}
 \end{equation}
  Hence $g_{ij}\in C^2(B)$. Finally, we have $g_{nn}=1,g_{ln}=g_{nl}=0, 1\leq l\leq n-1$, hence $g\in C^2(B)$.

By elliptic regularity, all harmonic coordinate charts in $B$ give rise to a real analytic structure in $B$ such that $g$ is real analytic. Hence if $t$ is a distance function such that $\partial M$ is defined by $t^{-1}(0)$ near $p$, then $t$ is real analytic near $p$. Choose a real analytic coordinate $(z,t)$ near $p$. Since $t^{-1}(0)$ is totally geodesic, $\frac{\partial g}{\partial t}(z,0) =0$. The evolution equation (\ref{evolution equation for 2nd form}) is equivalent to the second order PDE
\begin{equation}
\frac{\partial^2 g}{\partial t^2}=2 ric \ g-\frac{1}{2}tr_g(\frac{\partial g}{\partial t})\frac{\partial g}{\partial t}+(\frac{\partial g}{\partial t})^2,
\end{equation}
 where $ric\ g$ is the Ricci tensor of level sets of $t$. By the uniqueness part of Cauchy-Kovalevskaya theorem, we know $g(z,t)=g(z,0)$. Hence $Rm_M=0$ near $p$. Since $Rm_M$ is real analytic in the interior of $M$, $Rm_M=0$ in $M$.

\end{proof}

\subsection{Convergence of hyperk\"ahler metrics}

\begin{theorem}\label{main theorem hyperkahler metric}
Let $(X_i,g_i)$ be a sequence of compact, connected hyperk\"ahler 4-manifold with boundary, suppose on $\partial X_i$, we have
$$H_i\geq H_0>0, |S_i|\leq C, |\nabla_{\partial X_i}^{j+1} H|\leq C_j,$$   $$inj_{\partial X_i}\geq i_0, \emph{diam}_{g_i|_{\partial X_i}}(\partial X_i)\leq C,|\nabla^j_{\partial X_i}Rm_{\partial X_i}|\leq C_j,\forall j\geq 0, $$ 
and $\chi(X_i)\leq C$.
Assuming there exists no $-2$ curve on $X_i$, 
then there exists a subsequence such that $(X_i,g_i)$ converges in Cheeger-Gromov sense to a compact, connected hyperk\"ahler 4-manifold with boundary $(X_\infty, g_\infty).$
\end{theorem}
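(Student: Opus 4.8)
The strategy is to bootstrap from the well-controlled boundary geometry to a complete geometric control near $\partial X_i$, then propagate inward via the structure of hyperk\"ahler $4$-manifolds, and finally invoke the compactness machinery of Section~\ref{section Riemannian geometry}. First I would establish a uniform lower bound $i_{b}\geq i_1>0$ for the boundary injectivity radius of each $X_i$: this follows from Corollary~\ref{inj lower bound limit} (with Remark~\ref{inj lower bound limit remark}, since we only need curvature control on $N_1(\partial X_i,g_i)$ initially) provided we have a curvature bound on a collar, or more directly one combines the positive mean curvature $H_i\geq H_0$ with $\mathrm{Ric}_{g_i}=0$ through Corollary~\ref{Ric positive, mean positive, inj lower bound}; care is needed because that corollary presumes a sectional curvature upper bound, so one should first run the argument on the collar using only the Ricci-flatness in the evolution equation~(\ref{evolution of mean curvature}) as in Proposition~\ref{more general inj lower bound}. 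With $i_b\geq i_1$ in hand, Proposition~\ref{diam} gives $\mathrm{diam}_{g_i}(X_i)\leq D$ and $\mathrm{vol}(X_i)\leq V$, so the sequence has uniformly bounded diameter; this also forces $\partial X_i$ connected and the topology to be constrained.

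The second and central step is to obtain a \emph{global} curvature bound $\sup_{X_i}|Rm_{g_i}|\leq C'$. Near the boundary this is exactly Theorem~\ref{good boundary}: its hypotheses are met once we know $i_b\geq i_1$, $inj_{\partial X_i}\geq i_0$, the boundary tensor bounds, and an $L^2$-curvature bound $\int_{X_i}|Rm_{g_i}|^2\leq C$. The $L^2$ bound comes from the Gauss--Bonnet/Chern--Gauss--Bonnet formula for $4$-manifolds with boundary: for an Einstein (hence in particular hyperk\"ahler, so $\mathrm{Ric}=0$) $4$-manifold, $\chi(X_i)=\frac{1}{8\pi^2}\int_{X_i}|Rm_{g_i}|^2 + (\text{boundary terms involving } II \text{ and } Rm_{\partial X_i})$, and all boundary terms are controlled by $|S_i|\leq C$, $|\nabla_{\partial X_i}^j Rm_{\partial X_i}|\leq C_j$, and $\mathrm{vol}(\partial X_i)\leq C$ (the latter from $inj_{\partial X_i}\geq i_0$ and $\mathrm{diam}(\partial X_i)\leq C$); together with $\chi(X_i)\leq C$ this yields the uniform $\int_{X_i}|Rm_{g_i}|^2\leq C$. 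Having curvature bounded on a fixed-size collar $N_{r_1}(\partial X_i,g_i)$, one then upgrades to curvature bounded everywhere on $X_i$ by the same contradiction/point-selection scheme used in Theorem~\ref{good boundary}: if $|Rm_{g_i}|$ blew up at interior points $q_i$, rescaling and extracting a limit produces either a complete non-flat hyperk\"ahler ALE bubble -- which by Kronheimer's classification contains a $-2$ curve, contradicting the hypothesis via the argument of Proposition~\ref{no bubble} -- or a flat limit contradicting the normalization $|Rm|(q_i)=1$. The volume non-collapsing needed to run this (to ensure the bubble is a genuine ALE space and not a lower-dimensional collapse) is supplied near the boundary by Proposition~\ref{volume noncollasping} and then propagated inward using the now-established curvature bound together with the diameter bound and a packing argument; the $\epsilon$-regularity Proposition~\ref{Cheeger-Tian} handles the transition between the $L^2$ bound and the pointwise bound.

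With $\sup_{X_i}|Rm_{g_i}|\leq C'$, $i_b\geq i_1$, $inj_{\partial X_i}\geq i_0$, uniform diameter, uniform lower volume bound, and all the higher boundary derivative bounds, Corollary~\ref{corollary that will be used often} (or directly Remark~\ref{global harmonic radius lower bounds for complete manifolds with boundary} plus Proposition~\ref{harmonic radius lower bound}) gives a uniform harmonic-radius lower bound $r_h^{m+1,\alpha}(p,g_i,Q)\geq r_0>0$ for every $p\in X_i$ and every $m$; iterating over $m$ and applying the Einstein equation $\mathrm{Ric}_{g_i}=0$ in harmonic coordinates bootstraps to uniform $C^\infty$ bounds on $g_i$ in these charts. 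Since the diameter is bounded and the harmonic radius is bounded below, finitely many charts cover each $X_i$, so the $X_i$ are all diffeomorphic for large $i$ (their transition functions are uniformly controlled) and one extracts a subsequence converging in Cheeger--Gromov sense to a compact connected hyperk\"ahler $4$-manifold with boundary $(X_\infty,g_\infty)$; smoothness of the limit up to the boundary follows because the boundary harmonic-radius estimate is $C^{m+1,\alpha}$ for all $m$. The main obstacle is the interior curvature bound: one must simultaneously rule out curvature blow-up, volume collapse, and the boundary ``folding'' onto itself, and the only leverage against a blow-up is the exclusion of $-2$ curves via Kronheimer's classification of hyperk\"ahler ALE spaces -- making the careful bubbling analysis (packaged in Proposition~\ref{no bubble}/Proposition~\ref{no bubble 2}) the technical heart of the argument.
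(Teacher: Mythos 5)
Your overall architecture is right — diameter bound from Proposition~\ref{diam}, $L^2$-curvature bound from Chern--Gauss--Bonnet, collar curvature control from Theorem~\ref{good boundary}, interior curvature bound via Proposition~\ref{no bubble}, then Corollary~\ref{corollary that will be used often}. But your first step, establishing $i_b\geq i_1>0$, has a genuine gap that you acknowledge but do not actually resolve, and this step is where the real work is.

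You propose three routes to the $i_b$ lower bound, and none of them closes: (a) Corollary~\ref{inj lower bound limit} with Remark~\ref{inj lower bound limit remark} needs $|Rm|\leq C$ on $N_1(\partial X_i,g_i)$, which we do not yet have — the only tool to produce it, Theorem~\ref{good boundary}, itself needs $i_b\geq i_0$, so this is circular; (b) Corollary~\ref{Ric positive, mean positive, inj lower bound} needs $\sec\leq K$, same problem; (c) ``running the argument as in Proposition~\ref{more general inj lower bound}'' does not give a lower bound at all — that proposition only proves \emph{existence} of a focal point at distance $i_b$ from $\partial M$, and to convert this into a lower bound you must rule out such a focal point at small distance, which again requires curvature comparison. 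So as written, Step~1 presupposes the collar curvature bound it is supposed to enable.

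The paper breaks this circle with a rescaling contradiction (Proposition~\ref{global inj lower bound}) that you do not describe. Suppose $i_{b,g_i}\to 0$; rescale $\tilde g_i = i_{b,g_i}^{-2}g_i$ so that $i_{b,\tilde g_i}=1$. Then $\partial X_i$ with $\tilde g_i$ converges to flat $\mathbb R^3$, $|S|\to 0$, and, crucially, Theorem~\ref{good boundary} now applies to $\tilde g_i$ (its hypothesis $i_b\geq 1$ is satisfied by construction and $\int|Rm|^2$ is scale-invariant). One then splits into two cases at the rescaled scale. If $|Rm_{\tilde g_i}|$ stays bounded near the boundary, the limit is Ricci-flat with flat totally geodesic boundary, hence flat by Lemma~\ref{flat totally geodesic}; but choosing $p_i\in\partial X_i$ via Proposition~\ref{existence of focal points} so that $\gamma_{p_i}(1)$ is a focal point and invoking Proposition~\ref{pass to limit}, one finds a focal point at distance $1$ in flat half-space — impossible. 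If $|Rm_{\tilde g_i}|$ blows up, Theorem~\ref{good boundary} forces the blow-up point $q_i$ into the interior at distance $\geq 1/2$, Proposition~\ref{volume noncollasping} supplies volume non-collapsing there, and Proposition~\ref{no bubble} (using the no~$-2$~curve hypothesis) bounds $|Rm_{\tilde g_i}(q_i)|$ — contradiction. Your proposal never uses the focal-point machinery (Propositions~\ref{existence of focal points}, \ref{pass to limit}) at all, which is the linchpin of the first case, and you also locate the ``main obstacle'' in the interior curvature bound, whereas in the paper the interior bound follows routinely once $i_b$ is controlled; the technical heart is Proposition~\ref{global inj lower bound}. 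With that step replaced by the rescaling dichotomy, the remainder of your plan goes through as you describe.
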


\begin{remark}
By Chern-Gauss-Bonnet formula, our assumptions imply $$\int_{X_i}|Rm|^2\leq C.$$ Note that for a compact connected Einstein 4-manifold $(M,g)$ with boundary, the Chern-Gauss-Bonnet formula says
$$\frac{1}{8\pi^2}\int_M|Rm|^2=\chi(M)-\frac{1}{2\pi^2}\int_{\partial M}\prod\limits_{i=1}^3\lambda_i-\frac{1}{8\pi^2}\int_{\partial M}\sum\limits_{\sigma\in S_3}K_{\sigma_1\sigma_2}\lambda_{\sigma_3}. $$
Here $\lambda_1,\lambda_2,\lambda_3$ are eigenvalues of the shape operator $S$ of $\partial M$, Let $e_i$ be eigenvectors of eigenvalue $\lambda_i$ such that $\{e_1,e_2,e_3\}$ is an orthonormal basis, then $K_{ij}=\sec(e_i,e_j) $. See for example (1.16) in \cite{Anderson2000L2CA}.
\end{remark}

\begin{remark}
If we drop the condition $\text{diam}_{g_i|_{\partial X_i}}(\partial X_i)\leq C$, and replace $H_i\geq H_0>0$ by $H_i>0$, $\chi(X_i)\leq C$ by $\int_{X_i}|Rm_{g_i}|^2\leq C$, then for any point $p_i\in X_i$, a subsequence of $(X_i,g_i,p_i)$ converges in pointed Cheeger-Gromov sense to a complete, connected hyperk\"ahler 4-manifolds with nonempty or empty boundary $(X_\infty,g_\infty,p_\infty)$, depending on whether the distance of $p_i$ to boundary is bounded or not. 
\end{remark}

\begin{remark}\label{positive mean curvature essential}
The positive mean curvature condition is necessary. The following counterexample is natural and was observed by Donaldson in \cite{donaldson2018remarks}. Consider the standard unit ball $B^4$  inside Euclidean $\mathbb{R}^4$, ``squeeze'' the ball such that the north pole and the south pole of the boundary $S^3$ comes together, so we get a sequence of embedded $B^4$ in $\mathbb{R}^4$ converging in Hausdorff sense to a limit homeomorphic to the wedge sum of two $B^4$, whose boundary is an immersed $S^3$ intersecting itself at one point. For this sequence, all other assumptions are satisfied. Slightly modifying the process, one can also have a sequence of $B^4$ of dumbbell shape such that the middle cyclinder $B^3\times [0,1]$ collapses to $[0,1]$, then they have a Hausdorff limit which is homeomorphic to two $B^4$ joint by a line segment.

In these types of examples, the curvatures are uniformly bounded, and the global volume are non-collapsing before taking the limit.

\end{remark}

\begin{remark}\label{no -2 curve essential}

If we allow $-2$ curves and do not assume the positive mean curvature condition, something worse will happen: consider the Kummer construction. Let $T^4/\mathbb{Z}_2$ be the flat 4-orbifold with 16 singularities, remove small neighborhoods of the 16 singularities, and glue in 16 copies of $T^*S^2$. By varing the sizes of these glue-in regions and perturbing the metrics, we get a sequence of hyperk\"ahler 4-manifolds $(M_i,h_i)$, each of which is diffeomorphic to a $K3$ surface, such that $(M_i,h_i)$ converges in Gromov-Hausdorff sense to $T^4/\mathbb{Z}_2$, and converges in Cheeger-Gromov sense away from these 16 singularities. Now let $T^3/\mathbb{Z}_2\subset T^4/\mathbb{Z}_2$ be the flat 3-orbifold, such that the last coordinate equal to 0. Let $\tilde{X}_i\subset T^4/\mathbb{Z}_2$  be the closure of the tubular neighborhood of $T^3/\mathbb{Z}_2$ of width $i^{-1}$, then $\partial \tilde{X}_i$ is connected, totally geodesic, isometric to the same flat $T^3$. Now for each $i$, choose $n(i)$ large enough such that one can find  $X_i\subset M_{n(i)}$ which are compact domains with smooth boundary,  $d_{GH}(X_i,\tilde{X}_i)\rightarrow 0$, $|\nabla_{\partial X_i}^j S_{\partial X_i}|\rightarrow 0 $, $\forall j\geq 0$, $\partial X_i$ converges in Cheeger-Gromov sense to $\partial \tilde X_i$. Let $g_i=h_{n(i)}|_{ X_i}$, then $(X_i,g_i)$ converges in Gromov-Hausdorff sense to flat $T^3/\mathbb{Z}_2$. In this case, $\text{vol}(X_i)\rightarrow 0$, $\sup\limits_{X_i}|Rm_{g_i}|\rightarrow\infty$.

Note that in this example $\partial \tilde{X}_i$ cannot be perturbed in flat $T^4/\mathbb{Z}_2$ to have positive mean curvature. In fact,
take a small tubular neighborhood of $\partial \tilde{X}_i$ of width less than $i^{-1}$, whose boundary has two totally geodesic connected components $T_1,T_2$. Suppose $\partial \tilde{X}_i$ can be perturbed to $T'$ such that its mean curvature has a strict sign, say that its mean curvature vector points towards $T_1$. Then $T', T_1$ bound a region $W$. By \cite{donaldson2018remarks} Proposition 7, $T'$ and $T_1$ are isometric, so $T'$ is totally geodesic, which is a contradiction.  

\end{remark}
The major step of the proof in \ref{main theorem hyperkahler metric} is to show that these Riemannian manifolds have a nice neighborhood of definite size. We show that the boundary injectivity radius has a lower bound, so that the interior geometry within the boundary injectivity radius is nicely controlled by Theorem \ref{good boundary}.

\begin{proposition}\label{global inj lower bound}
There exists $i_1>0$, depending on the constants in Theorem \ref{main theorem hyperkahler metric} such that    $i_{b,g_i}\geq i_1.$
\end{proposition}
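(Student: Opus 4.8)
\emph{Proof plan.} The plan is to argue by contradiction: rescale so that the boundary injectivity radius equals $1$, extract a focal point from Proposition~\ref{existence of focal points} sitting at distance exactly $1$, and use Theorem~\ref{good boundary} to force the collar to converge to something flat, where no focal point can live. Suppose the conclusion fails; after passing to a subsequence $i_{b,g_i}\to 0$. Set $\tilde g_i:=i_{b,g_i}^{-2}g_i$, so $i_{b,\tilde g_i}=1$. Since $|S_i|$, $|\nabla^{j}_{\partial X_i}Rm_{\partial X_i}|$, $|\nabla^{j+1}_{\partial X_i}H_i|$ are uniformly bounded and $inj_{\partial X_i}\ge i_0$, the rescaled boundary data satisfy $|\tilde S_i|\to 0$, $|\nabla^{j}Rm_{\partial X_i,\tilde g_i}|\to 0$, $|\nabla^{j+1}\tilde H_i|\to 0$ and $inj_{\partial X_i,\tilde g_i}\to\infty$, while by Chern--Gauss--Bonnet (and scale invariance of $\int|Rm|^2$ in dimension $4$) we still have $\int_{X_i}|Rm_{\tilde g_i}|^{2}=\int_{X_i}|Rm_{g_i}|^{2}\le C$. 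As $(X_i,\tilde g_i)$ is Ricci-flat with mean convex boundary, Proposition~\ref{existence of focal points} provides $p_i\in\partial X_i$ with $\tau_{\tilde g_i}(p_i)=i_{b,\tilde g_i}=1$ and a focal point of $\partial X_i$ along $\gamma_{p_i}$ at distance $1$; since $\gamma_{p_i}|_{[0,1)}$ realizes the distance to $\partial X_i$ it has no interior focal point, so $\gamma_{p_i}(1)$ is in fact the first one.

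Next I would produce the flat collar limit. By Theorem~\ref{good boundary} applied to $(X_i,\tilde g_i)$ (legitimate since $i_{b,\tilde g_i}=1$), for each $r<1$ there is $C'(r)$ with $\sup_{N_r(\partial X_i,\tilde g_i)}|Rm_{\tilde g_i}|\le C'(r)$. Combining this with the collar volume lower bound of Proposition~\ref{volume of a metric cyclinder} (valid since the metric is Ricci-flat and $\exp^\perp$ is a diffeomorphism on $\partial X_i\times[0,1)$), Proposition~\ref{volume noncollasping}, Lemma~\ref{well-know interior inj lower bound}, and the harmonic radius estimate of Theorem~\ref{local harmonic radius lower bound}, a subsequence of $(X_i,\tilde g_i,p_i)$ converges (diagonally over $r\uparrow 1$) in pointed $C^\infty$, on the set of points at distance $<1$ from $\partial X_i$, to a limit which is Ricci-flat with flat, totally geodesic boundary; by Lemma~\ref{flat totally geodesic} this limit collar is flat, hence isometric to an open subset of $\mathbb{R}^{3}\times[0,1)$. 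In particular, for each fixed $r<1$, $|Rm_{\tilde g_i}|\to 0$ along $\gamma_{p_i}([0,r])$ and $\gamma_{p_i}|_{[0,r]}$ converges to a unit-speed normal geodesic of this flat collar.

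The final step is a dichotomy according to the curvature near the \emph{whole} segment $\gamma_{p_i}([0,1])$; let $\mathcal T_i\subset X_i$ be a neighbourhood of it of a fixed radius. If $\sup_{\mathcal T_i}|Rm_{\tilde g_i}|$ stays bounded along a subsequence, then $\mathcal T_i$ has uniformly bounded curvature and, containing a boundary neighbourhood of $p_i$, is volume non-collapsed (Propositions~\ref{volume of a metric cyclinder} and \ref{volume noncollasping}, plus Bishop--Gromov), so by Lemma~\ref{well-know interior inj lower bound} and Theorem~\ref{local harmonic radius lower bound} a subsequence converges in $C^\infty$ to a Ricci-flat manifold carrying a flat totally geodesic boundary portion, which is flat by Lemma~\ref{flat totally geodesic}. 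Since the convergence is smooth in a neighbourhood of all of $\gamma_{p_i}([0,1])$, Proposition~\ref{pass to limit} shows that the focal point at distance $1$ passes to the limit; but along a normal geodesic in a flat manifold with flat totally geodesic boundary every boundary-Jacobi field is of the form $t\mapsto v+tw$ with $v$ tangent and $w$ normal to the boundary, hence vanishes only if identically zero, so no focal point exists --- a contradiction. If instead $\sup_{\mathcal T_i}|Rm_{\tilde g_i}|\to\infty$, then since the curvature is bounded on every $N_r(\partial X_i,\tilde g_i)$ with $r<1$, the concentration occurs near $\gamma_{p_i}(1)$; running the point-selection and rescaling scheme exactly as in the proof of Theorem~\ref{good boundary}, one uses Proposition~\ref{no bubble} together with the hypothesis that $X_i$ carries no $-2$ curve to exclude a non-flat bounded-curvature bubble, and the resulting iteration produces, for every $N$, $N$ mutually disjoint balls each of energy $\ge\epsilon_0$, contradicting $\int_{X_i}|Rm_{\tilde g_i}|^{2}\le C$. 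In both cases we reach a contradiction, so $i_{b,g_i}\ge i_1$ for some $i_1>0$.

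I expect the first alternative of the dichotomy to be the main obstacle: Theorem~\ref{good boundary} a priori controls the geometry only on the \emph{open} collar $\{d(\cdot,\partial X_i)<1\}$, whereas the focal point lies precisely at its edge, so Proposition~\ref{pass to limit} cannot be invoked until curvature concentration at $\gamma_{p_i}(1)$ has been ruled out --- which is exactly where the absence of $-2$ curves, through the bubbling analysis of Theorem~\ref{good boundary} and Proposition~\ref{no bubble}, has to be used.
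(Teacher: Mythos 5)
Your proposal follows the same blueprint as the paper's proof: rescale so $i_{b,\tilde g_i}=1$, use Proposition~\ref{existence of focal points} to produce $p_i\in\partial X_i$ with a focal point along $\gamma_{p_i}$ at distance exactly $1$, split into a bounded-curvature and an unbounded-curvature case around the normal geodesic, and in the bounded case take a Cheeger--Gromov limit which is flat by Lemma~\ref{flat totally geodesic} and carries a focal point by Proposition~\ref{pass to limit}, which is impossible. This is precisely the paper's Case~1, and your observation that Theorem~\ref{good boundary} alone only controls the \emph{open} collar $\{d(\cdot,\partial X_i)<1\}$ --- so that one must first rule out curvature concentration at $\gamma_{p_i}(1)$ --- correctly identifies the key difficulty.

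The one place where your argument is imprecise is the unbounded-curvature alternative. You propose to rerun the disjoint-energy-ball iteration from the proof of Theorem~\ref{good boundary}, but that iteration is driven by the blow-up point being at distance $\ll 1$ from the boundary in the current scale (so that rescaling by that distance genuinely zooms in); here the blow-up points $q_i$ sit at distance bounded \emph{between} roughly $1/2$ and a fixed constant from $\partial X_i$, precisely because Theorem~\ref{good boundary} already bounds the curvature on $N_r(\partial X_i,\tilde g_i)$ for every $r<1$. Consequently, the iterative scheme does not get started, and the ``$N$ disjoint balls of energy $\geq\epsilon_0$'' contradiction is not the one that appears. What does work --- and is what the paper does --- is a single direct application of Proposition~\ref{no bubble}: the bounded distance range for $q_i$, combined with Proposition~\ref{volume noncollasping} and Bishop--Gromov, gives a volume lower bound at $q_i$; since $X_i$ has no $-2$ curve and $\int_{X_i}|Rm_{\tilde g_i}|^2\leq C$, Proposition~\ref{no bubble} yields a uniform curvature bound at $q_i$, contradicting $|Rm_{\tilde g_i}(q_i)|\to\infty$. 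So the key ingredient you cite (Proposition~\ref{no bubble} plus the no-$-2$-curve hypothesis) is exactly right; you just do not need, and cannot actually run, the iteration. Replacing the iteration by this direct application brings your proof into line with the paper's.
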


\begin{proof}
Suppose not, we have a subsequence of hyperk\"ahler metrics $g_i$,  with $i_{b,{g}_i}\rightarrow 0$. Rescale the metric $\tilde g_i=i_{b,g_i}^{-2}{g}_i$, then $i_{b,\tilde g_i}=1$. For any point $p_i\in \partial X_i$, the restriction metric $(\partial X_i,\tilde{g}_i|_{\partial X_i},p_i)$ converges in pointed Cheeger-Gromov sense to  flat $\mathbb{R}^3$, $|S_{\tilde{g}_i}|\rightarrow 0$ and $|\nabla_{\partial X_i}^{j+1} H_i|\rightarrow 0$ uniformly on $\partial X_i$. Consider $\sup\limits_{B_{\tilde g_i}(p_i,4)}|Rm_{\tilde g_i}|$. We have two cases 

\textbf{Case 1} $\sup\limits_{B_{\tilde g_i}(p_i,4)}|Rm_{\tilde g_i}|\leq C.$\\
We have a subsequence  $(B_{\tilde g_i}(p_i,3),\tilde g_i)$ converges in Cheeger-Gromov sense to a Riemannian manifold with boundary  $(B_\infty,g_\infty)$, so $(B_\infty,g_\infty)$ is Ricci flat, and all boundary components are flat, totally geodesic. By Lemma \ref{flat totally geodesic}, $(B_\infty,g_\infty)$ is flat. We need to choose good points $p_i$ to lead to a contradiction. In fact, by Proposition \ref{existence of focal points}, there exists $p_i\in\partial X_i$ such that $\gamma_{p_i}(1)$ is a focal point along $\gamma_{p_i}$. Let $p_\infty$ be the limit of $p_i$. By Proposition \ref{pass to limit}, we get a limit geodesic $\gamma_{p_\infty}: [0,1]\rightarrow B_\infty$, such that $\gamma_{p_\infty}(1)$ is a focal point along $\gamma_{p_\infty}$, contradiction. 

\textbf{Case 2} For some subsequence we have $\sup\limits_{B_{\tilde g_i}(p_i,4)}|Rm_{\tilde g_i}| \rightarrow\infty$.\\
Then we can find points $q_i\in B_{\tilde{g}_i}(p_i,4)$ such that $|Rm_{\tilde g_i}(q_i)|\rightarrow\infty$. By Theorem \ref{good boundary}, we have $d_{\tilde g_i}(q_i,\partial X_i)\geq \frac{1}{2}$ for large $i$. By Theorem \ref{good boundary}, Proposition \ref{volume noncollasping}, and the fact  $d_{\tilde{g}_i}(q_i,\partial X_i)\leq 4$, we have  $\text{vol}_{\tilde{g}_i}(B_{\tilde{g}_i}(q_i,\frac{r_0}{2}))\geq v_0$ for some $r_0,v_0$. Since there is no $-2$ curve in $X_i$, then by Proposition \ref{no bubble}, $\sup_{B_{\tilde{g}_i}(q_i,\frac{r_0}{10})}|Rm_{\tilde{g}_i}|\leq C$, which is a contradiction.

\end{proof}

Then we can finish the proof of Theorem \ref{main theorem hyperkahler metric} as follows: by Theorem \ref{good boundary}, $|Rm_{g_i}|$ is uniformly bounded in $N_{\frac{i_1}{2}}(\partial X_i,g_i)$. By Proposition $\ref{volume noncollasping}$, there exist $r_1<\frac{i_1}{10},v_1$, such that $\text{vol}_{g_i}(B_{g_i}(p,r_1))\geq v_1$ for any $p$ with $d_{g_i}(p,\partial X_i)=2r_1$.  By Proposition \ref{diam}, $\sup\limits_{q\in X_i}d_{g_i}(q,\partial X_i)\leq 3H_0^{-1}$, hence $\text{diam} (X_i,g_i)\leq C$. Then from Bishop-Gromov volume comparison,  we know $\text{vol}_{g_i}(B_{g_i}(p,r_1))\geq v_2$ for any $p$ with $d_{g_i}(p,\partial X_i)\geq 5r_1$.  By Proposition $\ref{no bubble}$, for these $p$,
$|Rm_{g_i}(p)|$ is uniformly bounded, with the bound independent of $i$ and $p$. Hence $\sup_X |Rm_{g_i}|$ is uniformly bounded. Then by Corollary \ref{corollary that will be used often},
 a subsequence $(X_i,g_i)$ converges in Cheeger-Gromov sense to a smooth Riemannian manifold with boundary $(X,g)$, so $g$ is a hyperk\"ahler metric.

\begin{remark} One can also directly prove Theorem \ref{main theorem hyperkahler metric} by rescaling the maximum curvature norm to be 1. Suppose it is achieved at $p_i$. Then from Corollary \ref{Ric positive, mean positive, inj lower bound}, we know $i_b\geq i_0$ for the rescaled metrics. If now $d(p_i,\partial X_i)$ is bounded, then the pointed Riemannian manifolds converge in pointed Cheeger-Gromov sense to a Ricci-flat manifold with flat and totally geodesic boundary, hence the limit is flat, contradition. Otherwise, for a subsequence $d(p_i,\partial X_i)\rightarrow\infty$, then we rescale the metrics again such that this distance is 1. However, it is unknown whether the curvature is bounded in a fixed size neighborhood of the boundary in this scale. Using the idea of Theorem \ref{good boundary}, we can keep rescaling such that this will happen at some stage, for possibly different points $p_i'$, then get a contradiction. The contradiction is the same the one in Case 2 in Proposition \ref{global inj lower bound}, since volume lower bounds can be passed within finite distance, so do the curvature bounds by Proposition \ref{no bubble}. Alternatively, one can prove Theorem \ref{main theorem hyperkahler metric} by rescaling the harmonic radius, and all these methods turn out to be equivalent eventually.
\end{remark}

\subsection{Convergence of hyperk\"ahler triples}\label{moduli space}

Now we turn to the setting of \ref{convergence of triples}. ${X}$ will denote a compact oriented 4-manifold with boundary $\partial X=Y$. Let $\mathcal{N}^+$ be the set of closed framings $\bm{\gamma}=(\gamma_1,\gamma_2,\gamma_3)$ on $Y$  that satisfies the ``positive mean curvature'' condition

\begin{equation}\label{mean positive}
\sum\limits_{i=1}^3 \langle\gamma_i,d(*_{\bm\gamma}\gamma_i)\rangle_{\bm\gamma}>0.
\end{equation}
Let $\mathcal{M}^+$ be the set of smooth hyperk\"ahler triples $\bm\omega=(\omega_1,\omega_2,\omega_3)$ whose restriction to the boundary lies in $\mathcal{N}^+$, so we have a restriction map $p_0:\mathcal{M}^+\rightarrow\mathcal{N}^+$, which induce a map
\begin{equation}\label{map between moduli space}
    p:\mathcal{M}^+/\mathcal{G}_X\rightarrow\mathcal{N}^+/\mathcal{G}_Y.
\end{equation}
  Here $\mathcal{M}^+$, $\mathcal{N}^+$ are equipped with Fr\'echet topology defined by smooth convergence, $\mathcal{G}_X,\mathcal{G}_Y$ are orientation preserving diffeomorphism groups of $X,Y$, respectively. It is obvious that $p_0$, $p$ are continuous. 
Then the compactness part of Theorem \ref{convergence of triples} is equivalent to:
\begin{proposition}
When there is no $-2$ curve in $X$, the map $p:\mathcal{M}^+/\mathcal{G}_X\rightarrow\mathcal{N}^+/\mathcal{G}_Y$   is proper.
\end{proposition}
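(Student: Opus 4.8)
The plan is to reduce properness to a sequential compactness statement and then feed the boundary data directly into Theorem \ref{main theorem hyperkahler metric}, finally upgrading the resulting convergence of metrics to convergence of triples. Concretely, properness of $p$ is equivalent to the assertion that whenever $[\bm\omega_i]\in\mathcal M^+/\mathcal G_X$ is a sequence for which $p([\bm\omega_i])$ converges in $\mathcal N^+/\mathcal G_Y$, some subsequence of $[\bm\omega_i]$ converges in $\mathcal M^+/\mathcal G_X$ to a point lying over $\lim p([\bm\omega_i])$; this is the form I would prove. So suppose $p([\bm\omega_i])=[\bm\omega_i|_{\partial X}]\to[\bm\gamma]$, which by definition of the Fr\'echet quotient topology means there are orientation preserving diffeomorphisms $\psi_i$ of $Y=\partial X$ with $\psi_i^*(\bm\omega_i|_{\partial X})\to\bm\gamma$ smoothly on $Y$. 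I would first replace $\bm\omega_i$ by $\Psi_i^*\bm\omega_i$, where $\Psi_i$ is an extension of $\psi_i$ to an orientation preserving diffeomorphism of $X$ (such an extension exists: damp $\psi_i$ to the identity across a collar of $\partial X$). This does not change $[\bm\omega_i]\in\mathcal M^+/\mathcal G_X$ and arranges that $\bm\gamma_i:=\bm\omega_i|_{\partial X}\to\bm\gamma$ smoothly on $Y$.

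Next I would read off the hypotheses of Theorem \ref{main theorem hyperkahler metric}. Smooth convergence $\bm\gamma_i\to\bm\gamma$ gives smooth convergence of the induced boundary metrics $g_{\bm\gamma_i}=g_{\bm\omega_i}|_{\partial X}$, hence uniform bounds $inj_{\partial X}\ge i_0$, $\operatorname{diam}(\partial X,g_{\bm\gamma_i})\le C$ and $|\nabla^j Rm_{\partial X}|\le C_j$ for every $j$. By Lemma \ref{hyperkahler triple second fundamental form} the shape operator of $\partial X$ and all its covariant derivatives are universal expressions in $\bm\gamma_i$ and its first derivatives (through the matrix $\Gamma_i$), so they converge as well; in particular $|S_i|\le C$ and $|\nabla^{j+1}H_i|\le C_j$, and since $H_i=\tfrac12\operatorname{Tr}\Gamma_i\to H_{\bm\gamma}$ uniformly on the compact set $\partial X$ with $H_{\bm\gamma}>0$, we obtain $H_i\ge H_0>0$ for all large $i$. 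Finally $\chi(X)$ is a fixed number. Since $X$ contains no $-2$ curve, Theorem \ref{main theorem hyperkahler metric} applies: after passing to a subsequence $(X,g_{\bm\omega_i})$ converges in Cheeger-Gromov sense to a compact connected hyperk\"ahler $4$-manifold with boundary, and as the underlying manifolds are all $X$ this limit is diffeomorphic to $X$, so we obtain orientation preserving diffeomorphisms $F_i$ of $X$ with $F_i^*g_{\bm\omega_i}\to g_\infty$ smoothly on $X$ for some hyperk\"ahler metric $g_\infty$.

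It remains to upgrade metric convergence to triple convergence. Each $F_i^*\bm\omega_i$ is parallel for the Levi-Civita connection of $F_i^*g_{\bm\omega_i}$ and pointwise normalized ($\langle\omega_a,\omega_b\rangle=2\delta_{ab}$ with values in $\Lambda^+$), so at a fixed base point $x_0\in X$ the triples $(F_i^*\bm\omega_i)(x_0)$ lie in a fixed compact set of oriented orthonormal frames of $\Lambda^+_{x_0}$ for the convergent metrics; a subsequence converges there, and since parallel transport depends continuously in $C^\infty$ on the metric, $F_i^*\bm\omega_i\to\bm\omega_\infty$ in $C^\infty$ on $X$, with $\bm\omega_\infty$ parallel and orthonormal for $g_\infty$, i.e.\ a hyperk\"ahler triple with $g_{\bm\omega_\infty}=g_\infty$. (Equivalently, $\nabla(F_i^*\bm\omega_i)=0$ plus pointwise normalization gives uniform $C^k$ bounds in $g_\infty$-harmonic coordinates and one applies Arzel\`a--Ascoli.) Hence $[\bm\omega_i]\to[\bm\omega_\infty]$ in $\mathcal M^+/\mathcal G_X$. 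By continuity of the restriction map $\mathcal M^+\to$ (space of framings) and of the quotient projections, $p([\bm\omega_\infty])=\lim p([\bm\omega_i])=[\bm\gamma]$; in particular $\bm\omega_\infty|_{\partial X}$ is $\mathcal G_Y$-equivalent to $\bm\gamma$, so it satisfies the $\mathcal G_Y$-invariant positivity condition \eqref{mean positive} and $\bm\omega_\infty\in\mathcal M^+$. Thus the subsequence converges in $\mathcal M^+/\mathcal G_X$ to a point over $[\bm\gamma]$, which gives properness (and in the language of compact sets, $p^{-1}(K)$ is sequentially compact for every compact $K\subset\mathcal N^+/\mathcal G_Y$).

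I expect the genuine difficulty to lie entirely inside Theorem \ref{main theorem hyperkahler metric} — in particular the boundary injectivity radius lower bound (Proposition \ref{global inj lower bound}), the near-boundary curvature estimate (Theorem \ref{good boundary}), and the no-bubbling statement (Proposition \ref{no bubble}). Within the present argument the only delicate points are bookkeeping: extending boundary diffeomorphisms to $X$ without disturbing the triples, using Lemma \ref{hyperkahler triple second fundamental form} to see that \emph{all} the extrinsic boundary quantities converge together with $\bm\gamma_i$, and confirming that the Cheeger-Gromov limit lands back on $X$ and that its boundary restriction lands back in $\mathcal N^+$.
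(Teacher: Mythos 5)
Your proof is correct and follows essentially the same route as the paper: normalize so the boundary triples converge smoothly, check that the induced boundary data satisfy the hypotheses of Theorem~\ref{main theorem hyperkahler metric}, invoke that theorem to get Cheeger--Gromov convergence of the metrics, and then upgrade to convergence of the triples using that the pulled-back triples are parallel and pointwise normalized. The only cosmetic difference is that you explicitly extend the boundary diffeomorphisms $\psi_i$ to $X$ before applying the theorem, whereas the paper keeps $\bm\omega_i$ unconjugated and simply observes that $(Y,g_i|_Y)$ converges in the Cheeger--Gromov sense, which is already the form Theorem~\ref{main theorem hyperkahler metric} accepts.
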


\begin{proof}
Suppose for some $\phi_i\in\mathcal{G}_Y$, and $\bm\gamma_i\in\mathcal{N}^+$,we have          $\phi_i^*{\bm\gamma_i}\rightarrow \bm\gamma\in \mathcal{N}^+$ and there exist $\bm\omega_i\in\mathcal{M}^+$ with $\bm\omega_i|_Y=\bm\gamma_i$, we want to show there exists $\psi_i\in\mathcal{G}_X$ and $\bm\omega\in\mathcal{M}^+$  such that $\psi_i^*\bm\omega_i\rightarrow\bm\omega$. Let $g_i$ be the Riemannian metric defined by $\bm\omega_i$. By the assumptions, we have a uniform positive lower bound for the mean curvature $H_i$ of $Y$ of the metric $g_{i}=g_{\bm\omega_i}$, and bounds for $|\nabla_{Y}^l S_i|$ for all $l\geq 0$, and $(Y,g_i|_{Y})$ converges in Cheeger-Gromov sense. Hence $(X,g_i)$ satisfies all conditions in Theorem \ref{main theorem hyperkahler metric}. Then for a subsequence, there exists diffeomorphism $\psi_i: X\rightarrow X$ such that $\psi_i^*g_i\rightarrow g$ smoothly as tensors. One can assume $\psi_i$ is orientation preserving, otherwise, for a subsequence, compose them with a fixed orientataion reversing diffeomorphism of $X$. Since $|\psi_i^*\bm\omega_i|^2=3$, $\psi_i^*\bm\omega_i$ are parallel, we conclude that for some subsequence $\psi_i^*\bm\omega_i\rightarrow\bm\omega$ smoothly, and $\bm\omega\in\mathcal{M}^+$.

\end{proof}

\subsection{Enhancements}

For the proof of the compactness part of Theorem \ref{convergence of triples enhancements}, one only needs the following proposition in place of Proposition \ref{no bubble}. Then we argue in the same way as the proof of Theorem \ref{convergence of triples}.

\begin{proposition}\label{no bubble 2}
Let $(M,g,\bm\omega)$ be a hyperk\"ahler 4-manifold. Suppose $B(p,5)$ has compact closure, and for any $-2$ curve $\Sigma$ in $B(p,3)$
, $$\Big{|}\int_{\Sigma}\bm\omega\Big{|}\geq a>0,$$  $$\emph{vol}(B(p,1))\geq v,$$ 
$$\int_{B(p,3)}|Rm|^2\leq C. $$ Then there exists $C'>0$, depending on $a,v,C$ such that $$\sup_{B(p,1)}|Rm|\leq C'.$$
\end{proposition}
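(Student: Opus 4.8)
The plan is a contradiction argument via blow-up: if the curvature on $B(p,1)$ were unbounded along a sequence, I rescale at a suitably chosen near-maximal point to produce a complete non-flat hyperk\"ahler \emph{bubble}, use the volume lower bound $v$ to show this bubble has Euclidean volume growth and is therefore ALE, locate inside it a $-2$ sphere, and then pull that sphere back into $B(p,3)$; the scaling behaviour of the triple forces the pulled-back periods to $0$, contradicting the lower bound $a$. The same argument yields Proposition~\ref{no bubble} as the special case where $B(p,3)$ contains no $-2$ curve at all.

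\emph{Point selection.} Suppose $(M_i,g_i,\bm\omega_i,p_i)$ satisfy the hypotheses with the same $a,v,C$ but $\sup_{B(p_i,1)}|Rm_{g_i}|\to\infty$. Since $B(p_i,5)$ is precompact, the continuous function $x\mapsto(2-d_{g_i}(x,p_i))^2|Rm_{g_i}(x)|$ attains its maximum $M_i$ over $\overline{B(p_i,2)}$ at an interior point $q_i$, with $M_i\ge\sup_{B(p_i,1)}|Rm_{g_i}|\to\infty$. Put $\lambda_i=|Rm_{g_i}(q_i)|$, so $\lambda_i\ge M_i/4\to\infty$, and rescale $\tilde g_i=\lambda_i g_i$ with compatible hyperk\"ahler triple $\tilde{\bm\omega}_i=\lambda_i\bm\omega_i$ (in dimension $4$, $\mathrm{dvol}$ scales by $\lambda_i^2$). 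The standard estimate gives $|Rm_{\tilde g_i}(q_i)|=1$ and $|Rm_{\tilde g_i}|\le 4$ on $B_{\tilde g_i}(q_i,\rho_i)$ with $\rho_i=\tfrac12\sqrt{M_i}\to\infty$, while $d_{g_i}(q_i,p_i)<2$.

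\emph{The bubble.} Since $g_i$ is Ricci-flat, Bishop--Gromov from $q_i$ together with $\mathrm{vol}_{g_i}(B_{g_i}(p_i,1))\ge v$ and $d_{g_i}(q_i,p_i)<2$ gives $\mathrm{vol}_{g_i}(B_{g_i}(q_i,s))\ge (v/81)\,s^4$ for $s\le 3$; rescaling, $\mathrm{vol}_{\tilde g_i}(B_{\tilde g_i}(q_i,s))\ge (v/81)\,s^4$ for all $s\le 3\sqrt{\lambda_i}$, so the rescaled pointed spaces are non-collapsed at every fixed scale, and by Lemma~\ref{well-know interior inj lower bound} have a uniform interior injectivity radius lower bound near $q_i$. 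The Einstein condition gives local bounds on all covariant derivatives of $Rm$ (so, as in Proposition~\ref{Cheeger-Tian} / standard $\epsilon$-regularity, the convergence is smooth with no orbifold points). Hence a subsequence of $(M_i,\tilde g_i,q_i)$ converges in pointed Cheeger--Gromov sense to a complete Ricci-flat — hence hyperk\"ahler — $4$-manifold $(N,h,q_\infty)$, carrying a parallel hyperk\"ahler triple $\bm\sigma$ obtained (after a further subsequence) as the $C^\infty$-limit of $F_i^*\tilde{\bm\omega}_i$, where $F_i$ are the convergence diffeomorphisms; here $|Rm_h(q_\infty)|=1$, so $N$ is \emph{not} flat, $\int_N|Rm_h|^2\le C$ by scale-invariance of the $L^2$-curvature together with smooth convergence on compacta, and $\mathrm{vol}(B_h(q_\infty,s))\ge (v/81)\,s^4$ for all $s>0$, so $N$ has Euclidean volume growth (the matching upper bound being Bishop--Gromov). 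A complete non-flat hyperk\"ahler $4$-manifold with finite $L^2$ curvature and Euclidean volume growth is ALE, and by Kronheimer's classification \cite{kronheimer1989construction}\cite{kronheimer1989torelli} it is the minimal resolution of $\mathbb C^2/\Gamma$ for a nontrivial finite $\Gamma\subset SU(2)$; thus $H_2(N;\mathbb Z)$ is spanned by embedded $2$-spheres of self-intersection $-2$, so $N$ contains a $-2$ sphere $\Sigma_\infty$. Since $\Sigma_\infty$ is compact with a tubular neighbourhood modelled on the disk bundle of $\mathcal O(-2)$, for large $i$ the set $\Sigma_i:=F_i(\Sigma_\infty)$ is an embedded sphere in $M_i$ whose normal bundle has Euler number $-2$, i.e.\ a $-2$ curve, lying within $g_i$-distance $o(1)$ of $q_i$ and hence inside $B_{g_i}(p_i,3)$. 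By hypothesis $\big|\int_{\Sigma_i}\bm\omega_i\big|\ge a$, yet
$$\int_{\Sigma_i}\tilde{\bm\omega}_i=\int_{\Sigma_\infty}F_i^*\tilde{\bm\omega}_i\longrightarrow \int_{\Sigma_\infty}\bm\sigma,$$
so $\int_{\Sigma_i}\tilde{\bm\omega}_i$ is bounded while $\int_{\Sigma_i}\bm\omega_i=\lambda_i^{-1}\int_{\Sigma_i}\tilde{\bm\omega}_i$ with $\lambda_i\to\infty$; hence $\int_{\Sigma_i}\bm\omega_i\to 0$, a contradiction.

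\emph{Main obstacle.} The two substantive ingredients are: (i) the non-collapsing and Euclidean volume growth of the bubble, which I deduce from the fixed-scale volume bound $v$ via Bishop--Gromov — this is precisely what excludes collapsed (lower-dimensional) bubbles as well as ALF/ALG/ALH bubbles such as Taub--NUT, which are non-flat with finite $L^2$ curvature but contain no $-2$ curve; and (ii) the structural statement that a non-flat hyperk\"ahler ALE $4$-space must contain an embedded $-2$ sphere, which rests on Bando--Kasue--Nakajima-type decay estimates at infinity and Kronheimer's classification. The point selection, the Cheeger--Gromov compactness (including smoothness of the limit), and the scaling identity for $\int_\Sigma\bm\omega$ are routine.
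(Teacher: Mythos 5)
Your proposal is correct and follows essentially the same route as the paper: a contradiction/blow-up argument with a point-selection step, Bishop--Gromov non-collapsing from the fixed-scale volume bound, identification of the bubble as a non-flat hyperk\"ahler ALE space via Bando--Kasue--Nakajima and Kronheimer, and the observation that the periods of $\bm\omega_i$ over the transplanted $-2$ sphere scale like $\lambda_i^{-1}$ and hence tend to $0$, contradicting the lower bound $a$. The only cosmetic difference is the point-selection device (you maximize $(2-d(\cdot,p_i))^2|Rm|$, while the paper uses its Lemma~\ref{point selection}); both yield the same curvature control on balls of radius tending to infinity after rescaling.
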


\begin{proof}

For all $q\in B(p,2)$, by volume comparison, we have $$\text{vol}(B(q,1))\geq 3^{-4} \text{vol}(B(q,3))\geq 3^{-4}\text{vol}(B(p,1))\geq 3^{-4} v.$$
Suppose the conclusion is not true, then we have a sequence $(M_i,g_i,p_i)$ satisfies the conditions, but
 there exists $q_i'\in B(p_i,1)$ with $|Rm_{g_i}(q_i')|\rightarrow\infty$. By the following Lemma \ref{point selection}, we can find points $q_i\in B(p_i,2)$ such that $|Rm_{g_i}(q_i)|\geq |Rm_{g_i}(q_i')|$, and $$\sup\limits_{B_{g_i}(q_i,|Rm_{g_i}(q_i')|^{\frac{1}{2}}|Rm_{g_i}(q_i)|^{-\frac{1}{2}})}|Rm_{g_i}|\leq 4|Rm_{g_i}(q_i)|.$$  Rescale the metric $\tilde{g}_i=|Rm_{g_i}(q_i)|g_i$, $\tilde{\bm\omega}_i=|Rm_{g_i}(q_i)|\bm\omega_i$, so $\tilde{\bm\omega}_i$ defines $\tilde{g}_i$. Then we have $$\sup\limits_{B_{\tilde{g}_i}(q_i,|Rm_{g_i}(q_i')|^{\frac{1}{2}})}|Rm_{\tilde{g}_i}|\leq 4 , $$  $$|Rm_{\tilde{g}_i}(q_i)|=1,$$ $$ \text{vol}_{\tilde{g}_i}(B_{\tilde{g}_i}(q_i,r))\geq 3^{-4}vr^n,\forall r\leq|Rm_{g_i}(q_i)|^{\frac{1}{2}},$$ and
$$\int_{B_{\tilde{g}_i}(q_i, |Rm_{g_i}(q_i)|^{\frac{1}{2}})}|Rm_{\tilde{g}_i}|^2\leq \int_{B_{\tilde{g}_i}(p_i, 3|Rm_{g_i}(q_i)|^{\frac{1}{2}})}|Rm_{\tilde{g}_i}|^2\leq C.$$
Hence, for a subsequence, $(M_i,\tilde{g}_i,q_i)$ converges in Cheeger-Gromov topology to a complete non-flat hyperk\"ahler 4-manifold  $(M_\infty,g_\infty,q_\infty)$ with maximum volume growth and $$\int_{M_\infty}|Rm_{g_\infty}|^2\leq C.$$
Since $\tilde{\bm\omega}_i$ are parallel, and $|\tilde{\bm\omega}_i|^2=6$, $\tilde{\bm\omega}_i$ also subconverges to a hyperk\"ahler triple $\bm\omega_\infty$ that defines $g_\infty$. By \cite{bando1989construction}, $(M_\infty,g_\infty)$ is a hyperk\"ahler ALE space of order 4. Hence, from Kronheimer's classification \cite{kronheimer1989construction}\cite{kronheimer1989torelli}, there exists a $-2$ curve $C_\infty$ in $B_{g_\infty}(q_\infty, R)$ for some $R>0$
such that $|\int_{C_\infty}\bm\omega_{\infty}|\neq 0$.
Let $\phi_i:B_{g_\infty}(q_\infty,R)\rightarrow V_i$ be diffeomorphisms, such that $\phi_i^*\tilde{\bm\omega}_i\rightarrow \bm\omega_\infty$ smoothly. Let $C_i=(\phi_i)_*C_\infty$, then $C_i$ is a $-2$ curve in $B_{g_i}(p_i,3)$ for large $i$ and $$\int_{C_i} \bm\omega_i=|Rm_{g_i}(q_i)|^{-1}\int_{C_i}\tilde{\bm\omega}_i=|Rm_{g_i}(q_i)|^{-1}\int_{C_\infty} \phi_i^*\tilde{\bm\omega}_i \rightarrow 0, $$
which contradicts our assumption.

\end{proof}

The following point selection lemma is well-known and elementary. 

\begin{lemma}\label{point selection}
Let $(M,g)$ be a Riemannian manifold. Suppose $\sup\limits_{B(p,2)}|Rm|<\infty$, $|Rm(p)|\neq 0$. Then there exists a point $q\in B(p,2)$ such that $|Rm(q)|\geq |Rm(p)|$, and $\sup\limits_{B(q,{|Rm(p)|}^{\frac{1}{2}}{|Rm(q)|^{-\frac{1}{2}}})}|Rm|\leq  4|Rm(q)|.$  
\end{lemma}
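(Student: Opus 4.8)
The plan is to argue by a standard point-picking iteration, using the normalization factor $4$ precisely to force the selected points to remain inside $B(p,2)$. Suppose the conclusion fails for the point $p$ itself; I will then construct inductively a sequence $q_0 = p, q_1, q_2, \dots$ in $B(p,2)$ with $|Rm(q_j)|$ growing geometrically, contradicting $\sup_{B(p,2)}|Rm| < \infty$.

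First I would set up the induction. Put $q_0 = p$, and suppose $q_0, \dots, q_k \in B(p,2)$ have been chosen with $|Rm(q_j)| \geq 4^j |Rm(p)|$ for $0 \leq j \leq k$. Write $r_j = |Rm(p)|^{1/2}|Rm(q_j)|^{-1/2}$, so that $r_j \leq 2^{-j}$. If
$$\sup_{B(q_k, r_k)}|Rm| \leq 4|Rm(q_k)|,$$
then $q := q_k$ already satisfies all the requirements (indeed $|Rm(q_k)| \geq |Rm(p)|$ and $q_k \in B(p,2)$), and we are done. Otherwise there is a point $q_{k+1} \in B(q_k, r_k)$ with $|Rm(q_{k+1})| > 4|Rm(q_k)| \geq 4^{k+1}|Rm(p)|$.

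Next I would verify that the iteration stays inside $B(p,2)$. Since $d(q_{j+1}, q_j) < r_j \leq 2^{-j}$ for each $j$, the triangle inequality gives
$$d(q_{k+1}, p) \leq \sum_{j=0}^{k} d(q_{j+1}, q_j) < \sum_{j=0}^{k} 2^{-j} < 2,$$
so $q_{k+1} \in B(p,2)$; in particular $|Rm(q_{k+1})|$ is finite, and likewise $B(q_k, r_k) \subset B(p,2)$ for every $k$, so all the suprema appearing above are finite. Hence the induction may be continued.

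Finally, if the process never terminated we would obtain points $q_k \in B(p,2)$ with $|Rm(q_k)| \geq 4^k |Rm(p)| \to \infty$, contradicting the hypothesis $\sup_{B(p,2)}|Rm| < \infty$. Therefore it must terminate at some finite stage, yielding the desired point $q$. I do not expect any genuine obstacle here; the only point requiring care is the bookkeeping of the radii $r_j$, which must decay like $2^{-j}$ so that the $q_k$ cannot escape $B(p,2)$ — and this is exactly why the selection is made with the factor $4$ (so that $r_{j+1} \leq \tfrac12 r_j$) rather than a smaller constant.
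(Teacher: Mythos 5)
Your proof is correct and is essentially the same point-picking iteration as in the paper: define $q_{k+1}$ whenever the ball $B(q_k,r_k)$ fails the curvature bound, note $r_j\leq 2^{-j}$ so the $q_j$ stay in $B(p,2)$ by a geometric-series estimate, and conclude via $|Rm(q_k)|\geq 4^k|Rm(p)|\to\infty$ contradicting $\sup_{B(p,2)}|Rm|<\infty$. The only cosmetic difference is that the paper phrases it as a pure contradiction argument ("if this is not true\ldots"), while you allow the iteration to terminate early at a valid $q_k$; the content is identical.
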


\begin{proof}
If this is not true, let $A=|Rm(p)|^{\frac{1}{2}}$, then there exist $q_1\in B(p,2)$ such that $d(q_1,p)< 1$ and $|Rm(q_1)|> 4|Rm(p)|=4A^2$. By induction, we can find a sequence of points $q_0=p, q_1,q_2,\cdots$ with $d(q_{j+1},q_j)< |Rm(q_{j})|^{-\frac{1}{2}}A$, $d(q_{j+1},p)< 2-2^{-j}$ and $|Rm(q_{j+1})|>  4^{j+1} A^2$. This is an obvious contradiction since $\sup\limits_{B(p,2)}|Rm|<\infty.$
\end{proof}

\subsection{Uniqueness}\label{uniqueness 1}

Given the compactness results, it is natural to ask whether a subsequential limit $\bm\omega$ is unique(up to a diffeomorphism) in Theorem \ref{convergence of triples}, Theorem \ref{convergence of triples enhancements}. The answer is yes. In fact, both\cite{biquard:hal-02928859} and \cite{anderson2008unique} proved a
unique continuation theorem for Einstein metrics with prescribed boundary metric and second fundamental form, which implies the following:

\begin{proposition}\label{local uniqueness hyperkahler triple}
Let $X$ be a connected oriented 4-manifold with boundary. Suppose $\bm\omega_1,\bm\omega_2$ are two smooth hyperk\"ahler triples on $X$. Suppose $\bm\omega_1|_{\partial X}=\bm\omega_2|_{\partial X}$ , then in geodesic gauges of $g_{\bm\omega_1}$, $g_{\bm\omega_2}$, we have
$\bm\omega_1=\bm\omega_2$ near $\partial X$. 
\end{proposition}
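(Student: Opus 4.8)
The plan is to reduce Proposition \ref{local uniqueness hyperkahler triple} to the unique continuation theorem for Einstein metrics with prescribed Cauchy data on the boundary, as established in \cite{biquard:hal-02928859} or \cite{anderson2008unique}, and then upgrade the conclusion from metrics to hyperk\"ahler triples. First I would observe that a hyperk\"ahler triple $\bm\omega_a$ ($a=1,2$) determines its Riemannian metric $g_{\bm\omega_a}$, which is Einstein (in fact Ricci-flat). Writing each $g_{\bm\omega_a}$ in the geodesic gauge of the boundary (that is, pulling back via the boundary exponential map $\exp^\perp$ relative to $g_{\bm\omega_a}$), the boundary metric is $g_{\bm\gamma}$ where $\bm\gamma=\bm\omega_1|_{\partial X}=\bm\omega_2|_{\partial X}$, and the second fundamental form of $\partial X$ is, by Lemma \ref{hyperkahler triple second fundamental form}, $A=\frac12(Tr\,\Gamma)I-\Gamma$, which depends only on $\bm\gamma$. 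Hence $g_{\bm\omega_1}$ and $g_{\bm\omega_2}$, written in their respective geodesic gauges, have identical Cauchy data $(g_{\bm\gamma},\,A)$ on $\partial X$. The cited unique continuation results then give $g_{\bm\omega_1}=g_{\bm\omega_2}$ in geodesic gauge in a collar neighborhood of $\partial X$; equivalently, there is a diffeomorphism fixing $\partial X$ pointwise identifying the two metrics near the boundary.

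The remaining step is to deduce equality of the triples from equality of the metrics. After applying the diffeomorphism from the previous paragraph I may assume $g:=g_{\bm\omega_1}=g_{\bm\omega_2}$ near $\partial X$, with both triples restricting to the same $\bm\gamma$ on $\partial X$. Each $\bm\omega_a$ is a triple of $g$-parallel self-dual $2$-forms with $|\bm\omega_a|^2=3$ spanning the bundle $\Lambda^+$ of self-dual forms; so $\bm\omega_1$ and $\bm\omega_2$ differ by a section $\rho$ of $SO(3)=SO(\Lambda^+_g)$, and since both are $\nabla^g$-parallel, $\rho$ is a $\nabla^g$-parallel (hence locally constant) $SO(3)$-valued function on the connected collar. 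Restricting to $\partial X$, where $\bm\omega_1|_{\partial X}=\bm\gamma=\bm\omega_2|_{\partial X}$, forces $\rho\equiv I$ there; combined with local constancy on the connected collar, $\rho\equiv I$, i.e. $\bm\omega_1=\bm\omega_2$ near $\partial X$. (To be careful about connectedness one works on a collar $\partial X\times[0,\epsilon)$, which is connected once $\partial X$ is; in general one argues component by component of $\partial X$, but in our application Proposition \ref{diam} already guarantees $\partial X$ connected.)

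I expect the main obstacle to be purely bookkeeping rather than substantive: namely, checking carefully that the Cauchy data fed into the Biquard/Anderson unique continuation theorem genuinely match. The subtlety is that the geodesic gauges for $g_{\bm\omega_1}$ and $g_{\bm\omega_2}$ are a priori different gauges, so one must phrase the statement as: there exist collar diffeomorphisms $\Phi_a:\partial X\times[0,\epsilon)\to X$ with $\Phi_a|_{\partial X}=\mathrm{id}$, $\Phi_a^*g_{\bm\omega_a}=dt^2+h_a(t)$, $h_a(0)=g_{\bm\gamma}$, $\partial_t h_a(0)$ determined by $A$, and then the theorem applies to the pair $\Phi_1^*g_{\bm\omega_1}$, $\Phi_2^*g_{\bm\omega_2}$ on $\partial X\times[0,\epsilon)$. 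One also needs enough boundary regularity: since the $\bm\omega_a$ are smooth up to the boundary, so are the metrics and the gauge-transformed metrics, so the smooth (or real-analytic-in-the-normal-variable) hypotheses of the cited theorems are met. Finally one transports the equality back to $X$ by $\Phi_1$, $\Phi_2$, which is exactly the statement "in geodesic gauges of $g_{\bm\omega_1}$, $g_{\bm\omega_2}$, $\bm\omega_1=\bm\omega_2$ near $\partial X$."
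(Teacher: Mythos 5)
Your proposal is correct and follows essentially the same route as the paper: match Cauchy data via Lemma \ref{hyperkahler triple second fundamental form}, invoke the Biquard/Anderson unique continuation theorem to equate the metrics in geodesic gauge, then use parallelism of the triples plus agreement on $\partial X$ to conclude. The only cosmetic difference is in the final step, where the paper argues directly that $\nabla^{g_{\bm\omega_1}}\lvert\bm\omega_1-\bm\omega_2\rvert^2_{g_{\bm\omega_1}}=0$ so the norm is constant and vanishes on $\partial X$, whereas you package the same fact as a locally constant $SO(3)$-valued transition section $\rho$; these are equivalent.
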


\begin{proof} 
We have $g_{\bm\omega_1}|_{\partial X}=g_{\bm\omega_2}|_{\partial X}$ and $II_{g_{\bm\omega_1}}=II_{g_{\bm\omega_2}}$ on $\partial X$. By \cite{biquard:hal-02928859} Theorem 4, in geodesic gauges, we have $g_{\bm\omega_1}=g_{\bm\omega_2}$. Since $\nabla^{g_{\bm\omega_1}} |\bm\omega_1-\bm\omega_2|_{g_{\bm\omega_1}}^2=0$, and $\bm\omega_1=\bm\omega_2$ at one point, we have $\bm\omega_1=\bm\omega_2$ everywhere near $\partial X$.
\end{proof}

Now the following global uniqueness result follows from an analytic continuation argument (See \cite{kobayashi1963foundations}  Chapter VI, Section 6):

\begin{theorem}\label{unique continuation hyperkahler}
Let $X$ be a connected 4-manifold with boundary,  $\pi_1(X,\partial X)=0$. Suppose $\bm\omega_1,\bm\omega_2$ are two smooth hyperk\"ahler triples on $X$, and $\phi_0:\partial X\rightarrow\partial X$ is a diffeomorphism, such that $\bm\omega_1|_{\partial X}=\phi_0^*(\bm\omega_2|_{\partial X})$, then there exists a diffeomorphism $\phi:X\rightarrow X$,   $\phi|_{\partial X}=\phi_0$ such that  $\bm\omega_1=\phi^*\bm\omega_2$ on $X$.
\end{theorem}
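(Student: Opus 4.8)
The plan is to run an analytic-continuation argument in the spirit of \cite{kobayashi1963foundations}, Chapter VI, Section 6, adapted to the relative situation. First I would produce a map near the boundary. Since $X$ is compact we have $i_{b,g_{\bm\omega_1}}>0$, so the boundary exponential map of $g_{\bm\omega_1}$ identifies a collar $N_\epsilon(\partial X,g_{\bm\omega_1})$ with $\partial X\times[0,\epsilon)$, and likewise for $g_{\bm\omega_2}$. Working near a point $p_0\in\partial X$, extend $\phi_0$ to a local diffeomorphism $\tilde\phi$ of a neighbourhood of $p_0$ in $X$ restricting to $\phi_0$ on $\partial X$; then $\tilde\phi^*\bm\omega_2$ is a hyperk\"ahler triple whose restriction to $\partial X$ agrees with that of $\bm\omega_1$ near $p_0$. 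Applying the local form of Proposition \ref{local uniqueness hyperkahler triple} — here one uses that the second fundamental form of $\partial X$ is intrinsic to the boundary framing, Lemma \ref{hyperkahler triple second fundamental form}, so the boundary data (induced metric and second fundamental form) of $g_{\bm\omega_1}$ and of $\tilde\phi^*g_{\bm\omega_2}$ coincide — and composing the resulting geodesic-gauge identification with $\tilde\phi$, one obtains a local diffeomorphism $\psi_0$ near $p_0$ with $\psi_0^*\bm\omega_2=\bm\omega_1$ and $\psi_0|_{\partial X}=\phi_0$. The $1$-jet of any such $\psi_0$ at $p_0$ is forced (its differential is $d\phi_0$ on $T_{p_0}\partial X$ and sends the $g_{\bm\omega_1}$-inward normal to the $g_{\bm\omega_2}$-inward normal), so these germs glue to a single diffeomorphism $\psi_0: N_\delta(\partial X,g_{\bm\omega_1})\to N_\delta(\partial X,g_{\bm\omega_2})$ with $\psi_0^*\bm\omega_2=\bm\omega_1$ and $\psi_0|_{\partial X}=\phi_0$, for some uniform $\delta>0$.

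Next I would analytically continue the germ of $\psi_0$ at $p_0$. The key point is that the local diffeomorphisms $\phi$ with $\phi^*\bm\omega_2=\bm\omega_1$ form a rigid pseudogroup: any such $\phi$ is in particular a local isometry between the Einstein metrics $g_{\bm\omega_1},g_{\bm\omega_2}$, which are real analytic in harmonic coordinates, so its germ is determined by its $1$-jet at a point; conversely, an isometry germ whose $1$-jet equals that of $\psi_0$ automatically satisfies $\phi^*\bm\omega_2=\bm\omega_1$, because $\phi^*\bm\omega_2$ and $\bm\omega_1$ are both $\nabla^{g_{\bm\omega_1}}$-parallel and agree at the point. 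Hence the germ of $\psi_0$ can be continued along paths. Since $X$ is compact, $(X,g_{\bm\omega_2})$ is complete and closed balls are compact; as continuation along a path yields an image path of the same length, it follows that the continuation exists along \emph{every} path in $X$ starting at $p_0$, and therefore, by the standard homotopy invariance of analytic continuation, the continued germ depends only on the homotopy class rel endpoints of the path.

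The monodromy then vanishes thanks to $\pi_1(X,\partial X)=0$: every loop based at $p_0$ is homotopic rel $p_0$ to a loop contained in $\partial X$, and along any loop in $\partial X\subset N_\delta(\partial X,g_{\bm\omega_1})$ the globally defined map $\psi_0$ is itself the analytic continuation of its own germ, so the continuation returns to the germ of $\psi_0$. Consequently the continuation defines a well-defined smooth map $\phi$ on the interior of $X$ which near $\partial X$ coincides with $\psi_0$; hence $\phi$ extends to a smooth map $\phi:X\to X$ with $\phi|_{\partial X}=\phi_0$ and $\phi^*\bm\omega_2=\bm\omega_1$, and in particular $\phi$ is a local diffeomorphism. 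Running the same construction with $\psi_0^{-1}$ in place of $\psi_0$ (using $\pi_1(X,\partial X)=0$ again) produces $\phi':X\to X$ with $(\phi')^*\bm\omega_1=\bm\omega_2$ and $\phi'|_{\partial X}=\phi_0^{-1}$; then $\phi'\circ\phi$ preserves $\bm\omega_1$ and equals the identity near $\partial X$, so by the rigidity above $\phi'\circ\phi=\mathrm{id}_X$, and similarly $\phi\circ\phi'=\mathrm{id}_X$. Thus $\phi$ is the diffeomorphism sought.

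I expect the main obstacle to be the global topological bookkeeping: arranging the analytic continuation so that it is defined on all of $X$ (which is why compactness/completeness enters) and verifying that the monodromy is killed precisely by $\pi_1(X,\partial X)=0$, via homotopy invariance of continuation together with the triviality of the monodromy along boundary loops. The local ingredients — Proposition \ref{local uniqueness hyperkahler triple} and the rigidity of the triple-preserving pseudogroup — are comparatively routine.
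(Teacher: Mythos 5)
Your proposal follows essentially the same route as the paper: produce a boundary-collar isometry from Proposition \ref{local uniqueness hyperkahler triple}, invoke real analyticity of hyperk\"ahler metrics, analytically continue \`a la Kobayashi--Nomizu, and kill the monodromy using $\pi_1(X,\partial X)=0$ to homotope loops into the collar where the continuation is the globally defined collar map. The only cosmetic differences are your choice of base point on $\partial X$ (the paper takes $p_0$ in the interior of the collar, which sidesteps the minor awkwardness of continuing a germ along a path lying in the boundary) and your explicit construction of the inverse $\phi'$ to conclude $\phi$ is a diffeomorphism, which the paper leaves implicit.
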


\begin{remark} This theorem implies that
the map $p:\mathcal{M}^+/\mathcal{G}_X\rightarrow \mathcal{N}^+/\mathcal{G}_Y$ defined in subsection \ref{moduli space} is injective, provided that $\mathcal{M}^+$ is nonempty.
\end{remark}

\begin{proof}
By the previous proposition, there exists a collar neighborhood $U$ of $\partial X$ and a diffeomorphism $\phi_1: U\rightarrow V\subset X$ such that $g_{\bm\omega_1}=\phi_1^*g_{\bm\omega_2}$, $\phi_1|_{\partial X}=\phi_0$. Since $g_{\bm\omega_1},g_{\bm\omega_2}$ are real analytic, $\phi_1$ is real analytic.  Fix $p_0\in U$ and a small neighborhood $U_0$ of $p_0$ in $X$. For any $p\in X\backslash \partial X$, choose a path $x(t),0\leq t\leq 1$ such that $x(0)=p_0, x(1)=p, x(t)\in X\backslash\partial X$, then an analytic continuation of the isometry $\phi_1|_{U_0}$ along $x(t)$ gives rise to an isometry defined near $p$. We claim that if we have two paths and two analytic continuations, then they define the same germ at $p$. In fact, one only needs to show that for the closed path $y_0(t)$ formed by concatenating these two paths, the isometry near $p_0$ given by an analytic continuation of $\phi_1|_{U_0}$  along $ y_0(t)$ has the same germ as $\phi_1|_{U_0}$ at $p_0$. Since $\pi_1(X,\partial X)=0$, $y_0(t)$ can be homotoped to a path $y_1(t)$ contained in $U$ via paths $y_s(t)$ in $X\backslash\partial X$, such that $y_s(0)=y_s(1)=p_0$. Since $\phi_1:U\rightarrow V$ is a globally defined isometry, by uniqueness of analytic continuation, we know that any analytic continuation of $\phi_1|_{U_0}$  along $y_1(t)$ must coincide with $\phi_1$, which finishes the proof of the claim by invariance of analytic continuation via homotopy. This shows that $\phi_1:U\rightarrow V$ can be extended to a global isometry $\phi:X\rightarrow X$ by analytic continuation. Hence $\bm\omega_1=\phi^*\bm\omega_2$ by the same argument as before.
\end{proof}

Given the compactness result,  Theorem \ref{unique continuation hyperkahler}, together with the theorem of Ebin-Palais on   properness of diffeomorphism group action on the space of Riemannian metrics on a closed manifold (See \cite{ebin1970manifold}), one can answer the question raised at the beginning of this subsection, thus finish the proof of Theorem \ref{convergence of triples}, Theorem \ref{convergence of triples enhancements}:

Suppose we have two sequences of diffeomorphism $\phi_i,\psi_i$ of $X$ such that  $\phi_i^*\bm\omega_i\rightarrow \bm\omega$, $\psi_i^*\bm\omega_i\rightarrow\bm\omega'$, then $(\phi_i|_{\partial X})^*\bm\gamma_i\rightarrow \bm\omega|_{\partial X}$, $(\psi_i|_{\partial X})^*\bm\gamma_i\rightarrow \bm\omega'|_{\partial X}$, where $\bm\gamma_i=\bm\omega_i|_{\partial X}$. Since $\bm\gamma_i$ converges to   $\bm\gamma$ in Cheeger-Gromov sense, there exists diffeomorphisms $u_i:\partial X\rightarrow\partial X$ such that $u_i^*\bm\gamma_i\rightarrow\bm\gamma$. By 
the theorem of Ebin-Palais, we have for a subsequence $(\phi_i|_{\partial X})^{-1}\circ u_i , (\psi_i|_{\partial X})^{-1}\circ u_i $ converge to some diffeomorphisms $u,u'$ on $\partial X$, respectively(because their inverses converge). 
Hence we also have $u_i^*\bm\gamma_i\rightarrow u^*\bm\omega|_{\partial X}$, $u_i^*\bm\gamma_i\rightarrow (u')^*\bm\omega'|_{\partial X}$, so $\bm\omega|_{\partial X}=(u'\circ u^{-1})^*\bm\omega'|_{\partial X}$. Note that the positive mean curvature condition implies that $\pi_1(X,\partial X)=0$ (See Proposition \ref{diam}),  then by Theorem \ref{unique continuation hyperkahler}, there exists a diffeomorphism $\varphi$ on $X$ with $\bm\omega'=\varphi ^*\bm\omega$.

\subsection{Some discussions}

Suppose $X$ is a connected complete metric space, and there exists a finite set $\Sigma=\{p_1,\cdots,p_m\}, m\geq 0$ such that $X\backslash\Sigma$ is a smooth flat hyperk\"ahler
4-manifold with nonempty boundary, and each boundary component $Y_1,\cdots, Y_n, n\geq 1$ is isometric to flat $\mathbb{R}^3$,  $X\backslash \cup_{i=1}^n{Y_i}$ is a flat hyperk\"ahler 4-orbifold and $\Sigma$ is the set of all orbifold points. Our goal is to classify all such $X$.

The motivation of this problem is the following:
\begin{proposition}\label{classification motivation}
Let $(X_i,g_i)$ be a sequence of compact hyperk\"ahler 4-manifolds with boundary, such that $$i_{b,g_i}\geq i_0, inj_{\partial X_i}\rightarrow\infty,$$ $$|S_i|\rightarrow 0, |\nabla^{j+1}_{\partial X_i} H_i|\rightarrow 0, |\nabla^j_{\partial X_i} Rm_{\partial X_i}|\rightarrow 0$$ uniformly on $\partial X_i$ for all $j\geq 0$, and $$\int_{X_i}|Rm_{g_i}|^2\leq C.$$ Then for any $p_i\in X_i$ $d_{g_i}(p_i,\partial X_i)\leq K$, a subsequence of $(X_i,g_i,p_i)$ converges in pointed Gromov Hausdorff sense to a complete metric space $(X_\infty,d_\infty,p_\infty)$. The limit $(X_\infty,d_\infty)$ satisfies all properties of  $X$ above.
\end{proposition}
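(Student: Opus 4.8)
The plan is to control the geometry in a definite-size collar of $\partial X_i$, then run an $\epsilon$-regularity argument in the interior, and finally pass to the limit. As a first step, I would observe that the hypotheses $|S_i|\to 0$, $|\nabla^j_{\partial X_i}Rm_{\partial X_i}|\to 0$, $|\nabla^{j+1}_{\partial X_i}H_i|\to 0$, $inj_{\partial X_i}\to\infty$, $i_{b,g_i}\geq i_0$ and $\int_{X_i}|Rm_{g_i}|^2\leq C$ are (more than) enough to apply Theorem \ref{good boundary}, which gives a uniform bound $\sup_{N_{r_1}(\partial X_i,g_i)}|Rm_{g_i}|\leq C'$ for any fixed $r_1<i_0$. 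Combined with $i_{b,g_i}\geq i_0$, $inj_{\partial X_i}\to\infty$ and Proposition \ref{volume noncollasping} (which produces volume non-collapsing for interior balls near the boundary), this yields bounded geometry in $N_{r_1/2}(\partial X_i,g_i)$; hence, after passing to a subsequence, $N_{r_1/2}(\partial X_i,g_i)$ converges in $C^\infty$ to a Ricci-flat collar whose boundary components are complete, flat, totally geodesic, and isometric to flat $\mathbb{R}^3$. By Lemma \ref{flat totally geodesic} this collar is flat; in particular $|Rm_{g_i}|\to 0$ uniformly on $N_{r_1/2}(\partial X_i,g_i)$ along this subsequence.

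Next I would pass to the global limit. Since $\mathrm{Ric}_{g_i}=0$, Gromov precompactness provides a further subsequence along which $(X_i,g_i,p_i)$ converges in the pointed Gromov--Hausdorff sense to a complete length space $(X_\infty,d_\infty,p_\infty)$; connectedness is inherited from the $X_i$, and $\partial X_\infty\neq\emptyset$ because $d_{g_i}(p_i,\partial X_i)\leq K$. Using the $\epsilon$-regularity Proposition \ref{Cheeger-Tian} together with $\int_{X_i}|Rm_{g_i}|^2\leq C$, a point-selection argument (Lemma \ref{point selection}) and a covering argument produce a finite set $\Sigma\subset X_\infty$, whose cardinality is bounded by $C$ over the $\epsilon$-regularity constant, such that $|Rm_{g_i}|$ is locally uniformly bounded near every compact subset of $X_\infty\setminus\Sigma$. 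Propagating the volume non-collapsing of the collar into the interior along bounded-curvature paths that avoid $\Sigma$ (as in the proof of Proposition \ref{volume noncollasping}), one upgrades the convergence off $\Sigma$ to $C^\infty$ convergence of the metrics, via Lemma \ref{well-know interior inj lower bound}, Remark \ref{global harmonic radius lower bounds for complete manifolds with boundary} and Proposition \ref{harmonic radius lower bound}. Thus $X_\infty\setminus\Sigma$ is a smooth hyperk\"ahler $4$-manifold with boundary, its boundary components being the flat, totally geodesic copies of $\mathbb{R}^3$ from the first step.

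To finish, since $X_\infty\setminus\Sigma$ is connected (a connected $4$-manifold with a finite set removed), Ricci-flat hence real-analytic, and flat in a collar of its boundary, I would conclude by analytic continuation — exactly as in the proof of Theorem \ref{unique continuation hyperkahler} — that $X_\infty\setminus\Sigma$ is flat everywhere, hence in particular a flat hyperk\"ahler manifold. It then remains to identify $X_\infty$ near a point $p_*\in\Sigma$: blowing up at points $q_i\to p_*$ realizing the curvature concentration and using the non-collapsing established above, the rescalings converge (arguing as in the proof of Proposition \ref{no bubble 2}) to a complete, non-flat, hyperk\"ahler $4$-manifold of maximal volume growth with finite $\int|Rm|^2$, which is ALE by \cite{bando1989construction}, hence asymptotic to $\mathbb{R}^4/\Gamma$ for a finite $\Gamma\subset SU(2)$ by Kronheimer's classification \cite{kronheimer1989construction}\cite{kronheimer1989torelli}. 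Therefore the metric tangent cone of $X_\infty$ at $p_*$ is the flat cone $\mathbb{R}^4/\Gamma$; since $X_\infty$ is already flat on a punctured neighborhood of $p_*$, this realizes $p_*$ as an isolated quotient singularity of the flat metric, so $X_\infty\setminus\bigcup_i Y_i$ is a flat hyperk\"ahler $4$-orbifold with orbifold locus exactly $\Sigma$.

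I expect the main obstacle to be the interior analysis of the second step: the collar has no a priori \emph{global} lower volume bound, so the non-collapsing needed to obtain a smooth (rather than lower-dimensional) limit off $\Sigma$ must be transported from the boundary into the interior along paths of controlled curvature that avoid the finitely many concentration points, and the $\epsilon$-regularity book-keeping must be arranged so that $\Sigma$ is genuinely finite. The blow-up analysis near $\Sigma$ in the last step — ruling out collapsed bubbles and invoking the ALE classification to produce the flat orbifold tangent cones — is the other delicate ingredient.
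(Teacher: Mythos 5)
Your proposal is correct and follows essentially the same route as the paper's (very terse) proof, which simply says that once Theorem \ref{good boundary} controls the geometry near the boundary, the conclusion follows from the Einstein $4$-manifold orbifold compactness theory of Anderson \cite{anderson1989ricci} or Bando--Kasue--Nakajima \cite{bando1989construction}. What you have done is unfold the details of that theory in the boundary setting: collar control, finite singular set via $L^2$-curvature $\epsilon$-regularity, smooth convergence off $\Sigma$, and ALE blow-up analysis at the points of $\Sigma$.

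Two small remarks on the execution. First, the worry you flag about ``transporting volume non-collapsing into the interior along bounded-curvature paths avoiding $\Sigma$'' is not actually an obstacle: since $\mathrm{Ric}_{g_i}\geq 0$ and $d_{g_i}(p_i,\partial X_i)\leq K$, every point $q\in B_{g_i}(p_i,R)$ lies within bounded distance of a non-collapsed ball produced near $\partial X_i$ by Proposition \ref{volume noncollasping}, so Bishop--Gromov alone gives $\mathrm{vol}(B(q,r))\geq c(R,K,i_0,v_1)\,r^4$ uniformly; no curvature control along the connecting path is needed. Second, to deduce that $X_\infty\setminus\Sigma$ is flat you should invoke Lemma \ref{flat totally geodesic} directly — its conclusion $Rm_M=0$ is global on the connected manifold $M=X_\infty\setminus\Sigma$ by real-analytic continuation of the curvature tensor, with no $\pi_1(X,\partial X)=0$ hypothesis — rather than the isometry-continuation argument of Theorem \ref{unique continuation hyperkahler}, which does require the $\pi_1$ hypothesis that you have not established for $X_\infty\setminus\Sigma$. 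The underlying mechanism you intend (analyticity of the metric and vanishing of $Rm$ on an open set) is the right one; just cite the lemma that already packages it.
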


By Theorem \ref{good boundary}, the geometry near boundary is nicely controlled,  so this theorem is a consequence of \cite{anderson1989ricci} or \cite{bando1989construction} etc.

\begin{remark}
In Theorem \ref{main theorem hyperkahler metric}, under the mean positive condition, if we allow $-2$ curves in $X_i$, then we are unable to show that $i_{b,g_i}$ has a lower bound. In fact, the bad case is that focal points are exactly those curvature blow-up points, and they approach the boundary in a moderate rate. However, we can say something within the scale of the boundary injectivity radius. Suppose $i_{b,g_i}\rightarrow 0$, then we rescale the metric such that they are equal to 1, then the rescaled metric satisfies the assumption of Proposition \ref{classification motivation}.
\end{remark}

\begin{theorem}\label{classification}
$X$ is isometric to one of the following:
\begin{itemize}
    \item  $\text{(m=0, n=1)}$ $\mathbb{R}^4_+ $;
    \item  $\text{(m=0, n=2)}$   the region in $\mathbb{R}^4$ bounded by two parallel hyperplanes;
    \item $\text{(m=1, n=1)}$   the connected component of $0$ in   $(\mathbb{R}^4\backslash H)/{\mathbb{Z}_2}$, where $H$ is a hyperplane such that $0\notin H$. 
\end{itemize}

\end{theorem}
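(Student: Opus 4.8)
The plan is to pass to the orbifold universal cover $\tilde X$ of $X$, to identify it explicitly as a convex polyhedral region of $\mathbb R^4$, and then to read off the classification from the deck group. Since $X$ minus its orbifold points and boundary is flat, $\tilde X$ is a simply connected complete flat $4$-manifold with totally geodesic boundary, so it carries a developing map $\mathrm{dev}\colon\tilde X\to\mathbb R^4$ which is a local isometry (up to the boundary). Each boundary component of $\tilde X$, with its induced metric, is a complete simply connected totally geodesic flat $3$-manifold isometric to $\mathbb R^3$, so $\mathrm{dev}$ restricts to a genuine isometry of it onto a \emph{full} affine hyperplane $P_\alpha\subset\mathbb R^4$. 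The first main step is to upgrade $\mathrm{dev}$ to a global isometry of $\tilde X$ onto a closed convex set: one shows that, because every face develops onto an entire hyperplane, $\tilde X$ lies on one side of each $P_\alpha$, and a connectedness argument (a point of $\bigcap_\alpha H_\alpha^{\circ}$ lying in the closure of the image cannot be a boundary point of the image) then identifies $\mathrm{dev}$ with an isometry of $\tilde X$ onto $\overline C$, where $C=\bigcap_\alpha H_\alpha$ is an intersection of open half-spaces. Finally, two of the $P_\alpha$ cannot meet transversally, for then the face $P_\alpha\cap\partial\overline C$ would be a proper subset of a hyperplane rather than all of $\mathbb R^3$; hence all the $P_\alpha$ are mutually parallel, and since $n\ge 1$ the region $\overline C$ is, up to isometry, either a half-space $\overline H$ or a slab $\overline S$ between two parallel hyperplanes.

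The second step is the group-theoretic bookkeeping. Let $G=\pi_1^{\mathrm{orb}}(X)$, acting by isometries on $\overline C$ with $X=\overline C/G$. Pulling back the hyperkähler triple on $X$ gives a $G$-invariant parallel hyperkähler triple on $C$, so the rotational part of every $g\in G$ lies in the stabilizer $SU(2)\subset SO(4)$ of a hyperkähler triple on $\mathbb R^4$; the elementary observation that a unit-quaternion left multiplication preserving a line is only $\pm1$ shows that any rotational part of an element of $G$ preserving the common normal direction of the $P_\alpha$ is $\pm1$. In the half-space case every $g\in G$ must preserve $\partial\overline H$ and its chosen side, hence is a translation parallel to $\partial\overline H$; then $G$ is torsion-free (so $m=0$), and the requirement that $\partial X\cong\mathbb R^3$ forces the translation lattice to be trivial, giving $X=\mathbb R^4_+$. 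In the slab case each $g$ either fixes both faces — forcing it to be a translation parallel to the faces — or interchanges them — forcing it, by the line-preservation fact, to be the point reflection in a point of the mid-hyperplane, with rotational part $-1$. Thus $G$ is generated by a lattice $T$ of face-parallel translations and at most one such point reflection, $|G/T|\le 2$, and every finite isotropy group is $\cong\mathbb Z_2$ of type $A_1$ (in particular no $A_{k\ge 2}$, $D$, or $E$ points occur, since those would require $SU(2)$-rotations fixing no line and hence not preserving the slab). Imposing once more that each boundary component be isometric to the full $\mathbb R^3$ forces $T$ trivial, which leaves precisely $G=\{1\}$ (the slab, $m=0$, $n=2$) and $G=\{1,\text{point reflection}\}$ (the $\mathbb Z_2$-quotient of the slab, $m=1$, $n=1$). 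These three possibilities are exactly the list in the statement, and conversely each of them manifestly satisfies all the hypotheses.

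The main obstacle is the first step: rigorously showing the developing map is globally injective onto a convex region. Simply connected flat manifolds with boundary need \emph{not} develop injectively in general — a planar wedge of total angle greater than $2\pi$ is a flat, simply connected surface with boundary whose developing map wraps around — and what rescues the situation here is precisely the hypothesis that every boundary component is a \emph{complete} flat $\mathbb R^{3}$, which pins each developed face to an entire hyperplane and thereby confines $\tilde X$ to one side of it. A clean way to run this is to double $\tilde X$ repeatedly across its boundary components: each such doubling is along a piece with trivial fundamental group, so van Kampen keeps the result simply connected, and iterating produces a simply connected complete flat manifold without boundary, which by Killing--Hopf is $\mathbb R^4$, inside which $\tilde X$ sits as the asserted convex region. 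Controlling this limiting process — in particular ruling out pathologies such as two distinct boundary components developing onto the same hyperplane from opposite sides, and verifying completeness of the union — is where the care is needed; once $\tilde X$ is identified with $\overline H$ or $\overline S$, the remainder of the argument is the short computation sketched above.
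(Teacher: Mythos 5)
Your strategy is the same in spirit as the paper's and it reaches the same list by the same key observations: flatness plus completeness forces the model to be a flat form of $\mathbb{R}^4$, the boundary components being complete flat $\mathbb{R}^3$'s force parallelism and kill any face-parallel translations, and the fact that the only elements of $SU(2)$ preserving a real line are $\pm 1$ pins down the rotational part of the deck group. Where you diverge is in the order of operations, and that difference matters technically. The paper doubles $X$ \emph{once}, along all of $\partial X$ simultaneously, producing a \emph{complete boundaryless} flat hyperk\"ahler orbifold $\hat X=X\sqcup_{\mathrm{id}}X$; Thurston's $(SU(2),\mathbb{R}^4)$-orbifold machinery (completeness makes the developing map of the universal orbifold cover a covering map) then gives $\hat X\cong\mathbb{R}^4/\Gamma$ outright, and one only has to classify $\Gamma$. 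You instead pass to the universal cover $\tilde X$ of $X$, which still has boundary, and have to realize it as a convex region of $\mathbb{R}^4$ by hand — precisely the obstacle you flag.

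Your proposed repair has a genuine gap. You write that ``each such doubling is along a piece with trivial fundamental group, so van Kampen keeps the result simply connected.'' If you double a slab along \emph{both} of its boundary hyperplanes at once, the result is $\mathbb{R}^3\times S^1$, which is not simply connected — van Kampen along a disconnected interface picks up a free factor and does not preserve simple connectivity. If instead you double one face at a time, each step produces another slab with two boundary faces, so the process never terminates in finitely many steps and you need a direct-limit argument whose completeness and isometric embedding you have not verified. The cleanest way to close this is exactly what the paper does: double $X$ once to kill the boundary, so the (possibly non–simply connected) double $\hat X$ is a complete flat orbifold without boundary, and \emph{then} pass to the universal (orbifold) cover, which Killing–Hopf identifies with $\mathbb{R}^4$. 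This renders the convexity step unnecessary: parallelism of the boundary hyperplanes and the constraints on $\Gamma$ fall out of analyzing the $\Gamma$-orbit of the preimage $\pi^{-1}(Y_1)\subset\mathbb{R}^4$, which is how the paper derives $r(\gamma)=\pm1$ and $\mathrm{rank}(\Gamma\cap\mathbb{R}^4)\leq 1$. The remainder of your group-theoretic bookkeeping (half-space $\Rightarrow$ trivial group; slab $\Rightarrow$ trivial group or a single point reflection, with face-parallel translations ruled out by $\partial X\cong\mathbb{R}^3$) agrees with the paper's.
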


\begin{proof}

Consider another copy of $X$, glue them together along $Y_1,\cdots, Y_n$,  we get a complete flat hyperk\"ahler orbifold $\hat{X}=X\sqcup_{id} X, id:\cup_{i=1}^nY_i\rightarrow \cup_{i=1}^n Y_i\subset X$, which contains $Y_1,\cdots, Y_n$ as smooth hypersurfaces. Then $\hat X$ is a $(SU(2),\mathbb{R}^4)$ orbifold in the sense of \cite{thurston1979geometry}. Let  
$\tilde{X}$ be the universal covering orbifold of $\hat X$, then we have a developing map $D:\tilde {X}\rightarrow \mathbb{R}^4$, and since $\tilde{X}$ is a complete orbifold, $D$ is a covering map.  Hence $\tilde{X}$ is homoeomorphic to $\mathbb{R}^4$, and $\hat X$ is isometric to $\mathbb{R}^4/\Gamma$, where $\Gamma$ is a discrete subgroup of $\mathbb{R}^4\rtimes SU(2)$. Let $r$ be the projection map to the second factor, then $r(\Gamma)$ is a finite subgroup of $SU(2)$, and we have a short exact sequence
$$0\rightarrow \Gamma\cap\mathbb{R}^4\rightarrow \Gamma\rightarrow r(\Gamma)\rightarrow 0.$$ 
Then $\text{rank} (\Gamma\cap\mathbb{R}^4)\leq 1$, since otherwise $\hat{X}$ is a quotient of $\mathbb{R}^2\times T^2$ and cannot contain a flat $\mathbb{R}^3$ as a Riemannian submanifold. 

If $\Gamma\cap\mathbb{R}^4=0$, then $\hat{X}\cong \mathbb{R}^4/r(\Gamma)$ and
 hence $r(\Gamma)=\{1\}$, since otherwise $\hat{X}$ has exactly one orbifold point, contradiction. Hence, $m=0, n=1$ and  ${X}$ is isometric to $\mathbb{R}_+^4$.

If $\Gamma\cap\mathbb{R}^4=\mathbb{Z}a$ for some $0\neq a\in\mathbb{R}^4$, let $\pi: \mathbb{R}^4\rightarrow \mathbb{R}^4/\Gamma$ be the covering map, then $\pi^{-1}(Y_1)$  is a complete totally geodesic submanifold of $\mathbb{R}^4$, hence a countable disjoint union of parallel hyperplanes.
Pick one of them $Z$, then $\pi^{-1}(Y_1)$ is a disjoint union of $\gamma. Z$ for $\gamma\in \Gamma$. Suppose $Z$ is defined by $b^Tx+c=0$, then $\gamma^{-1}.Z$ is defined by $b^T r(\gamma)x+c'=0$. Since they are parallel to each other, $b^T=\pm b^Tr(\gamma)$, and hence $\pm 1$ is an eigenvalue of $r(\gamma)$, which forces $r(\gamma)=\pm 1$. Hence $r(\Gamma)=\{1\}$ or $r(\Gamma)=\mathbb{Z}_2$. If $r(\Gamma)=\{1\}$, then $\Gamma$ is generated by $x\mapsto x+a$, so $m=0, n=2$ and $X$ is isometric to the region in $\mathbb{R}^4$ bounded by two parallel hyperplanes; If $r(\Gamma)=\mathbb{Z}_2$, then $\Gamma$ is generated by $x\mapsto -x+d$ and $x\mapsto x+a$ for some $d\in\mathbb{R}$. Hence $m=1$ and $\mathbb{R}^4/\Gamma\backslash\{Y_1,Y_2,\cdots, Y_n\}$ has $n+1$ connected components. But from the gluing construction and that $X$ is connected, we know $\hat{X}\backslash\{Y_1,Y_2,\cdots, Y_n\}$ has exactly two connected components. Hence $n+1=2, n=1$, and $X$ is isometric to the last case in the conclusion.

\end{proof}

\section{Torsion-free hypersymplectic manifolds with boundary}\label{section torsion-free triples}

\subsection{Preliminaries}

Recall that a hypersymplectic triple $\bm\omega=(\omega_1,\omega_2,\omega_3)$ on an oriented 4-manifold $X$ with boundary is a definite triple of symplectic forms. Write
 $$\omega_i\wedge\omega_j=2Q_{ij}\mu$$
 Denote $\bm{Q}=(Q_{ij})$, $\bm{Q}^{-1}=(Q_{ij})$, and $g=g_{\bm\omega}$ the Riemannian metric. Recall that $\bm\omega$ is called torsion-free 
if for each $i$,
 \begin{equation}
      d(Q^{ij}\omega_j)=0,
 \end{equation} which is equivalent to
\begin{equation}\label{torsion-free condition}
    dQ^{ij}\wedge \omega_j=0.
\end{equation}

Let us begin with some arguments and results in \cite{fine2020report}. Let $\mathcal{P}$ denotes the set of symmetric positive-definite 3 by 3 matrices. There are two Riemannian metrics on $\mathcal{P}$: the first one is the Euclidean metric 

\begin{equation}\label{usual laplacian torsion-free}
    \langle A, B\rangle =Tr(AB),
\end{equation}
 and the second one is the symmetric space metric, which has non-positive sectional curvature
\begin{equation}\label{hat laplacian torsion-free}
     \langle A, B\rangle_Q=Tr(Q^{-1}A Q^{-1}B) 
\end{equation}
at each point $Q\in \mathcal{P}$.

Then $\bm{Q}$ can be regarded as a map $\bm{Q}:X\rightarrow \mathcal{P}$. Let $\Delta \bm Q, \hat{\Delta}\bm Q$ denote the harmonic Laplacian of this map with respect to (\ref{usual laplacian torsion-free}),(\ref{hat laplacian torsion-free}), respectively. Explicitly, their components are related by
\begin{equation}\label{torsion-free Laplacian operator}
    (\hat\Delta\bm Q)_{ij}=\Delta Q_{ij}-Q^{km}\langle dQ_{ik},dQ_{mj}   \rangle,
\end{equation}

For a hypersymplectic triple $\bm\omega$ , the calculations in \cite{fine2018hypersymplectic} showed that the torsion-free condition is equivalent to 
\begin{equation}\label{torsion-free equations}
\hat\Delta\bm Q=0, \ \text{Ric}= \frac{1}{4}\langle d\bm Q\otimes d\bm Q\rangle_{\bm Q},
\end{equation}
where $\langle d\bm Q\otimes d\bm Q \rangle_{\bm Q}(u,v)=\langle \nabla_u \bm Q,\nabla_v \bm Q\rangle_{\bm Q}$.
Hence if $\bm\omega$ is torsion-free, then $\bm Q$ is a harmonic map with respect to (\ref{hat laplacian torsion-free}) and  $\text{Ric}\geq 0$. Then the scalar curvature $R$ of $g$ is $$R=\frac{1}{4}|d\bm Q|_{\bm Q}^2\geq 0,$$ 
which is a multiple of the energy density of the harmonic map $\bm Q$.
Take the trace of (\ref{torsion-free Laplacian operator}), we get 
\begin{equation}\label{subharmonic TrQ}
     \Delta Tr \bm Q=Q^{pq}\langle dQ_{kp},dQ_{qk}\rangle\geq 0.
\end{equation}
Moreover, \cite{fine2020report} showed that the function $R$ satisfies the inequality 
\begin{equation}\label{R Delta R}
R\Delta R\geq \frac{1}{2}|\nabla R|^2+\frac{1}{2}R^3,
\end{equation}
hence a contradiction argument implies that everywhere
\begin{equation}\label{subharmonic scalar curvature}
    \Delta R\geq 0.
\end{equation}
Then they used standard geometric analysis arguments for inequality (\ref{R Delta R}) and the fact $\text{Ric}\geq 0$ to conclude

\begin{proposition}\label{fy result}
Suppose $B(p,r)\subset X$ has compact closure, and $\partial B(p,r)\neq \emptyset$, then 
$R(p)\leq \frac{32}{r^2}$. In particular, a complete torsion-free hypersymplectic 4-manifold is hyperk\"ahler.
\end{proposition}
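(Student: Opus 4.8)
\emph{Proof proposal.} The plan is to reduce everything to the pointwise bound $R(p)\le 32\,r^{-2}$, prove that by a maximum principle applied to a cutoff of $R$, and then deduce the last sentence by letting $r\to\infty$ (together with a short argument on closed manifolds). The only curvature inputs I need are $\mathrm{Ric}\ge 0$ — available from (\ref{torsion-free equations}), and giving via the Laplacian comparison theorem the bound $\Delta\rho^2\le 2n=8$ for $\rho=d(\cdot,p)$ (in the barrier sense) — and the differential inequality (\ref{R Delta R}), which I would rewrite wherever $R>0$ as $\Delta R\ge \tfrac{1}{2R}|\nabla R|^2+\tfrac12 R^2$.

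For the estimate, fix $p$ and $r$, put $h=r^2-\rho^2$, and consider the continuous function $G=h^2R\ge 0$ on $\overline{B(p,r)}$; it vanishes on $\partial B(p,r)$ and, since $\overline{B(p,r)}$ is compact by hypothesis, attains its maximum at an interior point $x_0$ (a point with $\rho<r$). If $G(x_0)=0$ then $R\equiv0$ on $B(p,r)$ and we are done, so assume $R(x_0)>0$, hence also $h(x_0)>0$; if $x_0$ is a cut point of $p$ I would replace $\rho$ near $x_0$ by Calabi's upper support function, letting the perturbation parameter tend to $0$ at the end to keep the sharp constant, so that we may treat $\rho^2$ as smooth near $x_0$ with $\Delta\rho^2(x_0)\le 8$. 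At $x_0$, $\nabla G=0$ gives $\nabla R=-\tfrac{2R}{h}\nabla h$, hence $|\nabla R|^2=\tfrac{4R^2}{h^2}|\nabla h|^2$; substituting this into $0\ge\Delta G=\Delta(h^2)\,R+2\langle\nabla(h^2),\nabla R\rangle+h^2\Delta R$ and using the rewritten (\ref{R Delta R}), a short computation collapses to
\[
\tfrac12\,R\,h^2\ \le\ 4\,|\nabla h|^2-2\,h\,\Delta h\qquad\text{at }x_0 .
\]
Since $|\nabla h|^2=4\rho^2$ and $\Delta h=-\Delta\rho^2\ge -8$, the right-hand side is at most $16\rho^2+16(r^2-\rho^2)=16r^2$, so $G(x_0)=h(x_0)^2R(x_0)\le 32r^2$; evaluating $G\le G(x_0)$ at $p$, where $h(p)=r^2$, yields $r^4R(p)\le 32r^2$, i.e. $R(p)\le 32\,r^{-2}$.

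For the last sentence: on a complete noncompact connected manifold every metric ball is precompact (Hopf--Rinow) and, being a proper nonempty open subset of a connected space, has nonempty boundary, so the estimate applies for all $r$ and letting $r\to\infty$ forces $R(p)=0$; on a closed manifold $\int_M\Delta R=0$ together with $\Delta R\ge0$ (equation (\ref{subharmonic scalar curvature})) forces $\Delta R\equiv0$, and then (\ref{R Delta R}) forces $\tfrac12|\nabla R|^2+\tfrac12 R^3\le0$, i.e. $R\equiv0$ since $R\ge0$. In either case $R\equiv0$, so $|d\bm Q|_{\bm Q}^2=4R\equiv0$ by (\ref{torsion-free equations}), hence $\bm Q$ is constant; the constant element of $SL(3,\mathbb R)$ normalizing $\bm Q$ to the identity then turns $\bm\omega$ into a hyperk\"ahler triple inducing the same metric, so $g_{\bm\omega}$ is hyperk\"ahler. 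I expect the only points needing real care to be the cut-locus technicality and the bookkeeping that produces the sharp constant $32$; the genuinely hard ingredient — the Bochner-type inequality (\ref{R Delta R}) — is already in hand, so I do not anticipate a serious obstacle.
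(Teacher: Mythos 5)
Your argument is correct, and the paper itself does not prove this proposition—it cites Fine--Yao \cite{fine2020report}, remarking only that the estimate follows from ``standard geometric analysis arguments'' using $\mathrm{Ric}\ge 0$ and the Bochner-type inequality (\ref{R Delta R}); the $(r^2-\rho^2)^2$-cutoff maximum-principle computation you give is exactly that standard argument, and your bookkeeping does produce the sharp constant $32$: at the interior maximum of $G=h^2R$ one gets $\tfrac12 h^2R\le 4|\nabla h|^2-2h\Delta h\le 16\rho^2+16h=16r^2$, hence $r^4R(p)\le G(x_0)\le 32r^2$. The Calabi upper-barrier remark for the cut locus, the $r\to\infty$ argument in the noncompact complete case, the integration-by-parts argument on a closed manifold using (\ref{subharmonic scalar curvature}), and the final passage from $R\equiv 0$ to $d\bm Q\equiv 0$ to hyperk\"ahlerity after a constant $SL(3,\mathbb R)$ normalization all check out.
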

Note that in the latter case, there exists a constant matrix $B$ in $SL(3,\mathbb{R})$ such that $\bm \omega B$ is a hyperk\"ahler triple, and the Riemannian metric defined by  $\bm \omega$ is the same as the one defined by $\bm\omega B$. 

\subsection{Compactness for the boundary value problem}
 
Now suppose $X$ is an oriented 4-manifold with compact boundary $Y=\partial X$, then through the boundary exponential map, a neighborhood $U$ of $Y$ is diffeomorphic to $Y\times [0,a)$. Let $t$ denote the distance function $d(\cdot, Y)$, i.e., the projection $Y\times [0,a)\rightarrow [0,a)$, then in $U$, $\bm\omega$ can be written as
 $$\bm\omega=-dt\wedge*_{Y_t}\bm\gamma_t+\bm\gamma_t.$$
 where $Y_t=Y\times \{t\}$ and $*_{Y_t}$ is the Hodge star operator of $g|_{Y_t}$.
 $\bm\omega$ being closed is equivalent to 
 \begin{equation}
     d_{Y_t}\bm\gamma_t=0,
 \end{equation}
 \begin{equation}\label{torsion-free normal tangential 0}
     \frac{\partial \bm\gamma_t}{\partial t}=-d_{Y_t}(*_{Y_t} \bm\gamma_t).
 \end{equation}
  (\ref{torsion-free condition})
 is equivalent to 
\begin{equation}\label{torsion-free normal tangential}
    d_{Y_t}Q^{ij}\wedge *_{Y_t}\gamma_{t,j}+\frac{\partial Q^{ij}}{\partial t}\gamma_{t,j}=0,
\end{equation}
 \begin{equation}
     d_{Y_t}Q^{ij}\wedge \gamma_{t,j}=0.
 \end{equation}
 Note that
 (\ref{torsion-free normal tangential}) is equivalent to
 \begin{equation}\label{torsion-free normal tangential 2}
     \frac{\partial Q^{ij}}{\partial t}=\frac{d_{Y_t}Q^{ik}\wedge \eta_{t,j}\wedge *_{Y_t}\gamma_k}{\text{vol}_{t}},
 \end{equation}
 where $\text{vol}_t=\eta_{t,1}\wedge \eta_{t,2}\wedge\eta_{t,3}$ and equals to the Riemannian volume form of $g|_{Y_t}$.  From calculations in Lemma \ref{hyperkahler triple second fundamental form} and (\ref{torsion-free normal tangential 0}), (\ref{torsion-free normal tangential 2}) , it is easy to see that the second fundamental form $II(e_i,e_j)$ is in algebraic terms of $\bm\eta, d_{\partial X}\bm\eta,\bm Q, d_{\partial X}\bm Q$.

 Now let us try to prove Theorem \ref{convergence of hypersymplectic triples}, starting with basic observations.
 In Theorem \ref{convergence of hypersymplectic triples}, suppose $\bm\omega_i|_{\partial X}, \bm Q_i$ converge in Cheeger-Gromov sense to the limit,
 then we have $\text{diam}(\partial X,g_i|_{\partial X})\leq C, \text{vol}_{g_i}(\partial X)\leq C, inj_{\partial X, g_i|_{\partial X}}\geq i_0$,  $|\nabla_{\partial X}^j Rm_{\partial X,g_i}|\leq C_j$, $|\nabla ^j_{\partial X} S_i|\leq C_j$. By (\ref{subharmonic scalar curvature}) and maximum principle, $R_i$  is uniformly bounded on $X$, so $|\text{Ric}_{g_i}|$ is uniformly bounded on $X$ since $\text{Ric}_{g_i}$ is non-negative and its trace is uniformly bounded.  Due to the uniform mean positive curvature condition, and $\text{Ric}_{ g_i}\geq 0$,  we have an upper bound of $\sup\limits_{p\in X}d_{g_i}(p,\partial X)$, $\text{vol}_{g_i}(X)$ by Proposition \ref{diam}, and in particular an upper bound of $\text{diam}(X, g_i)$. The Chern-Gauss-Bonnet formula thus gives an upper bound of $\int_{X}|Rm_{g_i}|^2$. Also, by (\ref{subharmonic TrQ}) and maximum principle, $Tr \bm Q_i$ is uniformly bounded on $X$, so $\bm Q_i$ is uniformly bounded on $X$ and then $
|d\bm Q_i|$ is uniformly bounded on $X$. 

 Given these conclusions, we need to verify that all propositions that were used to prove Theorem \ref{convergence of triples enhancements} adapt to the torsion-free hypersymplectic setting. Firstly, we digress to discuss elliptic regularity for torsion-free equations (\ref{torsion-free equations}). In harmonic coordinates in $B_2$ or $B_2^+$, view $g$ as a 4 by 4 matrix of functions, then we have a system of PDEs in $g, Q$: 
 \begin{equation}\label{torsion-free equation 1 harmonic}
     \Delta_g Q_{kl}-Q^{pq}\langle dQ_{kp},dQ_{ql}\rangle_g=0,
 \end{equation}
 \begin{equation}\label{torsion-free equation 2 harmonic}
     \Delta_g g_{ij}+B_{ij}(g,\partial g)=-\frac{1}{2}Q^{ab}Q^{cd}{\partial_i}Q_{bc}{\partial_j}Q_{da}.
 \end{equation}
 From this, by a boostrapping argument, one sees interior regularity: fix $\beta\in (0,1)$. If $\bm Q$ is $C^1$ bounded, $g$ is $C^{1,\beta}$ bounded, and they are uniform positive on $B_2$, then all derivatives of $\bm Q$ and $g$ are bounded. For boundary regularity, there is no techinical difficulty to get the following version from Neumann boundary conditions (\ref{Neumann boundary condition 1}),(\ref{Neumann boundary condition 2}): if all tangential
 derivatives of $\bm Q$, $g$, $H$ are bounded on $\tilde{B}_2$, and  $\bm Q$ is $C^1$ bounded, $g$ is $C^{1,\beta}$ bounded, and they are uniformly positive on $B_2^+$, then all derivatives of $\bm Q$ and $g$ are bounded in $B_1^+$. If in both cases, we also assume $g$ is $C^{k,\beta}$ close to identity in $B_2$ or $B_2^+$, then similarly, $g$ is $C^{k,\alpha}$ close to identity for any $\alpha\in (\beta,1)$.

Following the proof of Theorem \ref{local harmonic radius lower bound}, we get the following two propositions.

\begin{proposition}\label{harmonic radius and bounds for Q} Let $(X,g,\bm\omega)$ be a  torsion-free hypersymplectic manifold and $B(p,r)$ is a metric ball that has compact closure, $\partial B(p,r)\neq \emptyset$. Suppose  for any $q\in B(p,r)$, $$inj_q \geq cd(q,\partial B(p,r)), $$
$$Tr\bm Q, R\leq  C.$$
Fix $\Lambda>1,0<\alpha<1$, then for any $k\geq 0$,  $q\in B(p,r),$  $$r_h^{k,\alpha}(q,g, \Lambda)\geq C_k'd(q,\partial B(p,r)).$$
In particular, $|\nabla^k \bm Q|\leq C_k''$ in $B(p,\frac{r}{2})$.

\end{proposition}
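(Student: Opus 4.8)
The plan is to follow the interior case (Case~1) of the proof of Theorem~\ref{local harmonic radius lower bound}, now carrying the matrix-valued map $\bm Q$ along with the metric, and to argue by contradiction and rescaling. If the bound fails for some $k\geq 1$, then after the usual normalization one obtains torsion-free hypersymplectic manifolds $(X_l,g_l,\bm\omega_l)$, metric balls $B_l=B(x_l,r_l)$ of compact closure with $\partial B_l\neq\emptyset$, and points $p_l\in B_l$ with
\[
\frac{r^{k,\alpha}_h(p_l,g_l,\Lambda)}{d_{g_l}(p_l,\partial B_l)}=\inf_{q\in B_l}\frac{r^{k,\alpha}_h(q,g_l,\Lambda)}{d_{g_l}(q,\partial B_l)}\longrightarrow 0 .
\]
Rescaling $\tilde g_l=r^{k,\alpha}_h(p_l,g_l,\Lambda)^{-2}g_l$ and $\tilde{\bm\omega}_l=r^{k,\alpha}_h(p_l,g_l,\Lambda)^{-2}\bm\omega_l$ (which defines $\tilde g_l$), I use that $\bm Q$ is scale invariant, so $\bm Q_{\tilde{\bm\omega}_l}=\bm Q_{\bm\omega_l}$, and that the hypothesis $inj_q\geq c\,d(q,\partial B_l)$ is scale invariant; then $r^{k,\alpha}_h(p_l,\tilde g_l,\Lambda)=1$, $d_{\tilde g_l}(p_l,\partial B_l)\to\infty$, and $r^{k,\alpha}_h(q,\tilde g_l,\Lambda)\geq\tfrac12$ for $q$ in any fixed $\tilde g_l$-ball around $p_l$ once $l$ is large.

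The crucial ingredient is the scalar-curvature decay coming from Proposition~\ref{fy result}: applied in $B_l$ it gives $R_{\tilde g_l}(q)\leq 32\,d_{\tilde g_l}(q,\partial B_l)^{-2}$, so $R_{\tilde g_l}\to 0$ uniformly on bounded $\tilde g_l$-balls around $p_l$. Since $\text{Ric}_{\tilde g_l}\geq 0$ with trace $R_{\tilde g_l}$, also $|\text{Ric}_{\tilde g_l}|\to 0$, and $inj_q\geq c\,d_{\tilde g_l}(q,\partial B_l)\to\infty$; hence by Proposition~\ref{harmonic radius lower bound} a subsequence $(X_l,\tilde g_l,p_l)$ converges in pointed $C^{k,\beta}$ ($\beta<\alpha$) to a complete $(M_\infty,g_\infty,p_\infty)$ with $\text{Ric}_\infty=0$ and $inj_\infty=\infty$, which by the Cheeger--Gromoll splitting theorem (exactly as in Case~1 of Theorem~\ref{local harmonic radius lower bound}) is isometric to flat $\mathbb{R}^4$. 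Meanwhile $\text{Tr}\,\bm Q_l\leq C$ and $\det\bm Q_l=1$ keep $\bm Q_l$ uniformly bounded and uniformly positive-definite, while $|d\bm Q_l|^2_{\bm Q_l}=4R_{\tilde g_l}\to 0$; hence the $\bm Q_l$ converge to a constant positive matrix $\bm Q_\infty$.

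Next I improve the harmonic chart, as in Theorem~\ref{local harmonic radius lower bound}, to reach a contradiction. Fixing $L$ large and using that the limit is flat, solving the Dirichlet problems $\Delta_{\tilde g_l}u^\nu_l=0$ with boundary data the Euclidean coordinates produces harmonic coordinates on $B_{L+5}$ centered near $p_l$ with $\norm{g_{l,ij}-\delta_{ij}}_{C^{k,\beta}(B_{L+3})}\to 0$ and $\norm{\bm Q_l-\bm Q_\infty}_{C^{0,1}(B_{L+3})}\to 0$. Then I bootstrap the coupled system (\ref{torsion-free equation 1 harmonic})--(\ref{torsion-free equation 2 harmonic}): the right-hand side of (\ref{torsion-free equation 1 harmonic}) is quadratic in $d\bm Q_l$ with coefficients built from $g_l$ and $\bm Q_l$, so interior Schauder and $W^{2,p}$ estimates upgrade $\bm Q_l\to\bm Q_\infty$ successively in $W^{2,p}$, $C^{1,\beta}$, \dots, up to $C^{k+1,\beta}$ on a slightly smaller ball; feeding this in, the right-hand side $-\tfrac12 Q^{ab}_lQ^{cd}_l\partial_iQ_{l,bc}\partial_jQ_{l,da}$ and the lower-order term $B_{ij}(g_l,\partial g_l)$ of (\ref{torsion-free equation 2 harmonic}) tend to $0$ in $C^{k-1,\beta}$, and elliptic estimates of the kind used in Theorem~\ref{local harmonic radius lower bound} give $\norm{g_{l,ij}-\delta_{ij}}_{C^{k+1,\beta}(B_{L+1})}\to 0$, hence $\norm{g_{l,ij}-\delta_{ij}}_{C^{k,\alpha}(B_{L+1})}\to 0$. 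This yields a $(\tfrac{L}{4},\Lambda,k,\alpha)$-harmonic chart at a point $p'_l$ with $d_{\tilde g_l}(p'_l,p_l)\to 0$, so $r^{k,\alpha}_h(p_l,\tilde g_l,\Lambda)\geq\tfrac{L}{8}$ for $l$ large, contradicting $r^{k,\alpha}_h(p_l,\tilde g_l,\Lambda)=1$ once $L>8$. For the final assertion, every $q\in B(p,\tfrac r2)$ has $d(q,\partial B(p,r))\geq\tfrac r2$, so $r^{k,\alpha}_h(q,g,\Lambda)\geq\tfrac r2\,C'_k$ gives harmonic charts of definite size around such $q$, and since $|d\bm Q|^2_{\bm Q}=4R\leq 4C$ there, the same Schauder bootstrap of (\ref{torsion-free equation 1 harmonic}) bounds $|\nabla^k\bm Q|$ on $B(p,\tfrac r2)$.

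I expect the main obstacle to be verifying that the blow-up limit is \emph{flat} rather than merely Ricci-flat: this relies on the decay $R_{\tilde g_l}\to 0$, supplied precisely by the a~priori estimate of Proposition~\ref{fy result}, which is the one step where torsion-freeness (not just the presence of a definite triple) is essential. Once that is in place and $\bm Q_\infty$ is constant, the remaining coupled Schauder bootstrap of $(g_l,\bm Q_l)$ through (\ref{torsion-free equation 1 harmonic})--(\ref{torsion-free equation 2 harmonic}) is routine, but it must be organized so that at each stage one equation is supplied with coefficients and data at exactly the regularity produced by the other.
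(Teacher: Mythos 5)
Your proposal is correct and takes essentially the same route as the paper, whose proof of this proposition is just a pointer to the interior case of Theorem~\ref{local harmonic radius lower bound} together with the elliptic regularity of the coupled system (\ref{torsion-free equation 1 harmonic})--(\ref{torsion-free equation 2 harmonic}); you supply exactly the intended ingredients, in particular the scale invariance of $\bm Q$ and of the hypotheses, the decay $R\to 0$ after rescaling via Proposition~\ref{fy result} which forces the blow-up limit to be flat $\mathbb{R}^4$, and the coupled bootstrap. The only cosmetic slip is the claim of $C^{k+1,\beta}$ convergence of $g_l$ to $\delta$ in the final step (the term $B_{ij}(g_l,\partial g_l)$ is only known to vanish in $C^{k-1,\beta}$ at that stage), but the $C^{k,\alpha}$ convergence you actually use for the contradiction with $r_h^{k,\alpha}=1$ does follow.
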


\begin{proposition} Let $(X,g,\bm\omega)$ be a  compact torsion-free hypersymplectic manifold with boundary. Suppose $i_b\geq i_0$, $inj_{X}\geq i_0$, $inj_{\partial X}\geq i_0$, $Tr \bm Q$, $R\leq C$ on $\partial X$ and $|\nabla^j_{\partial X} Rm_{\partial X}|\leq C_j$, $|\nabla^j_{\partial X} S|\leq C_j $, $|\nabla^j_{\partial X} \bm Q|\leq C_j$ on $\partial X$, $\forall k\geq 0$. Fix $\Lambda>1, 0<\alpha<1$, then for any $k\geq 0$, $q \in X$,
$$r_h^{k,\alpha}(q,g, \Lambda)\geq C_k'''.$$
\end{proposition}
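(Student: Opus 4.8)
The plan is to run the blow-up argument of Theorem \ref{local harmonic radius lower bound} for the coupled elliptic system (\ref{torsion-free equation 1 harmonic})--(\ref{torsion-free equation 2 harmonic}) satisfied by $(g,\bm Q)$ in harmonic coordinates. First I would record the consequences of the hypotheses that survive rescaling. Since $R$ and $\mathrm{Tr}\,\bm Q$ are subharmonic on $X$ by (\ref{subharmonic scalar curvature}) and (\ref{subharmonic TrQ}), the maximum principle propagates the boundary bounds $R,\mathrm{Tr}\,\bm Q\le C$ to all of $X$; as $\mathrm{Ric}\ge 0$ with $\mathrm{tr}\,\mathrm{Ric}=R$ and $|d\bm Q|^2_{\bm Q}=4R$ by (\ref{torsion-free equations}), this yields uniform bounds $|\mathrm{Ric}|\le C$, $|\bm Q|\le C$, $|d\bm Q|\le C$ on $X$. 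The crucial scaling observation is that $\bm Q$ is scale invariant (it is normalized by $\det\bm Q=1$) whereas $|d\bm Q|^2$ scales like the scalar curvature; hence under a rescaling $\hat g=\lambda^{-2}g$ with $\lambda\to 0$ one has $|\mathrm{Ric}_{\hat g}|\to 0$ and $d\bm Q_{\hat g}\to 0$ uniformly, so a blow-up limit is Ricci-flat with $\bm Q$ constant.

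Suppose the conclusion fails: there are $(X_k,g_k,\bm\omega_k)$ satisfying the hypotheses with fixed constants but $\lambda_k:=\inf_{X_k}r_h^{k_0,\alpha}(\cdot,g_k,\Lambda)\to 0$ for some fixed $k_0$, attained at $p_k\in X_k$. Rescale $\hat g_k=\lambda_k^{-2}g_k$, so $r_h^{k_0,\alpha}(\cdot,\hat g_k,\Lambda)\ge 1$ everywhere, $i_{b,\hat g_k}$, $inj_{X_k,\hat g_k}$, $inj_{\partial X_k,\hat g_k}\to\infty$, the rescaled boundary quantities $|\nabla^j_{\partial X}Rm_\partial|$, $|\nabla^j_{\partial X}S|$ ($j\ge 0$) and $|\nabla^j_{\partial X}\bm Q|$ ($j\ge 1$) tend to $0$, and $|\mathrm{Ric}_{\hat g_k}|\to 0$, $d\bm Q_{\hat g_k}\to 0$ on $X_k$. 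By Proposition \ref{harmonic radius lower bound} a subsequence of $(X_k,\hat g_k,p_k)$ converges in pointed $C^{k_0,\beta}$ to a complete limit $(X_\infty,g_\infty,p_\infty)$, with or without boundary according to whether $d_{\hat g_k}(p_k,\partial X_k)$ stays bounded; since $|\bm Q_{\hat g_k}|\le C$ and $|d\bm Q_{\hat g_k}|\to 0$, $\bm Q_{\hat g_k}$ subconverges to a constant $\bm Q_\infty$ and $g_\infty$ is Ricci-flat. If $\partial X_\infty=\emptyset$, then $inj_{X_\infty}=\infty$ and $g_\infty$ is flat by the Cheeger-Gromoll splitting theorem, exactly as in Case 1 of Theorem \ref{local harmonic radius lower bound}. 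If $\partial X_\infty\neq\emptyset$, then $\partial X_\infty$ is Ricci-flat with $inj=\infty$, hence flat $\mathbb{R}^3$, $H_\infty=0$, and (\ref{scalar curvatures and hypersurface}) together with $R_{X_\infty}=0$ forces $S_\infty=0$; by Lemma \ref{flat totally geodesic} $g_\infty$ is smooth and flat, and $i_{b,X_\infty}=\infty$ makes $(X_\infty,g_\infty)$ isometric to $\mathbb{R}^n_+$.

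Finally I would derive the contradiction by improving the coordinates near $p_k$ as in Theorem \ref{local harmonic radius lower bound}: on large harmonic balls around $p_k$, resp.\ around the foot point $q_k\in\partial X_k$ of $p_k$ in the boundary case, one solves the Dirichlet problem, resp.\ the mixed Dirichlet problem incorporating the Neumann boundary conditions (\ref{Neumann boundary condition 1})--(\ref{Neumann boundary condition 2}), for the new coordinate functions, and then bootstraps the $C^{k_0,\alpha}$-closeness of $\hat g_k$ to the Euclidean metric from the system (\ref{torsion-free equation 1 harmonic})--(\ref{torsion-free equation 2 harmonic}); the only term not already present in the Einstein case is the expression quadratic in $\partial\bm Q$ on the right of (\ref{torsion-free equation 2 harmonic}), which is uniformly small because $d\bm Q_{\hat g_k}\to 0$, and one uses the interior and Neumann elliptic regularity for the coupled system recorded before this proposition. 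This produces harmonic charts at $p_k$ of size $L$ for arbitrarily large $L$, contradicting $r_h^{k_0,\alpha}(p_k,\hat g_k,\Lambda)=1$. Points with $d(q,\partial X)$ bounded below are in fact already handled by Proposition \ref{harmonic radius and bounds for Q}, so the real content is the boundary case.

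The main obstacle is not analytic but organizational: one must make sure that the extra field $\bm Q$ never obstructs the blow-up or the coordinate-improvement bootstrap, particularly in the boundary regularity step where $\bm Q$ couples to $g$ through the Neumann conditions. This is exactly what the scaling of $|d\bm Q|^2=4R$, combined with the maximum-principle bounds on $R$ and $\mathrm{Tr}\,\bm Q$ and the coupled elliptic regularity already established, is designed to resolve: in the blow-up the $\bm Q$-dependence degenerates to a constant and the argument collapses onto the Ricci-flat case treated in Theorem \ref{local harmonic radius lower bound} and Lemma \ref{flat totally geodesic}.
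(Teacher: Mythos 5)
Your proposal is correct and takes the same approach the paper intends: the paper's own proof consists of the single sentence ``Following the proof of Theorem~\ref{local harmonic radius lower bound}, we get the following two propositions,'' and your write-up supplies exactly the details that adaptation requires — the maximum-principle propagation of the $R$ and $\operatorname{Tr}\bm Q$ bounds from $\partial X$ to all of $X$, the scale invariance of $\bm Q$ together with the fact that $|d\bm Q|^2_{\bm Q}=4R$ scales away, the identification of the blow-up limit as Ricci-flat with constant $\bm Q$ (so that Cases~1 and~2 of Theorem~\ref{local harmonic radius lower bound} apply verbatim via Cheeger-Gromoll and Lemma~\ref{flat totally geodesic}), and the coordinate-improvement bootstrap using the coupled system (\ref{torsion-free equation 1 harmonic})--(\ref{torsion-free equation 2 harmonic}) with the Neumann boundary conditions (\ref{Neumann boundary condition 1})--(\ref{Neumann boundary condition 2}). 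Your remark that interior points are already covered by Proposition~\ref{harmonic radius and bounds for Q} is also consistent with the paper's organization.
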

 
 Then we have 
 
\begin{proposition}
Proposition \ref{no bubble 2} holds for torsion-free hypersymplectic manifolds $(X,g,\bm\omega)$, provided an upper bound of $Tr\bm Q$ in $B(p,5)$.
\end{proposition}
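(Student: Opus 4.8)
The plan is to run the blow-up argument from the proof of Proposition~\ref{no bubble 2}, checking at each step that the torsion-free hypersymplectic structure is compatible with rescaling and with passing to $C^\infty$ limits. First I would argue by contradiction: suppose there is a sequence $(X_i,g_i,\bm\omega_i)$ satisfying the hypotheses, with $Tr\,\bm Q_{\bm\omega_i}\le\Lambda$ on $B(p_i,5)$, but points $q_i'\in B(p_i,1)$ with $|Rm_{g_i}(q_i')|\to\infty$. Since $\text{Ric}_{g_i}\ge 0$ by the torsion-free equations~(\ref{torsion-free equations}), volume comparison gives $\text{vol}(B_{g_i}(q,1))\ge 3^{-4}v$ for all $q\in B(p_i,2)$, and Lemma~\ref{point selection} produces $q_i\in B(p_i,2)$ with $|Rm_{g_i}(q_i)|\ge|Rm_{g_i}(q_i')|$ and $\sup_{B_{g_i}(q_i,r_i)}|Rm_{g_i}|\le 4|Rm_{g_i}(q_i)|$ for $r_i=|Rm_{g_i}(q_i')|^{1/2}|Rm_{g_i}(q_i)|^{-1/2}$. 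I then rescale $\tilde g_i=|Rm_{g_i}(q_i)|\,g_i$ and $\tilde{\bm\omega}_i=|Rm_{g_i}(q_i)|\,\bm\omega_i$, which still induces $\tilde g_i$. The crucial algebraic fact is that $\bm Q$ is scale-invariant, so $\bm Q_{\tilde{\bm\omega}_i}=\bm Q_{\bm\omega_i}$ has trace $\le\Lambda$, and since $\det\bm Q\equiv 1$ all its eigenvalues stay in a fixed compact subset of $(0,\infty)$; moreover the scalar curvature rescales by $|Rm_{g_i}(q_i)|^{-1}$, and Proposition~\ref{fy result} applied to unit balls inside $B(p_i,5)$ gives $R_{g_i}\le 32$ on $B(p_i,4)$, so $R_{\tilde g_i}\to 0$ on the balls in question.

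The next step is to produce a limit. Exactly as for Proposition~\ref{no bubble 2}, the bound $|Rm_{\tilde g_i}|\le 4$ on balls of radius $\to\infty$, the rescaled volume lower bound $\text{vol}_{\tilde g_i}(B_{\tilde g_i}(q_i,\rho))\ge 3^{-4}v\rho^4$, and Lemma~\ref{well-know interior inj lower bound} give an interior injectivity-radius lower bound. Feeding $Tr\,\bm Q,R\le C$ and this injectivity bound into the interior form of Proposition~\ref{harmonic radius and bounds for Q} yields a uniform $C^{k,\alpha}$ harmonic-radius lower bound on balls of arbitrarily large radius, and uniform bounds on $|\nabla^k\bm Q_{\tilde{\bm\omega}_i}|$. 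By Proposition~\ref{harmonic radius lower bound} a subsequence $(X_i,\tilde g_i,q_i)$ converges in pointed Cheeger--Gromov sense to a complete $4$-manifold $(M_\infty,g_\infty,q_\infty)$, with $\bm Q_{\tilde{\bm\omega}_i}\to\bm Q_\infty$ in $C^\infty$ for some positive-definite $\bm Q_\infty$ with $\det\bm Q_\infty=1$. This is where the argument genuinely differs from the hyperk\"ahler case: the triples $\tilde{\bm\omega}_i$ are \emph{not} parallel, so to get $\tilde{\bm\omega}_i\to\bm\omega_\infty$ in $C^\infty$ I would instead use that each component is closed and self-dual, hence harmonic, with $|\tilde{\bm\omega}_i|^2_{\tilde g_i}=2\,Tr\,\bm Q_{\tilde{\bm\omega}_i}\le 2\Lambda$; elliptic regularity for the Hodge Laplacian (equivalently for the first-order system $d+\delta$) with the now-controlled metric coefficients in harmonic coordinates bootstraps these $C^0$ bounds to uniform $C^{k,\alpha}$ bounds, so a further subsequence converges smoothly.

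Passing $d\tilde{\bm\omega}_i=0$, $d(\bm Q_{\tilde{\bm\omega}_i}^{-1}\tilde{\bm\omega}_i)=0$, self-duality and compatibility with $\tilde g_i$ to the limit shows $(M_\infty,g_\infty,\bm\omega_\infty)$ is a complete torsion-free hypersymplectic $4$-manifold; it is non-flat since $|Rm_{\tilde g_i}(q_i)|=1$, has maximal volume growth, and satisfies $\int_{M_\infty}|Rm_{g_\infty}|^2\le C$. By Proposition~\ref{fy result} it is hyperk\"ahler: there is $B\in SL(3,\mathbb R)$ such that $\bm\omega_\infty B$ is a hyperk\"ahler triple inducing $g_\infty$. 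Hence $(M_\infty,g_\infty)$ is a complete non-flat hyperk\"ahler ALE space of order $4$ by \cite{bando1989construction}, and Kronheimer's classification \cite{kronheimer1989construction}\cite{kronheimer1989torelli} provides a $-2$ curve $C_\infty\subset B_{g_\infty}(q_\infty,R)$ with $\int_{C_\infty}\bm\omega_\infty B\ne 0$, hence $\int_{C_\infty}\bm\omega_\infty\ne 0$ as $B$ is invertible. Transporting $C_\infty$ back via the convergence diffeomorphisms $\phi_i$ gives $-2$ curves $C_i\subset B_{g_i}(p_i,3)$ with $\int_{C_i}\bm\omega_i=|Rm_{g_i}(q_i)|^{-1}\int_{C_\infty}\phi_i^*\tilde{\bm\omega}_i\to 0$, contradicting $\big|\int_{C_i}\bm\omega_i\big|\ge a>0$.

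The place I expect to need the most care — the genuinely new input relative to Proposition~\ref{no bubble 2} — is the $C^\infty$ convergence of the hypersymplectic triples $\tilde{\bm\omega}_i$ in the blow-up, since the parallel-transport rigidity of the hyperk\"ahler case is no longer available; this is what forces the harmonic-form detour above and the use of the torsion-free elliptic regularity packaged in Proposition~\ref{harmonic radius and bounds for Q}. One must also track the scalings carefully: $\bm Q$ is scale-invariant with determinant one, so the single bound $Tr\,\bm Q\le\Lambda$ controls all of $\bm Q$, while $R$ rescales to zero, which is exactly what forces the blow-up limit to be hyperk\"ahler and thereby makes Kronheimer's classification applicable.
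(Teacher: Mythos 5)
Your proposal is correct and follows essentially the same route as the paper, which likewise runs the blow-up argument of Proposition \ref{no bubble 2} using $\mathrm{Ric}\geq 0$ for volume comparison, Proposition \ref{fy result} for the a priori scalar curvature bound, Proposition \ref{harmonic radius and bounds for Q} for harmonic-radius and $\bm Q$-derivative control, and the scalar-flatness of the rescaled limit to reduce to the hyperk\"ahler ALE classification. Your explicit treatment of the smooth convergence of the triples $\tilde{\bm\omega}_i$ via their harmonicity (closed and self-dual with bounded norm) is a detail the paper leaves implicit in its claim of Cheeger--Gromov convergence of $(M_i,g_i,\bm\omega_i,q_i)$, and it is a valid way to fill that step.
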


\begin{proof}

The proof is almost the same as there. Let us list the ingredients here: 

\begin{itemize}
\item We have Bishop-Gromov volume comparison, since $\text{Ric}_{g_i}\geq 0,$	
\item Before rescaling, $ R_i, |\text{Ric}_{g_i}|$ are automatically bounded by Proposition \ref{fy result}.
\item 
  For the rescaled metric $\tilde{g}_i$, the 
  curvature bound and the volume non-collapsing condition imply injectivity radius lower bound on compact sets, hence by Proposition \ref{harmonic radius and bounds for Q}, we have harmonic radius lower bounds as well as bounds for derivatives of $\bm Q_i$ on compact sets, so we have pointed Cheeger-Gromov convergence of a subsequence $(M_i,g_i,\bm\omega_i, q_i)$.

\item The limit $\bm\omega_\infty$ is a hyperk\"ahler triple up to a $SL(3,\mathbb{R})$ rotation, because $g_\infty$ is scalar flat, or because of Proposition \ref{fy result}.

\end{itemize}
So, we get the contradiction in the same way. 
\end{proof}

With the above three Propositions as tools and $\epsilon$-regularity, argue the same way as in Theorem \ref{good boundary}, one gets an analogous version of Theorem \ref{good boundary}, i.e., curvature control within  $i_b$, assuming $i_b\geq i_0$.  

\begin{remark}
We make a remark about the proof of $\epsilon$-regularity for torsion-free hypersymplectic manifolds here. Firstly, the proof in \cite{sun2021collapsing} Theorem 3.21 directly applies to this case by using Proposition \ref{fy result}. Alternatively, we can apply Remark 8.22 in \cite{cheeger2006curvature} to conclude that $g$ has $C^{1,\alpha}$ bounded covering geometry when curvature $L^2$ norm is small. By (\ref{torsion-free equations}), we have $|\nabla \text{Ric}|\leq C$, hence $g$ has $C^{2,\alpha}$ bounded covering geometry and $|Rm|$ is bounded.
\end{remark}

Now one can finish the proof of compactness part of  Theorem \ref{convergence of hypersymplectic triples} by the same arguments in Section \ref{section main proof}. Note that $\text{Ric}\geq 0, H>0$ is enough for the focal point argument.

\subsection{Uniqueness}

Finally, we prove the uniqueness part of Theorem \ref{convergence of hypersymplectic triples}.

\begin{proposition} Let $\bm\omega_1,\bm\omega_2$ be two torsion-free hypersymplectic triples on an oriented 4-manifold $X$ with compact boundary $Y=\partial X
$.
Suppose $\bm\gamma_1=\bm\gamma_2, \bm Q_1=\bm Q_2$ on $\partial X$, then $\bm\omega_1=\bm\omega_2$ in geodesic gauges of $g_{\bm\omega_i}$ near $\partial X$.
\end{proposition}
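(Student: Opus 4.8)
The plan is to lift the whole problem to the $7$-manifold $Z=X\times T^3$ and reduce it to the unique continuation theorem for Einstein metrics that was used in Proposition~\ref{local uniqueness hyperkahler triple}. By (\ref{g2 and hypersymplectic}) the torsion-free hypersymplectic triples $\bm\omega_1,\bm\omega_2$ induce torsion-free $G_2$ structures $\phi_1,\phi_2$ on the compact $7$-manifold with boundary $Z$, whose associated metrics $g_{\phi_1},g_{\phi_2}$ are Ricci-flat (holonomy contained in $G_2$). A direct computation from (\ref{g2 and hypersymplectic}), using $\det\bm Q_j\equiv 1$, shows that in the splitting $TZ=TX\oplus TT^3$ one has the orthogonal decomposition $g_{\phi_j}=g_{\bm\omega_j}+\sum_{a,b}(Q_j)_{ab}\,dt^a\,dt^b$. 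Hence $\partial Z=\partial X\times T^3$, the $g_{\phi_j}$-distance to $\partial Z$ equals the $g_{\bm\omega_j}$-distance to $\partial X$, and the normal geodesics of $\partial Z$ in $Z$ are exactly the normal geodesics of $\partial X$ in $X$ lying in the $X$-slices; equivalently, the geodesic gauge of $g_{\phi_j}$ relative to $\partial Z$ is the product of the geodesic gauge of $g_{\bm\omega_j}$ relative to $\partial X$ with $\mathrm{id}_{T^3}$.

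Next I would check that the hypotheses $\bm\gamma_1=\bm\gamma_2$ and $\bm Q_1=\bm Q_2$ on $\partial X$ force $g_{\phi_1}$ and $g_{\phi_2}$ to induce the same boundary metric and the same second fundamental form on $\partial Z$. The induced metric on $\partial Z$ is $g_{\bm\omega_j}|_{\partial X}\oplus\bm Q_j'$, and $g_{\bm\omega_j}|_{\partial X}$ is determined pointwise by $\bm\gamma_j$ and $\bm Q_j'$, so this metric agrees for $j=1,2$. For the second fundamental form, in the orthogonal splitting the only possibly nonzero blocks of $II_{\partial Z}$ are the $T\partial X$--$T\partial X$ block, which equals $II_{\partial X\subset X}$, and the $TT^3$--$TT^3$ block, which is $\tfrac{1}{2}\partial_t\bm Q|_{t=0}$; by the computation of Lemma~\ref{hyperkahler triple second fundamental form} together with the evolution equations (\ref{torsion-free normal tangential 0}) and (\ref{torsion-free normal tangential 2}), both $\partial_t\bm\gamma_t|_{t=0}$ and $\partial_t\bm Q|_{t=0}$ are algebraic expressions in $\bm\gamma_j,\bm Q_j'$ and their tangential derivatives. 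Therefore all of these boundary data agree for $j=1,2$.

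Then I would invoke the unique continuation theorem for Einstein metrics (\cite{biquard:hal-02928859} Theorem~4, or \cite{anderson2008unique}) on $Z$: in the geodesic gauges of $g_{\phi_1}$ and $g_{\phi_2}$ one obtains $g_{\phi_1}=g_{\phi_2}=:G$ in a collar of $\partial Z$. Since $\phi_1$ and $\phi_2$ are both parallel for the common Levi-Civita connection $\nabla^G$, the function $|\phi_1-\phi_2|_G^2$ is constant; and in geodesic gauge the restriction of $\phi_j$ to $\partial Z$ is built only from $\bm\gamma_j$ (via $\bm\omega_j=-dt\wedge *_{\bm\gamma_j}\bm\gamma_j+\bm\gamma_j$ along $\partial X$), so $\phi_1=\phi_2$ along $\partial Z$ and hence $\phi_1=\phi_2$ throughout the collar. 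Extracting the coefficients of $dt^1,dt^2,dt^3$ then yields $\bm\omega_1=\bm\omega_2$ in the geodesic gauges of $g_{\bm\omega_1},g_{\bm\omega_2}$ near $\partial X$; this last deduction $\phi_1=\phi_2\Rightarrow\bm\omega_1=\bm\omega_2$ is the exact analogue of the parallel-transport argument in Proposition~\ref{local uniqueness hyperkahler triple}.

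I expect the main obstacle to be the bookkeeping of the second step: one must verify that the pair $(\bm\gamma,\bm Q')$ packages precisely the Cauchy data ``boundary metric together with second fundamental form'' of $\partial Z$ in $Z$, so that the coupled, non-Einstein system (\ref{torsion-free equations}) on $X$ is genuinely subsumed by the pure Ricci-flat unique continuation on $Z$. Once this reduction is in place, all remaining steps are routine and parallel the hyperk\"ahler case.
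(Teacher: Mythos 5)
Your proposal is correct and follows essentially the same route as the paper: lift to $Z=X\times T^3$ via (\ref{g2 and hypersymplectic}), observe that $g_{\phi_j}=g_{\bm\omega_j}+Q_{ab}\,dt^a dt^b$ is a Ricci-flat warped product whose geodesic gauge relative to $\partial Z$ factors through that of $\partial X$ in $X$, verify that the induced metric and second fundamental form of $\partial Z$ are determined by $(\bm\gamma,\bm Q')$, apply \cite{biquard:hal-02928859} Theorem 4, and then transfer equality from the metrics to the parallel $3$-forms. The only difference is cosmetic: where you explicitly compute the block structure of $II_{\partial Z}$ (diagonal blocks $II_{\partial X\subset X}$ and $\tfrac12\partial_t\bm Q|_{t=0}$, vanishing off-diagonal block), the paper simply cites the computations in \cite{donaldson2018remarks} Section 2.2.

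One small point worth tightening: when you argue $\phi_1=\phi_2$ along $\partial Z$, it is not quite enough that the pullback $\rho_{0,j}$ of $\phi_j$ depends only on $\bm\gamma_j$; the full value of the $3$-form on $\partial Z$ also involves $\theta_{0,j}=-*_{Y_0}\gamma_{0,j}^a\wedge dt^a$, and $*_{Y_0}$ depends on $g_{\bm\omega_j}|_{\partial X}$, which in turn is determined by $(\bm\gamma_j,\bm Q_j')$ (as in Section~\ref{section triples}). So $\phi_1=\phi_2$ on $\partial Z$ does hold, but via $(\bm\gamma,\bm Q')$ rather than $\bm\gamma$ alone.
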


\begin{proof}
$\bm\omega_i$ defines a torsion-free $G_2$ structure $\phi_i$ on $X\times T^3$ via (\ref{g2 and hypersymplectic}), which defines a warpped product metric 
\begin{equation}\label{warpped product metric}
    g_{\phi_i}=g_{\bm\omega_i}+ Q_{ij}dt^idt^j.
\end{equation}
In the geodesic gauge of $g_{\bm\omega_i}$, write
$$\bm\omega_i=-dt\wedge *_{Y_t}\bm\gamma_t+\bm\gamma_t,$$
where $t$ is the distance function $d_{g_{\bm\omega_i}}(\cdot,\partial X)$.
Hence 
\begin{equation}\label{g2 geodesic gauge}
\phi_i=-dt\wedge \theta_{t,i}+\rho_{t,i},
\end{equation}
where $$\rho_{t,i}=dt^1\wedge dt^2\wedge dt^3-\gamma_i^1\wedge dt^1-\gamma_i^2\wedge dt^2-\gamma_i^3\wedge dt^3,$$
$$\theta_{t,i}=-*_{Y_t}\gamma_{t,i}^1\wedge dt^1-*_{Y_t}\gamma_{t,i}^2\wedge dt^2-*_{Y_t}\gamma_{t,i}^3\wedge dt^3$$
By (\ref{warpped product metric}), $t$ can also be viewed as $d_{g_{\phi_i}}(\cdot,\partial X\times T^3)$, so (\ref{g2 geodesic gauge}) is written in the geodesic gauge of $g_{\phi_i}$.
By the calculations in \cite{donaldson2018remarks} Section 2.2, for $i=1,2$, both $g_{\phi_i}|_{\partial X\times T^3}$ and the second fundamental forms of $\partial X\times T^3$ are equal to each other, since they are explicitly in terms of $\theta_{0.i},\rho_{0,i}$, which are in terms of $\bm\gamma_i,\bm Q_i$. Since $g_{\phi_i}$ are Ricci-flat, by \cite{biquard:hal-02928859} Theorem 4, $g_{\phi_1}=g_{\phi_2}$. Since $\nabla^{g_{\phi_1}}|\phi_1-\phi_2|^2=0$ and $\phi_1-\phi_2=0$ at one point, we have $\phi_1=\phi_2$, $\bm\omega_1=\bm\omega_2$.

\end{proof}

Note that for a torsion-free hypersymplectic triple $\bm\omega$, the metric $g_{\bm\omega}$ is real analytic with respect to the analytic structure defined by harmonic coordinates, due to elliptic regularity of (\ref{torsion-free equation 1 harmonic})(\ref{torsion-free equation 2 harmonic}), so the arguments in subsection \ref{uniqueness 1} shows global uniqueness:

\begin{theorem}\label{unique continuation torsion-free hypersymplectic}
Let $X$ be a connected 4-manifold with boundary, $\pi_1(X,\partial X)=0$. Suppose $\bm\omega_1,\bm\omega_2$ are two smooth torsion-free hypersymplectic triples on $X$, and $\varphi_0:\partial X\rightarrow\partial X$ is a diffeomorphism, such that $\bm\omega_1|_{\partial X}=\varphi_0^*(\bm\omega_2|_{\partial X})$, $\bm Q_1|_{\partial X}=\varphi_0^*\bm Q_2|_{\partial X}$, then there exists a diffeomorphism $\varphi:X\rightarrow X$,   $\varphi|_{\partial X}=\varphi_0$,  such that $\bm\omega_1=\varphi^*\bm\omega_2$ on $X$.
\end{theorem}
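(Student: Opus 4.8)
The plan is to mirror the proof of Theorem \ref{unique continuation hyperkahler} exactly, using the preceding local uniqueness proposition as the input in place of Proposition \ref{local uniqueness hyperkahler triple}. First I would invoke the just-proved local statement: since $\bm\gamma_1=\varphi_0^*\bm\gamma_2$ and $\bm Q_1=\varphi_0^*\bm Q_2$ on $\partial X$, there is a collar neighborhood $U$ of $\partial X$ and a diffeomorphism $\varphi_1\colon U\to V\subset X$ with $\varphi_1|_{\partial X}=\varphi_0$ and $\bm\omega_1=\varphi_1^*\bm\omega_2$ on $U$; in particular $g_{\bm\omega_1}=\varphi_1^*g_{\bm\omega_2}$ on $U$, so $\varphi_1$ is an isometry between these metrics. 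The key new ingredient needed to upgrade the germ-level statement to a global one is that, for a torsion-free hypersymplectic triple, $g_{\bm\omega}$ is real-analytic in harmonic coordinates — this follows from the elliptic regularity of the system (\ref{torsion-free equation 1 harmonic})(\ref{torsion-free equation 2 harmonic}) as already noted in the text, and it is what makes the classical analytic continuation machinery for isometries (Kobayashi–Nomizu, Chapter VI, Section 6) applicable. Consequently $\varphi_1$ is real-analytic.

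Next I would run the analytic continuation argument verbatim. Fix $p_0\in U$ and a small neighborhood $U_0\ni p_0$. For any $p\in X\setminus\partial X$, pick a path $x(t)$ in $X\setminus\partial X$ from $p_0$ to $p$ and analytically continue the isometry germ $\varphi_1|_{U_0}$ along $x(t)$; this produces an isometry germ near $p$ since $g_{\bm\omega_1},g_{\bm\omega_2}$ are real-analytic and complete enough along the path. The claim is that the resulting germ at $p$ is independent of the path: given two paths, the concatenated loop $y_0$ at $p_0$ can, because $\pi_1(X,\partial X)=0$, be homotoped rel endpoints through loops $y_s$ in $X\setminus\partial X$ to a loop $y_1$ contained entirely in the collar $U$; on $U$ the isometry $\varphi_1$ is globally defined, so continuation along $y_1$ returns the original germ, and by invariance of analytic continuation under homotopy the same holds along $y_0$. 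Therefore the germs patch to a global isometry $\varphi\colon X\to X$ of $g_{\bm\omega_1}$ onto $g_{\bm\omega_2}$ with $\varphi|_{\partial X}=\varphi_0$. Finally, since $\nabla^{g_{\bm\omega_1}}|\bm\omega_1-\varphi^*\bm\omega_2|^2_{g_{\bm\omega_1}}=0$ (both triples are parallel for $g_{\bm\omega_1}$) and the two agree on $U$, we get $\bm\omega_1=\varphi^*\bm\omega_2$ on all of $X$.

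I do not expect a serious obstacle here: the argument is essentially a transcription of the hyperkähler case, and every nonstandard point — local uniqueness with the extra datum $\bm Q'$, real-analyticity of the metric, and the hypothesis $\pi_1(X,\partial X)=0$ (guaranteed in the compactness application by $\mathrm{Ric}\ge 0$, $H>0$, and Proposition \ref{diam}) — has already been established in the excerpt. The only place deserving care is verifying that the analytic continuation of the \emph{isometry} also carries $\bm\omega_1$ to $\bm\omega_2$ and not merely the metrics; but this is handled exactly as before by the parallel-transport/unique-value argument, since $\bm\omega_i$ is $\nabla^{g_{\bm\omega_i}}$-parallel and determined by its value at a single point together with the metric.
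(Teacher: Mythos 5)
Your overall strategy is the same as the paper's: apply the local uniqueness proposition for torsion-free hypersymplectic triples on a collar, observe that $g_{\bm\omega_i}$ is real-analytic in harmonic coordinates by elliptic regularity of the system (\ref{torsion-free equation 1 harmonic})--(\ref{torsion-free equation 2 harmonic}), and run the analytic continuation argument of Theorem \ref{unique continuation hyperkahler} using $\pi_1(X,\partial X)=0$. That part is correct and matches the paper.

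The one genuine flaw is in your last step. You assert that $\bm\omega_i$ is $\nabla^{g_{\bm\omega_i}}$-parallel, so that $|\bm\omega_1-\varphi^*\bm\omega_2|^2$ has vanishing gradient. This is false for a general torsion-free hypersymplectic triple: if each $\omega_i$ were parallel then $\bm Q$ would be constant (the volume form $\mu$ is parallel and $Q_{ij}$ is the Gram matrix of the $\omega_i$ with respect to the wedge pairing), which fails exactly in the non-hyperk\"ahler case this theorem is meant to cover. What is parallel is the lifted $G_2$ form $\phi_i$ on $X\times T^3$ with respect to the warped product metric $g_{\phi_i}$ --- this is precisely why the paper's local uniqueness proposition passes to $X\times T^3$ rather than arguing on $X$. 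Two short repairs: (i) each $\omega_i$ is a closed self-dual, hence harmonic, $2$-form for the real-analytic metric $g_{\bm\omega_i}$, so $\bm\omega_i$ (and likewise $\bm Q_i$, being a harmonic map) is real-analytic in harmonic coordinates; then $\bm\omega_1-\varphi^*\bm\omega_2$ is a real-analytic tensor on the connected $X$ vanishing on the open collar, hence vanishes identically; or (ii) first propagate $\bm Q_1=\varphi^*\bm Q_2$ from the collar to all of $X$ by real-analyticity, so that $\varphi\times\mathrm{id}$ is an isometry of the warped product metrics on $X\times T^3$, and then apply the parallel-form argument to $\phi_1-(\varphi\times\mathrm{id})^*\phi_2$ as in the local proposition. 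Either patch is easy, but the step as you wrote it does not go through.
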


\bibliographystyle{plain}

\end{document}